\documentclass[a4paper,10pt,reqno]{amsart}

\usepackage[utf8]{inputenc}
\usepackage[T1]{fontenc}
\usepackage{amsmath,amssymb}
\usepackage[english]{babel}
\usepackage{mathrsfs}
\usepackage{tikz-cd}
\usepackage{verbatim}
\usepackage{color}
\usepackage[all]{xy}

\usepackage[shortlabels]{enumitem}
\setlist[enumerate]{label=\rm{(\arabic*)}}
\setlist[enumerate,2]{label=\rm{(\roman*)}}
\setlist[itemize]{label=\raisebox{0.25ex}{\tiny$\bullet$}}
\usepackage[backref, colorlinks, linktocpage, citecolor = blue, linkcolor = blue]{hyperref}

\usepackage[left=2cm,right=2cm,top=3cm,bottom=3cm]{geometry}

\theoremstyle{plain}
\newtheorem{theorem}{Theorem}[section]
\newtheorem{corollary}[theorem]{Corollary}
\newtheorem{proposition}[theorem]{Proposition}
\newtheorem{lemma}[theorem]{Lemma}

\newtheorem{notation}[theorem]{Notations}

\theoremstyle{definition}
\newtheorem{definition}[theorem]{Definition}
\newtheorem{remark}[theorem]{Remark}

\theoremstyle{plain}
\newtheorem{theoremA}{Theorem}
\newtheorem{propositionA}[theoremA]{Proposition}
\newtheorem{corollaryA}[theoremA]{Corollary}

\setcounter{tocdepth}{1}


\def\h{\mathrm{h}}
\def\H{\mathrm{H}}
\def\Mat{\mathrm{Mat}}
\def\GL{\mathrm{GL}}
\def\PGL{\mathrm{PGL}}
\def\Autz{\mathrm{Aut}^\circ}
\def\Aut{\mathrm{Aut}}
\def\Bir{\mathrm{Bir}}
\def\AA{\mathbb{A}}
\def\PP{\mathbb{P}}
\def\FF{\mathbb{F}}
\def\O{\mathcal{O}}
\def\Pic{\mathrm{Pic}}

\def\kk{\mathbf{k}}

\def\AAA{\mathcal{A}}

\def\D{\mathcal{D}}

\def\E{\mathcal{E}}
\def\L{\mathcal{L}}
\def\seg{\mathfrak{S}}

\begin{document}
	
	\title{Automorphism groups of $\PP^1$-bundles over \\geometrically ruled surfaces}
	\author{Pascal Fong}
	\address{Leibniz Universität Hannover, Institut für Algebraische Geometrie, Welfengarten 1, 30167 Hannover, Deutschland}
	\email{fong@math.uni-hannover.de}
	\subjclass[2020]{14E07,14E30,14J26,14J30,14J50,14J60,14L30}

	\begin{abstract}
		We classify the pairs $(X,\pi)$, where $\pi\colon X\to S$ is a $\PP^1$-bundle over a non-rational geometrically ruled surface $S$ and $\Autz(X)$ is \emph{relatively maximal}, i.e., maximal with respect to the inclusion in the group $\mathrm{Bir}(X/S)$. The results hold over any algebraically closed field of characteristic zero.
	\end{abstract}
	\maketitle	
	
	\tableofcontents
	
	\section{Introduction}
	
	The classification of algebraic subgroups of the Cremona groups was initiated by the Italian school of Algebraic Geometry. In \cite{Enriques}, Enriques classified the maximal connected algebraic subgroups of $\Bir(\PP^2)$ over $\mathbb{C}$, showing that they are conjugate to either $\Aut(\PP^2) = \PGL_3(\mathbb{C})$ or to $\Autz(\FF_b)$ for $b\neq 1$, where $\FF_b$ denotes the $b$-th Hirzebruch surface. Later with Fano, they stated the classification of maximal connected algebraic subgroups of $\Bir(\PP^3)$ over $\mathbb{C}$; see \cite{EnriquesFano}.
	A proof of their classification for $\Bir(\PP^3)$ was provided by Umemura in \cite{Umemura80,Umemura82a,Umemura82b,Umemura85}, and recently recovered and generalized to any algebraically closed field of characteristic zero by Blanc-Fanelli-Terpereau in \cite{BFT,BFT2}. 
	
	It follows from these classifications that $\Bir(\PP^2)$ and $\Bir(\PP^3)$ satisfy a remarkable property:
	every connected algebraic subgroup is contained in a maximal one. In higher dimensions, no complete classification is known. However, in the pioneering article \cite{Demazure}, which is considered as the starting point of toric geometry, Demazure classified the algebraic subgroups of $\Bir(\PP^n)$ containing a torus of maximal rank. 
	
	\bigskip
	
	If $C$ is a smooth projective curve of genus $g(C)\geq 1$, the maximal connected algebraic subgroups of $\Bir(C\times \PP^1)$ are classified in \cite{Fong}. An unexpected phenomenon arises: there exist \emph{unbounded connected algebraic subgroups} in $\Bir(C\times \PP^1)$, meaning connected algebraic subgroups that are not contained in any maximal one. This result was later generalized in higher dimensions; see \cite{fosok}, and \cite{fanelli_floris_zimmermann,kollar2024} for the rational case.
	
	\bigskip
	
	\emph{From now on, unless otherwise stated, $C$ denotes a smooth projective curve of genus $g \geq 1$.}
	
	\subsection{Automorphism groups of geometrically ruled surfaces}
	
	Up to conjugacy, the maximal connected algebraic subgroups of $\Bir(C\times \PP^1)$ are of the form $\Autz(S)$, where $S$ is one of the following \emph{geometrically ruled surfaces} (see Definition \ref{def:ruled}):
	\begin{enumerate}
		\item The product $C\times \PP^1$,
		\item The indecomposable geometrically ruled surface $\AAA_0$,
		\item The indecomposable geometrically ruled surface $\AAA_1$,
		\item The geometrically ruled surfaces $\PP(\O_C\oplus \L)$, where $\L$ is a non-trivial line bundle of degree zero.
	\end{enumerate}
	 The latter three cases occur only when $g=1$, i.e. $C$ is an elliptic curve. The \emph{Atiyah's geometrically ruled surfaces} $\AAA_0$ and $\AAA_1$ are the two indecomposable\footnote{This means that there do not exist two disjoint sections of the $\PP^1$-bundle $S\to C$.} $\PP^1$-bundles over the elliptic curve $C$ (see Section \ref{ruled} for definitions and properties). 
	
	\bigskip
	
	Using an explicit description of the automorphism groups of the above geometrically ruled surfaces, which is obtained by Maruyama in \cite{Maruyama}, it follows that the maximal connected algebraic subgroups of $\Bir(C\times \PP^1)$ have dimension at most $4$, while there exist unbounded connected algebraic subgroups of arbitrary large dimension. This contrasts with Enriques' classification, as there are only decomposable $\PP^1$-bundles over $\PP^1$, whose automorphism groups produce maximal connected algebraic subgroups of $\Bir(\PP^2)$ of arbitrarily large dimension. 
	
	\bigskip
	
	This article is motivated by the classification of maximal connected algebraic subgroups of $\Bir(C\times \PP^2)$. Inspired by the strategy used in \cite{BFT}, we focus on the automorphism groups of $\PP^1$-bundles over non-rational geometrically ruled surfaces. Their approach relies on the \emph{Minimal Model Program} (MMP) and the \emph{Sarkisov Program}, and the arguments developed in this article likewise rely on these theories, which hold in characteristic zero.
	
	\bigskip
	
	\emph{Accordingly, we assume throughout that $\kk$ is algebraically closed of characteristic zero.}
	
	\subsection{Reduction via the Minimal Model Program}
	
	Starting with a connected algebraic subgroup $G\subset \Bir(C\times \PP^2)$, the regularization theorem of Weil (see \cite{Weil}, and \cite{Sumihiro1,Sumihiro2, Brionnormal} for refinements) provides a smooth projective threefold that is $G$-birationally equivalent to $C\times \PP^2$, on which $G$ acts regularly and faithfully. Running an MMP conjugates $G$ to a connected algebraic subgroup of $\Autz(X)$, where $X$ is a conic bundle over a birationally ruled surface or a Mori del Pezzo fibration over $C$.  To prove or disprove the maximality of $\Autz(X)$ as a connected algebraic subgroup of $\Bir(X)$, we use the Sarkisov Program, which was developed by Corti in dimension three, extended to higher dimensions by Hacon and McKernan, and adapted to the equivariant setting by Floris (see \cite{Corti,HM13, Floris}).
	
	\bigskip
	
	The strategy outlined above can be adapted to study algebraic subgroups of $\Bir(X)$ in a more general setting, e.g. in \cite{Blanc,fong2} when $X$ is a surface of negative Kodaira dimension but $G$ is not necessarily connected, or in \cite{RobayoZimmermann, SchneiderZimmermann} when $X$ is a rational surface defined over a non-algebraically closed field. Finite subgroups of groups of birational transformations have also been of central interest. For finite subgroups of $\Bir(\PP^2)$, see e.g., \cite{Blancthesis, DI}, and \cite{Yasinsky,CMYZ} for non-closed fields. In higher dimension, research on finite subgroups of $\Bir(\PP^3)$ is still ongoing, see e.g., \cite{Prokhorov1,Prokhorov2,PS2,PS1,CheltsovShramov}, or also \cite{CSbook} and the references therein. When $X$ is a conic bundle over a non-uniruled variety, finite subgroups of $\Bir(X)$ have also been studied through the Jordan property by Bandman and Zarhin, see e.g., \cite{BZ,BZ2}.
	
	\subsection{The main results}
	
	In \cite{BFT}, the notion of \emph{relatively maximal automorphism group} was introduced to restrict the classification to the category of $\PP^1$-bundles over geometrically ruled surfaces, excluding the equivariant birational maps whose target is a rational Mori del Pezzo fibration or a rational Fano threefold. The automorphism groups of rational Mori del Pezzo fibrations were studied in a second phase in \cite{BFT2}, using different techniques from those developed in \cite{BFT}. 
	
	\bigskip
	
	We combine the techniques developed in \cite{BFT} and the classification of maximal connected algebraic subgroups of $\Bir(C\times \PP^1)$ obtained in \cite{Fong}, to classify the pairs $(X,\pi)$, where $\pi\colon X\to S$ is a $\PP^1$-bundle over a geometrically ruled surface $S$ and $\Autz(X)$ is \emph{relatively maximal} (see Definition \ref{def}). We obtain:

	\bigskip
	
	\begin{theoremA}\label{thmA}
		Let $\kk$ be an algebraically closed field of characteristic zero. Let $C$ be a smooth projective curve of genus $g\geq 1$, and let $\tau \colon S\to C$ and $\pi\colon X\to S$ be $\PP^1$-bundles. 
		
		\bigskip
		
		\begin{enumerate}
			\item[$\mathrm{(I)}$] If $g=1$, then the following hold:
			
			\medskip
			
			\begin{enumerate}
				\item\label{thmA.i} Let $X= S'\times \PP^1$, where $\tau'\colon S'\to C$ is one of the following geometrically ruled surfaces: \medskip
				\begin{enumerate}
					\item[(a)] $C\times \PP^1$,
					\item[(b)] $\AAA_0$,
					\item[(c)] $\AAA_1$,
					\item[(d)] $\PP(\O_C\oplus \O_C(D))$, where $D$ is a non-trivial divisor of degree zero.
				\end{enumerate}
				\medskip
				In each case, $X$ admits exactly two $\PP^1$-bundle structures for which $\Autz(X)$ is relatively maximal: 
				\begin{itemize}
					\item $S=S'$ and $\pi$ is the trivial $\PP^1$-bundle over $S$.
					\item $S=C\times \PP^1$ and $\pi=\tau' \times \mathrm{id}_{\PP^1}$; which is a decomposable $\PP^1$-bundle if $S'=C\times \PP^1$ or $\PP(\O_C\oplus \O_C(D))$ (cases (a) and (d)), and it is an indecomposable one if $S'=\AAA_0$ or $\AAA_1$ (cases (b) and (c)).
				\end{itemize}
				\vspace{3pt}
				
				\item\label{thmA.iv} Let $X=\AAA_0 \times_C \AAA_1$ and $\pi$ be the projection onto the first factor, so that $S = \AAA_0$. Then $\pi$ is an indecomposable $\PP^1$-bundle and $\Autz(X)$ is relatively maximal. \vspace{3pt}
				
				\item\label{thmA.v} Let $X=\AAA_1 \times_C \AAA_1$ and $\pi$ be the projection onto either factor, so that $S = \AAA_1$. Then $\pi$ is an indecomposable $\PP^1$-bundle and $\Autz(X)$ is relatively maximal. \vspace{3pt}
				
				\item\label{thmA.vi} Let $X=\PP(\O_C\oplus \O_C(D))\times_C \AAA_1$, where $D$ is a non-trivial divisor of degree zero. In this case, $X$ admits two $\PP^1$-bundle structures: \medskip

				\begin{itemize}
					\item $S=\PP(\O_C\oplus \O_C(D))$ and $\pi$ is the projection onto the first factor, which is an indecomposable $\PP^1$-bundle, and $\Autz(X)$ is relatively maximal.
					\item $S= \AAA_1$ and $\pi$ is the projection onto the second factor, which identifies $X$ with the decomposable $\PP^1$-bundle 
					\[
					\PP(\O_{\AAA_1} \oplus \O_{\AAA_1}(\tau^*(D))),
					\]
					and $\Autz(X)$ is relatively maximal provided that $D$ is not $2$-torsion. 
				\end{itemize}
				 \vspace{3pt}
				 
				\item\label{thmA.vii} Let $X=\PP(\O_C\oplus \O_C(D))\times_C \AAA_0$, where $D$ is a non-trivial divisor of degree zero. In this case, $X$ admits two $\PP^1$-bundle structures: \medskip
				\begin{itemize}
					\item $S=\PP(\O_C\oplus \O_C(D))$ and $\pi$ is the projection onto the first factor, which is an indecomposable $\PP^1$-bundle, and $\Autz(X)$ is relatively maximal provided that $D$ has infinite order.
					\item $S= \AAA_0$ and $\pi$ is the projection onto the second factor, which identifies $X$ with the decomposable
					$\PP^1$-bundle 
					\[
					\PP(\O_{\AAA_0} \oplus \O_{\AAA_0}(\tau^*(D))),
					\]
					and $\Autz(X)$ is relatively maximal.
				\end{itemize}
				\vspace{3pt}
				\item\label{thmA.viii} Let $X=\PP(\O_C\oplus \O_C(D))\times_C \PP(\O_C\oplus \O_C(E))$, where $D,E$ are non-trivial divisors of degree zero. In this case, $X$ admits two $\PP^1$-bundles structures:
				\medskip
				\begin{itemize}
					\item $S = \PP(\O_C\oplus \O_C(D))$ and $\pi$ is the projection onto the first factor, which identifies $X$ with the decomposable
					$\PP^1$-bundle 
					\[
					\PP(\O_{\PP(\O_C\oplus \O_C(D))} \oplus \O_{\PP(\O_C\oplus \O_C(D))}(\tau^*(E))),
					\]
					and $\Autz(X)$ is relatively maximal provided that $E$ is not a multiple of $D$.
					\item $S = \PP(\O_C\oplus \O_C(E))$ and $\pi$ is the projection onto the second factor, which identifies $X$ with the decomposable
					$\PP^1$-bundle 
					\[
					\PP(\O_{\PP(\O_C\oplus \O_C(E))} \oplus \O_{\PP(\O_C\oplus \O_C(E))}(\tau^*(D))),
					\]
					and $\Autz(X)$ is relatively maximal provided that $D$ is not a multiple of $E$.
				\end{itemize}				
				 \vspace{3pt}
				
				\item\label{thmA.ix} Let $X=\PP(\O_{C\times \PP^1} \oplus \O_{C\times \PP^1}(b\sigma + \tau^*(D)))$, where $b>0$, $D\in \Pic^0(C)$ and $\sigma$ is a constant section in $C\times \PP^1$. Then $\pi$ is a decomposable $\PP^1$-bundle over $S=C\times \PP^1$ and $\Autz(X)$ is relatively maximal. Moreover, if $D$ is trivial, then $X\cong C\times \FF_b$. \vspace{3pt}
				
				\item\label{thmA.x} Let $X=\PP(\O_{\AAA_1}\oplus \O_{\AAA_1}(2\sigma+\tau^*(D)))$, where $D$ is a non-trivial $2$-divisor on $C$ (see Definition $\ref{def:(m_2^*,b)})$, and $\sigma$ is a minimal section of $\AAA_1$ such that $\O_{\AAA_1}(\sigma)\simeq \O_{\AAA_1}(1)$. Then $\pi$ is a decomposable $\PP^1$-bundle over $S=\AAA_1$ and $\Autz(X)$ is relatively maximal. \vspace{3pt}
			\end{enumerate}\vspace{3pt}
			Moreover, let $T$ be a geometrically ruled surface over $C$, and let $\pi'\colon Y\to T$ be a $\PP^1$-bundle. If $\Autz(Y)$ is relatively maximal, then $T$ is isomorphic to one of the geometrically ruled surfaces $S$ listed above, and there exists an $\Autz(Y)$-equivariant birational map $Y\dashrightarrow X$ over $S$ to one of the $\PP^1$-bundles $\pi\colon X\to S$ listed above, which conjugates $\Autz(Y)$ and $\Autz(X)$. \vspace{3pt}
			
			\item[$\mathrm{(II)}$] If $g\geq 2$, then $\Autz(X)$ is relatively maximal if and only if  $X= C\times \PP^1 \times \PP^1$ and $\pi$ is the trivial $\PP^1$-bundle over $S=C\times \PP^1$.\vspace{3pt}
		\end{enumerate}
	\end{theoremA}
	
	In Theorem \ref{thmA}, except for cases \ref{thmA.ix} and \ref{thmA.x}, the threefolds listed are fiber products of two geometrically ruled surfaces over $C$; in particular, they are equipped with two $\PP^1$-bundle structures. In case \ref{thmA.iv}, $\Autz(X)$ is relatively minimal with respect to the projection onto $\AAA_0$, but not with respect to the second projection (see Proposition \ref{A_0A_1notmax}). In particular, $\Autz(\AAA_0 \times_C \AAA_1)$ is not conjugate to a maximal connected algebraic subgroup of $\Bir(C\times \PP^2)$.
	
	If $g\geq 2$, notice that $\Autz(C\times \PP^1 \times \PP^1) = (\PGL_2(\kk))^2$ is the unique relatively maximal automorphism group. This result generalizes the classification of maximal connected algebraic subgroups of $\Bir(C\times \PP^1)$, as $\Autz(C\times \PP^1)=\PGL_2(\kk)$ is the unique such subgroup.
	
	\bigskip
	
	In Proposition \ref{propB}, we describe the automorphism groups of the $\PP^1$-bundles listed in Theorem \ref{thmA}. Blanchard's lemma (see Proposition \ref{prop:Blanchard}) induces a morphism of algebraic groups
	\[
	\pi_*\colon \Autz(X) \to \Autz(S).
	\]
	For each case, we show that $\pi_*$ is surjective and compute its kernel. This determines the dimension of $\Autz(X)$. We also describe the orbits of $\Autz(X)$.
	
	\begin{propositionA}\label{propB}
		For each $\PP^1$-bundle $\pi\colon X\to S$ in Theorem \ref{thmA}, the induced morphism of algebraic groups 
		\[
		 \pi_* \colon \Autz(X) \to \Autz(S)
		\]
		is surjective. We keep the same assumptions, numbering, and notation as in Theorem \ref{thmA}. In cases \ref{thmA.iv} - \ref{thmA.viii}, we also denote by $p_1$ and $p_2$ the projections from $X$ onto the first and second factors, respectively. Then the following statements hold:
		\medskip
		\begin{enumerate}
			\item[(I)] 
			\begin{enumerate}
				\item For each $S'$, we have $\Autz(X)\cong \Autz(S') \times \PGL_2(\kk)$. More precisely: \medskip
				\begin{enumerate}
					\item[(a)] $\Autz(X) \cong C\times \PGL_2(\kk) \times \PGL_2(\kk)$ has dimension $7$ and acts transitively on $X$. \vspace{3pt}
					\item[(b)] $\Autz(X) \cong \Autz(\AAA_0) \times \PGL_2(\kk)$ has dimension $5$ and acts with two orbits: $\sigma \times \PP^1$, where $\sigma$ is the unique minimal section of $\AAA_0$, and its complement $X\setminus (\sigma \times \PP^1)$. \vspace{3pt}
					\item[(c)] $\Autz(X) \cong \Autz(\AAA_1) \times \PGL_2(\kk)$ has dimension $4$ and acts with orbits of the form $\omega \times \PP^1$, where $\omega \subset \AAA_1$ is an $\Autz(\AAA_1)$-orbit that is mapped $4$-to-$1$ onto $C$ via $\tau'$. \vspace{3pt}
					\item[(d)] $\Autz(X) \cong \Autz(\PP(\O_C\oplus \O_C(D))) \times \PGL_2(\kk)$ has dimension $5$ and acts with three orbits: $\sigma_1\times \PP^1$, $\sigma_2\times \PP^1$ and their complement $X\setminus ((\sigma_1 \sqcup \sigma_2) \times \PP^1)$, where $\sigma_1$ and $\sigma_2$ are the two disjoint minimal sections of $\PP(\O_C\oplus \O_C(D))$.
				\end{enumerate} \medskip
				If $\pi$ is the projection onto $S'$, then $\ker(\pi_*)\cong \PGL_2(\kk)$. If $\pi=\tau' \times \mathrm{id}_{\PP^1}\colon X\to C\times \PP^1$, then
				 \[
				 \ker(\pi_*) \cong
				 \left\{
				 \begin{aligned}
				 &	\PGL_2(\kk) & \text{ in (a),} \\
				 &	\mathbb{G}_a& \text{ in (b),}  \\
				&	(\mathbb{Z}/2 \mathbb{Z})^2 &\text{ in (c),}\\ 
				 &	\mathbb{G}_m & \text{ in (d).}
				  \end{aligned}
				 \right.
				 \]
				
				\item $\Autz(X) \cong \Autz(\AAA_0) \times_{\Autz(C)} \Autz(\AAA_1)$ has dimension $2$. We denote by $\sigma \subset \AAA_0$ the unique minimal section. Then $\Autz(X)$ acts with:\medskip
				\begin{itemize}
					\item orbits of dimension one of the form $p_1^{-1}(\sigma)\cap p_2^{-1}(\omega)$, which are mapped $4$-to-$1$ onto $\sigma$,\vspace{3pt}
					\item orbits of dimension two of the form $p_1^{-1}(\AAA_0 \setminus \sigma)\cap p_2^{-1}(\omega)$, which are mapped $4$-to-$1$ onto $\AAA_0\setminus \sigma$,
				\end{itemize}
				and in both cases above, $\omega\subset \AAA_1$ is an $\Autz(\AAA_1)$-orbit.
				Moreover, for the $\PP^1$-bundle structure $\pi = p_1$, we have
				\[
				\ker(\pi_*) \cong (\mathbb{Z}/2\mathbb{Z})^2.
				\]
				
				\item $\Autz(X) \cong \Autz(\AAA_1) \times_{\Autz(C)} \Autz(\AAA_1)$ has dimension $1$. Every orbit is one-dimensional and is of the form 
				$
				p_1^{-1}(\omega_1)\cap p_2^{-1}(\omega_2),
				$
				where $\omega_1,\omega_2\subset \AAA_1$ are $\Autz(\AAA_1)$-orbits. Each orbit is mapped $4$-to-$1$ onto $\omega_1$ via $p_1$, and also $4$-to-$1$ onto $\omega_2$ via $p_2$. Moreover, for $\pi=p_1$ and $\pi=p_2$, we have
				\[
				\ker(\pi_*) \cong (\mathbb{Z}/2\mathbb{Z})^2.
				\]
				
				\item $\Autz(X) \cong \Autz(\PP(\O_C\oplus \O_C(D))) \times_{\Autz(C)} \Autz(\AAA_1)$ has dimension $2$. We denote by $\sigma_1$ and $\sigma_2$ the two disjoint minimal sections of $\PP(\O_C\oplus \O_C(D))$. Then $\Autz(X)$ acts with:
				 \medskip
				\begin{itemize}
					\item orbits of dimension one of the form $p_1^{-1}(\sigma_1)\cap p_2^{-1}(\omega)$ or $p_1^{-1}(\sigma_2)\cap p_2^{-1}(\omega)$, that are mapped $4$-to-$1$ onto their images via $p_1$ and $1$-to-$1$ via $p_2$.\vspace{3pt}
					\item orbits of dimension two of the form
					$p_1^{-1}(\sigma_1 \sqcup \sigma_2)\cap p_2^{-1}(\omega)$,\vspace{3pt}
				\end{itemize}
				and in both cases above, $\omega\subset \AAA_1$ is an $\Autz(\AAA_1)$-orbit. Moreover,
				\[
				\ker({p_1}_*) \cong (\mathbb{Z}/2\mathbb{Z})^2,~ \ker({p_2}_*) \cong \mathbb{G}_m.
				\]
				
				\item $\Autz(X) \cong \Autz(\PP(\O_C\oplus \O_C(D))) \times_{\Autz(C)} \Autz(\AAA_0)$ has dimension $3$. We denote by $s_1,s_2$ the disjoint minimal sections of $\PP(\O_C\oplus \O_C(D))$, and $\sigma$ the unique minimal section of $\AAA_0$.
				Then $\Autz(X)$ acts with:
				\medskip
				
				\begin{itemize}
					\item two one-dimensional orbits, which are $l_1 = p_1^{-1}(\sigma_1) \cap \pi^{-1}(\sigma)$ and $l_2 = p_1^{-1}(\sigma_2) \cap \pi^{-1}(\sigma)$. \vspace{3pt}
					\item three two-dimensional orbits, which are $p_1^{-1}(\sigma_1) \setminus l_1$, $p_1^{-1}(\sigma_2) \setminus l_2$, and $\pi^{-1}(\sigma)\setminus (l_1 \sqcup l_2)$.\vspace{3pt}
					\item one three-dimensional orbit, which is the complement in $X$ of the union of the orbits of dimension $\leq 2$.
				\end{itemize}
				Moreover,
				\[
				\ker({p_1}_*) \cong \mathbb{G}_a, ~ \ker({p_2}_*) \cong \mathbb{G}_m.
				\]
				 \medskip
				
				\item $\Autz(X) \cong \Autz(\PP(\O_C\oplus \O_C(D))) \times_{\Autz(C)} \Autz(\PP(\O_C\oplus \O_C(E)))$ has dimension $3$. We denote by $\sigma_1,\sigma_2$ the two disjoint minimal sections of $\PP(\O_C\oplus \O_C(D))$ and by $\sigma'_1,\sigma'_2$ the two disjoint minimal sections of $\PP(\O_C\oplus \O_C(E))$. Then $\Autz(X)$ acts with:
				\medskip
				\begin{itemize}
					\item four one-dimensional orbits, which are $l_{11} = p_1^{-1}(\sigma_1) \cap p_2^{-1}(\sigma'_1)$, 
					$l_{12} = p_1^{-1}(\sigma_1) \cap p_2^{-1}(\sigma'_2)$, 
					$l_{21} = p_1^{-1}(\sigma_2) \cap p_2^{-1}(\sigma'_1)$, 
					and $l_{22} = p_1^{-1}(\sigma_2) \cap p_2^{-1}(\sigma'_2)$. \vspace{3pt}
					\item four two-dimensional orbits, which are $p_1^{-1}(\sigma_1) \setminus (l_{11} \sqcup l_{12})$, 
					$p_1^{-1}(\sigma_2) \setminus (l_{21} \sqcup l_{22})$,
					$p_2^{-1}(\sigma'_1) \setminus (l_{11} \sqcup l_{21})$,
					$p_2^{-1}(\sigma'_2) \setminus (l_{12} \sqcup l_{22})$, \vspace{3pt}
					\item one three-dimensional orbit, which is the complement in $X$ of the union of the orbits of dimension $\leq 2$.
				\end{itemize} 
				
				Moreover, for $\pi=p_1$ and $\pi=p_2$, we have
				\[
				\ker(\pi_*) \cong \mathbb{G}_m.
				\]
				
				\medskip
				
				\item 
				If $D$ is trivial, then $\Autz(X)\cong C\times \Aut(\FF_b)$ has dimension $b+6$ and acts on $X$ with two orbits: $C\times \sigma_b$ where $\sigma_b$ denotes the minimal section of $\FF_b$, and its complement. In this case,
				\[
				\ker(\pi_*) \cong \mathbb{G}_m \rtimes \Gamma(\O_{\PP^1}(b)).
				\]
				
				If $D$ is not trivial, then $\Autz(X)$ has dimension $5$ and acts with two orbits of dimension two, which are $\Autz(X)$-invariant disjoint sections of $\pi$, and the complement of their union is a three-dimensional orbit. In this case,
				\[
				\ker(\pi_*) \cong \mathbb{G}_m.
				\]

				\item $\Autz(X)$ has dimension $2$ and acts with two orbits of dimension two, which are $\Autz(X)$-invariant disjoint sections of $\pi$, and the complement of their union is a three-dimensional orbit. Moreover,
				\[
				\ker(\pi_*) \cong \mathbb{G}_m.
				\]
			\end{enumerate}
			
		\item [(II)]  $\Autz(X) \simeq \PGL_2(\kk) \times \PGL_2(\kk)$ has dimension $6$ and acts with orbits of dimension two which are fibers of the projection $\tau\pi\colon X\to C$. Moreover,
		\[
		\ker(\pi_*) \cong  \PGL_2(\kk).
		\]
		\end{enumerate}
	\end{propositionA}
	
	\bigskip
	
	Theorem \ref{thmA} provides a list of relatively maximal automorphism groups, up to conjugacy via a square birational map. For each such conjugacy class, it gives a birational model $X$ whose $\Autz(X)$ is a representative. However, Theorem \ref{thmA} does not describe all representatives within a given class. In particular, it does not specify when two automorphism groups $\Autz(X)$ and $\Autz(Y)$ lie in the same class.
	
	\bigskip
	
	To obtain a complete classification of pairs $(X,\pi)$ for which $\Autz(X)$ is relatively maximal, we study all equivariant square birational maps starting from the $\PP^1$-bundles in Theorem \ref{thmA}. For each relatively maximal automorphism group $\Autz(X)$ in Theorem \ref{thmA}, we determine the list of $\PP^1$-bundles $\pi'\colon Y\to T$ such that $\Autz(X)$ and $\Autz(Y)$ are conjugate via a square birational map. This leads to Theorem \ref{thmB}.
	
	We also determine the pairs $(X,\pi)$ that are \emph{superstiff}. This notion was introduced by Blanc, Fanelli, and Terpereau in \cite{BFT}. It is an analogue of superrigidity in the setting of $\PP^1$-bundles over geometrically ruled surfaces: a pair $(X,\pi)$ is superstiff if $\Autz(X)$ is relatively maximal and is the only representative in its conjugacy class (see Definition \ref{def}).
	
	\begin{theoremA}\label{thmB}
		Assumptions, numberings and notations as Theorem $\ref{thmA}$, and let $\pi'\colon Y\to T$ be a $\PP^1$-bundle. If there exists an $\Autz(X)$-equivariant square birational map $X\dashrightarrow Y$, then $S \cong T$ are isomorphic as geometrically ruled surfaces over $C$. Moreover,
		the following hold:\medskip
		
		\begin{enumerate}
			\item[$\mathrm{(I)}$] If $g=1$:\medskip
			
			\begin{enumerate}
				\item[$\mathrm{(i)}$-$\mathrm{(iii)}$] The pairs $(X,\pi)$ are superstiff.\medskip
				
				\item[$\mathrm{(iv)}$] 
				
				 {\tiny$\bullet$} If $\pi$ is the projection onto $\PP(\O_C \oplus \O_C(D))$, then the pair $(X,\pi)$ is superstiff.\medskip
				
				 {\tiny$\bullet$} If $\pi$ is the projection onto $\AAA_1$, then the pair $(X,\pi)$ is not stiff: $\Autz(X)$ is conjugate to $\Autz(Y)$ via a square birational map if and only if there exists $n\in \mathbb{Z}$ such that
					\[
					Y\cong \PP(\O_{\AAA_1}\oplus \O_{\AAA_1}(4n\sigma + \tau^*(D-nD_\sigma))),
					\]
					where $\sigma$ is a minimal section of $\AAA_1$ such that $\O_{\AAA_1}(\sigma)\simeq \O_{\AAA_1}(1)$, and where $D_\sigma$ is a divisor of degree two such that every $\Autz(\AAA_1)$-orbit $\omega\subset \AAA_1$ is linearly equivalent to $4\sigma - \tau^*(D_\sigma)$. \medskip

				\item[$\mathrm{(v)}$] With respect to both projections, the pair $(X,\pi)$ is not stiff. \vspace{3pt}
					
					{\tiny$\bullet$} If $\pi$ is the projection onto $S=\PP(\O_C\oplus \O_C(D))$, then $\Autz(X)$ is conjugate to $\Autz(Y)$ via a square birational map if and only if 
					\[
					Y\cong \AAA_{(S,b,nD)}
					\]
					for some $b\geq 0$ and 
					$n\in \{0,\ldots,b\}$ (see Lemma $\ref{lem:induction}$). \vspace{3pt}
					
					{\tiny$\bullet$} If $\pi$ is the projection onto $\AAA_0$, then $\Autz(X)$ is conjugate to $\Autz(Y)$ via a square birational map if and only if there exists $b\in \mathbb{Z}$ such that $$Y\cong \PP(\O_{\AAA_0}\oplus \O_{\AAA_0}(b\sigma +\tau^*(D))),$$
					where $\sigma$ is the minimal section of $\AAA_0$. \medskip
				
				\item[$\mathrm{(vi)}$] With respect to both projections, the pair $(X,\pi)$ is not stiff. \vspace{3pt}
				
				{\tiny$\bullet$} If $\pi$ is the projection onto $\PP( \O_C\oplus \O_C( D))$, then $\Autz(X)$ is conjugate to $\Autz(Y)$ via a square birational map if and only if there exist $b,n\in \mathbb{Z}$ such that 
				\[Y\cong \PP(\O_{S}\oplus \O_{S}(b\sigma +{\tau}^{*}(nD+E)))),\]
				where $\sigma$ is the minimal section of $\tau$ corresponding to the line bundle $\O_C( D) \subset \O_C\oplus \O_C( D)$.\vspace{3pt}
				
				 {\tiny$\bullet$} If $\pi$ is the projection onto $\PP( \O_C\oplus \O_C( E))$, then $\Autz(X)$ is conjugate to $\Autz(Y)$ via a square birational map if and only if there exist $b,n\in \mathbb{Z}$ such that 
				 \[Y\cong \PP(\O_{S}\oplus \O_{S}(b\sigma +{\tau}^{*}(D+nE)))),\]
				where $\sigma$ is the minimal section of $\tau$ corresponding to the line bundle $\O_C( E) \subset \O_C\oplus \O_C( E)$.\medskip
				
				\item[$\mathrm{(vii)}$] The pair $(X,\pi)$ is superstiff.\medskip
				
				\item[$\mathrm{(viii)}$] The pair $(X,\pi)$ is not stiff: $\Autz(X)$ is conjugate to $\Autz(Y)$ via a square birational map if and only if there exists $n\in \mathbb{Z}$ such that 
				\[
				Y\cong \PP(\O_{\AAA_1}\oplus \O_{\AAA_1}((4n+2)\sigma + \tau^*(D-nD_\sigma))),
				\]
				where $\sigma$ is a minimal section of $\AAA_1$ such that $\O_{\AAA_1}(\sigma)\simeq \O_{\AAA_1}(1)$, and where $D_\sigma$ is a divisor of degree two such that every $\Autz(\AAA_1)$-orbit $\omega\subset \AAA_1$ is linearly equivalent to $4\sigma - \tau^*(D_\sigma)$.\vspace{3pt}
			\end{enumerate}\medskip
			
			\item[$\mathrm{(II)}$] If $g\geq 2$, the pair $(X,\pi)$ is superstiff.\medskip
		\end{enumerate}
	\end{theoremA}
	
	\subsection{Plan of the article}
	
	We fix an algebraically closed field $\kk$ of characteristic zero. Through this article, we work with a smooth projective curve $C$ of positive genus, and $\PP^1$-bundles $\tau \colon S\to C$ and $\pi \colon X\to S$. 
	
	\bigskip
	
	Section \ref{SectionPreliminaries} recalls general properties of $\PP^1$-bundles and introduces several reduction steps. Assume that $\Autz(X)$ is relatively maximal. First we show that the morphism $\tau \pi\colon X\to C$ is an $\FF_b$-bundle over $C$, and $\Autz(S)$ is a maximal connected algebraic subgroup of $\Bir(S)$ (see Propositions \ref{removal jumping} and \ref{basesurfacemaximal}). 
	
	If $b=0$, then $X$ is isomorphic to a fiber product $S\times_C S'$, where $S'$ is a geometrically ruled surface over $C$, and its automorphism group is isomorphic to the fiber product $\Autz(S)\times_{\Autz(C)} \Autz(S')$ (see Lemmas \ref{fiberproduct} and \ref{autofiberproduct}). Therefore, $\Autz(S)$ and $\Autz(S')$ are conjugate to maximal connected algebraic subgroups of $\Bir(C\times \PP^1)$, and we obtain a list of $\FF_0$-bundles that we study individually throughout the article (see Proposition \ref{candidateFF0max}). 
	
	If $b>0$, we show that the connected algebraic group $\Autz(X)$ acts transitively on $C$ via the structure morphism (see Proposition \ref{equivarianttodecomposable}). We also show that there exist a divisor $D$ on $C$, whose linear class is invariant for the $S$-isomorphism class of $X$, and a rank-$2$ vector bundle $\E$ such that $\PP(\E)\simeq X$, fitting into a short exact sequence
	\[
	0 \to \O_S(b\sigma +\tau^*(D)) \to \E \to \O_S \to 0,
	\]
	where $\sigma$ is a minimal section of $\tau$ such that $\O_S(\sigma)\simeq \O_S(1)$ is the twisting sheaf. In Brosius' terminology, the short exact sequence above is the so-called \emph{canonical extension} of $\E$ (see \cite[Theorem 1]{Brosius}, and also \cite[Proposition 3.3.1]{BFT} for the analogue in the rational case). To the $\PP^1$-bundle $\pi\colon X\to S$, we associate the triplet of invariants $(S,b,D)$ (see Definition \ref{definvariants}). Using these invariants, we examine the possible $\Autz(X)$-equivariant Sarkisov links to study the maximality of $\Autz(X)$.
	
	\bigskip
	
	Then, we proceed with our analysis according to the different possibilities for $S$, as outlined above, in Sections \ref{SectionCxPP1}, \ref{SectionAAA1}, \ref{SectionAAA0}, and \ref{SectionSL}. In Section \ref{section:proofs}, we collect the results obtained and prove the main results.
	
	\bigskip
	
	To classify the maximal connected algebraic subgroups of $\Bir(C\times \PP^2)$, it remains to study the automorphism groups of Mori del Pezzo fibrations over $C$ and the equivariant birational maps from the $\FF_b$-bundles to them. If $\Autz(X)$ is a relatively maximal automorphism group as in Theorem \ref{thmA} and there exists no $\Autz(X)$-equivariant birational map to a Mori del Pezzo fibration, then $\Autz(X)$ is a maximal connected algebraic subgroup of $\Bir(X)$. Thus, we may already state:
	
	\begin{corollaryA}\label{coroC}
		Assumptions and numberings as in Theorem \ref{thmA}. The following hold:
		\begin{enumerate}
			\item[$\mathrm{(I)}$] Assume that $g=1$. Then the followings hold:
			\begin{enumerate}
				\item[(a)] The connected algebraic subgroup $\Autz(X)$ is maximal in $\Bir(X)$ in cases \ref{thmA.i} and \ref{thmA.v}.
				\item[(b)] The connected algebraic subgroup $\Autz(X)$ is not maximal in $\Bir(X)$ in case \ref{thmA.iv}.
				\item[(c)] There is no $\Autz(X)$-equivariant birational map starting from $X$ to a $\PP^2$-bundle in cases \ref{thmA.vi}, \ref{thmA.ix} provided that $b\neq 1$, and \ref{thmA.x}.
			\end{enumerate}
			
			\item[$\mathrm{(II)}$] If $g\geq 2$, then $\Autz(X)$ is a maximal connected algebraic subgroup of $\Bir(X)$.
		\end{enumerate}
	\end{corollaryA}

	In a forthcoming article, we plan to give a complete answer to the question of whether $\Autz(X)$ is a maximal connected algebraic subgroup in cases \ref{thmA.vi}, \ref{thmA.vii}, \ref{thmA.viii}, \ref{thmA.ix}, and \ref{thmA.x}, by studying the equivariant birational maps to the Mori del Pezzo fibrations.
	
	\acknowledgments{The author is thankful to Jérémy Blanc, Michel Brion, Adrien Dubouloz, Enrica Floris, Ronan Terpereau, Immanuel Van Santen, Egor Yasinsky, Sokratis Zikas, Susanna Zimmermann for their helpful discussions; and to the anonymous referees for their useful comments and careful reading. The author also acknowledges support by the Swiss National Science Foundation Grant “Geometrically Ruled Surfaces” 200020–192217, the CNRS through the grant PEPS JC/JC, and the ERC StG Saphidir.}

	\section{Preliminaries}\label{SectionPreliminaries}
	
	\subsection{Relatively maximal automorphism groups, (super)stiffness, and Blanchard's Lemma}
	
	\begin{definition}\label{def}
		Let $S$ be a projective surface and $\pi\colon X\to S$ be a $\PP^1$-bundle. For every $\PP^1$-bundle $\pi'\colon X'\to S'$ over a surface $S'$, we say that a birational map $\phi\colon X\dasharrow X'$ is
		\begin{enumerate}			
			\item \emph{$\Autz(X)$-equivariant} if $\phi\Autz(X)\phi^{-1} \subseteq \Autz(X')$;
			\item \label{def.2} a \emph{square birational map} if there exists a birational map $\psi\colon S\dasharrow S'$ such that $\pi' \phi = \psi\pi $, i.e.\ the following diagram commutes:
			\[
			\begin{tikzcd}[column sep=4em,row sep = 3em]
				X \arrow[r,dashed, "\phi"] \arrow[d, "\pi"]& X'\arrow[d,"\pi'"] \\
				S\arrow[r,dashed,"\psi"] & S',
			\end{tikzcd}
			\]
			if moreover $\phi$ is an isomorphism (resp.\ an automorphism), then $\phi$ is called a \emph{square isomorphism} (resp.\ a \emph{square automorphism}).
		\end{enumerate}
		We say that:
		\begin{enumerate}	
			\item[(3)] $\Autz(X)$ is \emph{relatively maximal} (with respect to $\pi$) if $\phi\Autz(X)\phi^{-1} = \Autz(X')$ for every $\Autz(X)$-equivariant square birational map $\phi\colon X\dasharrow X'$;
			\item[(4)] the pair $(X,\pi)$ is \emph{stiff} if for each $\Autz(X)$-equivariant square birational map $\phi\colon X\dasharrow X'$, the $\PP^1$-bundles $\pi\colon X\to S$ and $\pi'\colon X'\to S'$ are square isomorphic;
			\item[(5)] the pair $(X,\pi)$ is \emph{superstiff} if for each $\Autz(X)$-equivariant square birational map $\phi\colon X\dasharrow X'$, the birational maps $\phi$ and $\psi$ from the commutative diagram in \ref{def.2} are isomorphisms.
		\end{enumerate}
	\end{definition}
	
	\begin{remark}
	The definitions above were introduced by Blanc, Fanelli, and Terpereau; see \cite[Definitions 1.2.1 and 1.2.3]{BFT}. In loc. cit., the notion "relatively maximal" is just referred to as "maximal", which could be misleading for non-experts, as $\Autz(X)$ being relatively maximal does not necessarily imply that it is maximal as a connected algebraic subgroup of $\Bir(X)$.
	\end{remark}
	
	Working with the connected algebraic group $\Autz(X)$, we will make heavy use of the Blanchard's lemma and its corollary:
	
	\begin{proposition}\cite[Proposition 4.2.1]{BSU}\label{prop:Blanchard}
		Let $f\colon X\to Y$ be a proper morphism of schemes such that $f_*(\mathcal{O}_X) = \mathcal{O}_Y$, and let $G$ be a connected group scheme acting on $X$. Then there exists a unique action of $G$ on $Y$ such that $f$ is $G$-equivariant.
	\end{proposition}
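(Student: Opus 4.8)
The plan is to descend the given action along $f$ and to show that the descent is both forced and unique. Write $\alpha\colon G\times X\to X$ for the action. Any $G$-action $\beta\colon G\times Y\to Y$ making $f$ equivariant must satisfy $\beta\circ(\mathrm{id}_G\times f)=f\circ\alpha$, so the whole problem reduces to constructing a morphism $\beta$ with this property. Now $\mathrm{id}_G\times f\colon G\times X\to G\times Y$ is the base change of the proper morphism $f$ along the flat projection $G\times Y\to Y$; hence it is proper and, by flat base change for quasi-coherent cohomology, satisfies $(\mathrm{id}_G\times f)_*\mathcal{O}_{G\times X}=\mathcal{O}_{G\times Y}$. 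A surjective proper morphism $\mu$ with $\mu_*\mathcal{O}=\mathcal{O}$ is an epimorphism of schemes, and any morphism from its source to a separated scheme that is constant on the (set-theoretic) fibres of $\mu$ factors uniquely through $\mu$; this is seen by covering the target by affine opens and using $\mu_*\mathcal{O}=\mathcal{O}$ together with properness to produce a suitable open cover of the base. Thus the construction of $\beta$ reduces to showing that $f\circ\alpha$ is constant on the fibres $\{g\}\times f^{-1}(y)$ of $\mathrm{id}_G\times f$, i.e.\ that for every $g\in G$ the assignment $x\mapsto f(g\cdot x)$ is constant on each fibre $f^{-1}(y)$.

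I would prove this constancy fibrewise, via Mumford's rigidity lemma. Fix a closed point $y_0\in Y$ and set $B=(f^{-1}(y_0))_{\mathrm{red}}$. As $f$ is proper, $B$ is proper over $\kk$; as $f_*\mathcal{O}_X=\mathcal{O}_Y$ forces the geometric fibres of $f$ to be connected, $B$ is connected, so being reduced, proper and connected over the algebraically closed field $\kk$ it satisfies $\Gamma(B,\mathcal{O}_B)=\kk$. Consider
\[
h\colon B\times G\to Y,\qquad (x,g)\mapsto f(g\cdot x),
\]
which at $g=e$ contracts $B\times\{e\}$ to the point $y_0$. Since $\Gamma(B,\mathcal{O}_B)=\kk$, any morphism from $B$ to an affine scheme is constant; choosing an affine open $V\ni y_0$ and pushing the closed set $h^{-1}(Y\setminus V)$ forward along the proper projection $B\times G\to G$ yields an open neighbourhood $G_0$ of $e$ over which $h$ is constant on every slice $B\times\{g\}$. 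For fixed $x_1,x_2\in B$ the locus $\{g\in G : h(x_1,g)=h(x_2,g)\}$ is closed (as $Y$ is separated) and contains $G_0$; because $\kk$ has characteristic zero the connected group $G$ is smooth and hence irreducible, so $G_0$ is dense and the locus is all of $G$. Thus $h(x,g)$ is independent of $x$, which is exactly the desired constancy on $f^{-1}(y_0)$. As $y_0$ is arbitrary, $f\circ\alpha$ descends to a morphism $\beta\colon G\times Y\to Y$ with $\beta\circ(\mathrm{id}_G\times f)=f\circ\alpha$.

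It then remains to check that $\beta$ is a group action and is unique, and both follow formally from the epimorphism property. Uniqueness: any $\beta'$ with $\beta'\circ(\mathrm{id}_G\times f)=f\circ\alpha$ equals $\beta$ after cancelling the epimorphism $\mathrm{id}_G\times f$. Identity axiom: $\beta(e,-)\circ f=f\circ\alpha(e,-)=f=\mathrm{id}_Y\circ f$, and cancelling the epimorphism $f$ gives $\beta(e,-)=\mathrm{id}_Y$. Associativity: applying the base-change argument once more, $\mathrm{id}_{G\times G}\times f$ is again a surjective proper morphism with trivial pushforward of the structure sheaf, hence an epimorphism; the two morphisms $G\times G\times Y\to Y$ sending $(g_1,g_2,y)$ to $\beta(g_1,\beta(g_2,y))$ and to $\beta(g_1g_2,y)$ agree after precomposition with it, since both pull back on $G\times G\times X$ to $(g_1,g_2,x)\mapsto f(g_1\cdot(g_2\cdot x))=f((g_1g_2)\cdot x)$ by the action axiom for $\alpha$. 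Hence they coincide, and $\beta$ is a $G$-action for which $f$ is equivariant.

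The heart of the argument, and the step I expect to cost the most care, is the rigidity input: promoting the contraction of the single slice $B\times\{e\}$ to constancy along every slice. Both ingredients that make it work are precisely the hypotheses of the statement — the equality $f_*\mathcal{O}_X=\mathcal{O}_Y$, which guarantees connected fibres (hence $\Gamma(B,\mathcal{O}_B)=\kk$) and also underlies the descent of a fibrewise-constant morphism to an honest morphism on $Y$, and the connectedness of $G$, which in characteristic zero yields irreducibility and thus the density of $G_0$ needed to spread the contraction over all of $G$. One should also take minor care when $Y$ is disconnected, running the argument on each connected component, which the connected group $G$ necessarily preserves.
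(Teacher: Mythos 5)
Your argument is correct, and it is essentially the proof of the cited result: the paper itself does not prove this proposition but quotes it from \cite[Proposition 4.2.1]{BSU}, whose proof runs exactly along your lines (flat base change giving $(\mathrm{id}_G\times f)_*\mathcal{O}_{G\times X}=\mathcal{O}_{G\times Y}$, rigidity on the proper connected fibres of $f$, descent of fibrewise-constant morphisms along proper morphisms with trivial pushforward of the structure sheaf, and the action axioms and uniqueness via the epimorphism property). The two places where you invoke context-specific facts are harmless here: characteristic zero is a standing assumption of the paper (and in any case a connected group scheme of finite type over a field is irreducible without appealing to smoothness), and extending the constancy statement from closed points to arbitrary points of $G\times Y$ is routine since all schemes in play are Jacobson.
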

	
	\begin{corollary}\cite[Corollary 4.2.6]{BSU}\label{blanchard}
		Let $f\colon X\to Y$ be a proper morphism of schemes such that $f_*(\mathcal{O}_X) = \mathcal{O}_Y$. Then $f$ induces a homomorphism of group schemes $f_*\colon \Autz(X)\to \Autz(Y)$ such that for every $\alpha\in \Autz(X)$, the following diagram commutes:
		\[
		\begin{tikzcd}[column sep=4em,row sep = 3em]
			X \arrow[r, "\alpha"] \arrow[d, "f"]& X\arrow[d,"f"] \\
			Y\arrow[r,"f_*(\alpha)"] & Y.
		\end{tikzcd}
		\]
	\end{corollary}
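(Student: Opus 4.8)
The plan is to deduce the statement directly from the preceding Blanchard's lemma by applying it to the tautological action of the \emph{entire} group $G=\Autz(X)$ on $X$. First I would regard the natural evaluation morphism $a\colon \Autz(X)\times X\to X$ as an action of the connected group scheme $G=\Autz(X)$ on $X$. Since $f\colon X\to Y$ is proper with $f_*(\O_X)=\O_Y$, the preceding proposition produces a \emph{unique} action $b\colon \Autz(X)\times Y\to Y$ for which $f$ becomes $G$-equivariant, i.e.\ $f\circ a = b\circ(\mathrm{id}_{\Autz(X)}\times f)$.

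Next I would reinterpret the action $b$ as a homomorphism of group schemes. Under the standing projectivity hypotheses the automorphism functor of $Y$ is representable by a group scheme $\Aut(Y)$, and an action of $G$ on $Y$ is precisely the datum of a homomorphism $\rho\colon \Autz(X)\to\Aut(Y)$, with $\rho(\alpha)$ the automorphism $b(\alpha,-)$ of $Y$. Because $\Autz(X)$ is connected and $\rho$ is a morphism of group schemes, its image is connected and contains the identity, hence lands in the identity component $\Autz(Y)$ of $\Aut(Y)$. I would then set $f_*:=\rho$.

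The commutativity of the square is then immediate: evaluating the equivariance relation $f\circ a=b\circ(\mathrm{id}\times f)$ at a fixed $\alpha\in\Autz(X)$ yields $f\circ\alpha=f_*(\alpha)\circ f$, which is exactly the asserted diagram. That $f_*$ respects composition follows from the action axiom $b(\alpha\beta,-)=b(\alpha,-)\circ b(\beta,-)$, giving $f_*(\alpha\beta)=f_*(\alpha)\,f_*(\beta)$, so $f_*$ is a homomorphism of group schemes.

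The only point requiring care—more a bookkeeping matter than a genuine obstacle—is the passage from an abstract action to a homomorphism of group schemes, together with the assertion that the image lands in $\Autz(Y)$ rather than merely in $\Aut(Y)$; this rests on the representability of the automorphism functor of $Y$ and on the connectedness of $\Autz(X)$. Everything else is a formal consequence of the uniqueness and the equivariance furnished by the preceding Blanchard's lemma.
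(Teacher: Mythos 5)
Your proposal is correct and coincides with the intended argument: the paper gives no proof of this corollary (it is quoted verbatim from \cite[Corollary 4.2.6]{BSU}), and the derivation there is precisely yours — apply the preceding proposition to the tautological action of $\Autz(X)$ on $X$, view the resulting action on $Y$ as a homomorphism of group schemes into $\Aut(Y)$, and use connectedness of $\Autz(X)$ to conclude that the image lands in $\Autz(Y)$, equivariance giving the commutative square. The representability caveat you flag is the right one to worry about, and it holds in the relevant setting (automorphism functors of proper schemes over a field are representable by group schemes locally of finite type, by Matsumura--Oort), so there is no gap.
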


	\subsection{Geometrically ruled surfaces and maximal connected algebraic subgroups of $\Bir(C\times \PP^1)$}\label{ruled}
	
		\begin{definition}\label{def:ruled} Let $C$ be a smooth projective curve and let $S$ be a smooth projective surface. 
		\begin{enumerate}
			\item We say that $S$ is a \emph{geometrically ruled surface} over $C$ if there exists a surjective morphism $\tau\colon S\to C$ whose fibers are isomorphic to $\PP^1$.
			\item Let $\tau\colon S\to C$ be a geometrically ruled surface. The \emph{Segre invariant} of $S$ is the integer $$\seg(S) = \min\{\sigma^2 \vert~ \sigma\text{ section of } \tau\}.$$
			A section $\sigma$ of $\tau$ such that $\sigma^2 = \seg(S)$ is called a \emph{minimal section} of $\tau$.
			\item Let $T$ and $B$ be smooth projective varieties, and let $p\colon T\to B$ be a $\PP^1$-bundle. Let $\E$ be a rank-$2$ vector bundle over $B$ such that $\PP(\E)$ is $B$-isomorphic to $T$. We say that $T$ (or $p$) is \emph{decomposable} if $\E$ is \emph{decomposable}, i.e.\ $\E$ is $B$-isomorphic to the direct sum of two line bundles on $B$. Equivalently, $p$ is \emph{decomposable} if and only if it admits two disjoint sections. 
		\end{enumerate}
	\end{definition}
	
	Equivalently, a geometrically ruled surface $S$ over $C$ is a $\PP^1$-bundle, obtained as the projectivization of a rank-$2$ vector bundle (see \cite[Proposition 2.8]{Hartshorne}).
	
	\begin{definition}\label{defHirzebruch}
		Let $b\in \mathbb{Z}$. The \emph{$b$-th Hirzebruch surface} $\FF_b$ is the quotient of $(\AA^2\setminus \{0\})^2$ by the action of $(\mathbb{G}_m)^2$ defined as:
		\begin{align*}
			(\mathbb{G}_m)^2 \times (\AA^2\setminus \{0\})^2 & \longrightarrow (\AA^2\setminus \{0\})^2 \\
			((\mu,\rho),(y_0,y_1,z_0,z_1)) & \longmapsto (\rho^{-b}\mu y_0,\mu y_1 ,\rho z_0,\rho z_1).
		\end{align*}
		The class of $(y_0,y_1,z_0,z_1)\in (\AA^2\setminus \{0\})^2$ is denoted by $[y_0:y_1\ ;z_0:z_1]$.
	\end{definition}
	
	\begin{remark}		
		\begin{enumerate}
			\item The morphism $\tau_b \colon \FF_b  \to\PP^1$, $[y_0:y_1\ ;z_0:z_1]  \mapsto [z_0:z_1]$ identifies $\FF_b$ with $\PP(\O_{\PP^1}\oplus \O_{\PP^1}(b))$ as $\PP^1$-bundle over $\PP^1$. The disjoint sections $s_{-b}$ and $s_b$ of $\tau_b$, given by $\{y_0=0\}$ and $\{y_1=0\}$, have respectively self-intersection $-b$ and $b$. 
			\item The surface $\FF_b$ is isomorphic to ${\FF}_{-b}$, via $[y_0:y_1 \ ;z_0:z_1]\mapsto [y_1:y_0\ ;z_0:z_1]$.
		\end{enumerate}
	\end{remark}
	
	Recall that every $\PP^1$-bundle over $\PP^1$ is decomposable, hence isomorphic to $\FF_b$ for some $b\geq 0$. This property fails over non-rational curves. In fact, by Atiyah's classification of vector bundles over an elliptic curve $C$ (see also \cite[V. Theorem 2.15]{Hartshorne}), there exist exactly two indecomposable $\PP^1$-bundles over $C$. 
		
	\begin{theorem}\cite[Theorems 5, 6, 11]{Atiyah}
		Let $C$ be an elliptic curve and $\mathcal{L}$ be a line bundle of degree $1$ over $C$. There exist two rank-$2$ indecomposable vector bundles $\E_0$ and $\E_1$ over $C$, unique up to $C$-isomorphism, such that $\mathrm{H}^0(C,\E_0)\neq 0$ and $\mathrm{H}^0(C,\E_1)\neq 0$, and which fit into the following exact sequences:
		\[
		\begin{array}{cccccccccc}
			&0 & \to &\O_C  &\to& \E_0 &\to  &\O_C  &\to 0, & \\
			& 0& \to &\O_C & \to &\E_1 &\to & \mathcal{L}  & \to 0. &
		\end{array}
		\]
		Up to $C$-isomorphism, $ \PP(\E_0)$ and $\PP(\E_1)$ are exactly the two classes of indecomposable $\PP^1$-bundles over $C$ and the $C$-isomorphism class of $\PP(\E_1)$ does not depend on the choice of $\mathcal{L}$. 
	\end{theorem}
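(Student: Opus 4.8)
The plan is to establish the three assertions of the theorem in sequence: existence and uniqueness of $\E_0$ and $\E_1$ as non-split extensions, their indecomposability, and the fact that $\PP(\E_0)$ and $\PP(\E_1)$ exhaust the indecomposable $\PP^1$-bundles over $C$ with $\PP(\E_1)$ independent of $\mathcal{L}$. First I would pin down the relevant extension groups. Since $C$ is an elliptic curve, $\omega_C \cong \O_C$ and $g=1$, so by Serre duality and Riemann--Roch one computes $\mathrm{Ext}^1(\O_C,\O_C) \cong \mathrm{H}^1(C,\O_C) \cong \kk$ and $\mathrm{Ext}^1(\mathcal{L},\O_C) \cong \mathrm{H}^1(C,\mathcal{L}^{-1}) \cong \mathrm{H}^0(C,\mathcal{L})^\vee \cong \kk$, where the last identification uses $\mathrm{h}^0(\mathcal{L}) = \deg\mathcal{L} = 1$. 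In both cases the space of extension classes is one-dimensional, so all nonzero classes are scalar multiples of one another and the non-split extensions form a single point of $\PP(\mathrm{Ext}^1)$; this produces $\E_0$ and $\E_1$, each unique up to $C$-isomorphism, and each carries the nonzero section arising from the sub-sheaf $\O_C$, so that $\mathrm{H}^0(C,\E_i)\neq 0$.

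Second I would prove indecomposability directly from the extensions. If $\E_0 \cong A\oplus B$ with $A,B$ line bundles, then $\deg A + \deg B = 0$, and a short degree count applied to the induced maps $A,B \to \O_C$ forces $\deg A = \deg B = 0$; the nonzero section then yields a retraction of $\O_C \hookrightarrow \E_0$, splitting the sequence, a contradiction. For $\E_1$ the total degree is odd, so the bundle is automatically stable, and composing a hypothetical direct summand with $\E_1 \to \mathcal{L}$ and with $\O_C \hookrightarrow \E_1$ again contradicts non-splitness. Equivalently, one checks that $\mathrm{End}(\E_0) \cong \kk[\epsilon]/(\epsilon^2)$ and $\mathrm{End}(\E_1) = \kk$ are local rings.

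Third, and this is the substantive point, I would show these are all the indecomposable $\PP^1$-bundles. Since $\PP(\E) \cong \PP(\E\otimes M)$ for any line bundle $M$, and tensoring a rank-$2$ bundle by $M$ multiplies the determinant by $M^{\otimes 2}$, surjectivity of multiplication by $2$ on $\Pic^0(C)$ lets me normalize any rank-$2$ bundle to have $\det$ of degree $0$ or $1$; decomposable bundles yield $\PP^1$-bundles with two disjoint sections, so only indecomposable bundles contribute. Here I would invoke Atiyah's classification (the cited theorems): the indecomposable rank-$2$ bundles of degree $0$ are exactly $\E_0 \otimes L$ for $L \in \Pic^0(C)$, and those of degree $1$ are classified by their determinant $\mathcal{L} \in \Pic^1(C)$, giving $\E_1 = \E_1(\mathcal{L})$. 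As $\PP(\E_0 \otimes L) = \PP(\E_0)$, the degree-$0$ case yields the single class $\PP(\E_0)$. For degree $1$, given $\mathcal{L},\mathcal{L}'$ I would choose $\mathcal{N} \in \Pic^0(C)$ with $\mathcal{N}^{\otimes 2} \cong \mathcal{L}'\otimes \mathcal{L}^{-1}$; then $\E_1(\mathcal{L})\otimes\mathcal{N}$ is indecomposable of determinant $\mathcal{L}'$, hence equals $\E_1(\mathcal{L}')$ by uniqueness, so $\PP(\E_1)$ is independent of $\mathcal{L}$. Finally $\PP(\E_0)\not\cong\PP(\E_1)$, since an isomorphism would force $0\equiv 1 \pmod 2$ on the degree of the determinant.

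The main obstacle is this completeness step: the statement that every indecomposable rank-$2$ bundle is, up to twist, one of $\E_0$ or $\E_1(\mathcal{L})$, together with the fact that in degree $1$ the determinant is a complete invariant. The extension-theoretic and numerical arguments are routine, but this classification is precisely the deep content of Atiyah's theorems, which I would cite rather than reprove; the only genuinely new bookkeeping is translating it into the language of $\PP^1$-bundles through the twist-invariance of $\PP(\E)$.
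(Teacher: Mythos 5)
The paper offers no proof of this statement at all---it is quoted verbatim as a cited result of Atiyah (Theorems 5, 6, 11)---and your proposal likewise defers the one genuinely deep step, the classification of indecomposable rank-$2$ bundles up to twist and the completeness of the determinant as an invariant in degree one, to those same theorems, so the two approaches coincide. The bookkeeping you supply around the citation is correct: $\mathrm{Ext}^1(\O_C,\O_C)\cong\mathrm{Ext}^1(\mathcal{L},\O_C)\cong\kk$ gives existence and uniqueness of the non-split extensions, your splitting arguments give indecomposability, and twist-invariance of $\PP(\E)$ plus divisibility of $\Pic^0(C)$ handles the reduction and the independence of $\PP(\E_1)$ from $\mathcal{L}$; the only blemish is the aside that odd degree makes $\E_1$ ``automatically stable,'' which is false for arbitrary rank-$2$ bundles (e.g.\ $\O_C(2p)\oplus\O_C(-p)$), but harmless since your direct argument and the computation $\mathrm{End}(\E_1)=\kk$ stand on their own.
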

	
	\begin{notation}
		Let $C$ be an elliptic curve. From now on, we denote by $\AAA_0 = \PP(\E_0)$ and $\AAA_1 = \PP(\E_1)$ the two indecomposable $\PP^1$-bundles over $C$. 
	\end{notation}
	
	The two geometrically ruled surfaces $\AAA_0$ and $\AAA_1$ are birational to $C\times \PP^1$. In particular, these birational maps can be decomposed into a sequence of finitely many elementary transformations, namely, the blow-up of a point followed by the contraction of the strict transform of the fiber passing through that point. This explicit construction enables us to compute their transition maps as $\PP^1$-bundles.
	
	\begin{lemma}\label{transitionAtiyah}		
		Let $C$ be an elliptic curve and $p\in C$. For every $\alpha\in \kk(C)^*$ having a zero of order one at $p$, there exists an open subset $U_j\subset C$ such that $\mathrm{div}_{|U_j}(\alpha) = p$ and for every trivializing open subset $U_i\subset C$ that does not contain $p$:
		\begin{enumerate}
			\item\label{transitionAtiyah.2} We can choose the transition maps of the indecomposable geometrically ruled surface $\tau_0\colon \AAA_0\to C$ as
			\[
			\begin{array}{cccc}
				& U_j\times \PP^1 & \dashrightarrow & U_i\times \PP^1 \\
				& (x,[u:v]) & \longmapsto & (x,[u:\alpha(x)^{-1} u+ v]).
			\end{array}
			\]
			\item\label{transitionAtiyah.3} We can choose the transition maps of the indecomposable geometrically ruled surface $\tau_1\colon \AAA_1\to C$ as
			\[
			\begin{array}{cccc}
				& U_j\times \PP^1 & \dashrightarrow & U_i\times \PP^1 \\
				& (x,[u:v]) & \longmapsto & (x,[u:\alpha(x)^{-1} u + \alpha(x) v]).
			\end{array}
			\]
		\end{enumerate}
	\end{lemma}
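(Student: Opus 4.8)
The plan is to follow the strategy announced in the paragraph preceding the statement: realise each Atiyah surface by an explicit sequence of elementary transformations of the trivial bundle $C\times\PP^1$ and then read off the transition maps in the resulting atlas. Since $\alpha$ has a simple zero at $p$, it is a local parameter there, so there is a trivialising open $U_j\ni p$ on which $\mathrm{div}_{|U_j}(\alpha)=p$, and I would use $\alpha$ as the coordinate transverse to the fibre $F_p=(\tau_i\pi)^{-1}(p)$ over $U_j$. The point to record at the outset is that any trivialising $U_i\not\ni p$ meets $U_j$ only inside $U_j\setminus\{p\}$, where $\alpha$ and $\alpha^{-1}$ are both regular and nowhere vanishing; hence a gluing matrix built from $\alpha^{\pm1}$ is biregular on $U_i\cap U_j$, and the same computation applies to \emph{every} such $U_i$, which is why the stated formula is independent of the choice of $U_i$.

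The core is a local calculation over $p$. Writing the fibre coordinate as $[u:v]$, I would blow up a point of $F_p$ and then contract the strict transform of $F_p$; away from $F_p$ this is the identity, while on $U_j$ the contraction rescales the fibre coordinate transverse to the blown-up point by the local parameter, i.e.\ it is governed by the substitution $w\mapsto \alpha^{-1}w$ in the affine charts of the blow-up. Tracking this through the two charts of the blow-up produces, between the modified chart over $U_j$ and an unmodified chart over $U_i$, a gluing of the form $(x,[u:v])\mapsto(x,[u:\alpha(x)^{-1}u+v])$; a suitably placed further modification over $p$ multiplies the second fibre coordinate by an additional factor of $\alpha$ and yields $(x,[u:v])\mapsto(x,[u:\alpha(x)^{-1}u+\alpha(x)v])$. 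The determinants of the associated $\GL_2$-cocycles are $1$ and $\alpha$, so the two constructions give $\PP^1$-bundles of even, resp.\ odd, relative degree, matching $\det\E_0=\O_C$ and $\deg(\det\E_1)=1$.

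It then remains to identify the two surfaces obtained in this way with $\AAA_0$ and $\AAA_1$, and this is the step I expect to be the real content. By Atiyah's theorem (recalled above), it suffices to check that the underlying rank‑$2$ bundle $\E$ is indecomposable with $\H^0(C,\E)\neq 0$ and the correct determinant; a coordinate sub‑line‑bundle together with the corresponding quotient exhibits $\E$ as an extension of the shape in Atiyah's sequences, so $\H^0(C,\E)\neq 0$ is immediate and everything reduces to non‑splitness, namely that the off‑diagonal cocycle $\alpha^{-1}$ is nonzero in the relevant $\mathrm{Ext}^1$. Here the elliptic geometry is decisive: a coboundary trivialising $\alpha^{-1}$ would produce a rational function on $C$ regular on $C\setminus\{p\}$ with at most a simple pole at $p$, whereas $\h^0(C,\O_C(p))=1$ on a genus‑one curve forces such a function to be constant, contradicting the genuine simple pole of $\alpha^{-1}$. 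The main obstacle is therefore not this non‑splitness in isolation but the bookkeeping of the full atlas, and it is genuinely more delicate for $\AAA_1$: the naive two‑chart gluing (identity away from $U_j$) yields for the $\AAA_1$‑matrix the decomposable bundle $\O_C\oplus\O_C(p)$, because there the coordinate sub‑bundle $\O_C(p)$ splits off (one has $\mathrm{Ext}^1(\O_C,\O_C(p))=\H^1(C,\O_C(p))=0$). Consequently I must retain the transitions among the charts whose overlaps avoid $p$; these carry a nontrivial, determinant‑one gluing that destroys the coordinate sub‑bundle and supplies the indecomposability, and I would verify that the elementary‑transformation atlas is compatible with them. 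Once the cocycle is shown to be nontrivial, the uniqueness in Atiyah's theorem identifies $\PP(\E)$ with $\AAA_0$ in the first case and with $\AAA_1$ in the second, completing the proof.
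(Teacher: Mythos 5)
Your construction step (elementary transformations of $C\times\PP^1$ over $p$, identity away from $U_j$, producing the lower-triangular cocycles) is the same as the paper's, and replacing the paper's identification of the targets --- which is done there simply by citing \cite[Proposition 2.21]{Fong} --- with a direct verification via Atiyah's theorem is a legitimate alternative route. However, your execution of that identification contains a genuine error in the $\AAA_1$ case. The coordinate sub-bundle preserved by the cocycle $\begin{pmatrix}1&0\\ \alpha^{-1}&\alpha\end{pmatrix}$, namely the one cut out by $\{u=0\}$, has transition function $\alpha$; a rational section of it is given by $1$ on $U_j$ and $\alpha$ on the $U_i$, whose divisor is $\mathrm{div}(\alpha)-p\sim -p$. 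So this sub-bundle is $\O_C(-p)$, not $\O_C(p)$, and the relevant sequence is $0\to\O_C(-p)\to\E\to\O_C\to 0$, equivalently (after twisting by $\O_C(p)$) an extension of $\O_C(p)$ by $\O_C$. The extension group is $\mathrm{Ext}^1(\O_C,\O_C(-p))=\H^1(C,\O_C(-p))\cong \H^0(C,\O_C(p))^\vee$, which is one-dimensional, not zero. Hence your claim that the naive atlas yields the decomposable bundle $\O_C\oplus\O_C(p)$ is false; non-splitness is proved by exactly the same \v{C}ech/Riemann--Roch argument you use for $\AAA_0$ (a splitting would give $b_i=\alpha b_j+\alpha^{-1}$, gluing to a function regular on $C\setminus\{p\}$ with a simple pole at $p$, impossible since $\h^0(C,\O_C(p))=1$). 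You can cross-check the orientation with self-intersections: with sub-bundle $\O_C(-p)$ and $\det\E\cong\O_C(-p)$, the section $\{u=0\}$ has self-intersection $\deg\E-2\deg\O_C(-p)=+1$, as a minimal section of $\AAA_1$ must; your reading would give $-1$, i.e.\ a decomposable surface.

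This sign error then drives you to a ``fix'' --- retaining nontrivial, determinant-one gluings among the charts $U_i$, $i\neq j$ --- which is not only unnecessary but contradicts the statement being proved: in the construction, the elementary transformations are the identity over every $U_i$ not containing $p$, so all transitions among those charts are the identity, and the lemma asserts that with precisely this atlas the resulting surface is $\AAA_1$. If indecomposability genuinely required extra gluings elsewhere, the lemma would be false. The repair is therefore not to change the atlas but to redo the $\mathrm{Ext}$ computation with the correct orientation; once the class of $\alpha^{-1}$ is shown nonzero in $\H^1(C,\O_C(-p))$ (resp.\ $\H^1(C,\O_C)$ for $\AAA_0$), the one-dimensionality of these groups plus Atiyah's uniqueness completes the identification, and your argument goes through. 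A smaller imprecision worth noting: a single blow-up-and-contraction of $C\times\PP^1$ over $p$ produces a diagonal (decomposable) gluing $[u:\alpha v]$; the unipotent gluing of $\AAA_0$ only appears after a second elementary transformation centered at a general point of the new fiber, and $\AAA_1$ after a third, which is how the paper organizes the computation.
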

	
	\begin{proof}
		Let $q=(p,[0:1])\in C\times \PP^1$ and $(U_k)_k$ be a trivializing open cover of $\AAA_0$ and $\AAA_1$. Choose $j$ such that $p\in U_j$ and we can assume that for each $i\neq j$, the point $p$ does not belong to $U_i$. By shrinking the open subset $U_j$ if necessary, we assume that $\mathrm{div}_{|U_j}(\alpha) = p$. 
		Up to isomorphisms, an elementary transformation $\epsilon_q \colon C\times \PP^1 \dasharrow S$ centered at $q$ is locally given by
		\[
		\begin{array}{cccc}
		&U_j\times \PP^1 & \dasharrow & U_j\times \PP^1 \\
		&(x,[u:v]) & \longmapsto & (x,[u:\alpha(x)v])
		\end{array}
		\]	
		and identity on $U_i\times \PP^1$. Therefore, we can choose the transition maps $\tau_{ij}$ of the $\PP^1$-bundle $\tau\colon S \to C$ as 
		\[
		\begin{array}{cccc}
					\epsilon_q^{-1}\colon & U_j\times \PP^1 & \dasharrow & U_i\times \PP^1 \\
					& (x,[u:v]) & \longmapsto & (x,[\alpha(x)u:v]).
		\end{array}
		\]
		An elementary transformation of $S$ centered at a point $q'\in \tau^{-1}(p)$ which does not lie on the unique $(-1)$-section of $\tau\colon S\to C$ and distinct from the base point of $\epsilon_q^{-1}$ yields a birational map $\epsilon_{q'} \colon S\dashrightarrow \AAA_0$ (see e.g.\ \cite[Proposition 2.21 (1)]{Fong}). Locally, we can assume that the birational map $\epsilon_{q'}$ is given by 
		\[
		\begin{array}{cccc}
			& U_j\times \PP^1 & \dasharrow & U_j\times \PP^1 \\
			& (x,[u:v]) & \longmapsto & (x,[\alpha(x)u:v-u]),
		\end{array}
		\]
		and identity on $U_i\times \PP^1$. Therefore, we can choose the transition maps $\tau_{0,ij}$ of $\AAA_0$ as 
		\[
		\begin{array}{cccc}
			\tau_{ij}\epsilon_{q'}^{-1} \colon &U_j\times \PP^1 & \dasharrow &U_i\times \PP^1 \\
			&(x,[u:v]) & \longmapsto &(x,[u:\alpha(x)^{-1}u+v]),
		\end{array}
		\]
		and this proves \ref{transitionAtiyah.2}. Under this choice of trivializations, the unique section of $\AAA_0$ of self-intersection zero is the zero section $\{u=0\}$. An elementary transformation of $\AAA_0$ centered on any point $q''$ which is not lying on the minimal section yields a birational map $\epsilon_{q''}\colon \AAA_0\dashrightarrow \AAA_1$ (see e.g.\ \cite[Proposition 2.21 (2)]{Fong}). Thus, we can choose $q''$ such that $\epsilon_{q''}$ is locally given by
		\[
		\begin{array}{cccc}
			& U_j\times \PP^1 & \dasharrow & U_j\times \PP^1 \\
			& (x,[u:v]) & \longmapsto & (x,[\alpha(x)u:v])	
		\end{array}
		\]
		and identity on $U_i\times \PP^1$. So we can choose the transition maps of $\AAA_1$ as 
		\[
		\begin{array}{cccc}
			\tau_{0,ij}\epsilon_{q''}^{-1}  \colon & U_j\times \PP^1 & \dasharrow & U_i\times \PP^1 \\
			& (x,[u:v]) & \longmapsto & (x,[u:\alpha(x)^{-1}u+\alpha(x)v]),
		\end{array}
		\]
		and this proves \ref{transitionAtiyah.3}.
	\end{proof}
	
	Next, we recall some properties of the geometry of non-rational geometrically ruled surfaces and their automorphism groups. The following result is due to Maruyama; see \cite[Theorem 3]{Maruyama}.
	
	\begin{lemma}\label{geometryofruledsurface}
		Let $C$ be a smooth projective curve of genus $g\geq 1$ and $\tau \colon S\to C$ be a $\PP^1$-bundle. Then the following hold:
		\begin{enumerate}
			\item If $S=C\times \PP^1$ and $\tau$ is the projection onto $C$, then $\seg(S)=0$ and $\tau$ admits infinitely many minimal sections. Moreover, $\Autz(S) \simeq \Autz(C) \times \PGL_2(\kk)$.
			\item \label{geometryofruledsurface.3} If $g=1$ and $S$ is $C$-isomorphic to $\AAA_0$, then $\seg(S)=0$ and $\tau$ admits a unique minimal section $\sigma_0$. Moreover, $S$ contains exactly two $\Autz(S)$-orbits: $\sigma_0$ and its complement; and $\Autz(S)$ fits into the short exact sequence
			$$
			0 \to \mathbb{G}_a \to \Autz(S) \to \Autz(C) \to 0.
			$$
			\item \label{geometryofruledsurface.4} If $g=1$ and $S$ is $C$-isomorphic to $\AAA_1$, then $\seg(S)=1$ and $\tau$ admits infinitely many minimal sections that are pairwise not linearly equivalent. Moreover, $\Autz(S)$ fits into the short exact sequence
			$$
			0 \to \left(\mathbb{Z}/2\mathbb{Z}\right)^2 \to \Autz(S) \to \Autz(C) \to 0.
			$$
			\item \label{geometryofruledsurface.2} If $g=1$ and $S$ is $C$-isomorphic to $\PP(\O_C\oplus\O_C(D))$ for some non-trivial divisor class $D\in \Pic^0(C)$, then $\seg(S)=0$ and $\tau$ admits exactly two disjoint minimal sections, $\sigma$ and $\sigma_D$, corresponding to the line subbundles $\O_C$ and $\O_C(D)$ of $\O_C\oplus  \O_C(D)$. These sections are not linearly equivalent and $\sigma \sim \sigma_D + \tau^*(D)$.
			
			Moreover, $S$ contains three $\Autz(S)$-orbits: $\sigma$, $\sigma_D$ and $S\setminus (\sigma\cup \sigma_D)$;
			and $\Autz(S)$ fits into the short exact sequence
			$$
			0 \to \mathbb{G}_m \to \Autz(S) \to \Autz(C) \to 0.
			$$
		\end{enumerate}
	\end{lemma}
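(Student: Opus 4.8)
The plan is to derive all four cases from two ingredients: the uniqueness of the ruling, which produces the stated short exact sequences via Blanchard's lemma, and explicit computations on the rank-$2$ bundles $\O_C^{\oplus 2}$, $\E_0$, $\E_1$ and $\O_C\oplus \O_C(D)$, using Atiyah's theorem and the transition maps of Lemma \ref{transitionAtiyah}. First I would observe that, since $g\geq 1$, every rational curve in $S$ is contracted by $\tau$ (its image in $C$ is a point), so the fibers of $\tau$ are the only rational curves in $S$ and $\tau$ is the unique $\PP^1$-fibration. Hence every element of $\Autz(S)$ preserves $\tau$, and Corollary \ref{blanchard} yields a homomorphism $\tau_*\colon \Autz(S)\to \Autz(C)$ whose kernel $K$ consists of the automorphisms acting fiberwise. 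To see that $\tau_*$ is surjective it suffices (for $g=1$, where $\Autz(C)$ is the group of translations) to lift each translation $t_a$: one checks $t_a^*(\E)\cong \E$, because $t_a$ acts trivially on $\Pic^0(C)$ and preserves both the numerical invariants and $\dim \mathrm{H}^0$ that characterize $\E_0$ and $\E_1$ in Atiyah's theorem; for $S=C\times \PP^1$ this is immediate. This gives the exact sequences $0\to K\to \Autz(S)\to \Autz(C)\to 0$ in all four cases and reduces the computation of $\Autz(S)$ to that of $K$.

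Next I would compute $K$, the group of $C$-automorphisms of the $\PP^1$-bundle. From the sheaf sequence $1\to \mathbb{G}_m\to \mathcal{GL}(\E)\to \mathcal{PGL}(\E)\to 1$ one obtains that $K$ is an extension of a subgroup of $\Pic(C)$ (the image of the connecting map) by $\mathrm{Aut}_{\O_C}(\E)/\mathbb{G}_m$. The relevant endomorphism algebras are read off from the defining sequences: for $\E_0$ the non-split extension $0\to \O_C\to \E_0\to \O_C\to 0$ gives $\mathrm{End}_{\O_C}(\E_0)\cong \kk[t]/(t^2)$, hence $K\cong \mathbb{G}_a$; for $\O_C\oplus \O_C(D)$ with $D\neq 0$ of degree $0$ one has $\mathrm{H}^0(\O_C(\pm D))=0$, so $\mathrm{End}_{\O_C}(\E)\cong \kk\times \kk$ and $K\cong \mathbb{G}_m$; and for $\E_1$, which is stable of slope $1/2$, one has $\mathrm{End}_{\O_C}(\E_1)=\kk$, so the linear part is trivial and all of $K$ arises from the connecting map into the $2$-torsion $\Pic(C)[2]\cong (\mathbb{Z}/2\mathbb{Z})^2$, giving $K\cong (\mathbb{Z}/2\mathbb{Z})^2$. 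In the two indecomposable cases one can alternatively compute $K$ by hand as the group of collineations compatible with the explicit transition maps of Lemma \ref{transitionAtiyah}.

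Finally I would read off the Segre invariants, minimal sections, orbit decompositions and linear equivalences. Sections of $\tau$ correspond to sub-line-bundles $\mathcal{L}\subset \E$, the self-intersection of the associated section being determined by $\deg \mathcal{L}$ and $\deg \E$. For $\O_C^{\oplus 2}$ the constant sections give $\seg(S)=0$ and an infinite family of minimal sections; for $\E_0$ the unique maximal sub-bundle $\O_C$ of the non-split extension gives $\seg(S)=0$ and a single minimal section $\sigma_0$; for $\E_1$ stability forces maximal sub-bundles to have degree $0$, of which there is a positive-dimensional, pairwise non-linearly-equivalent family, so $\seg(S)=1$; and for $\O_C\oplus \O_C(D)$ the two summands give the disjoint minimal sections $\sigma,\sigma_D$ with $\seg(S)=0$, while $\sigma\sim \sigma_D+\tau^*(D)$ follows from $\Pic(S)=\tau^*\Pic(C)\oplus \mathbb{Z}\sigma_D$. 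The orbit decompositions then follow from the action of $K$ on the fibers: $\mathbb{G}_a$ fixes $\sigma_0$ and acts transitively on its complement in $\AAA_0$, while $\mathbb{G}_m$ fixes $\sigma$ and $\sigma_D$ and is transitive on the remaining locus of $\PP(\O_C\oplus \O_C(D))$.

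The main obstacle is the $\AAA_1$ case. Since $\E_1$ is simple, the group $K\cong (\mathbb{Z}/2\mathbb{Z})^2$ does not come from linear automorphisms of $\E_1$ but from the connecting homomorphism to $\Pic(C)[2]$; identifying these four automorphisms explicitly — either cohomologically or via the transition maps of Lemma \ref{transitionAtiyah} — and simultaneously exhibiting the infinite family of pairwise non-equivalent minimal sections is the delicate point, and is where I would rely most directly on \cite{Atiyah} and \cite{Maruyama}.
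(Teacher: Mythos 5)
Your proposal has a genuine gap at precisely the one statement the paper actually proves. You dispatch the linear equivalence $\sigma\sim\sigma_D+\tau^*(D)$ by saying it ``follows from $\Pic(S)=\tau^*\Pic(C)\oplus\mathbb{Z}\sigma_D$'', but it does not: that decomposition, after intersecting with a fiber and with $\sigma_D$, only yields $\sigma\sim\sigma_D+\tau^*(B)$ for \emph{some} $B\in\Cl^0(C)$, and nothing in the lattice structure or the intersection numbers can identify $B$ with $D$ --- indeed $\sigma$ and $\sigma_D$ are numerically equivalent but not linearly equivalent, so any purely numerical argument is structurally incapable of separating them. To pin down $B$ you must restrict to a section: since $\sigma\cap\sigma_D=\emptyset$ one has $\O_S(\sigma)|_{\sigma_D}\cong\O_{\sigma_D}$, while $\O_S(\sigma_D)|_{\sigma_D}$ is the normal bundle of $\sigma_D$, which for the section corresponding to the subbundle $\O_C(D)\subset\O_C\oplus\O_C(D)$ is $\mathrm{Hom}(\O_C(D),\O_C)\cong\O_C(-D)$; comparing gives $\O_C\cong\O_C(B-D)$, i.e.\ $B\sim D$. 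This restriction (or normal bundle) computation is the actual content of the equivalence; the paper instead obtains it by factoring a birational map $C\times\PP^1\dashrightarrow S$ through two elementary transformations and pushing forward an explicit principal divisor. Either way, a real argument is needed where your proposal has none.

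On the rest, your route is genuinely different from the paper's: the paper cites Maruyama (and Fong) for the Segre invariants, orbits and the three exact sequences, whereas you rebuild them from Blanchard's lemma plus the endomorphism algebras $\mathrm{End}(\E_0)\cong\kk[t]/(t^2)$, $\mathrm{End}(\O_C\oplus\O_C(D))\cong\kk\times\kk$, $\mathrm{End}(\E_1)\cong\kk$ --- a legitimate and more self-contained scheme, with correct use of stability for $\seg(\AAA_1)=1$ and the family of minimal sections. Two points need care, though. First, $t_a^*\E_1\cong\E_1$ is false as stated, since $\det\E_1$ has degree one and translations act nontrivially on $\Pic^1(C)$; what is true, and suffices, is $\PP(t_a^*\E_1)\cong\PP(\E_1)$ by the independence of $\AAA_1$ of the choice of $\L$. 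Second, knowing that the fiberwise automorphisms you construct lie in the identity component $\Autz(S)$, and not merely in $\Aut(S)$, is not automatic: in your case (4) with $D$ two-torsion, $\Aut_C(S)$ contains an involution swapping $\sigma$ and $\sigma_D$ which is \emph{not} in $\Autz(S)$, so for $\AAA_1$ the membership of the four two-torsion automorphisms in $\Autz(\AAA_1)$ requires an argument (for instance via the quotient description $\AAA_1\cong(C\times\PP^1)/C[2]$ of Lemma \ref{A_1quotient}, under which the translation action of $C$ descends and realizes them in its connected image). You defer exactly this delicate point to Atiyah--Maruyama, which matches the paper's own level of reliance on the literature, so the comparison there is a wash; the missing restriction argument for $\sigma\sim\sigma_D+\tau^*(D)$ is what must be supplied.
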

	
	\begin{proof} 
			The computations of the Segre invariants $\seg(S)$ and the short exact sequences can be found in \cite{Maruyamabook, Maruyama} (see also \cite[Section \S 2]{Fong}). The only statement not covered in the references above is that $\sigma \sim \sigma_D + \tau^*(D)$ in \ref{geometryofruledsurface.2}.
			
			Write $D \sim p-p_0$, where $p_0$ is the neutral element of the elliptic curve $C$ and $p\neq p_0$. We denote by $\tau_1\colon C\times \PP^1\to C$ the projection onto the first factor and let $\sigma_0=\{z_0=0\}$ and $\sigma_1=\{z_1=0\}$ be two disjoint constant sections of $\tau_1$. By \cite[Proposition 2.20 (2)]{Fong}, there exists a birational map $\phi\colon C\times \PP^1\dashrightarrow S$, consisting of the blowup of $\tau_1^{-1}(p) \cap \sigma_0$ and $\tau_1^{-1}(p_0) \cap \sigma_1$
			followed by the contraction of the strict transforms of their corresponding fibers. The strict transforms by $\phi$ of the constant sections $\sigma_0$ and $\sigma_1$ are respectively $\sigma_D$ and $\sigma$. Since $\sigma_0$ and $\sigma_1$ are linearly equivalent, there exists $h\in \kk(C\times \PP^1)^*$ such that $\mathrm{div}(h)=\sigma_0 - \sigma_1$. Then $\mathrm{div}(h \phi^{-1}) = \sigma_D - \sigma + \tau^{*}(p) - \tau^{*}(p_0)$, i.e., $\sigma \sim \sigma_D +\tau^*(D)$.
	\end{proof}
	
	In the lemma above, the short exact sequences in the cases \ref{geometryofruledsurface.3} and \ref{geometryofruledsurface.2} do not split. The corresponding middle terms are equipped with a structure of $\mathbb{G}_a$ and $\mathbb{G}_m$-torsors over $C$ (see \cite[Chapter VII, \S 3]{Serre}, and also \cite[Theorem 3]{Maruyama}). From the perspective of algebraic groups, they provide examples of anti-affine groups, i.e., their global sections are constant (see e.g., \cite[Example 4.2.4]{BSU}). Using these results, we obtain the following lemma:
	
	\begin{lemma}\label{trickBrion}	
		Let $C$ be an elliptic curve, $G=\Autz(S)$ where $S=\AAA_0$ or $\PP(\O_C\oplus \L)$, with $\L\in \Pic^0(C)\setminus \{\O_C\}$ of infinite order. Then $G$ does not contain any complete curve.
	\end{lemma}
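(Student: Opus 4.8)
The plan is to exploit the non-split extension structure of $G$ recorded in Lemma~\ref{geometryofruledsurface}. Recall that $G$ is a commutative connected algebraic group fitting into a short exact sequence
\[
0 \to L \to G \xrightarrow{\ p\ } A \to 0,
\]
where $A=\Autz(C)\cong C$ is the elliptic curve of translations of $C$, and $L=\mathbb{G}_a$ (if $S=\AAA_0$) or $L=\mathbb{G}_m$ (if $S=\PP(\O_C\oplus\L)$), and where this sequence does \emph{not} split (see the discussion following Lemma~\ref{geometryofruledsurface}). Suppose for contradiction that $G$ contains an irreducible complete curve $Z$; after translating by a point of $Z$ (translations are variety automorphisms of $G$) I may assume the identity $e$ lies on $Z$. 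Since $Z$ is complete and irreducible, $p(Z)$ is an irreducible complete subvariety of the curve $A$, hence either a point or all of $A$.

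I would first dispose of the easy alternative. If $p(Z)$ is a point it must be $\{e\}$, so $Z\subseteq \ker p = L$; but $L$ is affine, and a complete connected subvariety of an affine variety is a point, forcing $Z$ to be a point, a contradiction. Thus $p(Z)=A$ and $p|_Z\colon Z\to A$ is surjective, hence finite. Next I would pass to the subgroup $B:=\langle Z\rangle$ generated by $Z$: as $G$ is commutative and $Z$ is an irreducible complete curve through $e$, the group $B$ is a connected algebraic subgroup which is again complete (it is a finite product of copies of the complete variety $Z\cup Z^{-1}$). A complete connected algebraic group is an abelian variety, and $B\cap L$ is a complete subgroup of the affine group $L$, hence finite; since $p(B)=A$, this forces $\dim B=1$ and shows that $\phi:=p|_B\colon B\to A$ is an isogeny of elliptic curves.

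The core of the argument is then to contradict non-splitness through this isogeny. The inclusion $B\hookrightarrow G$ covers $\phi$, so $b\mapsto(b,b)$ is a homomorphic section of the pulled-back extension $\phi^*G = G\times_A B \to B$; hence the pullback $\phi^*[G]$ of the extension class vanishes in $\mathrm{Ext}^1(B,L)$. Choosing a complementary isogeny $\psi\colon A\to B$ with $\phi\psi=[n]_A$ for some $n\geq 1$ and using functoriality, one obtains $\psi^*\phi^*[G]=[n]\cdot[G]$, so that $n\cdot[G]=0$: the class $[G]$ is torsion. This is the contradiction. Indeed, when $L=\mathbb{G}_a$ one has $\mathrm{Ext}^1(A,\mathbb{G}_a)\cong \mathrm{H}^1(A,\O_A)$, a one-dimensional $\kk$-vector space (here characteristic zero is essential), which is torsion-free, so $[G]=0$, against non-splitness; when $L=\mathbb{G}_m$ one has $\mathrm{Ext}^1(A,\mathbb{G}_m)\cong \Pic^0(A)$ with $[G]$ corresponding to the class of $\L$, so $[G]$ torsion would force $\L$ to have finite order, against our hypothesis.

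The main obstacle is precisely this last step: one must control how the extension class transforms under pullback by an isogeny, and this is exactly where the two standing hypotheses enter essentially. The conclusion is sharp: if $\L$ had finite order $m$, then pulling back along $[m]\colon C\to C$ splits the $\mathbb{G}_m$-extension, and $G$ then does contain complete curves — so no argument avoiding these hypotheses can succeed.
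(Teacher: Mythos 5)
Your proof is correct, and its first half coincides with the paper's: the paper likewise assumes a complete curve through the identity of $G$, invokes the same generation result of Borel to produce a complete connected subgroup, and extracts an elliptic curve mapping onto $C$ (the paper does this via Poincar\'e complete reducibility, whereas you get $\dim B=1$ directly from the finiteness of $B\cap L$ --- a small simplification). The two arguments genuinely diverge at the final contradiction. The paper forms the isogeny $C'\times H\to G$, where $H=\ker(G\to\Autz(C))$ and the kernel $F$ is finite, computes that $\O(G)=\O(H)^F$ is infinite-dimensional, and contradicts the fact that $G$ is \emph{anti-affine}, which it cites from [BSU, Example 4.2.4]; it is precisely inside that cited statement that the hypotheses (non-splitness in the $\mathbb{G}_a$ case, infinite order of $\L$ in the $\mathbb{G}_m$ case) are consumed. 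You instead use the isogeny $\phi=p|_B$ to split the pulled-back extension, a complementary isogeny together with additivity of $\mathrm{Ext}^1$ to conclude that the class $[G]$ is torsion, and then the identifications $\mathrm{Ext}^1(C,\mathbb{G}_a)\cong \mathrm{H}^1(C,\O_C)$ (torsion-free in characteristic zero) and $\mathrm{Ext}^1(C,\mathbb{G}_m)\cong\Pic^0(C)$ with $[G]=\pm[\L]$ (Weil--Barsotti, combined with Maruyama's description of $G$ as $S$ minus its invariant sections, which fixes the underlying torsor) to reach the contradiction. In effect you unwind the anti-affineness criterion rather than citing it: your route is more self-contained, isolates exactly where characteristic zero and the infinite-order hypothesis enter, and yields the correct sharpness remark (for torsion $\L$ the group $G$ is an isogeny quotient of $\mathbb{G}_m\times C$ and does contain complete curves). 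The price is that you must invoke the Weil--Barsotti formula, the additivity of $\mathrm{Ext}^1$ under pullback by $[n]$, and the matching of the extension class with the torsor class; these are classical and can be sourced from [Serre, Chapter VII], which the paper already cites for the torsor structure of $G$, so no new ingredients are actually needed.
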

	
	\begin{proof}
		First, let us recall that $G$ is anti-affine, i.e. $\O(G)\simeq \kk$, see e.g., \cite[Example 4.2.4]{BSU}. 
		
		We denote by $\sigma$ the unique minimal section of $\AAA_0$, by $\sigma_1$ and $\sigma_2$ the two minimal sections of $\PP(\O_C\oplus \L)$. Then by \cite[Theorem 3]{Maruyama}, the connected algebraic group $G$ is isomorphic to $\AAA_0\setminus \sigma$ if $S = \AAA_0$, or isomorphic to $\PP(\O_C\oplus \L) \setminus (\sigma_1\cup \sigma_2)$ if $S=\PP(\O_C\oplus \L)$. Therefore, the restriction of the structure morphism yields a surjective morphism $p\colon G\to C$. 
		
		Assume by contradiction that $G$ contains a complete curve $D$. Then $D$ surjects onto $C$ via $p$. Without loss of generality, we can assume that $0_G \in D$. By \cite[Chapter I, \S2, 2.2 Proposition]{Borel}, the group $A$ generated by $D$ and $-D$ is a connected subgroup of $G$. Since $D$ is complete, it follows that $A$ is also complete; thus, $A$ is an abelian subvariety of $G$ that surjects onto $C$ via the restriction of $p$. By Poincaré's complete reducibility theorem (see e.g., \cite[Corollary 4.2.6]{Brion_structure}), there exists an elliptic curve $C'\subset A$ that surjects onto $C$. Recall that Blanchard's lemma induces a morphism of algebraic groups $\pi_*\colon G\to \Autz(C)$ (see Corollary \ref{blanchard}). We denote by $H$ its kernel, which is isomorphic to $\mathbb{G}_a$ or $\mathbb{G}_m$ by Lemma \ref{geometryofruledsurface}. The action of $H$ on $G$ yields a short exact sequence
		\[
		0 \to F \to C'\times H \to G \to 0,
		\]
		where $F$ is a finite group. Therefore, $\O(G)= \O(C'\times H)^F = \O(H)^F$ has infinite dimension, and this contradicts that $\O(G) \simeq \kk$.
	\end{proof}
	
	\begin{remark}\label{rem:A_0_elliptic}
		In this remark, we assume that $\kk$ is an algebraically closed field of arbitrary characteristic. In the previous lemma, we did use the assumption that $\mathrm{char}(\kk)=0$. Indeed, by \cite[Example 4.2.4]{BSU}, the group $\Autz(\AAA_0)$ is anti-affine if and only if $\mathrm{char}(\kk)=0$. 
		
		In fact, the previous lemma does not extend to positive characteristic. By \cite[Theorem 4]{Maruyama} (see also \cite[Propositions 2.10 and 2.15]{Togashi_Uehara}), the geometrically ruled surface $\AAA_0$ is elliptic if and only if $\mathrm{char}(\kk)=p>0$. In this case, there exists a finite subgroup scheme $F\subset C$ of the elliptic curve $C$ acting diagonally on the product $C\times \PP^1$, such that $$\AAA_0 \cong (C\times \PP^1)/F.$$ Under this isomorphism, the structure morphism of $\PP^1$-bundle is identified with the projection onto the quotient $C/F$.
		An explicit construction of $\AAA_0$ as a quotient of $C\times \PP^1$ is given in \cite[Examples 4.7 and 4.8]{Katsura_Ueno} (see also \cite[Remark 4.4]{Togashi_Uehara}), with $F = \mathbb{Z}/p\mathbb{Z}$ or $\alpha_p$, depending on whether $C$ is ordinary or supersingular. 
		
		Since the minimal section $\sigma$ is the unique section of self-intersection zero, it is contracted by the projection $\AAA_0 \to \PP^1/F \simeq \PP^1$. In particular, every fiber of this projection is contained in $\Autz(\AAA_0) \simeq \AAA_0 \setminus \sigma$.
	\end{remark}
	
	For the sake of completeness, let us also recall the classification of maximal connected algebraic subgroups of $\Bir(C\times \PP^1)$:
	
	\begin{theorem}\cite[Theorem B]{Fong}\label{dim2max}
		Let $C$ be a smooth projective curve of genus $g\geq 1$ and $G$ be a connected algebraic subgroup of $\Bir(C\times \PP^1)$. Then the following hold:
		\begin{enumerate}
			\item \label{dim2max.1} If $g=1$, then $G$ is maximal if and if $G=\Autz(S)$, where $S$ is a geometrically ruled surface isomorphic to $C\times \PP^1$, $\AAA_0$, $\AAA_1$, or $\PP(\O_C\oplus \mathcal{L})$ for some $\mathcal{L}\in \Pic^0(C)\setminus \{0\}$. 
			\item \label{dim2max.2} If $g\geq 2$, then $G$ is maximal if and only if $G = \Autz(C\times \PP^1)\simeq \PGL_2(\kk)$. 
		\end{enumerate}
	\end{theorem}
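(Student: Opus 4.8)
The plan is to reduce the classification of maximal connected algebraic subgroups $G\subseteq \Bir(C\times \PP^1)$ to the study of the groups $\Autz(S)$ for $\PP^1$-bundles $\tau\colon S\to C$, and then to decide which of these are maximal and how they are related up to conjugacy. First, by Weil's regularization theorem together with a $G$-equivariant run of the Minimal Model Program on a smooth projective surface birational to $C\times \PP^1$, one conjugates $G$ into $\Autz(S)$ for some geometrically ruled surface $\tau\colon S\to C$; since $C$ has positive genus, the only Mori fiber space structures over $C$ are the $\PP^1$-bundles, so no rational degeneration occurs, and by maximality $G=\Autz(S)$. A crucial structural point is that for $g\geq 1$ the ruling $\tau$ is \emph{unique}, being induced by the Albanese map; hence every birational map between two such ruled surfaces is automatically a square birational map over $C$ (after composing with an automorphism of $C$) and factors into a finite sequence of elementary transformations. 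Consequently, deciding whether $\Autz(S)$ embeds up to conjugacy into $\Autz(S')$ amounts to analyzing which elementary transformations are $\Autz(S)$-equivariant, i.e.\ centered at $\Autz(S)$-fixed points.

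Next I would stratify the candidates by the Segre invariant. If $\seg(S)<0$, the surface carries a unique section $\sigma_-$ of negative self-intersection; one checks that $\tau_*\colon \Autz(S)\to \Autz(C)$ (Corollary \ref{blanchard}) has finite image, so $\Autz(S)$ is an affine group fixing $\sigma_-$ pointwise. An elementary transformation centered at a point of $\sigma_-$ is then $\Autz(S)$-equivariant and produces a surface with strictly more negative section and strictly larger automorphism group; iterating yields an unbounded ascending chain, so such $\Autz(S)$ is never maximal (these are exactly the unbounded subgroups alluded to in the introduction). Hence a maximal group is conjugate to $\Autz(S)$ with $\seg(S)\geq 0$, and over an elliptic curve these are precisely $C\times \PP^1$, $\AAA_0$, $\AAA_1$, and $\PP(\O_C\oplus \L)$ with $\L\in \Pic^0(C)\setminus\{\O_C\}$. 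For all four, Lemma \ref{geometryofruledsurface} gives that $\tau_*$ surjects onto $\Autz(C)$ with fiberwise kernel $\PGL_2$, $\mathbb{G}_a$, $(\mathbb{Z}/2\mathbb{Z})^2$, $\mathbb{G}_m$ respectively, whence $\dim \Autz(S)\leq 3+\dim \Autz(C)$.

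For $g\geq 2$ the group $\Autz(C)$ is finite, so for every $\seg(S)\geq 0$ bundle the identity component $\Autz(S)^\circ$ equals its fiberwise part, which is $\PGL_2$ only for $C\times \PP^1$ and otherwise $\mathbb{G}_m$, $\mathbb{G}_a$, or finite. In the latter cases the group fixes the minimal section(s) pointwise, so equivariant elementary transformations centered there conjugate it into the standard torus or unipotent subgroup of $\PGL_2$ acting on $C\times \PP^1$; thus $\PGL_2\cong \Autz(C\times \PP^1)^\circ$ is the unique maximal connected algebraic subgroup, giving \ref{dim2max.2}. The genus-one case \ref{dim2max.1} is the delicate one. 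The four-dimensional group $\Autz(C\times \PP^1)\cong \Autz(C)\times \PGL_2$ is maximal by the dimension bound. For the three remaining families I would use that, by Lemma \ref{geometryofruledsurface}, they are \emph{non-split} extensions of the elliptic curve $C$ by $\mathbb{G}_a$, $(\mathbb{Z}/2\mathbb{Z})^2$, and $\mathbb{G}_m$: any connected subgroup of $C\times \PGL_2$ dominating the first factor with affine fiberwise part is forced to be a \emph{split} product, since $C$ is complete and admits no nonconstant homomorphism to an affine group. Hence none of the three is conjugate into the four-dimensional group, and comparing their extension classes in $\mathrm{Ext}^1(C,-)$ separates them pairwise and from $C\times \PP^1$.

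It remains to verify maximality within each genus-one family: the only invariant curves are the minimal section(s), each isomorphic to $C$ and acted on transitively by $\Autz(S)$, so there is no $\Autz(S)$-fixed point and hence no symmetry-increasing equivariant elementary transformation; combined with the uniqueness of the ruling, this rules out enlargement. The main obstacle is precisely this final step, namely excluding \emph{all} enlarging equivariant square birational maps rather than merely the individual elementary links. The cleanest route is to combine the orbit analysis with the rigidity furnished by the non-split, anti-affine structure of these groups (Lemma \ref{trickBrion}): a connected group containing $\Autz(S)$ would, after regularization, be some $\Autz(S')$ with $S'$ of Segre invariant $\geq 0$, and the absence of equivariant links together with the extension-class obstruction forces $S'\cong S$. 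This same rigidity simultaneously establishes maximality and the pairwise non-conjugacy of the four groups.
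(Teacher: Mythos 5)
This statement is imported verbatim from \cite[Theorem B]{Fong} and is not proved anywhere in the present paper, so there is no internal proof to compare against; your reconstruction can only be judged on its own merits and against the paper's toolkit. Your overall skeleton is the right one and is consistent with that toolkit: regularization plus an equivariant MMP landing on a geometrically ruled surface (the ruling being unique for $g\geq 1$), the dichotomy on the Segre invariant with unbounded ascending chains for $\seg(S)<0$, Atiyah/Maruyama's classification to produce the list, and an orbit analysis to establish maximality --- the last being exactly the mechanism the paper itself uses one dimension up (Lemma \ref{LinkfromFFb}, Propositions \ref{prop:ggeq2} and \ref{FF0viaCxPP1}).

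There is, however, a genuine gap at precisely the step you flag as ``the main obstacle,'' and your proposed patch does not close it. You argue that the ``non-split, anti-affine structure'' and an ``extension-class obstruction'' force any enlarging model $S'$ to be isomorphic to $S$. This is false as a group-theoretic principle, and $\AAA_1$ is an explicit counterexample: by Lemma \ref{A_1quotient}, $\Autz(\AAA_1)\cong \Autz(C)\cong C$ is an elliptic curve, which embeds perfectly well as an algebraic group into $\Autz(C\times \PP^1)\simeq \Autz(C)\times \PGL_2(\kk)$, namely as $\Autz(C)\times\{\mathrm{id}_{\PP^1}\}$. So no abstract rigidity of the group can prove that $\Autz(\AAA_1)$ is maximal; what obstructs the conjugation is the geometry of the action (the orbits of $\Autz(\AAA_1)$ are $4$-to-$1$ multisections over $C$, not sections), not the group structure. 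Your anti-affineness argument also breaks for $\PP(\O_C\oplus \L)$ with $\L$ torsion, where $\Autz(S)$ is not anti-affine --- note that Lemma \ref{trickBrion} explicitly requires $\L$ of infinite order. The correct way to finish is the fixed-point principle: for a \emph{connected} group, the indeterminacy locus of an equivariant birational map between surfaces is a finite invariant set, hence consists of fixed points; since for $g=1$ all four groups surject onto $\Autz(C)$ (Lemma \ref{geometryofruledsurface}), every orbit dominates $C$ and has dimension at least one, so every $\Autz(S)$-equivariant birational map $S\dashrightarrow S'$ extends to a morphism, and a birational morphism between geometrically ruled surfaces over $C$ is an isomorphism. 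Conjugation by an isomorphism carries $\Autz(S)$ onto $\Autz(S')$, which yields maximality at once. The same argument (orbits are the fibers) handles $g\geq 2$ and also repairs your claim that $\Autz(C\times\PP^1)$ is ``maximal by the dimension bound'': the bound $\dim\leq 4$ holds only among surfaces with $\seg\geq 0$, whereas $\seg<0$ surfaces have automorphism groups of arbitrarily large dimension, so dimension counting alone cannot rule out an enlarging equivariant map to one of those.
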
		
	
	\section{First reduction step: towards $\FF_b$-bundles}
	
	\subsection{Removal of jumping fibers}
	
	\begin{definition}
		Let $X$ and $B$ be smooth projective varieties. A surjective morphism $p\colon X\to B$ is \emph{an $\FF_b$-bundle} if there exists a Zariski open cover $(U_i)_i$ of $B$ such that, for each open subset $U_i$, there exists an isomorphism $\phi_i \colon p^{-1}(U_i) \to U_i \times \FF_b$ making the following diagram commute:
		\[
		\begin{tikzcd}[column sep=2em,row sep = 3em]
			p^{-1}(U_i)\arrow[rd,"p" swap] \arrow[rr,"\sim", "\phi_i" swap] &&  U_i \times \FF_b \arrow[ld,"p_{U_i}"]\\
		& U_i &.
		\end{tikzcd}
		\] 
		The isomorphisms $\phi_i$ are called the \emph{trivializations} of the $\FF_b$-bundle. The birational maps 
		\[
		\phi_i \phi_j^{-1}\colon U_j \times \FF_b \dashrightarrow U_i \times \FF_b,
		\]
		which are isomorphisms over $U_j\cap U_i$, are called the \emph{transition maps} of the $\FF_b$-bundle.
	\end{definition}
	
	Let $C$ be a smooth projective curve, and let $\tau\colon S\to C$ and $\pi\colon X \to S$ be $\PP^1$-bundles. In this section, we follow the strategy adopted in \cite[Section 3.2]{BFT} to show that $\Autz(X)$ is conjugate to a subgroup of the automorphism group of an $\FF_b$-bundle over $C$. 
	
	\begin{lemma}\label{lemma removal jumping}
		Let $C$ be a smooth projective curve, and let $\tau\colon S\to C$ and $\pi\colon X \to S$ be $\PP^1$-bundles. Then there exists an integer $b\geq 0$ such that the generic fiber of the morphism $\tau\pi$ is isomorphic to the Hirzebruch surface $\FF_b$ over the function field $\kk(C)$.
		\medskip
		
		Moreover, for each $p\in C$, there exists an affine open subset $U\subset C$ containing $p$ and a local trivialization $\phi_U\colon \tau^{-1}(U)\to U\times \PP^1$ such that the following hold:
		\begin{enumerate}
			\item \label{removal.2} Let $V=U\setminus \{p\}$. Then there exists a local trivialization 
			$
			\psi_V\colon (\tau \pi)^{-1}(V)\xrightarrow{\sim} V\times \FF_b
			$
			such that the following diagram is commutative:
			\[
			\begin{tikzcd}[column sep=4em,row sep = 3em]
				(\tau \pi)^{-1}(V)\arrow[r,"\sim","\psi_V" swap] \arrow[d,"\pi",swap] &  V\times \FF_b \arrow[d,"id\times \tau_b"]\\
				\tau^{-1}(V) \arrow[r,"{\phi_U}_{|V}" below ,"\sim"]& V\times \PP^1,
			\end{tikzcd}
			\] 
			where $\tau_b\colon \FF_b \to \PP^1$ denotes the structure morphism of the $b$-th Hirzebruch surface.
			\item \label{removal.3} Assume that the commutative diagram in \ref{removal.2} does not extend across $p$, that is, $\psi_V$ does not extend to an isomorphism $\psi_U\colon (\tau \pi)^{-1}(U)\xrightarrow{\sim} U\times \FF_b$.
			Then there exists a positive integer $\epsilon$ such that 
			\[
			(\tau \pi)^{-1}(p)\simeq \FF_{b+2\epsilon}.
			\]
			In this case, we say that $\tau^{-1}(p)$ is a jumping fiber.
			\item \label{removal.4} If $\tau^{-1}(p)$ is a jumping fiber, one can blowup the minimal section in $(\tau \pi)^{-1}(p)$ and then contract the strict transform of $(\tau \pi)^{-1}(p)$. This replaces $X$ with another $\PP^1$-bundle $\pi'\colon X'\to S$, which is also trivial over $V$. After finitely many such steps, $\tau^{-1}(p)$ is no longer a jumping fiber, and $\tau\pi$ becomes the trivial $\FF_b$-bundle over $U$.
		\end{enumerate} 
	\end{lemma}
	
	\begin{proof} 
		We adapt the proof of \cite[Lemma 3.2.1]{BFT}. Let $p\in C$ and $U\subset C$ be an affine trivializing open subset of $\tau$ containing $p$, with a trivialization $\phi_U\colon \tau^{-1}(U)\to U\times \PP^1$. Let $V_0=\phi_U^{-1}(U\times \{[y:1],y\in \kk\})$ and $V_{\infty}=\phi_U^{-1}(U\times \{[1:y],y\in \kk\})$, which are affine and isomorphic to $U\times \AA^1$. Let $\mathcal{E}$ be a vector bundle of rank two over $S$ such that $\PP(\mathcal{E}) \simeq  X$. By \cite[Theorem 4']{Quillen}, $\mathcal{E}_{|V_0}$ and $\mathcal{E}_{|V_{\infty}}$ are pullbacks of rank-$2$ vector bundles over $U$. Therefore, by choosing $U$ smaller if necessary, we can assume that $V_0$ and $V_\infty$ trivialize $\mathcal{E}$. This implies that $V_0$ and $V_{\infty}$ also trivialize the $\PP^1$-bundle $\pi$ and we can write the transition maps as follows:
		\[
		\begin{array}{cccc}
			\psi_\infty\psi_0^{-1}\colon & V_{0} \times \PP^1 & \dashrightarrow &  V_\infty \times \PP^1 \\
			& ((x,y),[u:v]) & \longmapsto  & \left((x,y^{-1}),A(x,y)\cdot [u:v] \right),
		\end{array}
		\]
		where $\psi_0$ and $\psi_\infty$ respectively denote the trivializations of $V_0$ and $V_\infty$, and $A\in \mathrm{GL}_2(\mathcal{O}_C(U)[y,y^{-1}])$. The $\PP^1$-bundle $\pi$ is determined by the equivalence class of $A$ modulo $A\sim \lambda M A M'$, where $\lambda \in {\mathcal{O}_C(U)}^* y^\mathbb{Z}$, $M\in \mathrm{GL}_2(\mathcal{O}_C(U)[y^{-1}])$ and $M'\in \mathrm{GL}_2(\mathcal{O}_C(U)[y])$. Since $A$ is invertible, it follows that $\det(A)\in{\mathcal{O}_C(U)}^* y^\mathbb{Z}$. After multiplying by a matrix 
		$\begin{pmatrix}
			\mu & 0 \\ 0 & 1
		\end{pmatrix}\in \mathrm{GL}_2(\O_C(U)^*)$,
		we can assume that $\mathrm{det}(A)\in y^\mathbb{Z}$.
		
		Working over $\kk(C)$, we get a $\PP^1$-bundle over $\PP^1_{\kk(C)}$, which is isomorphic to a Hirzebruch surface $(\FF_b)_{\kk(C)}$ (see e.g.\ \cite{HM}). Hence there exist $M\in \mathrm{GL}_2(\kk(C)[y^{-1}])$ and $N\in \mathrm{GL}_2(\kk(C)[y])$ such that 
		\[
		M^{-1} A N = D = 
		\begin{pmatrix}
			y^m & 0 \\
			0 & y^n
		\end{pmatrix},
		\]
		with $m,n\in \mathbb{Z}$ satisfying $m-n=b\geq 0$.
		
		\ref{removal.2} and \ref{removal.3}: Notice that $\mathrm{det}(M), \mathrm{det}(N)\in \kk(C)^*$. Since $\mathrm{det}(A),\mathrm{det}(D) \in y^\mathbb{Z}$, it follows from the equality $AN=MD$ that $\mathrm{det}(A)= \mathrm{det}(D)$ and $\mathrm{det}(M)= \mathrm{det}(N)$. Multiplying both $M$ and $N$ with the same element of $\O_C(U)^*$, we can assume that $M\in \GL_2(\O_C(U)[y^{-1}])$, $N\in \GL_2(\O_C(U)[y])$. Choosing $U$ smaller if necessary and multiplying both $M$ and $N$ by the same element of $\kk(C)^*$, we can furthermore assume that $(M(x),N(x))\neq (0,0) $ for each $x\in U$, where $M(x)\in \Mat_2(\kk[y^{-1}])$ and $N(x)\in \Mat_2(\kk[y])$ are obtained by evaluating respectively the coefficients of $M$ and $N$ at $x$.
		
		Let $Z\subset C$ be the zero set of $\mathrm{det}(M)$, which is finite. 
		If $Z\cap U=\emptyset$, one can take $V= U$ as $M$ and $N$ correspond respectively to automorphisms of $V_\infty\times \PP^1$ and $V_0\times \PP^1$, and this ends the proof.
		
		Now we assume that $\{p\}= Z\cap U$ and we prove the result by induction on $\nu_p(\det(M))$. We write $\widetilde{b}\geq 0$ such that $(\tau \pi)^{-1} (p) \simeq \FF_{\widetilde{b}}$. There exist $\widetilde{M}\in \mathrm{GL}_2(\kk[y^{-1}])$ and $\widetilde{N}\in \mathrm{GL}_2(\kk[y])$ such that 
		\[
		\widetilde{M}^{-1} A(p) \widetilde{N} = 
		\begin{pmatrix}
			y^{\widetilde{m}} & 0 \\
			0 & y^{\widetilde{n} }
		\end{pmatrix}
		\]
		with $\widetilde{m} - \widetilde{n} = \widetilde{b}\geq 0$. Since $\mathrm{det}(A)\in y^\mathbb{Z}$ is independent of $x$, it follows that $m+n = \widetilde{m} + \widetilde{n}$. Writing $\epsilon = \widetilde{m} - m = n - \widetilde{n}\in \mathbb{Z}$, one gets $\widetilde{b} - b = (\widetilde{m} - m ) - (\widetilde{n} - n ) = 2\epsilon$. Replacing $A,M,N$ with respectively $\widetilde{M}^{-1} A \widetilde{N},\widetilde{M}^{-1}M,\widetilde{N}^{-1}N$; we keep the equation $AN=MD$, do not change the set $Z$ and the multiplicity of $p$ as zero of $\mathrm{det}(M)$, but we can assume that $A(p)=
		\begin{pmatrix}
			y^{\widetilde{m}} & 0 \\
			0 & y^{\widetilde{n} }		 
		\end{pmatrix}.
		$
		Writing $M(p) = 
		\begin{pmatrix}
			\beta_{11} & \beta_{12} \\
			\beta_{21} & \beta_{22}
		\end{pmatrix}\in \Mat_2(\kk[y^{-1}])$
		yields 
		\begin{equation}\label{eq3}
			N(p)= A(p)^{-1}M(p) D = 
			\begin{pmatrix}
				y^{-\epsilon}\beta_{11} & y^{-b-\epsilon} \beta_{12} \\
				y^{b+\epsilon} \beta_{21} & y^{\epsilon} \beta_{22}
			\end{pmatrix}.\tag{$\star$}
		\end{equation}
		\begin{enumerate}
			\item[(i)] If the first column of $M(p)$ is zero, then so is the first column of $N(p)$. Let $f\in \kk(C)^{*}$ having a zero of order $1$ at $p$, by restricting $U$ one can assume that $U$ does not contain other zeroes or poles of $f$, and let 
			$\Delta=
			\begin{pmatrix}
				f & 0 \\
				0 & 1
			\end{pmatrix}.
			$
			Since $D$ commutes with $\Delta$, replacing $M$ by $M'=M\Delta^{-1}$ and $N$ by $N'=N\Delta^{-1}$ does not change the equation $AN=MD$ , and decreases the multiplicity of $p$ as zero of $\mathrm{det}(M)$. A similar argument works if the second column of $M(p)$ is zero.
			\item[(ii)] If $\epsilon <0$ then $-b-\epsilon = \epsilon - \widetilde{b}<0$. Since $M(p)\in \Mat_2(\kk[y^{-1}])$ and $N(p)\in \Mat_2(\kk[y])$, it follows that the second column of $N(p)$ is zero. Hence the second column of $M(p)$ is also zero and we can apply (i).
			\item[(iii)] If $\epsilon = b = 0$ then $M(p) = N(p)$. In particular they do not depend on $y$ or $y^{-1}$, i.e.\ they belong to $\Mat_2(\kk)$. Since $M(p)$ is not invertible, there exists $R\in \mathrm{GL}_2(\kk)$ such that the first column of $M(p)R$ is zero. We replace $M,N$ by $MR,NR$, so that the equation $AN=MD$ still holds because $D=y^mI_2=y^nI_2$ commutes with $R$. Then we apply (i).
			\item[(iv)] If $\epsilon = 0$ and $b>0$, then $\beta_{12}=0$ and $\beta_{22}\in \kk$. If $\beta_{22}=0$, we apply (i). Else $\beta_{11}=0$ because $\mathrm{det}(M(p))=0$, and the first column of  $M(p)R$ is zero if  
			$R=\begin{pmatrix}
				\beta_{22} & 0 \\ -\beta_{21} & 1
			\end{pmatrix}\in \mathrm{GL}_{2}(\kk[y^{-1}])$.
			Replacing also $N$ by $NR'$ with $R' = D^{-1}RD=
			\begin{pmatrix}
				\beta_{22} & 0 \\ -\beta_{21}y^b & 1
			\end{pmatrix}\in \mathrm{GL}_2(\kk[y])$, the equation $AN=MD$ still holds and we can apply (i).
			\item[(v)] \label{v} The last case is when $\epsilon>0$, which implies that $\widetilde{b}=b+2\epsilon\geq 2$ and that $\beta_{11}=\beta_{12}=0$. 
		\end{enumerate}
		
		Taking $U$ small enough, we get $Z\cap U =\{p\}$ and $(\tau\pi)^{-1}(p)\simeq \FF_{b+2\epsilon}$. Writing $V=U\setminus Z$, this yields \ref{removal.2} and \ref{removal.3}.
		
		\ref{removal.4}:  By \ref{removal.3}, the fiber $(\tau\pi)^{-1}(p)$ is isomorphic to an Hirzebruch surface $\FF_{\widetilde{b}}$ with $\widetilde{b}\geq 2$. In particular, it contains a unique minimal section that we may assume to be defined by $u=0$ in the charts $V_0 \times \mathbb{P}^1$ and $V_\infty\times \PP^1$, both isomorphic to $(U\times \AA^1)\times \PP^1$. By restricting $U$ if necessary, there exists $f\in \kk(C)^{*}$ such that $\mathrm{div}(f)_{|U}=p$. The blowup of the minimal section followed by the contraction of the strict transform of the surface $\FF_{\widetilde{b}}$ corresponds locally to the birational map:
		\[
		\begin{array}{ccc}
			(U\times \mathbb{A}^1)  \times \mathbb{P}^1 & \dashrightarrow & (U\times \mathbb{A}^1) \times \mathbb{P}^1\\
			((x,y),[u:v] )& \longmapsto & ((x,y),[u:f(x)v]),
		\end{array}
		\]
		This replaces the transition matrix $A$ with $A'= \Delta^{-1}A\Delta$, where: 
		\begin{center}
			\begin{minipage}{0.3\textwidth}
				$\Delta = 
				\begin{pmatrix}
					f & 0 \\
					0 & 1
				\end{pmatrix}$,
			\end{minipage}
			\begin{minipage}{0.3\textwidth}
				$
				A=
				\begin{pmatrix}
					\alpha_{11} & \alpha_{12} \\
					\alpha_{21} & \alpha_{22}
				\end{pmatrix}$,
			\end{minipage}
			\begin{minipage}{0.3\textwidth}
				$A' =
				\begin{pmatrix}
					\alpha_{11} & \alpha_{12}f^{-1} \\
					\alpha_{21}f & \alpha_{22}
				\end{pmatrix}		
				$.
			\end{minipage}
		\end{center}
		
		Since $A(p)$ is diagonal, it follows that $\alpha_{12}$ and $\alpha_{21}$ vanish at $q$, hence $A'\in \mathrm{GL}_2(\mathcal{O}_C(U)[y,y^{-1}])$. Moreover, $\epsilon >0$ implies that the first line of $N(p)$ and $M(p)$ is zero by (\ref{eq3}), hence the matrices $M'=\Delta^{-1}M$ and $N'=\Delta^{-1}N$ belong to $\mathrm{Mat}_{2}(\mathcal{O}_C(U)[y,y^{-1}])$. Replacing $A,M,N$ with $A',M',N'$ does not change the equation $AN=MD$, but decreases the multiplicity of $p$ as zero of $\mathrm{det}(M)$. After finitely many steps, one gets the trivial $\FF_b$-bundle over $U$. 
	\end{proof}
	
	Applying Lemma \ref{lemma removal jumping} inductively to remove all jumping fibers, we obtain:
	
	\begin{proposition}\label{removal jumping}
	Let $C$ be a smooth projective curve, and let $\tau\colon S\to C$ and $\pi\colon X \to S$ be $\PP^1$-bundles. Then there exist $b\geq 0$, a $\PP^1$-bundle $\widetilde{\pi}\colon \widetilde{X}\to S$ such that $\tau\widetilde{\pi}$ is an $\FF_b$-bundle, and an $\Autz(X)$-equivariant birational map $\psi\colon X\dashrightarrow \widetilde{X}$.
	\end{proposition}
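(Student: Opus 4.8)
The plan is to globalize Lemma \ref{lemma removal jumping} and then verify that the resulting birational map is $\Autz(X)$-equivariant. First I would apply Lemma \ref{lemma removal jumping} to produce the integer $b\geq 0$ and, for each $p\in C$, a trivializing open subset $U\ni p$ on which $\tau\pi$ is governed by a transition matrix as in the proof of the lemma. The jumping fibers correspond to the points of the finite zero set $Z\subset C$ of $\det(M)$; away from $Z$ the fibration $\tau\pi$ is already an $\FF_b$-bundle by \ref{removal.2}. Thus the sole obstruction to $\tau\pi$ being globally $\FF_b$-locally trivial is this finite set of jumping fibers, and part \ref{removal.4} of the lemma provides, for each such fiber, an explicit elementary modification (blow up the minimal section of $(\tau\pi)^{-1}(p)\simeq \FF_{b+2\epsilon}$, then contract the strict transform of this fiber) which decreases the jumping. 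Since each modification is an isomorphism away from the single fiber it touches, iterating finitely many times at each of the finitely many points of $Z$ removes all jumping fibers without reintroducing them elsewhere, and yields a $\PP^1$-bundle $\widetilde{\pi}\colon \widetilde{X}\to S$ with $\tau\widetilde{\pi}$ an $\FF_b$-bundle, together with a birational map $\psi\colon X\dashrightarrow \widetilde{X}$ over $S$.

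The heart of the proposition, which the lemma does not address, is equivariance. By Blanchard's lemma (Corollary \ref{blanchard}), the proper morphism $\tau\pi\colon X\to C$ induces an action of the connected group $\Autz(X)$ on $C$, and every $\alpha\in \Autz(X)$ carries the fiber $(\tau\pi)^{-1}(p)$ isomorphically onto the fiber over $(\tau\pi)_*(\alpha)(p)$. Since an automorphism preserves the isomorphism class of a fiber, $\Autz(X)$ permutes the finite set $Z$ of jumping points; as $\Autz(X)$ is connected, this permutation action on a finite set must be trivial. Hence each jumping point $p$ is fixed and each jumping fiber $(\tau\pi)^{-1}(p)$ is an $\Autz(X)$-invariant surface in $X$.

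It then remains to check that the modification at each jumping fiber is canonical, and therefore equivariant. By \ref{removal.3} a jumping fiber is isomorphic to $\FF_{\widetilde{b}}$ with $\widetilde{b}=b+2\epsilon\geq 2$, so it carries a \emph{unique} minimal section, which is consequently preserved by the $\Autz(X)$-action restricted to the invariant surface $(\tau\pi)^{-1}(p)$. Blowing up an $\Autz(X)$-invariant centre and contracting an $\Autz(X)$-invariant divisor are both equivariant, so the action of $\Autz(X)$ lifts through each step and descends to a regular faithful action on the modified threefold. Composing the finitely many equivariant modifications, the birational map $\psi$ conjugates $\Autz(X)$ into $\Aut(\widetilde{X})$, and by connectedness the image lands in $\Autz(\widetilde{X})$, i.e.\ $\psi\Autz(X)\psi^{-1}\subseteq \Autz(\widetilde{X})$, which is exactly the $\Autz(X)$-equivariance of $\psi$. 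The main obstacle is precisely this equivariance bookkeeping: one must ensure that the centres used in \ref{removal.4} are \emph{forced} to be $\Autz(X)$-invariant rather than merely that invariant centres happen to exist, and this is guaranteed by the uniqueness of the minimal section on an $\FF_{\widetilde{b}}$ with $\widetilde{b}\geq 2$ together with the fact that connectedness pins down each jumping fiber.
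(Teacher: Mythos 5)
Your proof is correct and follows essentially the same route as the paper: produce $\widetilde{X}$ by applying Lemma \ref{lemma removal jumping} \ref{removal.4} at the finitely many jumping fibers, then deduce equivariance from the fact that the connected group $\Autz(X)$ acts trivially on the finite set of jumping fibers. Your extra observation that the blowup centres are forced to be $\Autz(X)$-invariant by uniqueness of the minimal section of $\FF_{b+2\epsilon}$ with $\epsilon>0$ is a detail the paper leaves implicit, but it is the same argument.
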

	
	\begin{proof}
		We adapt the proof of \cite[Proposition 3.2.2]{BFT}.
		By Lemma \ref{lemma removal jumping} \ref{removal.2}, we can cover $C$ by finitely many trivializing affine open subsets $(U_i)_i$ of $\tau\colon S\to C$, such that each $\tau^{-1}(U_i)$ has at most one jumping fiber. Hence there exist $b\geq 0$ and a dense open subset $U\subset C$ such that for all $p\in U$, $(\tau \pi)^{-1}(p)\simeq\FF_b$; and for each $q_i\in U_i\cap (C\setminus U)$, there exists an integer $c_i$ such that $(\tau \pi)^{-1}(q_i) \simeq \FF_{c_i}$, where $c_i-b$ is a positive even integer by Lemma \ref{lemma removal jumping} \ref{removal.3}. Applying Lemma \ref{lemma removal jumping} \ref{removal.4} to each jumping fiber, we get a $\PP^1$-bundle $\widetilde{\pi}\colon \widetilde{X}\to S$ such that $\tau \widetilde{\pi}$ is a  $\FF_b$-bundle and a birational map $\psi\colon X\dashrightarrow \widetilde{X}$. The connected algebraic group $\Autz(X)$ acts trivially on the set of the jumping fibers, since this set is finite. Therefore, the birational map $\psi\colon X\dashrightarrow \widetilde{X}$ is $\Autz(X)$-equivariant.
	\end{proof}
	
	Let us recall that an automorphism $f\in \Aut(X)$ preserves a fibration $\pi\colon X\to S$ if there exists a unique automorphism $g\in \Aut(S)$ such that $\pi f = g \pi$. The subgroup $G\subseteq \Aut(X)$ consisting of automorphisms preserving $\pi$ is closed and contains the connected component $\Autz(X)$ (see e.g., \cite[Proposition 2.22]{Brionnotes}). 
	In the following proposition, we show that $G=\Aut(X)$ when $X$ is an $\FF_b$-bundle with $b>0$:
	
	\begin{proposition}\label{Autalgebraicgroup}
		Let $C$ be a smooth projective curve of positive genus, $\tau\colon S\to C$  and $\pi\colon X\to S$ be $\PP^1$-bundles such that $\tau\pi$ is an $\FF_b$-bundle with $b>0$. Then the following hold:
		\begin{enumerate}
			\item\label{Autalgebraicgroup.2} For every $f \in \Aut(X)$, there exist $g \in \Aut(S)$ and $h \in \Aut(C)$ such that the following diagram commutes:
			\[
			\begin{tikzcd}[column sep=4em,row sep = 3em]
				X \arrow[r, "f"] \arrow[d, "\pi"]& X\arrow[d,"\pi"] \\
				S\arrow[r,"g"] \arrow[d, "\tau"]& S\arrow[d, "\tau"] \\
				C \arrow[r, "h"] & C.
			\end{tikzcd}
			\]
			
			\item \label{Autalgebraicgroup.1} The groups $\Aut(S)$ and $\Aut(X)$ are algebraic subgroups of $\Bir(S)$ and $\Bir(X)$, respectively.
			
			\item  \label{Autalgebraicgroup.3} There exists a morphism of algebraic groups $\Pi\colon \Aut(X)\to \Aut(S)$, mapping $f$ to $g$, which restricts to the morphism $\pi_*\colon \Autz(X)\to \Autz(S)$ induced by Blanchard's lemma $(\ref{blanchard})$. Moreover, $\Pi(\Aut(X))$ is an algebraic subgroup of $\Aut(S)$ and $ \Pi(\Aut(X))^\circ = \pi_*(\Autz(X))$.
		\end{enumerate}
	\end{proposition}
	
	\begin{proof}
		(1): Let $F$ be a fiber of $\pi\tau$, that is isomorphic to $\FF_b$ by assumption. For any $f \in \Aut(X)$, the restriction of $\tau\pi f \colon X\to C$ to $F$ is constant, as $C$ is not rational. Therefore, there exists a unique bijective map $h \colon C\to C$ such that $h \tau \pi=  \tau \pi f$. Since $\tau\pi$ admits a section, it follows that $h$ is an automorphism and this shows that $\Aut(X)$ preserves the fibration $\tau \pi \colon X\to C$. This shows that there exists a group homomorphism $\Aut(X)\to \Aut(C)$.
		
		We proceed similarly with the fibration $\pi\colon X\to S$. Let $\widetilde{F}$ be a fiber of $\pi$, that is isomorphic to $\PP^1$. There is a unique fiber $F$ of $\tau\pi$ such that $\widetilde{F}\subset F$, and $\widetilde{F}$ is a fiber of the $b$-th Hirzebruch surface $\pi_{|F}\colon F \to \mathbb{P}^1$. Then $f(\widetilde{F}) \subset f(F)$. Since $f$ preserves the fibration $\tau\pi$, the surface $f(F)$ is again a fiber of $\tau\pi$. As $b>0$ and $f$ respects the intersection form, it follows that $f(\widetilde{F})$ is a fiber of the $b$-th Hirzebruch surface $\pi_{|f(F)}\colon f(F)\to \PP^1$. This implies that $f$ induces a unique bijective map $g\colon S\to S$ such that $g \pi = \pi f$. Moreover, $\pi$ admits a section, so $g$ is an automorphism. This shows that there exists a group homomorphism $\Pi\colon \Aut(X) \to \Aut(S)$, that maps $f$ to $g$.

		(2): By \cite[Proposition 3.6]{Fong}, $\Aut(S)$ is an algebraic subgroup of $\Bir(S)$. We recall the proof, which goes as follows.
		Let $D= -K_S + m\tau^*(A)$, where $A$ is an ample divisor on $C$, and notice that $D$ is ample for $m>0$ large enough. Notice also that the numerical class of $D$ is fixed by $\Aut(S)$; thus, $\Aut(S)$ has finitely many connected components and it is an algebraic subgroup of $\Bir(S)$ by \cite[Theorem 2.10]{Brionnotes}.
		
		Now, to show that $\Aut(X)$ is an algebraic subgroup of $\Bir(X)$, we consider the divisor $D'=-K_X+ n \pi^*(D) $ with $n>0$. Since $\pi$ is a $\PP^1$-bundle, we have $-K_X\cdot F=2$ for any fiber $F$ of $\pi$. Then for $n$ large enough, the divisor $D'$ is ample. Now, it remains to see that the numerical class of $D'$ is fixed by $\Aut(X)$.
		Let $f\in \Aut(X)$. Then $f^*(-K_X) \sim -K_X$ and $f^*(\pi^*D) = \pi^*(g^*D)$, where $g = \Pi(f)$ is defined in \ref{Autalgebraicgroup.2}. The numerical class of $D$ is fixed by $\Aut(S)$, so the numerical class of $\pi^*D$ is fixed by $\Aut(X)$. Hence, the numerical class of $D'$ is also fixed by $\Aut(X)$. Using again \cite[Theorem 2.10]{Brionnotes}, it follows that $\Aut(X)$ has only finitely many connected components, and it is an algebraic subgroup of $\Bir(X)$.
		
		(3): First, we show that the group homomorphism $\Pi\colon \Aut(X) \to \Aut(S)$ is in fact a morphism of algebraic groups. By uniqueness of Blanchard's lemma, the restriction of $\Pi$ on $\Autz(X)$ equals $\pi_*\colon \Autz(X) \to \Autz(C)$ (see Proposition \ref{prop:Blanchard} and Corollary \ref{blanchard}).
		
		By \ref{Autalgebraicgroup.1}, we can write $\Aut(X) = G_0 \sqcup \cdots \sqcup G_n$, where each $G_i$ is connected and $G_0 =\Autz(X)$.
		For every $t \in G_i$, the following diagram commutes:
		\[
		\begin{tikzcd}[column sep=4em,row sep = 3em]
			G_0 \arrow[r, "t\cdot "] \arrow[d, "\pi_*"]& G_i\arrow[d,"\Pi_{\vert G_i}"] \\
			\Pi(G_0) \arrow[r,"\Pi(t) \cdot "] & \Pi(G_i),
		\end{tikzcd}
		\]
		where the horizontal arrows are the translations by $t$ and $\Pi(t)$. This shows that $\Pi$ is a morphism of algebraic groups. Therefore, $ \Pi(\Aut(X))$ is an algebraic subgroup of $\Aut(S)$, and we can write
		$ \Pi(\Aut(X))= \Pi(G_0)\cup \cdots \cup \Pi(G_n)$. Since $\Pi(G_0)$ is connected, it follows that $\Pi (G_0)\subseteq  \Pi(\Aut(X))^\circ$. In fact, this is an equality as they have the same dimension, thus $\pi_*(\Autz(X)) =  \Pi(\Aut(X))^\circ$.
	\end{proof}
	
	\begin{remark}
		\begin{itemize}
			\item Proposition \ref{Autalgebraicgroup} \ref{Autalgebraicgroup.2} is false if $b=0$. For example, let $X = C\times \PP^1 \times \PP^1$, and $\pi\colon X\to C\times \PP^1$ be the projection onto the first two factors. The permutation of two $\PP^1$-factors defines an automorphism $f\in \Aut(X)$, which does not lie in $\Autz(X)$ and does not preserve the fibration $\pi$. 
			\item Let $g\in \Autz(S)$. If $g^*X$ is $S$-isomorphic to $X$, then we can lift $g$ to an automorphism of $X$, i.e., $g$ lies in the image of $\Pi$. A priori, this lifting may not be in the connected component $\Autz(X)$. Proposition \ref{Autalgebraicgroup} \ref{Autalgebraicgroup.3} can be reformulated as follows: if $g \in \Pi(\Aut(X))^\circ \subset \Autz(S)$, then we can choose its lifting in $\Autz(X)$.
		\end{itemize}
	\end{remark}

	\subsection{Fiber products of geometrically ruled surfaces as $\FF_0$-bundles}

	~\bigskip

	The following lemmas show that the case of $\FF_0$-bundles over $C$ is particularly explicit, as they correspond to fiber products of geometrically ruled surfaces.
	
	\begin{lemma}\label{fiberproduct}
		Let $C$ be a smooth projective curve, and let $\tau_1\colon S_1\to C$ be a geometrically ruled surface. Then the following statements hold:
		\begin{enumerate}
			\item\label{fiberproduct.1} Let $\tau_2\colon S_2\to C$ be a geometrically ruled surface, and let $\pi_1\colon S_1\times_C S_2\to S_1$ and $\pi_2\colon S_1\times_C S_2 \to S_2$ be the natural projections. Then the morphism $\tau_1 \pi_1= \tau_2 \pi_2$ endows the fiber product $S_1\times_C S_2$ with the structure of an $\FF_0$-bundle over $C$. 
			\item\label{fiberproduct.2} Let $\pi_1\colon X\to S_1$ be a $\PP^1$-bundle such that $\tau_1\pi_1$ is an $\FF_0$-bundle. Then there exists a geometrically ruled surface $\tau_2\colon S_2\to C$ such that $X$ is $S_1$-isomorphic to the fiber product $S_1\times_C S_2$.  Moreover, the $\PP^1$-bundle $\pi_1$ is decomposable if and only if $\tau_2$ is decomposable.
		\end{enumerate}
	\end{lemma}
	
		\begin{proof}
		\ref{fiberproduct.1}: We fix an open cover $(U_i)_i$ of $C$ that simultaneously trivializes $\tau_1$ and $\tau_2$, and we denote the transition matrices of $\tau_1$ and $\tau_2$ by $s_{1,ij},s_{2,ij}\in \GL_2(\mathcal{O}_C(U_{ij}))$, respectively. For each $U_i$, define the isomorphisms:
		\[
		\begin{array}{cccc}
		 \alpha_i\colon  & (U_i\times \PP^1)\times_{U_i} (U_i\times \PP^1) & \longrightarrow& U_i\times \FF_0 \\
			 & (x,[y_0:y_1]),(x,[z_0:z_1]) & \longmapsto & (x,[y_0:y_1\ ;z_0:z_1]).
		\end{array} 
		\]
		Together, the isomorphisms $(\alpha_i)_i$ induces a $C$-isomorphism $\alpha\colon S_1\times_C S_2\to X$, where $X$ is an $\FF_0$-bundle over $C$.
		
		\ref{fiberproduct.2}: There exist trivializations of the $\FF_0$-bundle $\tau_1\pi_1$ such that its  transition maps are:
		\[
		\begin{array}{ccc}\label{transitionFF0}
			U_j\times \FF_0  & \dashrightarrow & U_i\times \FF_0 \notag\\
			(x,[y_0:y_1\ ;z_0:z_1])& \longmapsto & \left(x,\left[s_{2,ij}(x)\cdot \begin{pmatrix}
				y_0 \\ y_1
			\end{pmatrix}; s_{1,ij}(x)\cdot \begin{pmatrix}
			z_0\\z_1
		\end{pmatrix}\right] \right), \tag{$\star$}
		\end{array}
		\]
		where $s_{1,ij},s_{2,ij} \in \GL_2(\O_C(U_{ij}))^*$. Under this choice of trivializations, the $\PP^1$-bundle structure morphism $\pi_1$ corresponds to the projection onto $(x,[z_0:z_1])$. Hence, $s_{1,ij}$ are the transition matrices of the geometrically ruled surface $\tau_1\colon S_1\to C$.
		The projection onto $(x,[y_0:y_1])$ gives a geometrically ruled surface $\tau_2\colon S_2\to C$, with transition matrices $s_{2,ij}$, such that $X$ is $C$-isomorphic to $S_1\times_C S_2$.
		
		By the transition maps (\ref{transitionFF0}) above, the $\PP^1$-bundle $\pi_1$ is decomposable if and only if there exist a choice of trivializations such that $s_{2,ij}$ are diagonal matrices, or equivalently, if and only if $\tau_2$ is decomposable.
	\end{proof}
	
	\begin{remark}
		Since every $\PP^1$-bundle over $\PP^1$ is decomposable, it follows that $\FF_0$-bundles over $\PP^1$ are fiber products of the form $\FF_a\times_{\PP^1} \FF_{b}$. Then the projection onto either of the two factors yields a decomposable $\PP^1$-bundle. 
		Let $C$ be a smooth projective curve, that is not isomorphic to $\PP^1$. By Lemma \ref{fiberproduct}, there exists indecomposable $\PP^1$-bundles $\pi\colon X\to S$, where $\tau\colon S\to C$ is a geometrically ruled surface, such that $\tau\pi$ is an $\FF_0$-bundle.
	\end{remark}
	
	As a direct consequence of Blanchard's lemma, we obtain an explicit description of the automorphism groups of $\FF_0$-bundles:
	
	\begin{lemma}\label{autofiberproduct}
		Let $C$ be a smooth projective curve, and let $\tau_1\colon S_1\to C$ and $\tau_2\colon S_2\to C$ be geometrically ruled surfaces. Let $X$ be the fiber product $S_1\times_C S_2$, equipped with the two projections $\pi_1\colon X\to S_1$ and $\pi_2\colon X\to S_2$. Then the following hold:
		\begin{enumerate}
			\item\label{autofiberproduct.1} $\Autz(X)\simeq \Autz(S_1)\times_{\Autz(C)}  \Autz(S_2)$.
			\item $\ker({\pi_1}_*) = \ker({\tau_2}_*)$ and $\ker({\pi_2}_*) = \ker({\tau_1}_*)$.
			\item \label{FF0relativelymax} Assume that $g(C)>0$. If $\Autz(X)$ is relatively maximal with respect to one of the two $\PP^1$-bundle structures, then $\Autz(S_1)$ and $\Autz(S_2)$ are maximal connected algebraic subgroups of $\Bir(S_1)$ and $\Bir(S_2)$, respectively.
		\end{enumerate}
	\end{lemma}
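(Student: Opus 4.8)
The plan is to prove the two assertions separately, deducing (1) from Blanchard's lemma together with the universal property of the fibre product, and (2) by contraposition using (1).

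\emph{Part (1).} Write $\tau = \tau_1\pi_1 = \tau_2\pi_2\colon X\to C$. First I would apply Corollary \ref{blanchard} to each of the $\PP^1$-bundles $\pi_1,\pi_2,\tau_1,\tau_2$, obtaining homomorphisms of algebraic groups $(\pi_1)_*\colon \Autz(X)\to\Autz(S_1)$, $(\pi_2)_*\colon\Autz(X)\to\Autz(S_2)$, $(\tau_1)_*\colon\Autz(S_1)\to\Autz(C)$ and $(\tau_2)_*\colon\Autz(S_2)\to\Autz(C)$, each fitting into a commutative square with the corresponding projection. Since $(\tau_1)_*(\pi_1)_* = \tau_* = (\tau_2)_*(\pi_2)_*$, the pair $((\pi_1)_*,(\pi_2)_*)$ factors through a homomorphism $\Phi\colon\Autz(X)\to\Autz(S_1)\times_{\Autz(C)}\Autz(S_2)$. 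To construct the inverse I would use the universal property of $X = S_1\times_C S_2$: a $T$-point $(g_1,g_2)$ of the fibre product consists of automorphisms inducing, via the Blanchard squares, the same automorphism of $C$, so that $g_1\pi_1$ and $g_2\pi_2$ have equal composites to $C$ and therefore glue to an automorphism $\Psi(g_1,g_2)$ of $X$. This construction is functorial and defines a homomorphism $\Psi$ that is a two-sided inverse of $\Phi$; hence $\Phi$ is an isomorphism of algebraic groups.

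\emph{Part (2).} Assume, say, that $\Autz(X)$ is relatively maximal with respect to $\pi_1$, and suppose for contradiction that one factor, say $\Autz(S_i)$, is not maximal in $\Bir(S_i) = \Bir(C\times\PP^1)$. Then $\Autz(S_i)$ is strictly contained in some connected algebraic subgroup $G'\subset\Bir(S_i)$. Regularising $G'$ (Weil--Sumihiro) and running a $G'$-equivariant relative MMP over $C$ — legitimate because $g(C)>0$ forces the fibration to $C$ to be canonical, hence $G'$-equivariant — yields a ruled surface $\tau_i'\colon S_i'\to C$ with $G'\subseteq\Autz(S_i')$, together with an $\Autz(S_i)$-equivariant birational map $\eta_i\colon S_i\dashrightarrow S_i'$ commuting with the two rulings to $C$. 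As strict inclusions are preserved by conjugation, $\eta_i\Autz(S_i)\eta_i^{-1}\subsetneq\Autz(S_i')$. I would then lift $\eta_i$ to $X$ by keeping the other factor fixed: setting $\Phi = \mathrm{id}_{S_1}\times_C\eta_2$ (or $\eta_1\times_C\mathrm{id}_{S_2}$) gives a birational map from $X$ to $X' = S_1\times_C S_2'$ (resp.\ $S_1'\times_C S_2$) that is square with respect to $\pi_1$ and $\Autz(X)$-equivariant. By Part (1), $\Phi\Autz(X)\Phi^{-1}$ is the fibre product of the conjugated factor with the untouched one, which is strictly contained in $\Autz(X') = \Autz(S_1)\times_{\Autz(C)}\Autz(S_2')$ (resp.\ $\Autz(S_1')\times_{\Autz(C)}\Autz(S_2)$). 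This contradicts the relative maximality of $\Autz(X)$ with respect to $\pi_1$.

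\emph{The main obstacle.} The delicate point is to guarantee that the strict inclusion of one factor survives the fibre product over $\Autz(C)$, i.e.\ that $G\times_{\Autz(C)}H\subsetneq G\times_{\Autz(C)}K$ whenever $H\subsetneq K$. This holds automatically when the new part of $K$ meets the kernel of the map to $\Autz(C)$, and in general it reduces to the surjectivity of $(\tau_i)_*\colon\Autz(S_i)\to\Autz(C)$ (equivalently, to the statement that every element of $\Autz(C)$ lifts to $S_i$). For $g\geq 2$ this is automatic since $\Autz(C)$ is trivial, and for the ruled surfaces with large automorphism groups relevant to the classification it follows from the short exact sequences of Lemma \ref{geometryofruledsurface}, which are surjective onto $\Autz(C)$. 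The second technical input is the regularisation-and-MMP step producing the square-over-$C$ enlargement $\eta_i$; here the hypothesis $g(C)>0$ is essential, as it is what makes the ruling to $C$ — and hence the squareness of $\eta_i$ and of the lifted map $\Phi$ — canonical and $G'$-equivariant.
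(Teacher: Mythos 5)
Your proof is essentially the paper's: part (1) is the same argument (Blanchard's lemma for the four structure morphisms, factorisation through the fibre product, inverse from the universal property), and part (2) is the same contraposition via the product square birational maps $\eta_1\times_C\mathrm{id}_{S_2}$ and $\mathrm{id}_{S_1}\times_C\eta_2$; where the paper obtains the enlargement $S'$ directly from the non-rationality of $C$ (i.e.\ from \cite{Fong}), you spell out the regularisation-plus-equivariant-MMP argument behind it, which is fine. The one divergence is your ``main obstacle'' paragraph, and it cuts both ways. You correctly identify a point the paper passes over in silence: strictness of $G\times_{\Autz(C)}H\subsetneq G\times_{\Autz(C)}K$ for $H\subsetneq K$ is not formal, and either of your sufficient conditions (new elements of $K$ lying in the kernel of the map to $\Autz(C)$, or surjectivity of a structural map onto $\Autz(C)$) does settle it. However, your justification of that surjectivity --- ``for the ruled surfaces with large automorphism groups relevant to the classification'' --- is not available at this stage: the lemma is stated for arbitrary ruled surfaces $S_1,S_2$, and Proposition \ref{candidateFF0max} uses part (2) precisely to pin down which surfaces can occur, so one cannot presuppose they are among the nice ones; over an elliptic curve $(\tau_i)_*$ can have trivial image, e.g.\ for $S_i=\PP(\O_C\oplus\O_C(p))$ with $p$ a point. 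In that residual case one falls back on your first condition and checks it always holds: if $\ker((\tau')_*)\subseteq H:=\phi\Autz(S_1)\phi^{-1}$, then connectedness forces either $H=\Autz(S')$ (excluded) or $H=\ker((\tau')_*)$ with $(\tau')_*$ surjective; by Maruyama's classification the latter would make $\Autz(S_1)\cong H$ isomorphic to $\PGL_2(\kk)$, $\mathbb{G}_a$, $\mathbb{G}_m$ or a finite group, and no ruled surface over a curve of positive genus whose automorphism group fails to surject onto $\Autz(C)$ has such an automorphism group. With that supplement your part (2) --- and, for that matter, the paper's own proof, which asserts the strict inclusion without comment --- is airtight.
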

	
	\begin{proof}
		(1): By Blanchard's lemma (\ref{blanchard}) and the universal property of fiber products, there exists a unique morphism $\Phi \colon \Autz(X)\to \Aut^\circ(S_1) \times_{\Autz(C)} \Autz(S_2)$, sending $f\in \Autz(X)$ to $({\pi_1}_*(f),{\pi_2}_*(f))$, and such that ${\tau_1}_*{\pi_1}_*(f)={\tau_2}_*{\pi_2}_*(f)$. The inverse of $\Phi$ sends $(f_1,f_2)\in \Aut^\circ(S_1) \times_{\Autz(C)} \Autz(S_2)$ to the element $(x_1,x_2) \mapsto (f_1(x_1),f_2(x_2))\in \Autz(X)$. Hence $\Phi$ is an isomorphism.
		
		(2): This follows from \ref{autofiberproduct.1}. The kernel of ${\pi_1}_*$ equals $\{\mathrm{id}_{S_1}\} \times_{\Autz(C)} \Autz(S_2)$. Since $\mathrm{id}_{S_1}$ is mapped to $\mathrm{id}_{C}$ by ${\tau_1}_*$, the second factor is also mapped to $\mathrm{id}_{C}$ by ${\tau_2}_*$. Thus, $\ker({\pi_1}_*) = \ker({\tau_2}_*)$. Permutting the roles of $S_1$ and $S_2$, we get the second statement.
		
		(3): Without loss of generality, we can assume that $\Autz(X)$ is relatively maximal with respect to the structure of $\PP^1$-bundle $\pi_1\colon X\to S_1$. 
		
		First, assume by contradiction that $\Autz(S_1)$ is not a maximal connected algebraic subgroup of $\Bir(S_1)$. Since $C$ is not rational, there exists a geometrically ruled surface $S'$ and an $\Autz(S_1)$-equivariant $C$-birational map $\phi\colon S_1 \dashrightarrow S'$ such that $\phi \Autz(S_1) \phi^{-1} \subsetneq  \Autz(S')$. The following diagram commutes
		\[
		\begin{tikzcd}[column sep=4em,row sep = 3em]
			X \arrow[r,dashed, "\phi\times \mathrm{id}_{S_2}"] \arrow[d, "\pi_1"]& S'\times_C S_2\arrow[d,"\pi'"] \\
			S_1\arrow[r,dashed,"\phi"] & S',
		\end{tikzcd}
		\]
		where $\pi'$ denotes the projection on $S'$. By \ref{autofiberproduct.1}, the square birational map $\phi\times \mathrm{id}_{S_2} $ is $\Autz(X)$-equivariant and conjugates $ \Autz(X)$ to a strict subgroup of $ \Autz(S'\times_C S_2)$, which contradicts that $\Autz(X)$ is relatively maximal with respect to $\pi_1$.
		
		If instead, we assume by contradiction that $\Autz(S_2)$ is not a maximal connected algebraic subgroup of $\Bir(S_2)$, the proof proceeds similarly by considering the $\Autz(X)$-equivariant square birational map $ \mathrm{id}_{S_1} \times \psi$, where $\psi \colon S_2 \dashrightarrow S'$ is $\Autz(S_2)$-equivariant and $\psi \Autz(S_2) \psi^{-1} \subsetneq  \Autz(S')$.
	\end{proof}
	
	Taking $S_1$ and $S_2$ such that $\Autz(S_1)$ and $\Autz(S_2)$ are maximal connected algebraic subgroups of $\Bir(S_1)$ and $\Bir(S_2)$, respectively, we obtain the following proposition:
	
	\begin{proposition}\label{candidateFF0max}
		Let $C$ be a smooth projective curve of genus $g\geq 1$ and $X=S_1\times_C S_2$ as in Lemma $\ref{autofiberproduct}$. If $\Autz(X)$ is relatively maximal with respect to one of the two $\PP^1$-bundle structures, then the following hold:
		\begin{enumerate}
			\item If $g=1$, then $X$ is isomorphic to one of the following:
			\begin{enumerate}
				\item $C\times \PP^1 \times \PP^1$,
				\item $\PP(\O_C\oplus \L)\times \PP^1$ for some non-trivial $\L\in \Pic^0(C)$,
				\item $\AAA_0\times \PP^1$,
				\item $\AAA_1\times \PP^1$,
				\item\label{candidateFF0max.v} $\PP(\O_C\oplus \L) \times_C \PP(\O_C\oplus \L')$ for some non-trivial $\L,\L'\in \Pic^0(C)$, 
				\item\label{candidateFF0max.vi} $\PP(\O_C\oplus \L)\times_C \AAA_0$ for some non-trivial $\L\in \Pic^0(C)$,
				\item $ \PP(\O_C\oplus \L)\times_C \AAA_1$ for some non-trivial $\L\in \Pic^0(C)$, 
				\item\label{candidateFF0max.viii} $\AAA_0\times_C \AAA_0$,
				\item $\AAA_0\times_C \AAA_1$,
				\item $\AAA_1\times_C \AAA_1$.
			\end{enumerate}
			\item If $g\geq 2$, then $X$ is isomorphic to $C\times \PP^1 \times \PP^1$.
		\end{enumerate}
		Moreover, in every case above, the morphism of connected algebraic groups $(\tau\pi)_*\colon \Autz(X) \to \Autz(C)$ induced by Blanchard's lemma is surjective.
	\end{proposition}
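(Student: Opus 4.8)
The plan is to reduce the classification to the two-dimensional result of Theorem \ref{dim2max} via the fiber product description of the automorphism group, and then to read off the list of candidates by a combinatorial enumeration. First I would invoke Lemma \ref{autofiberproduct}\ref{FF0relativelymax}: since $\Autz(X)$ is relatively maximal for one of the two projections and $g\geq 1$, both $\Autz(S_1)$ and $\Autz(S_2)$ are maximal connected algebraic subgroups of $\Bir(S_1)$ and $\Bir(S_2)$. Because each $S_i$ is a $\PP^1$-bundle over $C$, it is $C$-birational to $C\times \PP^1$, so $\Bir(S_i)\cong \Bir(C\times \PP^1)$ and Theorem \ref{dim2max} applies to each factor.

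Next I would enumerate. If $g\geq 2$, Theorem \ref{dim2max}\ref{dim2max.2} forces $S_1\cong S_2\cong C\times \PP^1$, hence $X\cong (C\times \PP^1)\times_C (C\times \PP^1)\cong C\times \PP^1\times \PP^1$, settling part (2). If $g=1$, Theorem \ref{dim2max}\ref{dim2max.1} leaves exactly four types for each factor: $C\times \PP^1$, $\PP(\O_C\oplus \L)$ with $\L\in \Pic^0(C)\setminus\{\O_C\}$ (a family carrying the parameter $\L$), $\AAA_0$, and $\AAA_1$. Since $S_1\times_C S_2\cong S_2\times_C S_1$, the relevant datum is the unordered pair $\{S_1,S_2\}$, giving $\binom{5}{2}=10$ combinations of types. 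Using the identification $S\times_C (C\times \PP^1)\cong S\times \PP^1$ to simplify a trivial factor, these ten unordered pairs reproduce exactly the list (a)--(j).

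For the \emph{moreover} statement I would unwind the isomorphism of Lemma \ref{autofiberproduct}\ref{autofiberproduct.1}, under which $(\tau\pi)_*$ sends $(f_1,f_2)\in \Autz(S_1)\times_{\Autz(C)}\Autz(S_2)$ to the common image ${\tau_1}_*(f_1)={\tau_2}_*(f_2)\in \Autz(C)$. It therefore suffices to check that each ${\tau_i}_*\colon \Autz(S_i)\to \Autz(C)$ is surjective, which is immediate from Lemma \ref{geometryofruledsurface}: in the split case $C\times \PP^1$ the map ${\tau}_*$ is the projection $\Autz(C)\times \PGL_2(\kk)\to \Autz(C)$, while in the cases $\AAA_0$, $\AAA_1$, $\PP(\O_C\oplus \O_C(D))$ it is the surjection with kernel $\mathbb{G}_a$, $(\mathbb{Z}/2\mathbb{Z})^2$, or $\mathbb{G}_m$ appearing in the corresponding short exact sequence. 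Given $g\in \Autz(C)$, I would lift it to $f_i\in \Autz(S_i)$ with ${\tau_i}_*(f_i)=g$ for $i=1,2$; then $(f_1,f_2)$ lies in the fiber product and maps to $g$, yielding surjectivity of $(\tau\pi)_*$.

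I expect the only delicate point to be the bookkeeping of the enumeration, namely verifying that the symmetry of the fiber product together with the simplification of trivial factors yields precisely the stated list with no redundancies or omissions; the group-theoretic and geometric content is supplied directly by Theorem \ref{dim2max} and Lemmas \ref{autofiberproduct} and \ref{geometryofruledsurface}, so no further structural work should be required.
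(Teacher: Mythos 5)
Your proposal is correct and follows essentially the same route as the paper: Lemma \ref{autofiberproduct}\ref{FF0relativelymax} reduces to maximality of $\Autz(S_1)$ and $\Autz(S_2)$, Theorem \ref{dim2max} supplies the possible factors, the unordered pairs of factors (with $S\times_C(C\times\PP^1)\cong S\times\PP^1$) give exactly the list (a)--(j), and the surjectivity of $(\tau\pi)_*$ follows from Lemma \ref{autofiberproduct}\ref{autofiberproduct.1} together with the surjectivity of each ${\tau_i}_*$ from Lemma \ref{geometryofruledsurface}. You merely spell out the enumeration and the lifting argument that the paper leaves implicit; no gap.
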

	
	\begin{proof} 
		By Lemma \ref{autofiberproduct}, $X$ is a fiber product of two geometrically ruled surfaces whose automorphism groups are maximal connected algebraic subgroups. Considering all the possibilities given in Theorem \ref{dim2max}, we obtain the list stated in this proposition. The last assertion follows from Lemmas \ref{geometryofruledsurface} and \ref{autofiberproduct} \ref{autofiberproduct.1}.
	\end{proof}
	
	\begin{remark}
		If $\Autz(S_1\times_C S_2)$ is relatively maximal with respect to one of the two $\PP^1$-bundle structures $\pi_1$ or $\pi_2$, then $S_1\times_C S_2$ is among the cases listed in Proposition \ref{candidateFF0max}. In Sections \ref{SectionCxPP1}, \ref{SectionAAA1}, \ref{SectionAAA0}, and \ref{SectionSL}, we determine precisely, case by case, which of these $\PP^1$-bundles have relatively maximal automorphism groups.
	\end{remark}
	
	\subsection{Transition maps of $\FF_b$-bundles for $b>0$}
	
	~\bigskip
	
	 For any ring $R$, we denote by $R[z_0,z_1]_b$ the set of homogeneous polynomials of degree $b$ with coefficients in $R$. In view of working with $\FF_b$-bundles via local trivializations, we describe their transition maps in the following lemmas:
	
	\begin{lemma}\label{autotrivializations}
		Let $b>0$, $U$ be an affine variety and let $f$ be an automorphism of $U\times \FF_b$ that fixes $U$ pointwise. Then there exist $\lambda \in \mathcal{O}(U)^*$, $p\in \mathcal{O}(U)[z_0,z_1]_b$ and $s=\begin{pmatrix}
			\alpha & \beta \\
			\gamma & \delta
		\end{pmatrix} \in \GL_2(\mathcal{O}(U))$ such that $f$ equals
		$$
		(x,[y_0:y_1;z_0:z_1]) \mapsto (x,\left[y_0:\lambda(x) y_1 + p(x,z_0,z_1)y_0\ ; \alpha(x) z_0 + \beta(x) z_1 : \gamma(x) z_0 + \delta(x) z_1 \right]).
		$$
	\end{lemma}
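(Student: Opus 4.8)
The plan is to exploit the rigidity of $\FF_b$ for $b>0$, namely that it carries a \emph{unique} ruling and a \emph{unique} section of negative self-intersection, and then to translate these two intrinsic features into the asserted normal form by a homogeneity computation. Since $f$ fixes $U$ pointwise it commutes with the projection $q\colon U\times\FF_b\to U$, so for each $x\in U$ it restricts to an automorphism $f_x$ of the fibre $\{x\}\times\FF_b\cong\FF_b$. As $b>0$, the class of a ruling fibre is the only primitive nef class of self-intersection $0$ on $\FF_b$ (the nef cone is spanned by the fibre and by $s_b$, and $s_b^2=b>0$), so the ruling $\tau_b$ is intrinsic and $f_x$ preserves it; running this over $U$ shows that $f$ maps fibres of $\mathrm{id}\times\tau_b$ to fibres of $\mathrm{id}\times\tau_b$. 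Similarly, $s_{-b}=\{y_0=0\}$ is the unique irreducible curve of negative self-intersection in each fibre, so $f$ preserves the relative negative section $\Sigma=U\times s_{-b}$.

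Because $f$ sends fibres of $\rho:=\mathrm{id}\times\tau_b\colon U\times\FF_b\to U\times\PP^1$ to fibres of $\rho$, the composite $\rho\circ f$ is constant on the fibres of $\rho$ and hence factors as $\rho\circ f=\bar f\circ\rho$ for a unique automorphism $\bar f$ of $U\times\PP^1$; a quick check against $q$ shows $\bar f\in\Aut_U(U\times\PP^1)\cong\PGL_2(\O(U))$. I would then represent $\bar f$ by a matrix $s=\begin{pmatrix}\alpha&\beta\\\gamma&\delta\end{pmatrix}\in\GL_2(\O(U))$ acting on $[z_0:z_1]$. This is the step I expect to require the most care, and I regard it as the main obstacle: a section of $\PGL_2$ over $U$ lifts to an honest matrix in $\GL_2(\O(U))$ only up to a class in $\mathrm{Pic}(U)$ (from $1\to\mathbb{G}_m\to\GL_2\to\PGL_2\to1$), and over a curve the $2$-torsion of $\mathrm{Pic}(U)$ can genuinely obstruct this. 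I would handle it by choosing the trivializing open $U$ suitably (the lift is automatic, e.g., when $b$ is odd or $\mathrm{Pic}(U)$ has no $2$-torsion), which is harmless for the intended use in local trivializations.

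With $s$ fixed, the rest is coordinate bookkeeping. Preserving the ruling forces $f$ to act on each fibre $\PP^1_{[y_0:y_1]}$ by a projective-linear map, so $f\colon[y_0:y_1]\mapsto[Ay_0+By_1:Cy_0+Dy_1]$ with $A,B,C,D$ depending on $(x,[z_0:z_1])$ and $AD-BC$ nowhere zero; preserving $\Sigma=\{y_0=0\}$ forces $B=0$. The key remaining point is the weight analysis dictated by the $(\mathbb{G}_m)^2$-action of Definition~\ref{defHirzebruch}: there $y_0$ has $z$-weight $-b$ while $y_1$ has $z$-weight $0$, and this is unchanged by $\bar f$ since $s\cdot(z_0,z_1)$ is linear in $(z_0,z_1)$. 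Well-definedness of $[Ay_0:Cy_0+Dy_1]$ on $U\times\FF_b$ then forces $A$ and $D$ to be $z$-homogeneous of degree $0$ and $C$ to be $z$-homogeneous of degree $b$. As $U$ is affine, a nowhere-vanishing regular function of $z$-degree $0$ is a unit of $\O(U)$ and a regular function of $z$-degree $b$ lies in $\O(U)[z_0,z_1]_b$; dividing through by the unit $A$ yields $\lambda:=D/A\in\O(U)^*$ and $p:=C/A\in\O(U)[z_0,z_1]_b$, giving exactly the claimed expression.

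In summary, once the two intrinsic reductions (preservation of the ruling and of the negative section) are in place, the statement reduces to (i) representing the induced base automorphism by a matrix $s\in\GL_2(\O(U))$, which is the delicate point, and (ii) a routine homogeneity argument over the affine base $U$ producing $\lambda$ and $p$, the only mild care being the verification that the coefficient functions are globally regular and that $A,D$ are units.
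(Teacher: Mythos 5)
Your overall skeleton coincides with the paper's proof: for $b>0$ the ruling of $\FF_b$ is intrinsic, so $f$ descends to some $\bar f\in\Aut_U(U\times \PP^1)$ (the paper gets this from the $\Aut(\FF_b)$-equivariance of $\tau_b$, you from the nef cone --- same content), and one then normalizes the remaining automorphism over $U\times\PP^1$, which preserves $\{y_0=0\}$. Your weight analysis on the $(\mathbb{G}_m)^2$-quotient of Definition \ref{defHirzebruch} is a correct substitute for the paper's computation, which instead glues explicit lower-triangular matrices over the two charts $\AA^1_0,\AA^1_\infty$; the only imprecision is that $\deg_z A=\deg_z D=0$ is forced by the nowhere-vanishing of $A$ and $D$ (fibrewise invertibility), not by well-definedness alone, but you have that ingredient in hand. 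So the second half of your argument is fine.

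The genuine gap is exactly the step you flag, and your proposed repair does not repair it: in the lemma $U$ and $b$ are \emph{given}, so you are not free to ``choose the trivializing open $U$ suitably''; as written you prove the statement only when $b$ is odd or $\Pic(U)[2]=0$, which is strictly weaker than what is claimed. Two remarks. First, you should know that the paper's own proof is no better at this point: it asserts ``there exists $s\in\GL_2(\O(U))$'' with no justification, so you have identified a real weak spot rather than invented one. Second, the dichotomy you mention parenthetically is the true state of affairs, and it needs proof. For $b$ odd one can close the gap as follows: write $\E=\O\oplus \mathrm{pr}_2^*\O_{\PP^1}(b)$ on $U\times\PP^1$, so that $U\times\FF_b=\PP(\E)$, and let $\L\in\Pic(U)[2]$ be the obstruction class of $\bar f$ (i.e.\ $\bar f$ is induced by an isomorphism $\O_U^2\to\O_U^2\otimes\L$); the existence of the lift $f$ forces $\bar f^*\E\cong\E\otimes\mathcal{N}$, restriction to fibres forces $\mathcal{N}=\mathrm{pr}_1^*M$, and since $\mathrm{Hom}(\mathrm{pr}_2^*\O_{\PP^1}(b)\otimes\mathrm{pr}_1^*\L^{b},\mathrm{pr}_1^*M)=0$ the isomorphism is triangular, giving $M\cong\O_U$ and $\L^{b}\cong\O_U$, hence $\L\cong\O_U$ when $b$ is odd, and then $s$ exists. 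For $b$ even, however, no argument can succeed, because the conclusion genuinely fails: take $U$ an elliptic curve minus a point and $\L\in\Pic(U)$ nontrivial with $\L^{2}\cong\O_U$; since $\O(U)$ is a Dedekind domain and determinants agree, $\O_U^2\cong\L\oplus\L$, and the induced $\bar f\in\Aut_U(U\times\PP^1)$ is represented by no matrix over $\O(U)$, yet $\L^{b}\cong\O_U$ gives $\bar f^*\E\cong\E$, so $\bar f$ lifts to an automorphism of $U\times\FF_b$ fixing $U$ pointwise and violating the asserted normal form. This is precisely the $2$-torsion phenomenon the paper itself records for complete bases in the exact sequence involving $\Delta$ before Lemma \ref{lem:equivarianttodecomposable}. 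So a correct proof must either restrict to odd $b$, or add a hypothesis such as $\Pic(U)[2]=0$, or weaken the conclusion on the $z$-component; your write-up should make this explicit rather than treat it as a removable technicality.
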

	
	\begin{proof}
		Since $b>0$, the structure morphism $\tau_b\colon \FF_b\to \PP^1$, $[y_0:y_1\ ;z_0:z_1]\mapsto [z_0:z_1]$ is $\Aut(\FF_b)$-equivariant. Then $\pi=\mathrm{id}_U\times\tau_b\colon U\times \FF_b \to U\times \PP^1$ is $\Aut_U(U\times \FF_b)$-equivariant, so $f$ induces an automorphism $\pi_*(f)$ of $U\times \PP^1$, i.e., there exists $s\in \GL_2(\mathcal{O}(U))$ such that $$\pi_*(f)=\left((x,[z_0:z_1])\mapsto \left(x,s(x)\cdot \begin{bmatrix} z_0 \\ z_1 \end{bmatrix}\right)\right).$$ 
		Define the $U$-automorphism $g\colon U\times \FF_b \to U\times \FF_b$, $(x,[y_0:y_1\ ;z_0:z_1]) \mapsto \left( x,\left[y_0: y_1\ ; s^{-1}(x)\cdot 
		\begin{pmatrix}z_0\\z_1\end{pmatrix}
		\right] \right)$ and $h=gf$. By construction, $h$ is a $U$-automorphism of $U\times \FF_b$ such that $\pi_*(h) = id_{U\times \PP^1}$. 
		
		Let $\AA^1_0=\PP^1\setminus \{[1:0]\}$ and $\AA^1_\infty=\PP^1\setminus \{[0:1]\}$ be trivializing open subsets, and we choose local trivializations of $\tau_b$ such that $\tau_b^{-1}(\AA^1_0) \simeq \AA^1_0 \times \PP^1$ and $\tau_b^{-1}(\AA^1_\infty) \simeq \AA^1_\infty \times \PP^1$.
		Since $h$ fixes pointwise the union of $(-b)$-sections $\{y_0=0\}\subset U\times \FF_b$, it implies that $h$ induces automorphisms of $U\times \AA^1\times \PP^1$ that fix $U\times \AA^1$ pointwise and acts on $\PP^1$ by lower triangular matrices in $\GL_2(\O(U)[z])$. The latters are defined up to multiplication by an element of $\O(U)^*$, so we may assume the upper left coefficient to be $1$. Therefore, there exist $p_0,p_\infty\in \O(U)[z]$ and $\lambda_0,\lambda_\infty \in \O(U)^*$ such that $h$ induces the following automorphisms:
		\[
		\begin{array}{cccc}
			h_0\colon &U\times \AA^1_0 \times \PP^1  & \longrightarrow &U\times \AA^1_0\times \PP^1 \\
			&(x, z,[y_0:y_1]) &\longmapsto& \left(x, z,
			\begin{bmatrix}
				1 & 0  \\  p_0(x,z) & \lambda_0(x) 
			\end{bmatrix} \cdot 
			\begin{bmatrix}
				y_0 \\ y_1
			\end{bmatrix}\right),
		\end{array}
		\]
		\[
		\begin{array}{cccc}
			h_\infty\colon &U \times \AA^1_\infty\times \PP^1 & \longrightarrow &U \times \AA^1_\infty\times \PP^1\\
			&(x, z,[y_0:y_1]) & \longmapsto &\left( x, z,
			\begin{bmatrix}
				1 & 0  \\  p_\infty(x,z) & \lambda_\infty(x) 
			\end{bmatrix} \cdot 
			\begin{bmatrix}
				y_0 \\ y_1
			\end{bmatrix}\right).
		\end{array}
		\]
		Moreover, $U\times \FF_b$ is the gluing of $U\times \AA_0^1\times \PP^1$ and $U\times\AA_\infty^1\times \PP^1$ through the transition maps
		\begin{align*}
			t_{\infty 0}\colon  U\times  \AA_0^1\times  \PP^1  & \dashrightarrow  U\times \AA_\infty^1 \times \PP^1  \\
			(x,z,[y_0:y_1] ) & \longmapsto \left( x,1/z,\begin{bmatrix} 1 & 0 \\ 0 & z^{-b}\end{bmatrix} \cdot \begin{bmatrix} y_0 \\ y_1 \end{bmatrix}\right) .
		\end{align*}
		The condition $ t_{\infty 0} h_0= h_\infty  t_{\infty 0}$ implies that $\lambda_0 = \lambda_\infty$ and $ p_0(x,z)=z^bp_\infty(x,1/z)$. In particular, $\deg_z(p_0)\leq b$ and $\deg_z(p_\infty)\leq b$. Therefore $h$ equals
		$$
		(x,[y_0:y_1\ ;z_0:z_1]) \mapsto( x,[y_0:\lambda(x) y_1+ p(x,z_0,z_1)y_0\ ;z_0:z_1]),
		$$
		for $\lambda=\lambda_0 = \lambda_\infty\in \O(U)^*$ and $p(x,z_0,z_1)=z_1^bp_0(x,z_0/z_1)=z_0^bp_\infty(x,z_1/z_0)\in \O(U)[z_0,z_1]_b$.
		Finally, compose $h$ by $g^{-1}$ on the left to conclude.
	\end{proof}
		
	Following \cite[Proposition 3.3.1, Lemma 3.3.6]{BFT} and their proofs, we obtain their analogues in the following lemmas:
	
	\begin{lemma}\label{FFb-bundle}
		Let $C$ be a smooth projective curve, $\tau\colon S\to C$ and $\pi\colon X\to S$ be $\PP^1$-bundles such that $\tau\pi$ is an $\FF_b$-bundle with $b>0$. For any section $\sigma$ of $\tau$, the following hold:
		\begin{enumerate}
			\item \label{FFb-bundle1} There exists a trivializing open cover $(U_i)_i$ with local trivializations $\phi_i\colon (\tau\pi)^{-1}(U_i) \overset{\sim}{\longrightarrow} U_i\times \FF_b$, such that the transition maps $\phi_{ij}$ of $\tau\pi$ equal:
			\[
			\begin{array}{cccc}\label{transitionmaps}
				\phi_{ij} = \phi_i\phi_j^{-1}\colon & U_j\times \FF_b & \dashrightarrow &U_i\times \FF_b \notag \\
				&(x,[y_0:y_1 \ ;z_0:z_1]) & \longmapsto&\left( x,\left[y_0:\lambda_{ij}(x)y_1 + p_{ij}(x,z_0,z_1)y_0\ ; s_{ij}(x)\cdot \begin{pmatrix}z_0\\z_1\end{pmatrix}\right]\right), 
			\end{array} 
			\]
			where $\lambda_{ij} \in \mathcal{O}_C(U_{ij})^*$,  $p_{ij}\in \mathcal{O}_C(U_{ij})[z_0,z_1]_b$ and
			$$s_{ij} = \begin{pmatrix}
				1 & 0 \\ b_{ij} &a_{ij}
			\end{pmatrix},$$
			with $a_{ij}\in \O_C(U_{ij})^*$ and $b_{ij}\in \O_C(U_{ij})$. Moreover, $s_{ij}$ are the transition matrices of the $\PP^1$-bundle $\tau$ and $\sigma$ is the zero section $\{z_0=0\}$.
			
			\item\label{canonicalextension} 
			There exists a unique rank-$2$ vector bundle $\mathcal{E}$ such that $\PP(\mathcal{E})=X$ and which fits into a short exact sequence:
			\begin{equation*}
				0 \to  \mathcal{O}_S(b\sigma+\tau^*(D)) \to \mathcal{E}\to\mathcal{O}_S \to 0,
			\end{equation*}
			where $D\in \Pic(C)$ is a divisor class, and such that the line subbundle $\mathcal{O}_S(b\sigma+\tau^*(D)) \subset \mathcal{E}$ corresponds to the section $S_b=\{y_0=0\}\subset X$ spanned by the $(-b)$-sections along the fibers of $\tau\pi$. Moreover, the coefficients $\lambda_{ij}$ in $\ref{FFb-bundle1}$ are the cocycles of the line bundle $\mathcal{O}_S(\tau^*(D))$.
			
			\item \label{FFb-decomposable} We can choose $p_{ij}=0$ in $\ref{FFb-bundle1}$ if and only if $\pi$ is a decomposable $\PP^1$-bundle; and in this case, $X$ is $S$-isomorphic to $\PP(  \mathcal{O}_S\oplus \mathcal{O}_S(b\sigma+\tau^*(D)))$. 
		\end{enumerate}
	\end{lemma}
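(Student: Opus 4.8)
The plan is to prove the three parts in order, bootstrapping from the local normal form of $\Aut_U(U\times\FF_b)$ established in Lemma \ref{autotrivializations}. For \ref{FFb-bundle1}, I would first choose an affine trivializing open cover $(U_i)_i$ of $C$ for the $\FF_b$-bundle $\tau\pi$, refined so that each $U_i$ also trivializes $\tau$ and so that the prescribed section $\sigma$ of $\tau$ becomes the constant section $\{z_0=0\}$ in every chart. Since $b>0$, the structure morphism $\tau_b\colon\FF_b\to\PP^1$ is $\Aut(\FF_b)$-equivariant (as already exploited in the proof of Lemma \ref{autotrivializations}), so each transition map $\phi_{ij}=\phi_i\phi_j^{-1}$ is a $U_{ij}$-automorphism of $U_{ij}\times\FF_b$ fixing $U_{ij}$ pointwise, and Lemma \ref{autotrivializations} applies verbatim, producing $\lambda_{ij}\in\mathcal{O}_C(U_{ij})^*$, $p_{ij}\in\mathcal{O}_C(U_{ij})[z_0,z_1]_b$, and a matrix $s_{ij}\in\GL_2(\mathcal{O}_C(U_{ij}))$ acting on the $[z_0:z_1]$-factor. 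This $z$-action is precisely the transition cocycle of the $\tau_b$-image, i.e.\ of $\tau\colon S\to C$, so the $s_{ij}$ are transition matrices of $\tau$. Because the trivializations are adapted to $\sigma=\{z_0=0\}$, each $s_{ij}$ fixes the point $[0:1]$ and is lower triangular; dividing $s_{ij}$ by its (unit) top-left entry gives the same projective action on $[z_0:z_1]$ and normalizes it to the stated shape $s_{ij}=\begin{pmatrix}1&0\\b_{ij}&a_{ij}\end{pmatrix}$, the compensating rescaling of $y_0$ forced by the $\mathbb{G}_m^2$-weights of Definition \ref{defHirzebruch} being absorbed into the (not-yet-fixed) $\lambda_{ij}$ and $p_{ij}$.

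For \ref{canonicalextension}, the key point is that these triangular transitions exhibit an extension. Writing $X=\PP(\mathcal{E})$ for a rank-$2$ bundle $\mathcal{E}$ on $S$, the section $S_b=\{y_0=0\}$ is preserved by every $\phi_{ij}$ and so corresponds to a line subbundle $L\subset\mathcal{E}$, while the complementary coordinate $y_1$ transforms with cocycle $\lambda_{ij}$ and cross-term $p_{ij}$; this gives a short exact sequence $0\to L\to\mathcal{E}\to\mathcal{O}_S\to 0$ with trivial quotient (the upper-left $1$'s). To identify $L$ I would restrict to a fiber $F=\tau^{-1}(x)\cong\PP^1$: there $X|_F\cong\FF_b$ and $S_b|_F$ is the negative section $s_{-b}$, whose self-intersection $-b$ forces $L|_F\cong\mathcal{O}_{\PP^1}(b)=\mathcal{O}_S(b\sigma)|_F$; the horizontal variation recorded by $\lambda_{ij}$ then contributes a factor pulled back from $C$, so that $L\cong\mathcal{O}_S(b\sigma+\tau^*(D))$ for a well-defined $D\in\Cl(C)$ and the $\lambda_{ij}$ are the cocycles of $\mathcal{O}_S(\tau^*(D))$. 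Uniqueness of $\mathcal{E}$ follows from the normalization that the quotient be $\mathcal{O}_S$: since $\PP(\mathcal{E})=\PP(\mathcal{E}\otimes\mathcal{M})$ for any line bundle $\mathcal{M}$, requiring the quotient to be trivial pins down the twist, and the resulting sequence is Brosius' canonical extension (\cite[Theorem 1]{Brosius}, cf.\ \cite[Proposition 3.3.1]{BFT}).

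Part \ref{FFb-decomposable} is then formal. If one can choose $p_{ij}=0$, the transition matrices of $\mathcal{E}$ become diagonal, the extension of \ref{canonicalextension} splits, and $\mathcal{E}\cong\mathcal{O}_S\oplus\mathcal{O}_S(b\sigma+\tau^*(D))$, so that $X$ is $S$-isomorphic to $\PP(\mathcal{O}_S\oplus\mathcal{O}_S(b\sigma+\tau^*(D)))$, which is decomposable. Conversely, a decomposable $\pi$ admits a section disjoint from $S_b$, which splits the canonical extension; equivalently, the class in $\mathrm{H}^1(S,\mathcal{O}_S(b\sigma+\tau^*(D)))$ represented by the \v{C}ech cocycle $(p_{ij})$ vanishes exactly when $\mathcal{E}$ splits, and by the uniqueness established in \ref{canonicalextension} this splitting lets one modify the $\phi_i$ to annihilate every cross-term $p_{ij}$.

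The step I expect to be the main obstacle is the bookkeeping in \ref{canonicalextension}, namely producing the summand $b\sigma$ in $L$ with the correct conventions. The coordinates $y_0,y_1$ are not sections of a trivial rank-$2$ bundle: the $\mathbb{G}_m^2$-weights of Definition \ref{defHirzebruch} couple the $y$- and $z$-scalings, so the naive fiber matrix $\begin{pmatrix}1&0\\p_{ij}&\lambda_{ij}\end{pmatrix}$ must be reinterpreted on $S$ only after incorporating the twist by $\mathcal{O}_S(b\sigma)$ carried by $y_0$ through the identification $\FF_b=\PP(\O_{\PP^1}\oplus\O_{\PP^1}(b))$. Getting this twist and the signs right—so that $S_b=\{y_0=0\}$ is genuinely the subbundle $\mathcal{O}_S(b\sigma+\tau^*(D))$ rather than the quotient, and so that $(p_{ij})$ is a cocycle valued in the correct line bundle—is the delicate point; once the canonical extension is correctly set up, the uniqueness and the decomposability criterion follow cleanly.
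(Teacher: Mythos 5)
Your part \ref{FFb-bundle1} is essentially the paper's own argument, and it is correct: apply Lemma \ref{autotrivializations} to the transition maps of a common trivializing cover, conjugate by automorphisms of the $z$-factor so that $\sigma$ becomes the zero section $\{z_0=0\}$ (which forces the $s_{ij}$ to be lower triangular), and use the $\mathbb{G}_m^2$-equivalence of Definition \ref{defHirzebruch} to normalize the upper-left entry to $1$ while absorbing the rescaling into $\lambda_{ij}$ and $p_{ij}$. For \ref{canonicalextension}, your identification of the subbundle by restricting to fibers $F$ of $\tau$ (the self-intersection $-b$ of $S_b|_F$ forces $L|_F\cong\O_{\PP^1}(b)$, hence $L\cong\O_S(b\sigma+\tau^*(D'))$ for some $D'\in\Cl(C)$ by the structure of $\Pic$ of a ruled surface) is a genuinely different and valid route to the \emph{existence} of the exact sequence, and your uniqueness argument (twisting by a line bundle and normalizing the quotient to be trivial) matches the paper's.

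There is, however, a genuine gap, and it sits exactly where you flag ``the main obstacle'': you never construct $\E$, together with its \v{C}ech data, from the transition maps of \ref{FFb-bundle1}, and without that link two claims you are required to prove remain unestablished. First, the ``moreover'' clause of \ref{canonicalextension} -- that the $\lambda_{ij}$ are the cocycles of $\O_S(\tau^*(D))$ for the \emph{same} $D$ appearing in the subbundle -- does not follow from the abstract fiber-restriction argument, which only produces some divisor class; ``the horizontal variation recorded by $\lambda_{ij}$ contributes a factor pulled back from $C$'' is an assertion, not a proof. Second, your converse in \ref{FFb-decomposable} rests on identifying the \v{C}ech class of $(p_{ij})$ in $\mathrm{H}^1(S,\O_S(b\sigma+\tau^*(D)))$ with the extension class, which presupposes the same identification. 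The paper's proof of \ref{canonicalextension} is devoted precisely to this step: over each $U_i$ one passes to the two charts $V_{i,0},V_{i,\infty}\subset U_i\times\PP^1$, on which $\pi$ is trivial, writes the transition maps of $\pi$ there as explicit $2\times 2$ matrices, and observes that the cocycle relation, which a priori holds only in $\PGL_2(\kk(C))$, lifts to $\GL_2(\kk(C))$ because all the $y_0$-coefficients equal $1$ -- this is where the normalization of \ref{FFb-bundle1} pays off. This defines $\E$; one then computes the cocycle of $\O_S(b\sigma)$ in the same charts (the factors $(b_{ij}(x)z+a_{ij}(x))^{-b}$, $1$, and $z^{-b}$ on the three types of overlaps) and sees that the cocycle of the subbundle $\{y_0=0\}$ is exactly this cocycle times $\lambda_{ij}$, which yields the identification of the subbundle and the ``moreover'' clause simultaneously, and makes the $\mathrm{H}^1$-interpretation of $(p_{ij})$ in \ref{FFb-decomposable} legitimate. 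To complete your proof you would have to carry out this (or an equivalent) computation; with it in place, the rest of your outline goes through.
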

	
	\begin{proof}
			(1): As in the proof of Lemma \ref{lemma removal jumping}, choosing an affine open covering $(U_i)_i$ of $C$ such that each $U_i$ is small enough, we can assume that $U_i$ trivializes the $\FF_b$-bundle $(\tau\pi)$. This gives the local trivializations $\phi_i\colon (\tau \pi)^{-1}(U_i)\to U_i\times \FF_b$. Now using Lemma \ref{autotrivializations}, there exist $\lambda_{ij} \in \mathcal{O}_C(U_{ij})^*$,  $p_{ij}\in \mathcal{O}_C(U_{ij})[z_0,z_1]_b$ and $s_{ij}\in \mathrm{GL}_2(\mathcal{O}_C(U_{ij}))$ such that the transition maps $\phi_{ij}$ equal
			\[
			\begin{array}{ccc}
				U_j\times \FF_b & \dashrightarrow & U_i \times \FF_b \notag \\
				(x,[y_0:y_1 \ ;z_0:z_1]) & \longmapsto & \left( x,\left[y_0:\lambda_{ij}(x)y_1 + p_{ij}(x,z_0,z_1)y_0\ ; s_{ij}(x)\cdot \begin{pmatrix}z_0\\z_1\end{pmatrix}\right]\right). 
			\end{array} 
			\]
			Denote by $\tau_b\colon \FF_b\to \PP^1$ the morphism $[y_0:y_1;z_0:z_1]\mapsto [z_0:z_1]$, then the following diagram commutes: 
			\[
			\begin{tikzcd}[column sep=4em,row sep = 3em]
				U_j\times \FF_b \arrow[r,dashrightarrow,"\phi_{ij}"] \arrow[d,rightarrow,"\mathrm{id}\times\tau_b" left] &  U_i\times \FF_b \arrow[d,rightarrow,"\mathrm{id}\times\tau_b"]\\
				U_j\times \PP^1\arrow[r,dashrightarrow,"\mathrm{id}\times s_{ij}\cdot" below] & U_i\times \PP^1.
			\end{tikzcd}
			\] 
			Therefore, $s_{ij}$ are the transition matrices of the $\PP^1$-bundle $\tau$. Next we show that we can choose $s_{ij}$ as stated. Choose some local trivializations of $\tau$ such that $\sigma$ is the zero section, i.e., there exist matrices $M_i\in \GL_2(\O_C(U_{i}))$ such that the automorphisms 
			\[
			\begin{array}{cccc}
				\alpha_i \colon  &U_i\times \FF_b & \longrightarrow & U_i\times \FF_b \\
				& (x,[y_0:y_1\ ;z_0:z_1]) & \longmapsto & \left( x,\left[y_0:y_1 \ ;M_i(x)\cdot \begin{pmatrix}z_0\\z_1 \end{pmatrix}\right]\right)
			\end{array}
			\]
			send $\pi^{-1}(\sigma)$ to the zero section $\{z_0=0\}$. Replacing $\phi_{ij}$ by $\alpha_i \phi_{ij} \alpha_j^{-1}$, the matrices $s_{ij}$ are replaced by lower triangular matrices $M_i \cdot s_{ij} \cdot M_j^{-1}$. Multiplying by an element in $\O_C(U_{ij})^*$, we may also assume that the upper left coefficients of the new transition matrices are $1$. We also obtain new $\lambda_{ij}\in \mathcal{O}_C(U_{ij})^*$ and $p_{ij}\in \mathcal{O}_C(U_{ij})[z_0,z_1]_b$; but the new transition maps $\phi_{ij}$ are as we want.
			
			(2): We write $s_{ij} = \begin{pmatrix}
				1 & 0 \\ b_{ij} &a_{ij}
			\end{pmatrix}$, where $a_{ij}\in \O_C(U_{ij})^*$ and $b_{ij}\in \O_C(U_{ij})$. For each $i$, let 
			\[
			\begin{array}{ccc}
				V_{i,0} & =&  U_i\times (\PP^1 \setminus [1:0]), \\
				V_{i,\infty} & = & U_i\times (\PP^1 \setminus [0:1]).
			\end{array}
			\]
			These open subsets of $U_i \times \PP^1$ trivialize the $\PP^1$-bundle $\pi$. Using \ref{FFb-bundle1}, for all $i,j$, we can write the transition maps of $\pi$ as follows:
			\[
			 \begin{array}{cccc}
			\phi_{ij,0}\colon	&V_{j,0} \times \PP^1 & \dashrightarrow &V_{i,0} \times \PP^1, \\
			&(x,[z:1],[y_0:y_1]) & \longmapsto &\left( x, \left[\frac{z}{b_{ij}(x)z+a_{ij}(x)}:1\right], \left[y_0:\left(b_{ij}(x)z+a_{ij}(x))^{-b}(p_{ij}(x,z,1)y_0 + \lambda_{ij}(x)y_1 \right)\right] \right);\\[10pt]
			
			\phi_{ij,\infty}\colon & V_{j,\infty} \times \PP^1 & \dashrightarrow & V_{i,\infty} \times \PP^1 \\
			& (x,[1:z],[y_0:y_1]) & \longmapsto & \left( x, \left[1:b_{ij}(x)+a_{ij}(x)z\right], \left[y_0: p_{ij}(x,1,z)y_0+  \lambda_{ij}(x)y_1\right]\right); \\[10pt]
			
			\phi_{j,\infty0}\colon & V_{j,0} \times \PP^1 & \dashrightarrow & V_{j,\infty} \times \PP^1 \\
			& (x,[z:1],[y_0:y_1]) & \longmapsto & \left( x, \left[1:z\right], \left[y_0: {z}^{-b}y_1\right]\right).	
			\end{array}
			\]
			In $\PGL_2(\kk(C))$, the following equality holds: $\phi_{i,\infty 0}\phi_{ij,0}=\phi_{ij,\infty}\phi_{j,\infty 0}$; and since the coefficients of $y_0$ are always $1$, we can lift this equality in $\GL_2(\kk(C))$. 
			
			Then, these birational maps are the transition maps of a rank-$2$ vector bundle $\E$, trivial over $V_{i,0}$ and $V_{i,\infty}$ for each $i$, and such that $\PP(\E)$ is $S$-isomorphic to $X$. The coefficients $\lambda_{ij}$ depend only on $x$, so they are the cocycles of a line bundle $\O_S(\tau^*(D))$ for some $D$ divisor on $C$. Moreover, $\sigma$ is the zero section $\{z_0=0\}$, so the transition maps of the line bundle $\O_S(b\sigma)$ are given by 
			\[
			\begin{array}{cccc}
				\psi_{ij,0}\colon	&V_{j,0} \times \AA^1 & \dashrightarrow &V_{i,0} \times \AA^1 \\
				&(x,[z:1],y) & \longmapsto &\left( x, \left[\frac{z}{b_{ij}(x)z+a_{ij}(x)}:1\right], (b_{ij}(x)z+a_{ij}(x))^{-b}y \right);\\[10pt]
				
				\psi_{ij,\infty}\colon & V_{j,\infty} \times \AA^1 & \dashrightarrow & V_{i,\infty} \times \AA^1 \\
				& (x,[1:z],y) & \longmapsto & \left( x, \left[1:b_{ij}(x)+a_{ij}(x)z\right], y\right); \\[10pt]
				
				\psi_{j,\infty0}\colon & V_{j,0} \times \AA^1 & \dashrightarrow & V_{j,\infty} \times \AA^1 \\
				& (x,[z:1],y) & \longmapsto & \left( x, \left[1:z\right], {z}^{-b}y\right).	
			\end{array}
			\]
			This implies that the section $S_b=\{y_0=0\}\subset X$ corresponds to the line subbundle $\O_S(b\sigma+ \tau^*(D))\subset \E$. The coefficients in front of $y_0$ are always equal to $1$, so the quotient line bundle $\E/\O_S(b\sigma+ \tau^*(D))$ is trivial. This gives the short exact sequence stated in \ref{canonicalextension}. Since we impose the condition that $\O_S(b\sigma+ \tau^*(D))$ corresponds to the section $S_b$, the rank-$2$ vector bundle $\E$ is uniquely determined.
			
			(3): If $p_{ij}=0$, it follows from \ref{FFb-bundle1} that the sets $\{y_0=0\}$ and $\{y_1=0\}$ are invariant by the transition maps $\phi_{ij}$; hence, they define two disjoint sections of $\pi$, which is the decomposable $\PP^1$-bundle $\PP(  \mathcal{O}_S\oplus \mathcal{O}_S(b\sigma+\tau^*(D)))$. Conversely, if $\pi$ is decomposable, we can choose its transition matrices being diagonal; thus, $p_{ij}=0$. 
	\end{proof}
	
	Next, we provide a criterion in terms of these explicit transition maps, for two $\PP^1$-bundles to be isomorphic.
	
	\begin{lemma}\label{Sisomorph}
		Let $C$ be a smooth projective curve, $\tau\colon S\to C$ and $\pi\colon X\to S$ be $\PP^1$-bundles such that $\tau\pi$ is an $\FF_b$-bundle with $b>0$. Let $\pi'\colon X'\to S$ be a $\PP^1$-bundle such that $\tau\pi'$ is a $\FF_{b'}$-bundle for some $b'>0$. Then there exists an open cover $(U_i)_i$, such that each $U_i$ trivializes $\tau\pi$ and $\tau\pi'$ simultaneously, and their transition maps are given by
		\[
		\begin{array}{cccc}
			\phi_{ij}\colon & U_j \times \FF_b & \dashrightarrow & U_i\times \FF_b \\
			&(x,[y_0:y_1\ ;z_0:z_1]) & \longmapsto& \left(x,\left[y_0:\lambda_{ij}(x)y_1 + p_{ij}(x,z_0,z_1)y_0\ ; s_{ij}(x)\cdot \begin{pmatrix}z_0\\z_1\end{pmatrix}\right]\right); \\[10pt]
			\phi'_{ij}\colon  & U_j \times \FF_{b'} & \dashrightarrow &U_i\times \FF_{b'} \\
			& (x,[y_0:y_1\ ;z_0:z_1]) & \longmapsto& \left(x,\left[y_0:\lambda'_{ij}(x) y_1 + p'_{ij}(x,z_0,z_1)y_0\ ; s_{ij}(x)\cdot \begin{pmatrix}z_0\\z_1\end{pmatrix}\right]\right);	
		\end{array}
		\]
		where $\lambda_{ij},\lambda'_{ij}\in \mathcal{O}_C(U_{ij})^*$, $p_{ij}\in \mathcal{O}_C(U_{ij})[z_0,z_1]_b$, $p'_{ij}\in \mathcal{O}_C(U_{ij})[z_0,z_1]_{b'}$ and $s_{ij}\in \GL_2(\O_C(U_{ij}))$ are the transition matrices of the $\PP^1$-bundle $\tau$. Moreover, the following are equivalent:
		\begin{enumerate}
			\item \label{binvariant} $\pi$ and $\pi'$ are $S$-isomorphic,
			\item \label{conditioniso} $b=b'$, and there exist $\mu_i\in \mathcal{O}_C(U_{i})^*$, $\mu_j\in \mathcal{O}_C(U_{j})^*$, $q_{j}\in \mathcal{O}_C(U_{j})[z_0,z_1]_b$, $q_i\in \mathcal{O}_C(U_{i})[z_0,z_1]_b$ such that $\lambda_{ij}' = \mu_{i} \mu_{j}^{-1} \lambda_{ij}$,  and 
			$$p'_{ij}(x,z_0,z_1) = \mu_i(x) p_{ij}(x,z_0,z_1)-\mu_{i}(x) \mu_{j}^{-1} (x)\lambda_{ij}(x) q_j(x,z_0,z_1)+q_i\left(x,s_{ij}(x)\cdot \begin{pmatrix} z_0\\ z_1\end{pmatrix}\right).$$
		\end{enumerate}
	\end{lemma}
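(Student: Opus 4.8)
The plan is to reduce both assertions to computations with the explicit transition cocycles furnished by Lemma \ref{FFb-bundle}. For the existence of the common cover, I would apply Lemma \ref{FFb-bundle} \ref{FFb-bundle1} to $\pi$ and to $\pi'$ separately and then pass to a common refinement of the two resulting open covers. Since the quotient by the $[y_0:y_1]$-direction records precisely the base $\PP^1$-bundle $\tau$, and both $\tau\pi$ and $\tau\pi'$ sit over the same $\tau$, one may use one and the same trivialization of $\tau$ for both bundles — hence the same section $\sigma$ (the zero section $\{z_0=0\}$) and the same normalized matrices $s_{ij}\in\GL_2(\O_C(U_{ij}))$. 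This yields the asserted normal forms for $\phi_{ij}$ and $\phi'_{ij}$, which differ only in the data $(\lambda_{ij},p_{ij})$ versus $(\lambda'_{ij},p'_{ij})$.

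For $\ref{binvariant}\Rightarrow\ref{conditioniso}$, let $\Psi\colon X\to X'$ be an $S$-isomorphism; then $\pi'\Psi=\pi$, so $\tau\pi'\Psi=\tau\pi$ and $\Psi$ lies over $C$ as well. Consequently $\Psi$ maps $(\tau\pi)^{-1}(U_i)$ isomorphically onto $(\tau\pi')^{-1}(U_i)$, and I set $\Psi_i:=\phi'_i\Psi\phi_i^{-1}\colon U_i\times\FF_b\to U_i\times\FF_{b'}$. Restricting $\Psi_i$ to a fibre over a point of $U_i$ gives an isomorphism $\FF_b\cong\FF_{b'}$, and since $b,b'>0$ the surfaces $\FF_b$ and $\FF_{b'}$ are isomorphic only when $b=b'$; this forces $b=b'$. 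Moreover, in these trivializations both $\pi$ and $\pi'$ become $\mathrm{id}\times\tau_b$, so $\Psi_i$ commutes with $\mathrm{id}\times\tau_b$ and therefore fixes the $U_i$- and $[z_0:z_1]$-coordinates. By Lemma \ref{autotrivializations}, with trivial action on $[z_0:z_1]$, there exist $\mu_i\in\O_C(U_i)^*$ and $q_i\in\O_C(U_i)[z_0,z_1]_b$ such that
$$\Psi_i\colon (x,[y_0:y_1\ ;z_0:z_1])\mapsto \bigl(x,[y_0:\mu_i(x)y_1+q_i(x,z_0,z_1)y_0\ ;z_0:z_1]\bigr).$$

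The heart of the argument is the gluing relation $\phi'_{ij}\Psi_j=\Psi_i\phi_{ij}$, which must hold on each overlap for the local isomorphisms $\Psi_i$ to descend to $\Psi$. Because all four maps keep the coefficient of $y_0$ equal to $1$, I may lift their actions on $[y_0:y_1]$ to honest lower-triangular matrices in $\GL_2(\kk(C))$ and simply multiply. The one subtlety is bookkeeping of the $z$-variable: since $\Psi_i$ is applied after $\phi_{ij}$, the polynomial $q_i$ is evaluated at the already-transformed coordinate $s_{ij}(x)\cdot(z_0,z_1)^{\mathrm t}$, whereas $q_j$ in $\phi'_{ij}\Psi_j$ is evaluated at $(z_0,z_1)$. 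Comparing the two resulting matrices entrywise yields $\lambda'_{ij}=\mu_i\mu_j^{-1}\lambda_{ij}$ from the diagonal and the stated identity for $p'_{ij}$ from the off-diagonal entry, which establishes $\ref{conditioniso}$.

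For the converse $\ref{conditioniso}\Rightarrow\ref{binvariant}$ I would run this computation backwards: given $b=b'$ and data $(\mu_i,q_i)$ satisfying the two relations, define $\Psi_i$ by the displayed formula; the relations are precisely the identities $\phi'_{ij}\Psi_j=\Psi_i\phi_{ij}$, so the $\Psi_i$ glue to a global isomorphism $\Psi\colon X\to X'$. Each $\Psi_i$ fixes the $U_i$- and $[z_0:z_1]$-coordinates, hence commutes with $\mathrm{id}\times\tau_b=\pi$, so $\Psi$ is an $S$-isomorphism. I expect the only delicate point to be the correct tracking of the evaluation point of the homogeneous polynomials through the composition — the source of the term $q_i(x,s_{ij}\cdot(z_0,z_1)^{\mathrm t})$; everything else reduces to a routine multiplication of lower-triangular $2\times2$ matrices.
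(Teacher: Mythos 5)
Your proposal is correct and takes essentially the same route as the paper's proof: a common refinement with a single fixed trivialization of $\tau$ gives the stated normal forms, Lemma \ref{autotrivializations} puts the local pieces $\Psi_i$ of an $S$-isomorphism in the form $(x,[y_0:\mu_i(x)y_1+q_i(x,z_0,z_1)y_0\ ;z_0:z_1])$, and comparing coefficients in the gluing relation $\phi'_{ij}\Psi_j=\Psi_i\phi_{ij}$ (the paper writes it as $\phi'_{ij}=\alpha_i\phi_{ij}\alpha_j^{-1}$) yields exactly the two identities, with the converse obtained by gluing the same formulas. Your bookkeeping of the evaluation point $s_{ij}(x)\cdot(z_0,z_1)^{\mathrm t}$ in $q_i$, and the normalization of the $y_0$-coefficient to $1$ that upgrades the projective identity to an exact one, are precisely the points the paper relies on as well.
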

	
	\begin{proof}
		By choosing the trivializing opens $U_i$ smaller if necessary, we can assume that the opens $U_i$ trivialize $\tau\pi$ and $\tau\pi'$ simultaneously. Then the first part of the statement follows from Lemma \ref{FFb-bundle} \ref{FFb-bundle1}.
		
		 Assume that $\pi$ and $\pi'$ are isomorphic as $\PP^1$-bundles. In particular, the fibers of $\tau\pi$ and $\tau\pi'$ are isomorphic, hence $b=b'$. By Lemma \ref{autotrivializations}, there exist automorphisms $\alpha_j$ and $\alpha_i$ defined as
		\[
		\begin{array}{cccc}
			\alpha_j  \colon & U_j\times \FF_b & \longrightarrow & U_j\times \FF_{b} \\
			& (x,[y_0:y_1\ ;z_0:z_1]) & \longmapsto & \left(x,\left[y_0:\mu_{j}(x)y_1 + q_{j}(x,z_0,z_1)y_0\ ;  z_0:z_1\right]\right), \\[10pt]
			\alpha_i \colon & U_i\times \FF_b & \longrightarrow & U_i\times \FF_{b} \\
			&(x,[y_0:y_1\ ;z_0:z_1]) & \longmapsto & \left(x,\left[y_0:\mu_{i}(x)y_1 + q_{i}(x,z_0,z_1)y_0\ ;  z_0:z_1\right]\right), \\
		\end{array}
		\]
		where $\mu_j\in  \mathcal{O}_C(U_{j})^*$, $\mu_i\in  \mathcal{O}_C(U_{i})^*$, $q_{j}\in \mathcal{O}_C(U_{j})[z_0,z_1]_b$, $q_i\in \mathcal{O}_C(U_{i})[z_0,z_1]_b$ and such that $\phi'_{ij} = \alpha_i \phi_{ij} \alpha_j^{-1}$. Using the latter equality and identifying the coefficients, we get that $\lambda_{ij}' = \mu_{i} \mu_{j}^{-1} \lambda_{ij}$ and 
		$$p'_{ij} (x)= \mu_i(x) p_{ij}(x,z_0,z_1)-\mu_{i}(x) \mu_{j}^{-1}(x) \lambda_{ij}(x)q_j(x,z_0,z_1)+q_i\left(x,s_{ij}(x)\cdot \begin{pmatrix} z_0\\z_1\end{pmatrix} \right).$$
		
		Conversely, if the equalities above hold, then we define the automorphisms $\alpha_i\colon U_i\times \FF_b \to U_i\times \FF_b$ as before and we get that $\phi'_{ij} = \alpha_i \phi_{ij} \alpha_j^{-1}$, i.e., $\pi$ and $\pi'$ are $S$-isomorphic.
	\end{proof}
	
	\begin{remark}\label{rem.tauto}
		By Lemma \ref{Sisomorph}, once we fix the section $\sigma$, the linear class of $D$ in the short exact sequence of Lemma \ref{FFb-bundle} \ref{canonicalextension} is an invariant for the $S$-isomorphism class of $X$.
		
		In the next sections, we choose $\sigma$ to be a minimal section of $\tau$ such that $\O_S(\sigma)\simeq \O_{\PP(\E_S)}(1)$ is the twisting sheaf and $\E_S$ is a normalized rank-$2$ vector bundle over $C$ such that $\PP(\E_S) \simeq S$ (see e.g.\ \cite[V. 2. Notation 2.8.1 and Proposition 2.9]{Hartshorne}). This minimal section depends on the choice of $\E_S$.
	\end{remark}
	
	To each $\PP^1$-bundle $\pi\colon X\to S$ we associate a triplet of invariants $(S,b,D)$, which is a generalization of the numerical invariants $(a,b,c)$ introduced in \cite[Definition 1.4.1]{BFT}. 
	
	\begin{definition}\label{definvariants}
		Let $b>0$, let $C$ be a smooth projective curve, and $\tau\colon S\to C$ be a $\PP^1$-bundle. We fix a section $\sigma$ such that $\O_S(\sigma)\simeq \O_S(1)$ is the twisting sheaf. For every $\PP^1$-bundle $\pi\colon X \to S$ such that $\tau\pi$ is an $\FF_{b}$-bundle, we say that \emph{$\pi$ has invariants $(S,b,D)$}, where $D$ is the linear class of the divisor in Lemma \ref{FFb-bundle} \ref{canonicalextension}. 
	\end{definition}

\subsection{Sarkisov Program for $\FF_b$-bundles}

~\bigskip

The Sarkisov program is a non-deterministic algorithm that decomposes every birational map between two Mori fiber spaces into elementary ones, the so-called Sarkisov links. This result is due to Corti in dimension three, Hacon and McKernan in higher dimensions; see \cite{Corti,HM13}. Its equivariant version, for connected algebraic groups, is due to Floris:  

\begin{theorem}[{\cite[Theorem 1.2]{Floris}}] \label{Floris}
	Let $G$ be a connected algebraic group acting on two Mori fiber spaces $\pi\colon X\to B$ and $\pi'\colon X'\to B'$. If $\phi\colon X\dashrightarrow X'$ is a $G$-equivariant birational map, then there exists a decomposition of $\phi$ as a product of $G$-equivariant Sarkisov links.
\end{theorem}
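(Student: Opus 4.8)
The plan is to equivariantize the Hacon--McKernan proof of the Sarkisov program \cite{HM13} by exploiting the rigidity of connected group actions on numerical invariants. Recall that in the non-equivariant setting, one starts with a movable linear system $\mathcal{H}'$ on $X'$ whose class is a large multiple of a very ample divisor, pulls it back to a movable system $\mathcal{H}$ on $X$, and attaches to the pair $(X,\mathcal{H})$ a Sarkisov degree $(\mu,\lambda,e)$ governed by the Noether--Fano inequality. A Sarkisov link is then produced by running a $(K_X+\mathcal{H})$-MMP with scaling over a suitable base; the Noether--Fano inequality guarantees that each link strictly decreases the degree, and termination follows from finiteness of models. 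The whole algorithm is thus controlled by numerical data and canonical MMP operations (extremal contractions, flips, and the selection of extremal rays), with no genuine choices beyond the initial linear system.

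The key observation, which makes the equivariance essentially automatic, is that a connected algebraic group acts trivially on the Néron--Severi space $\mathrm{N}^1$ and on the space of curve classes $\mathrm{N}_1$. Indeed, $G$ acts continuously on these finitely generated groups (resp.\ finite-dimensional real vector spaces with their lattice of integral classes), and since $G$ is connected the image of this representation is a connected subgroup of a discrete group, hence trivial. Consequently every object defined numerically --- the Mori cone $\overline{NE}$, its $(K+\mathcal{H})$-negative extremal rays, the nef and movable cones, and the ample models appearing in the MMP with scaling --- is $G$-invariant. The corresponding extremal contractions and flips are uniquely determined by these invariant data, so by their universal properties they lift canonically to $G$-equivariant operations. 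It follows that each Sarkisov link produced by the algorithm is $G$-equivariant, and the intermediate Mori fiber spaces carry induced $G$-actions.

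It remains to arrange the initial data $G$-invariantly. One fixes an ample line bundle $L'$ on $X'$; the numerical class $m[L']$ is $G$-invariant for every $m$, so the complete movable system $\mathcal{H}'=|mL'|$ is $G$-stable and pulls back under the $G$-equivariant map $\phi$ to a $G$-stable system $\mathcal{H}$ on $X$, making the pair $(X,\mathcal{H})$ $G$-invariant. Since the boundary $\mathcal{H}$ and the scaling divisor can then be taken $G$-invariant, and since all contracted extremal rays are $G$-invariant by the previous paragraph, the MMP with scaling of \cite{HM13} runs verbatim in the category of $G$-varieties; the relevant existence and termination statements are numerical, hence insensitive to the group action.

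The main obstacle is the equivariant MMP itself: one must know that for a $G$-invariant klt pair the required extremal contractions and flips exist $G$-equivariantly and that the program terminates. For connected $G$ this reduces, via the triviality of the action on $\mathrm{N}^1$ and $\mathrm{N}_1$ established above, to the ordinary MMP --- the contraction of a $G$-invariant extremal ray is automatically $G$-equivariant by the universal property of the contraction, and likewise for the associated flip --- so no termination or existence input beyond \cite{HM13} is needed. The only genuinely delicate point is verifying that the Noether--Fano inequality and the strict decrease of the Sarkisov degree persist, but these are again numerical statements about the $G$-invariant pair $(X,\mathcal{H})$, and hence remain unchanged.
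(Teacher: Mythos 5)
The paper does not prove this statement; it is quoted from Floris \cite{Floris}, and your overall strategy coincides with her actual proof: equivariantize the Hacon--McKernan construction \cite{HM13} using the fact that a connected group acts trivially on $\mathrm{N}^1$ and $\mathrm{N}_1$, so that every extremal ray, contraction, flip and ample model occurring in the algorithm is $G$-invariant, and the action descends along each step (your appeal to ``universal properties'' is exactly Blanchard's lemma, Corollary \ref{blanchard}). Still, two of your steps have genuine problems.

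First, your claim that the complete linear system $\mathcal{H}'=|mL'|$, hence the pair $(X,\mathcal{H})$, is $G$-stable is false in general, and it fails precisely in the setting of this paper. For connected $G$, the assignment $g\mapsto g^*L'\otimes (L')^{-1}$ defines a morphism $G\to \Pic^0(X')$; it vanishes when $G$ is linear, but not otherwise. For instance, on $X'=C\times\PP^1\times\PP^1$ with $C$ elliptic, the translations of $C$ lift to $\Autz(X')$ and do not preserve the linear equivalence class of an ample divisor, only its algebraic (hence numerical) class. So one cannot ``arrange the initial data $G$-invariantly'' at the level of divisors or linear systems, nor choose a $G$-invariant boundary or scaling divisor; the argument must be phrased, as in \cite{Floris}, purely in terms of numerical classes --- which indeed suffices, since negativity and choice of extremal rays, and ample models of semiample classes, are numerical notions, and the klt boundary is only needed for existence and termination, not for equivariance. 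Relatedly, you conflate Corti's termination via the Sarkisov degree and the Noether--Fano inequality \cite{Corti} with the Hacon--McKernan termination via finiteness of ample models: the Corti-style induction genuinely uses the linear system (its maximal centers), and those need not be $G$-invariant once the system itself is not, so only the route through \cite{HM13} survives equivariantization.

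Second, you never produce a $G$-equivariant common log resolution $W$ of $\phi$. The whole construction of \cite{HM13} --- the affine subspace of divisor classes, finiteness of ample models, and the chamber structure whose walls produce the links --- takes place on a fixed smooth model $W$ dominating both $X$ and $X'$; for the resulting links and intermediate Mori fiber spaces to inherit $G$-actions, $G$ must act on $W$ compatibly with both projections. This requires equivariant (functorial) resolution of singularities in characteristic zero, applied to the closure of the graph of $\phi$, and it is a genuine ingredient of the proof that cannot be omitted.
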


Let $\pi\colon X\to S$ be a $\PP^1$-bundle. Recall from Definition \ref{def} that $\Autz(X)$ is  relatively maximal if, for every $\Autz(X)$-equivariant square birational map
\[
\phi\colon X\dashrightarrow X'
\]
where $\pi'\colon X'\to S'$ is a $\PP^1$-bundle, we have $\phi \Autz(X) \phi^{-1} = \Autz(X')$. By Theorem \ref{Floris}, $\phi$ factorizes as a composition of $\Autz(X)$-equivariant Sarkisov links. In the forthcoming Sections \ref{SectionCxPP1}, \ref{SectionAAA1}, \ref{SectionAAA0}, and \ref{SectionSL}, we will use the $\Autz(X)$-equivariant Sarkisov Program for every $\PP^1$-bundle structure $\pi\colon X\to S$ to determine whether $\Autz(X)$ is relatively maximal.

\begin{remark}\label{rem:no_contraction_bgeq2}
	Let us recall that Mori fiber spaces are $\mathbb{Q}$-factorial with terminal singularities. In particular, there is no Sarkisov link whose target Mori fiber space is a variety with non-terminal singularities. 
	
	Let $b>0$, let $C$ be a smooth projective curve and $X\to C$ be an $\FF_b$-bundle. Let $\kappa \colon X\to X'$ be the contraction of the $(-b)$-sections along the fibers. Then $X'$ is non-terminal if and only if $b \geq 2$.
	To see this, it suffices to compute the discrepancies, as follows.
	
	Let $\eta \colon \FF_b \to T$ be the contraction of the $(-b)$-section $\sigma_b \subset \FF_b$. In particular, $T$ is a normal projective surface and
	\[
	K_{\FF_b} \sim_{\mathbb{Q}} \eta^*(K_{T}) + a  \sigma_b,
	\]
	for some $a\in \mathbb{Q}$. In dimension two, being smooth and terminal are equivalent (see e.g. \cite[Corollary 5.18]{Kollar-Mori}); hence, $T$ is non-terminal, i.e., $a \leq 0$ if and only if $b\geq 2$.
	
	Since it is a local computation and $X$ is locally isomorphic to the trivial $\FF_b$-bundle, we can assume that $X = C\times \FF_b$ and $X' = C\times T$ and $\kappa = \mathrm{id}_C \times \eta$. Moreover, the following diagram commutes
	\[
	\begin{tikzcd}[column sep=2em,row sep = 3em]
		\FF_b \arrow[rr, "\eta"] && T \\
		C \times \FF_b \arrow[rr,"\mathrm{id}_C \times \eta"] \arrow[u, "p_{\FF_b}"]\arrow[rd, "p_C" swap]&& C\times T\arrow[u, "q_T" swap]\arrow[ld, "q_C"] \\
		& C &,
	\end{tikzcd}
	\]
	where $p_{\FF_b}$, $p_C$, $q_T$, $q_C$ are projections. Using that $K_{C\times \FF_b} \sim p_C^*K_C + p_{\FF_b}^*K_{\FF_b} $ and $K_{C\times T} \sim q_C^*K_C + q_T^*K_T$, we obtain that
	\[
	K_{C\times \FF_b} \sim_{\mathbb{Q}} (\mathrm{id}_C \times \eta)^*(K_{C\times T}) + a(C\times \sigma_b).
	\]
	Thus, $X'$ is non-terminal if and only if $b\geq 2$.
\end{remark}

The following lemma provides all possible Sarkisov diagrams of type III and IV, starting from a $\PP^1$-bundle $\pi\colon X\to S$ over a geometrically ruled surface.

\begin{lemma}\label{SarkisovIIIandIV}
	Let $C$ be a smooth projective curve of positive genus, $\tau\colon S\to C$ and $\pi\colon X\to S$ be $\PP^1$-bundles such that $\tau\pi$ is an $\FF_b$-bundle with $b\geq 0$. The following hold:
	\begin{enumerate}
		\item\label{SarkisovIIIandIV.1} If $b\geq 2$, there is no Sarkisov diagram of type III and IV starting from $\pi$.
		\item\label{SarkisovIIIandIV.2} If $b=1$, there exists a Sarkisov diagram of type III:
		\[
		\begin{tikzcd} [column sep=4em,row sep = 3em]
			X \arrow[d,"\pi",swap]\arrow[dr,"\kappa"] &  \\
			S\arrow[dr,"\tau",swap] & X'\arrow[d,"\pi'"]  \\
			&C,
		\end{tikzcd}
		\] 
		where $\kappa$ is the contraction of the surface spanned by the $(-1)$-sections along the fibers of $\tau\pi$, and $\pi'\colon X'\to C$ is a $\PP^2$-bundle.
		\item\label{SarkisovIIIandIV.3} If $b=0$, there exists a $\PP^1$-bundle $\tau'\colon S'\to C$ such that $X\simeq S\times_C S' $. Via this isomorphism, the structure morphism $\pi$ is identified with the projection on the first factor. Denoting by $\pi'$ the projection on the second factor, the pullback diagram yields a Sarkisov diagram of type IV:
		\[
		\begin{tikzcd} [column sep=2em,row sep = 3em]
			S\times_C S'\arrow[d,"\pi",swap]\arrow[rr,equal] && S\times_C S'\arrow[d,"\pi'"] \\
			S\arrow[dr,"\tau",swap] && S'\arrow[dl,"\tau'"]  \\
			&C&.
		\end{tikzcd}
		\] 
	\end{enumerate}
	In the cases \ref{SarkisovIIIandIV.2} and \ref{SarkisovIIIandIV.3}, these diagrams are the only Sarkisov diagrams of type III and IV starting from $\pi\colon X\to S$, and the induced Sarkisov links are $\Autz(X)$-equivariant.
\end{lemma}

\begin{proof}
	We consider Sarkisov diagrams of type III and IV starting from $\pi\colon X\to S$. Since $S$ is a geometrically ruled surface, it has Picard rank $2$, and one extremal ray of the cone of curves $\overline{\mathrm{NE}}(S)$ determines the structure morphism $\tau\colon S\to C$. The other extremal ray contracts the numerical class of the minimal section, so in particular the fibers of this morphism are not rationally connected, and we may exclude this case in the Sarkisov diagrams; see \cite{HMShokurov}. This implies that in our diagrams of type III and IV, the lower left arrow is the structure morphism $\tau\colon S\to C$ and we can determine the rest of the Sarkisov diagram by the $2$-ray game.
	
	If $b>0$, we denote by $S_b\subset X$ the surface spanned by the $(-b)$-sections along the fibers of $\tau\pi$. Over the generic fiber of $\tau\pi\colon X\to C$, which is isomorphic to $\FF_b$ over $\kk(C)$, we contract the $(-b)$-section. Therefore, if there exists a Sarkisov diagram of type III or IV, the induced Sarkisov link necessarily contracts $S_b$. 
	If $b\geq 2$, this yields a threefold with non-terminal singularities, which is prohibited by the Sarkisov program (see Remark \ref{rem:no_contraction_bgeq2}). If $b=1$, this yields a birational morphism $\kappa\colon X\to X'$, where $\pi'\colon X'\to C$ is a $\PP^2$-bundle over $C$. Since $S_1$ is $\Autz(X)$-invariant, it follows that $\kappa$ is $\Autz(X)$-equivariant. This proves \ref{SarkisovIIIandIV.1} and \ref{SarkisovIIIandIV.2}.
	
	If $b=0$, there exists a $\PP^1$-bundle $\tau'\colon S'\to C$ such that $X \simeq S\times_C S'$ by Lemma \ref{fiberproduct}. This yields an $\Autz(X)$-equivariant Sarkisov diagram of type IV, where the RHS Mori fiber space is given by the projection on $S'$. The induced Sarkisov link is an isomorphism; hence, it is $\Autz(X)$-equivariant. This proves \ref{SarkisovIIIandIV.3}.
\end{proof}

To complete Lemma $\ref{SarkisovIIIandIV}$, we would need to determine the $\Autz(X)$-orbits and the $\Autz(X)$-Sarkisov diagrams of type I and II. If every $\Autz(X)$-orbit has dimension at least $2$, we can already state:

\begin{lemma}\label{LinkfromFFb}
	Let $C$ be a smooth projective curve of positive genus, $\tau\colon S\to C$ and $\pi\colon X\to S$ be $\PP^1$-bundles such that $\tau\pi$ is an $\FF_b$-bundle with $b\geq 0$. If the $\Autz(X)$-orbits have dimension at least $2$, then $\Autz(X)$ is relatively maximal and the pair $(X,\pi)$ is superstiff. 
\end{lemma}

\begin{proof}
	Since every $\Autz(X)$-orbit has dimension at least two, its blowup yields an isomorphism. Therefore, there is no $\Autz(X)$-equivariant Sarkisov diagram of type I and II. By Lemma \ref{SarkisovIIIandIV}, there is also no $\Autz(X)$-equivariant Sarkisov diagrams of type III and IV if $b\geq 2$; and there is no $\Autz(X)$-equivariant square birational map if $b\in \{0,1\}$.
	This implies that $\Autz(X)$ is relatively maximal and the pair $(X,\pi)$ is superstiff.
\end{proof}

\section{Second reduction step: towards $\FF_b$-bundles without invariant fibers}

\subsection{$\FF_b$-bundles with invariant fibers}

\begin{remark}
	Let $C$ be a smooth projective curve of positive genus $g\geq 1$ and $\tau\pi\colon X\to C$ be an $\FF_b$-bundle with $b>0$. Recall that, by Blanchard's lemma (see Corollary \ref{blanchard}), the structure morphism $\tau \pi$ induces a morphism of algebraic groups:
	\[
	(\tau\pi)_*\colon \Autz(X) \to \Autz(C).
	\]
	If $g\geq 2$, then $\Autz(C)$ is trivial, so $(\tau\pi)_*$ is trivial. If $g=1$, then $\Autz(C)\simeq C$ and $(\tau\pi)_*$ may be trivial or surjective.
\end{remark}

Following the ideas of \cite[\S 3]{BFT}, we show in this section that if $b>0$ and the $\FF_b$-bundle $(\tau\pi)_*$ is trivial, then $\Autz(X)$ is not relatively maximal. This allows us to restrict our study to $\PP^1$-bundles over geometrically ruled surfaces whose automorphism groups are maximal connected algebraic subgroups of $\Bir(C\times \PP^1)$.

\begin{remark}
Unlike the case where $S$ is isomorphic to a Hirzebruch surface, not every element $f\in \Aut_C(S)$ is induced by an element of $\Aut_C(\E)$, where $\E$ is a rank-$2$ vector bundle such that $\PP(\E) \cong S$. In fact, by \cite[\S 5]{Grothendieck}, $f$ is induced by an isomorphism $\E\cong \E\otimes \mathcal{N}$, where $\mathcal{N}$ is a line bundle, and there exists a short exact sequence
\[
0 \to \Aut_C(\E) /\mathbb{G}_m \to \Aut_C(S) \to \Delta \to 0,
\]
where $\Delta$ is a subgroup of the two-torsion subgroup of the Jacobian of $C$. More precisely,
\[
\Delta \simeq \{\mathcal{N} \in \Pic^0(C), \E \cong \E\otimes \mathcal{N}\}.
\]
\end{remark}

Adapting the proof of \cite[Lemma 3.7.1]{BFT}, we obtain:

\begin{lemma}\label{lem:equivarianttodecomposable}
	Let $C$ be a smooth projective curve of genus $g(C)\geq 1$, $\tau\colon S \to C$ be a geometrically ruled surface with a section $\sigma$ such that $\O_S(\sigma)\simeq \O_S(1)$, and $\pi\colon X\to S$ be a $\PP^1$-bundle such that $\tau\pi$ is an $\FF_b$-bundle with $b>0$. If the morphism $(\tau \pi)_*\colon \Autz(X)\to \Autz(C)$ is trivial and one of the following condition holds:
	\begin{enumerate}
		\item $b$ is odd,
		\item $\Delta$ is trivial,
		\item $\seg(S)>0$.
	\end{enumerate}
	Then there exists an $\Autz(X)$-equivariant square birational map $X \dashrightarrow \PP( \mathcal{O}_S\oplus \mathcal{O}_S(b\sigma+\tau^*(D)) )$, where $D$ is a divisor on $C$ that can be chosen with arbitrarily large degree.
\end{lemma}

\begin{proof}
	We fix a trivializing open cover $(U_i)_i$ of the $\FF_b$-bundle $\tau\pi$, such that $\sigma$ is the zero section $\{z_0=0\}$. We also choose a rank-$2$ vector bundle $\E$, such that $\PP(\E)=S$, and with transition matrices 
	\[s_{ij} =
	\begin{pmatrix}
		1 & 0 \\ b_{ij} & a_{ij}
	\end{pmatrix} \in \GL_2(O_C(U_{ij})).
	\]		
	Let $\rho \in \Autz(X)$. Then $\rho$ induces an automorphism $\pi_*(\rho)\in \Autz(S)$ by Blanchard's lemma (\ref{blanchard}). For each $i$, the automorphism $\pi_*(\rho)$ induces an automorphism of $U_i \times \PP^1$ via the local trivializations of $\tau$, given by
	\[
	(x,[z_0:z_1]) \mapsto (x,r_i(x)\cdot [z_0:z_1]),
	\]
	where $r_i \in \GL_2(\O_C(U_i))$. By \cite[\S 5]{Grothendieck}, $\pi_*(\rho)$ is induced by an isomorphism $\E\cong \mathcal{N}\otimes \E$, where $\mathcal{N}$ is a two-torsion element of the Jacobian of $C$; hence, for all $i,j$, we have the equality $r_i s_{ij} = \xi_{ij} \otimes s_{ij} r_j$, where $\xi_{ij} \in \O_C(U_{ij})^*$ denote the cocycles of $\mathcal{N}$.

	Via the local trivializations of $\tau\pi$, the automorphism $\rho$ induces an automorphism of $U_i\times \FF_b$ 
	$$
	\rho_i\colon (x,[y_0:y_1\ ;z_0:z_1])\longmapsto \left(x,\left[y_0:\mu_i(x)y_1 +y_0\sum_{k=0}^{b}q_{i,k}(x)z_0^kz_1^{b-k}\ ;r_i(x)\cdot \begin{pmatrix}z_0\\z_1\end{pmatrix}\right]\right)
	$$
	where $\mu_i\in {\mathcal{O}_C}(U_i)^*$ and $q_{i,k}\in \mathcal{O}_C(U_i)$ (see Lemma \ref{autotrivializations}). 
	
	Fix an index $j$. First we prove that for all $i$, $\mu_j=\mu_i \in \kk^*$.  Developing the equality $\phi_{ij} \rho_j = \rho_i \phi_{ij}$, where $\phi_{ij}$ denote the transition maps of $(\tau\pi)_*$ as in Lemma \ref{FFb-bundle}, one gets for all $x\in U_{ij}$: 
	\begin{multline*}
		\left[y_0: \mu_j(x) \lambda_{ij}(x) y_1 +  \left(\lambda_{ij}(x) \sum_{k=0}^{b}q_{j,k}(x)z_0^kz_1^{b-k}+p_{ij}\left(x,r_j(x)\cdot \begin{pmatrix}z_0\\z_1\end{pmatrix}\right) \right) y_0\ ;s_{ij}(x)\cdot r_j(x)\cdot \begin{pmatrix}z_0\\z_1\end{pmatrix}\right]= \\ 	
		\left[y_0:\lambda_{ij}(x)\mu_i (x)y_1 +  \left(\mu_i (x)p_{ij}(x,z_0,z_1)+  \sum_{k=0}^{b}q_{i,k}(x)z_0^k\left(b_{ij}(x)z_0+a_{ij}(x)z_1\right)^{b-k}\right)y_0\ ;r_i(x)\cdot s_{ij}(x)\cdot \begin{pmatrix}z_0\\z_1\end{pmatrix}\right].
	\end{multline*}
	Now, using that $\xi_{ij}\otimes s_{ij}r_j = r_is_{ij}$, we get $\xi_{ij}^b \mu_j= \mu_i$ on $U_{ij}$. This yields an element $\mu\in \Gamma(C,\mathcal{N}^b)$. 
	
	Now we prove that under one of the three given additional assumptions, $\pi_*(\rho)$ is induced by an element of $\Aut_C(\E)$. If $b$ is odd, then $\mathcal{N}^b \cong \mathcal{N}$. Since $\mu \neq 0$, it follows that $\mathcal{N}$ is trivial. The same conclusion holds as well if $\Delta$ is trivial. If $\seg(S)>0$ and $g(C)\geq 2$, then $\Autz(S)$ is trivial by \cite[Theorem 2]{Maruyama}; hence, $\pi_*(\rho)$ is the identity on $S$. If $\seg(S)>0$ and $g(C)=1$, then $S\cong \AAA_1$ and $\Autz(S)$ is an elliptic curve which surjects onto $\Autz(C)$ via $\tau_*$; hence, the assumption that $(\tau\pi)_*$ is trivial already implies that $\pi_*$ is trivial and $\pi_*(\rho)$ is the identity. In all those cases, we obtain that $r_i s_{ij} =  s_{ij} r_j$ and $\mu\in \kk^*$. 
	
	Let $\{p_1,\cdots ,p_n\}$ be the complement of $U_j$. Since $\Autz(X)$ has a structure of an algebraic group, the rational map
	\[
	\begin{array}{clcc}
		\Phi\colon &\Autz(X) \times (U_j \times \FF_b) & \dasharrow &U_j\times \FF_b \\
		&\big(\rho,(x,[y_0:y_1\ ;z_0:z_1])\big) & \longmapsto &\rho_j (x,[y_0:y_1\ ;z_0:z_1])
	\end{array}
	\]
	has bounded poles. Therefore, for each $i$, the integer $-\nu_{p_i}(q_{j,k})$ is bounded above by an integer which is independent of $\rho$. It follows that for each $i$, the quantity
	$$-\nu_{(p_i,z_0,z_1)}\left( (x,z_0,z_1)\mapsto \sum_{k=0}^{b} q_{j,k}(x)(z_0-b_{ij}(x)a_{ij}^{-1}(x)z_1)^ka_{ij}^{-(b-k)}(x)z_1^{b-k} \right)$$ is also bounded by an integer $c_i\geq 0$ which is independent of $\rho$.
	
	Set $D=\sum_{i=1}^{n} d_ip_i$, with $d_i \geq c_i$. To conclude, we show that $\rho_j$ extends to an automorphism of $\PP(  \mathcal{O}_S\oplus \mathcal{O}_S(b\sigma+\tau^*(D)))$. Denote the transition maps of $\PP(  \mathcal{O}_S\oplus \mathcal{O}_S(b\sigma+\tau^*(D)))$ by
	\[
	\begin{array}{cccc}
		\theta_{ij} \colon & U_j\times \FF_b & \dashrightarrow & U_i\times \FF_b \\
		& (x,[y_0:y_1\ ;z_0:z_1])& \longmapsto & \left(x,\left[y_0: \widetilde{\lambda}_{ij}(x) y_1\ ;s_{ij}(x)\cdot 
		\begin{pmatrix}
			z_0 \\ z_1
		\end{pmatrix}\right]\right),
	\end{array}
	\]
	where $\widetilde{\lambda}_{ij}$ are the cocyles of the line bundle $\O_C(D)$ (see Lemma \ref{FFb-bundle} \ref{FFb-bundle1} and \ref{canonicalextension}). Then for all $\left(x,[y_0:y_1\ ;z_0:z_1]\right)\in U_{ij}\times \FF_b$, we get that  $\theta_{ij}\rho_j\theta_{ij}^{-1}\left(x,[y_0:y_1\ ;z_0:z_1]\right)$ equals
	\begin{multline*}
		\Bigg(x,\Bigg[y_0:\mu y_1+ \left(\widetilde{\lambda}_{ij}(x)\sum_{k=0}^{b} q_{j,k}(x)(z_0-b_{ij}(x)a_{ij}^{-1}(x)z_1)^ka_{ij}^{-(b-k)}(x)z_1^{b-k}\right) y_0 \ ;\\ 
		s_{ij}(x)\cdot r_j(x)\cdot {s_{ij}}^{-1}(x)\cdot \begin{pmatrix}z_0\\z_1\end{pmatrix} \Bigg] \Bigg).
	\end{multline*}
	
	By shrinking $U_i$ if necessary, it follows from Lemma \ref{transitionmaplinebundle}, that we can choose $\widetilde{\lambda}_{ij}\in \kk(C)^*$ such that $\mathrm{div}_{\vert U_i}(\widetilde{\lambda}_{ij})= D_{\vert U_i}$. Therefore, in the formula just above, the zeroes of $\widetilde{\lambda}_{ij}$ cancel the poles inside the sum. Hence, for every $\rho \in \Autz(X)$, the composition $\theta_{ij}\rho_j\theta_{ij}^{-1}$ is an automorphism of $U_i\times \FF_b$, i.e., $\rho_j$ extends to an element of $\Autz(\PP(  \mathcal{O}_S\oplus \mathcal{O}_S(b\sigma+\tau^*(D))))$. This yields an $\Autz(X)$-equivariant square birational map $X\dashrightarrow \PP(  \mathcal{O}_S\oplus \mathcal{O}_S(b\sigma+\tau^*(D)) ))$ above $S$.
	
	Finally, notice that $\{d_i\geq c_i\}_i$ are the only conditions imposed in the choice of $D$, so the latter can be chosen with arbitrarily large degree.
\end{proof}

We are now ready to state the main result of this section, which is analogue of \cite[Lemma 3.7.1]{BFT}. Before, we make the following observation, which is elementary:

\begin{lemma}\label{transitionmaplinebundle}
	Let $D$ a divisor on a smooth projective curve $C$. Then there exists a divisor $E$ such that the following hold:
	\begin{enumerate}
		\item $\mathrm{Supp}(E)\cap \mathrm{Supp}(D)= \emptyset$,
		\item The line bundle $\O_C(D)$ is trivial over $U=C\setminus \mathrm{Supp}(E)$ and $V=C\setminus \mathrm{Supp}(D)$,
		\item The transition maps of $\O_C(D)$ can be chosen as follows:
		\[
		\begin{array}{cccc}
			\phi_{UV}\colon &V \times \AA^1 & \dashrightarrow &U\times \AA^1 \\
			&(x,z) & \longmapsto& (x,f(x)z),
		\end{array}
		\]
		where $f \in \kk(C)^*$ satisfies $\mathrm{div}(f)=D-E$.
	\end{enumerate}
\end{lemma}

\begin{proof}
	Let $A$ be a very ample divisor on $C$, such that $B=D+A$ is also very ample. Then there exist $A' \sim A$ and $B'\sim B$ such that $\mathrm{Supp}(A')\cap \mathrm{Supp}(D)= \emptyset$ and $\mathrm{Supp}(B') \cap \mathrm{Supp}(D) = \emptyset$. 
	Let $E= B'-A'$, which is linearly equivalent to $D$. Then $\mathrm{Supp}(E) \cap \mathrm{Supp}(D) = \emptyset$, and there exists $f\in \kk(C)^*$ such that $\mathrm{div}(f) = D-E$.
	
	Since $U\cap \mathrm{Supp}(E)=\emptyset$ and $V\cap \mathrm{Supp}(D)=\emptyset$, the line bundle $\O_C(D)$ is trivial over $U=C\setminus \mathrm{Supp}(E)$ and $V=C\setminus \mathrm{Supp}(D)$. The transition map of $\O_C(D)$ are given by
	\[
	\begin{array}{ccc}
		V\times \AA^1 & \dashrightarrow & U\times \AA^1 \\
		(x,z) & \longmapsto & \left(x,\frac{\phi_U}{\phi_V}(x)z\right),
	\end{array}
	\]
	where $\phi_U$, $\phi_V\in \kk(C)^*$ satisfy $\mathrm{div}(\phi_U) = D_{|U} =D$ and $\mathrm{div}(\phi_V) = D_{|V}=0$. In particular, we can choose $\phi_U=f$ and $\phi_V=1$.
\end{proof}

\begin{proposition}\label{equivarianttodecomposable}
	Let $C$ be a smooth projective curve of genus $g(C)\geq 1$, let $\tau\colon S \to C$ be a geometrically ruled surface, and let $\pi\colon X\to S$ be a $\PP^1$-bundle such that $\tau\pi$ is an $\FF_b$-bundle with $b>0$. If the morphism $(\tau \pi)_*\colon \Autz(X)\to \Autz(C)$ is trivial, then $\Autz(X)$ is not relatively maximal.
\end{proposition}

\begin{proof}
		First, notice that the trivial ruled surface $C\times \PP^1$ satisfies the assumptions of Lemma \ref{lem:equivarianttodecomposable} (in this case, $\Delta$ is trivial). If $S$ is not $C$-isomorphic to $C\times \PP^1$ and $\seg(S)\leq 0 $, then $\sigma$ is an $\Autz(S)$-invariant section. Let $S_b$ be the $\Autz(X)$-invariant surface spanned by the $(-b)$-sections along the fibers of $\tau\pi$. Then $\pi^{-1}(\sigma) \cap S_b$ is an $\Autz(X)$-invariant curve, whose blowup followed by the contraction of the strict transform of $\pi^{-1}(\sigma)$ yields an $\Autz(X)$-equivariant birational map $X\dashrightarrow X'$, where $\tau\pi'\colon X' \to C$ is an $\FF_{b+1}$-bundle. If $(\tau\pi')_*\colon \Autz(X')\to \Autz(C)$ is not trivial, then $\Autz(X)$ is not relatively maximal. Else, we can replace $X$ by $X'$ and assume that $b$ is an odd integer. In all cases, we can assume that $\pi\colon X\to S$ satisfies the hypothesis of Lemma \ref{lem:equivarianttodecomposable}.
		
		Applying Lemma \ref{lem:equivarianttodecomposable}, we can also replace $X$ by $\PP(  \mathcal{O}_S\oplus \mathcal{O}_S(b\sigma+\tau^*(D)))$, where $\deg(D)>0$ may be chosen arbitrarily large in the following proof. Let $p\in C$. The curve $l_p=\{y_0=0\}\cap (\tau\pi)^{-1}(p)\subset X$ is $\Autz(X) $-invariant, because $(\tau\pi)_*$ is trivial and $b>0$. Then the blowup of $l_p$ followed by the contraction of the strict transform of $(\tau\pi)^{-1}(p)$ yields an $\Autz(X)$-equivariant square birational map $\phi\colon X\dashrightarrow X'$ above $S$.
		
		Let $j$ such that $p\in U_j$. We can assume that $\phi$ is given locally by 
		\[
		\begin{array}{ccc}
			U_j\times \FF_b&\dashrightarrow &U_j\times \FF_b\\
			(x,[y_0:y_1\ ;z_0:z_1])&\longmapsto & (x,[y_0:\delta(x) y_1\ ;z_0:z_1]),
		\end{array}
		\]
		where $\delta\in \kk(C)^*$ satisfies $\mathrm{div}(\delta)_{|U_j}=p$. We can assume that $p\notin U_i$ if $i\neq j$, and that $\phi$ induces the identity on $U_i\times \FF_b$. Writing the transition maps $\theta_{ij}$ of the $\FF_b$-bundle $\tau\pi \colon X\to C$ as in Lemma \ref{lem:equivarianttodecomposable}, we get that the transition maps of the $\FF_b$-bundle $\tau\pi'\colon X'\to C$ equal
		\[
		\begin{array}{cccc}
			\theta'_{ij} \colon &U_j\times \FF_b&\dashrightarrow &U_i\times \FF_b\\
			& (x,[y_0:y_1\ ;z_0:z_1])&\longmapsto & \left(x,\left[y_0:\delta^{-1}(x) \widetilde{\lambda}_{ij}(x)y_1\ ;s_{ij}(x)\cdot \begin{pmatrix}z_0\\z_1\end{pmatrix}\right] \right).
		\end{array}
		\]
		By Lemma \ref{FFb-bundle}, $X'$ is $S$-isomorphic to $\PP(\O_S\oplus \O_S(b\sigma+{\tau}^*(D+p)))$. To conclude, it suffices to show that the curve $l_p' = \{y_1=0\}\cap (\tau\pi')^{-1}(p) \subset X'$ is not $\Autz(X')$-invariant.
		
		Choosing $\deg(D)$ large enough, we can assume that $D'=D+p$ is very ample. In particular, there exists a section $\gamma\in \Gamma(C,\O_C(D'))$ such that $\gamma(p)\neq 0$. For each $i$ and $\xi\in \kk$, we define the automorphism 
		\[
		\begin{array}{cccc}
			\alpha_{\xi,i}\colon &U_i\times \FF_b & \longrightarrow &U_i\times \FF_b \\
			&(x,[y_0:y_1;z_0:z_1])& \longmapsto &(x,[y_0:y_1+\xi\gamma_{|U_i}(x)z_0^by_0\ ;z_0:z_1]).
		\end{array}
		\]
		We get that $\alpha_{\xi,i} \theta'_{ij}=\theta'_{ij}\alpha_{\xi,j} $, i.e., the $(\alpha_{\xi,i})_i$ glue into an element $\alpha_\xi\in \Autz(X')$ which does not leave $l'_p$ invariant.
\end{proof}

Combining the previous lemma with the classification of maximal connected algebraic subgroups of $\Bir(C\times \PP^1)$, we obtain the following proposition:

\begin{proposition}\label{basesurfacemaximal}
	Let $C$ be a curve of genus $g\geq 1$, let $\tau\colon S\to C$ and $\pi \colon X \to S$ be $\PP^1$-bundles such that $\tau\pi$ is an $\FF_b$-bundle with $b> 0$. If $\Autz(X)$ is relatively maximal, then $\Autz(S)$ is a maximal connected algebraic subgroup of $\Bir(S)$.

\end{proposition}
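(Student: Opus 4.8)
The plan is to deduce the maximality of $\Autz(S)$ from the fact that $\Autz(S)$ must act transitively on the base curve $C$, and to exploit that transitivity to show no equivariant modification of $S$ can enlarge the group. First I would run the contrapositive of Proposition \ref{equivarianttodecomposable}: since $\Autz(X)$ is relatively maximal and $b>0$, the morphism $(\tau\pi)_*\colon \Autz(X)\to \Autz(C)$ cannot be trivial. Blanchard's lemma (Corollary \ref{blanchard}) gives the factorisation $(\tau\pi)_*=\tau_*\circ \pi_*$ through $\pi_*\colon \Autz(X)\to\Autz(S)$ and $\tau_*\colon \Autz(S)\to\Autz(C)$, so $\tau_*$ is non-trivial as well. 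If $g\geq 2$ then $\Autz(C)$ is trivial, forcing $(\tau\pi)_*$ to be trivial, a contradiction; hence the hypotheses force $g=1$ and $\Autz(C)=C$. The image of $\tau_*$ is then a non-trivial connected subgroup of the elliptic curve $C$, hence all of $C$, so $\tau_*$ is surjective and $\Autz(S)$ acts on $C$ through the transitive translation action of $\Autz(C)=C$.

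The crucial step I would then isolate is the following rigidity statement: every $\Autz(S)$-equivariant birational map $\psi\colon S\dashrightarrow S'$ to a geometrically ruled surface $\tau'\colon S'\to C$ over $C$ is an isomorphism. Since $S$ is a smooth surface, the indeterminacy locus $\mathrm{Ind}(\psi)$ is finite, and equivariance forces it to be $\Autz(S)$-invariant (pre- and post-composing $\psi$ with automorphisms translates, respectively preserves, its indeterminacy). Its image $\tau(\mathrm{Ind}(\psi))\subseteq C$ is then a finite subset invariant under $\tau_*(\Autz(S))=C$; as the translation action is transitive, this subset must be empty, so $\psi$ is a morphism. Applying the same reasoning to $\psi^{-1}$, which is $\Autz(S')$-equivariant with source group still surjecting onto $C$, shows $\psi^{-1}$ is a morphism too, so $\psi$ is an isomorphism.

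Finally I would conclude by contradiction. If $\Autz(S)$ were not a maximal connected algebraic subgroup of $\Bir(S)$, there would be a connected algebraic subgroup $G$ with $\Autz(S)\subsetneq G\subseteq\Bir(S)$. Regularising $G$ and running a relative MMP over $C$ — legitimate because $g\geq 1$ makes the fibration to $C$ canonical, so all maps are automatically over $C$ — produces a geometrically ruled surface $\tau'\colon S'\to C$ with $G\subseteq\Autz(S')$ and a $G$-equivariant, hence $\Autz(S)$-equivariant, birational map $\psi\colon S\dashrightarrow S'$ over $C$. By the previous step $\psi$ is an isomorphism, so $\Autz(S')=\psi\Autz(S)\psi^{-1}\subsetneq \psi G\psi^{-1}\subseteq\Autz(S')$, which is absurd. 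Thus $\Autz(S)$ is maximal; comparing with Theorem \ref{dim2max} then identifies $S$ with one of $C\times\PP^1$, $\AAA_0$, $\AAA_1$, or $\PP(\O_C\oplus\L)$ with $\L$ non-trivial of degree zero.

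The hard part will be justifying the reduction in the last paragraph, namely that any enlargement of $\Autz(S)$ can be realised by an equivariant birational map over $C$ to a geometrically ruled surface so that the transitivity argument applies. This is exactly where the surjectivity of $\tau_*$ onto $C$ is essential: it rules out the unbounded connected algebraic subgroups of $\Bir(C\times\PP^1)$, which fix the base and are not contained in any maximal subgroup, and which would otherwise obstruct the passage to a relatively minimal ruled model. I expect this to be handled cleanly by invoking the regularisation theorem and the equivariant MMP over $C$ described in the introduction, together with Theorem \ref{dim2max} to pin down the resulting surface.
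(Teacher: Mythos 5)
Your proposal is correct, and its logical skeleton is the same as the paper's, just run in the direct rather than the contrapositive direction: both arguments rest on Proposition \ref{equivarianttodecomposable}, on the dichotomy that $\tau_*$ is either trivial or surjective because $\Autz(C)$ is trivial ($g\geq 2$) or the elliptic curve itself ($g=1$), and on the implication that surjectivity of $\tau_*$ forces $\Autz(S)$ to be maximal. The only substantive difference is how that last implication is handled: the paper dispatches it in one sentence (surjectivity gives $\Autz(S)$-orbits of dimension at least one, hence maximality, a criterion coming from the surface classification in \cite{Fong}), whereas you reprove it from scratch via Weil regularization, an equivariant MMP over $C$, and the observation that a finite, $\Autz(S)$-invariant indeterminacy (or exceptional) locus must project to a finite translation-invariant subset of $C$, hence be empty. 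Your expansion is sound --- it is precisely the argument underlying the paper's one-line claim, and it is the two-dimensional analogue of Lemma \ref{LinkfromFFb} --- so it buys self-containedness at the cost of length. One peripheral remark of yours is off, though it does not affect the proof: the unbounded subgroups of $\Bir(C\times\PP^1)$ do not ``obstruct the passage to a relatively minimal ruled model''; regularization and the equivariant MMP apply to any connected algebraic subgroup, and the surjectivity of $\tau_*$ is needed only in the rigidity step (where you indeed use it), not in the reduction to a geometrically ruled model.
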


\begin{proof}
	We assume that $\Autz(S)$ is not a maximal connected algebraic subgroup of $\Bir(S)$ and prove the contraposition. The algebraic group $\Autz(C)$ is trivial if $g\geq 2$, or isomorphic to $C$ if $g=1$, so the morphism $\tau_*\colon \Autz(S)\to \Autz(C)$ is trivial or surjective. If $g=1$ and $\tau_*$ is surjective, then the $\Autz(S)$-orbits have dimension at least $1$, so $\Autz(S)$ is a maximal connected algebraic subgroup of $\Bir(S)$, which is a contradiction. Therefore, $\tau_*$ is trivial, so $(\tau \pi)_*$ as well. Then by Proposition \ref{equivarianttodecomposable}, the automorphism group $\Autz(X)$ is not relatively maximal.
\end{proof}

\subsection{Lifting automorphisms of $S$}

~\bigskip

Let $\pi\colon X\to S$ be a $\PP^1$-bundle. Then $f\in \Autz(S)$ induces a cartesian square:
\[
\begin{tikzcd}
	f^*X \arrow[r,"\widetilde{f}"] \arrow[d,"f^*\pi" swap] & X \arrow[d,"\pi"] \\ S \arrow[r,"f"] & S,
\end{tikzcd}
\]
where $\widetilde{f}$ is an isomorphism and $f^*\pi\colon f^*X \to S$ is the pullback $\PP^1$-bundle.
We start with the following observation:

\begin{lemma}\label{pullbackiso}
	Let $\pi \colon X\to S$ be a morphism of projective varieties such that $\pi_*(\O_X)=\O_S$. If $f\in \Autz(S)$ lies in the image of $\pi_*\colon \Autz(X) \to \Autz(S)$, then $f^*X$ and $X$ are isomorphic over $S$.
\end{lemma}

\begin{proof}
	By assumption, there exists $\widetilde{f}\in \Autz(X)$ such that $\pi \widetilde{f} =f \pi$. By the universal property of pullback, there exists a morphism $X\to f^*X=S\times_S X$ over $S$, given by $x\mapsto (\pi(x),\widetilde{f}(x))$. The inverse of this morphism is $(y,x) \mapsto {\widetilde{f}}^{-1}(x)$. Thus, $X$ and  $f^*X$ are isomorphic over $S$.
\end{proof}

Assume that the $\PP^1$-bundle $\pi\colon X\to S$ has invariants $(S,b,D)$, introduced in Definition $\ref{definvariants}$. If $f^*\pi\colon f^*X \to S$ has invariants different than $(S,b,D)$, then $f\notin \pi_*(\Autz(X))$ by Lemma $\ref{pullbackiso}$. When $C$ is an elliptic curve, we obtain:

\begin{proposition}\label{degnonnuldec}
	Let $C$ be an elliptic curve, let $D\in \Pic(C)$ such that $\deg(D)\neq0$, and let $\tau \colon S\to C$ be a geometrically ruled surface admitting a section $\sigma$ whose linear class is $\Autz(S)$-invariant and such that $\O_S(\sigma)\simeq \O_S(1)$. Then for every $\PP^1$-bundle $\pi\colon X\to S$ with invariants $(S,b,D)$, where $b>0$, the morphism $(\tau\pi)_*\colon \Autz(X)\to \Autz(C)$ is trivial and the automorphism group $\Autz(X)$ is not relatively maximal.
\end{proposition}

\begin{proof}
	If $\tau_*$ is trivial, then the morphism $(\tau\pi)_* \colon \Autz(X)\to \Autz(C)$ is also trivial. Now, assume that $\tau_*$ is not trivial. Since the image of $\tau_*$ is a connected algebraic subgroup of $\Autz(C)$, it follows that $\tau_*$ is surjective. By Lemma \ref{FFb-bundle} \ref{canonicalextension}, there exists a rank-$2$ vector bundle $\E$ over $S$ such that $\PP(\E)\simeq X$, and which fits into the short exact sequence 
	$$ 0 \to \mathcal{O}_S(b\sigma+\tau^*(D)) \to \mathcal{E}\to \mathcal{O}_S \to 0.$$
	Since $\deg(D)\neq 0$, there exists $p\in C$ such that $D$ is linearly equivalent to $\deg(D)p$. Then there exists $g\in \Autz(C)$ such that $D$ is not linearly equivalent to $g^*D$. The morphism $\tau_*$ being surjective, there exists $f\in \Autz(S)$ such that $\tau_*(f)=g$. Moreover, the pullback functor is exact, so $f^*\E$ fits into the short exact sequence:
	$$ 0 \to \mathcal{O}_S(bf^*\sigma+f^*(\tau^*(D))) \to f^*\mathcal{E}\to \mathcal{O}_S \to 0.$$
	By assumption, the linear class of $\sigma$ is $\Autz(S)$-invariant, so we get $\O_S(bf^*\sigma) \simeq \O_S(b\sigma)$. Since $\tau_*(f)=g$, it follows that $f^*(\tau^*(D)) = \tau ^* (g^*(D))$. Hence, $f^*\pi \colon f^*X \to S$ has invariants $(S,b,g^*(D))$, but we have chosen $g$ such that $D$ and $g^*D$ are not linearly equivalent. So by Lemmas \ref{Sisomorph} and \ref{pullbackiso}, $X$ and $f^*X$ are not $S$-isomorphic, and $f$ does not lift to an automorphism of $X$. Thus, the image of $(\tau \pi)_*$ does not contain $g$ and $(\tau \pi)_*$ is trivial. We conclude that $\Autz(X)$ is not relatively maximal by Proposition \ref{equivarianttodecomposable}.
\end{proof}

In \cite{Brosius,BrosiusII}, Brosius studies the moduli space of rank-$2$ vector bundles over a geometrically ruled surface. He shows that such a rank-$2$ vector bundle $\mathcal{E}$ sits into a short exact sequence, called the \emph{canonical extension} of $\mathcal{E}$. We recall his result below:

\begin{definition}\cite[\S 1]{Brosius}
	Let $\tau\colon S\to C$ be a geometrically ruled surface and let $\E$ be a rank-$2$ vector bundle over $S$. We say that the pair $(n,m)$ is the $\emph{fiber type}$ of $\E$ if the restriction of $\E$ on a general fiber $f$ of $\tau$ is isomorphic to $\O_f(n)\oplus \O_f(m)$.
\end{definition}

\begin{theorem}\cite[\S 2. Proposition 7]{Brosius}\label{Brosius}
	Let $\tau\colon S\to C$ be a geometrically ruled surface over a smooth projective curve $C$, and let $\E$ be an indecomposable rank-$2$ vector bundle over $S$ of fiber type $(n,m)$, where $n>m$ are integers. Then $\E$ fits into the following short exact sequence, called the \emph{canonical extension} of $\E$:
	$$ 0\to \E' \to \E \to \mathscr{I}_Z\otimes \mathscr{M} \to 0,$$
	where $\E' = \tau^*(\tau_*(\E\otimes \O_S(-n)))\otimes \O_S(n)$ is a line bundle of fiber degree $n$, $\mathscr{M}$ is a line bundle of fiber degree $m$, and $\mathscr{I}_Z$ is the ideal sheaf on a l.c.i.\ zero cycle $Z$. Conversely, the data of $\E'$, $Z$, $\mathscr{M}$, together with an invertible orbit in $\PP(\mathrm{Ext}^1_{\O_S}(\mathscr{I}_Z\otimes \mathscr{M} ,\E'))$, uniquely determine the isomorphism class of $\E$.
\end{theorem}

In the theorem above, by \emph{"an invertible orbit"}, we mean an element of $\PP(\mathrm{Ext}^1_{\O_S}(\mathscr{I}_Z\otimes \mathscr{M} ,\E'))$ that arises from a rank-$2$ vector bundle; see \cite[\S 2, Proposition 6]{Brosius}. Our setting is simplier, as we work with $\PP^1$-bundles without jumping fibers. In fact, the short exact sequence obtained in Lemma $\ref{FFb-bundle}$ is a particular case of Theorem $\ref{Brosius}$:

\begin{lemma}\label{Brosiusrem}
		Let $C$ be a smooth projective curve, let $\tau\colon S\to C$ and $\pi\colon X\to S$ be $\PP^1$-bundles such that $\tau\pi$ is an $\FF_b$-bundle with $b>0$.
		Let $\E$ be the rank-$2$ vector bundle over $S$ such that $\PP(\E) \simeq X$ and which fits into the short exact sequence
		\begin{equation}\label{canonicalextensionseq}
			0 \to \mathcal{O}_S(b\sigma+\tau^*(D)) \to \mathcal{E}\to  \mathcal{O}_S \to 0 \tag{$\dagger$}
		\end{equation}
	    obtained in Lemma $\ref{FFb-bundle}$ $\ref{canonicalextension}$. If $\E$ is indecomposable, then this short exact sequence is the canonical extension of $\E$ in the sense of Theorem $\ref{Brosius}$, with $\E' \simeq \O_S(b\sigma+ \tau^*(D))$, $\mathscr{M}\simeq \O_S$ and $Z=\emptyset$.
\end{lemma}

\begin{proof}
	Restricting (\ref{canonicalextensionseq}) to a fiber $f$ of $\tau$, we get $\E_f\simeq \O_{f}\oplus \O_{f}(b)$, i.e., $\E$ is of fiber type $(b,0)$. Tensoring (\ref{canonicalextensionseq}) by $\O_S(-b\sigma)$, we get the short exact sequence:
	$$
	0 \to \mathcal{O}_S(\tau^*(D)) \to \mathcal{E}\otimes \O_S(-b\sigma)\to  \mathcal{O}_S(-b\sigma) \to 0.
	$$
	Applying $\tau_*$ and and using that $\tau_*(\O_S(-b\sigma)) = 0$ (see e.g., \cite[II. Proposition 7.11. (a)]{Hartshorne}), we get that $\tau_*(\mathcal{E}\otimes \O_S(-b\sigma)) \simeq \O_C(D)$. Applying now $\tau^*$ and then tensoring by $\O_S(b\sigma)$, we get that $$\E' =  \tau^*(\tau_*(\E\otimes \O_S(-b\sigma))) \otimes \O_S(b\sigma)  \simeq \O_S(b\sigma+\tau^*(D)).$$
	This also implies that $\mathscr{M}\simeq \O_S$ and $Z=\emptyset$.
\end{proof}

\begin{remark}
	In our setting, $\mathscr{M}\simeq \mathcal{O}_S$, $Z=\emptyset$, and Theorem \ref{Brosius} implies that the isomorphism class of $\E$ is uniquely determined by the data of the invariants $(S,b,D)$  (once we have fixed the section $\sigma$ such that $\mathcal{O}_S(\sigma) \simeq \mathcal{O}_S(1)$) and an invertible orbit of $\PP(\mathrm{Ext}^1_{\O_S}(\mathcal{O}_S,\mathcal{O}_S(b\sigma+\tau^*(D))))$.
\end{remark}

\begin{proposition}\label{imagepi*}
	Let $C$ be an elliptic curve, $\tau \colon S\to C$ and $\pi\colon X\to S$ be $\PP^1$-bundles, such that $\pi$ has invariants $(S,b,D)$ for some $b>0$ and $D\in \Pic^0(C)$. Then the following hold:
	\begin{enumerate}
		\item\label{imagepi*.1} If $S=C\times \PP^1$, then there exists a subgroup $G$ of $\PGL_2(\kk)$ such that $\pi_*(\Autz(X)) = \Autz(C)\times G$.
		\item\label{imagepi*.2} If $S$ is $C$-isomorphic to $\AAA_0$ or $\PP(\O_C\oplus \mathcal{L})$, for some $\mathcal{L}\in \Pic^0(C)\setminus \{0\}$ of infinite order, then $\pi_*$ is surjective.
	\end{enumerate}
\end{proposition}

\begin{proof}
	  Let $\sigma$ be a section of $\tau$ such that $\O_S(\sigma)\simeq \O_S(1)$ and $\E$ be a rank-$2$ vector bundle over $S$, such that $\PP(\E)\simeq X$, and whose canonical extension is
	 $$
	 0 \to \O_S(b\sigma + \tau^*(D)) \to \E \to \O_S \to 0.
	 $$
	 Notice that for every $f\in \Autz(S)$, we have $f^*(\tau^*(D))=\tau^*(\tau_*(f)^*(D))$, and this divisor is linearly equivalent to $\tau^*(D)$ because the linear class of $D$ is $\Autz(C)$-invariant. Therefore, the pullback bundle $f^*X$ has also invariants $(S,b,D)$. 
	 	 
 	(1): If $S=C\times \PP^1$, then $\Autz(S) = \Autz(C) \times \PGL_2(\kk)$. The section $\sigma$ is a constant section (see e.g., \cite[V. Example 2.11.1]{Hartshorne}) and its linear class is $\Autz(S)$-invariant. The group $\Autz(C)\times \{\mathrm{id}_{\PP^1}\}$ acts on $\PP(\mathrm{Ext}^1_{\O_S}(\O_S,\mathcal{O}_S(b\sigma+\tau^*(D))))$ via pullback, which is a trivial action because $\Autz(C)$ is projective. Therefore, by Theorem \ref{Brosius}, $f^*X$ is $(C\times \PP^1)$-isomorphic to $X$ for every $f\in \Autz(C)\times \{\mathrm{id}_{\PP^1}\}$.
 	So $\Autz(C)\times \{\mathrm{id}_{\PP^1}\}$ is contained in $\Pi(\Aut(X))^\circ$, which equals $\pi_*(\Autz(X))$, by Proposition \ref{Autalgebraicgroup}. This implies that $\pi_*(\Autz(X)) = \Autz(C)\times G$ for some subgroup $G\subset \PGL_2(\kk)$.
 	
 	(2): The section $\sigma$ is $\Autz(S)$-invariant (see e.g., \cite[Chapter V. Example 2.11.2]{Hartshorne} and Lemma \ref{geometryofruledsurface}). Here as well, the group $\Autz(S)$ acts on $\PP(\mathrm{Ext}^1_{\O_S}(\O_S,\mathcal{O}_S(b\sigma+\tau^*(D))))$ via pullback and this action is necessarily trivial, as the automorphism group $\Autz(S)$ is anti-affine, i.e., $\O(\Autz(S))\simeq \kk$ (see e.g., \cite[Example 1.2.3]{BSU} and Lemma \ref{geometryofruledsurface}). By Theorem \ref{Brosius}, the rank-$2$ vector bundles $f^*{\E}$ and $\E$ are $S$-isomorphic for every $f\in \Autz(S)$, and using once more Proposition \ref{Autalgebraicgroup}, we conclude that the morphism $\pi_*$ is surjective.
\end{proof}

\section{Automorphisms of $\PP^1$-bundles over $C\times \PP^1$}\label{SectionCxPP1}

In this section, $\tau\colon C\times \PP^1\to C$ denotes the projection onto the first factor. Then $\tau$ admits infinitely many minimal sections, which are the constant sections; they are all of self-intersection zero and pairwise linearly equivalent. We also fix $\sigma = C\times \{[0:1]\} \subset C\times \PP^1$ as the constant section such that
$\O_{C\times \PP^1}(\sigma)\simeq \O_{C\times \PP^1}(1)$ (see Remark $\ref{rem.tauto}$ and \cite[V. 2.11.1]{Hartshorne}). The main result of this section is the following:

\begin{proposition}\label{main_prop:CtimesP^1}
	Let $C$ be a smooth projective curve of genus $g\geq 1$ and let $\pi\colon X\to C\times \PP^1$ be a $\PP^1$-bundle such that $\tau\pi$ is an $\FF_b$-bundle, with $b\in \mathbb{Z}$.
	Then $\Autz(X)$ is relatively maximal if and only if one of the following cases occurs:
	\begin{enumerate}
		\item Case $g=1$ and $b=0$. Then there exists a geometrically ruled surface $\tau'\colon S\to C$ such that
		\[
		X \cong S\times \PP^1,
		\]
		and $S$ is isomorphic to one of the following geometrically ruled surfaces: $C\times \PP^1$, $\AAA_0$, $\AAA_1$, or $\PP(\O_C\oplus \O_C(D))$ where $D\in \Pic^0(C)$ is non-trivial. Under this isomorphism, $\pi$ is identified with the morphism $\tau' \times \mathrm{id}_{\PP^1}$.
		\item Case $g=1$ and $b\neq0$. Then 
		\[
		X \cong \PP(\O_{C\times \PP^1}\oplus \O_{C\times \PP^1}(b\sigma+\tau^*(D))),
		\]
		where $\sigma$ is a constant section of $C\times \PP^1$ and $D\in \Pic^0(C)$. In this case, there is a short exact sequence
		\[
		1\to \mathbb{G}_m \to \Autz(X) \to \Autz(C) \to 1.
		\]
		\item Case $g\geq2$. Then $X\cong C\times \PP^1 \times \PP^1$ is the trivial $\PP^1$-bundle over $C\times \PP^1$ and $\Autz(X) \cong \PGL_2(\kk)^2$.
	\end{enumerate}
	Moreover, in all cases above, the pair $(X,\pi)$ is superstiff.
\end{proposition}

\subsection{$\FF_0$-bundles with relatively maximal automorphism groups}

~\bigskip

If $C$ is a smooth projective curve of genus $g\geq 2$, we have all the ingredients to classify the relatively maximal automorphism groups of $\PP^1$-bundles over $C\times \PP^1$:

\begin{proposition}\label{prop:ggeq2}
	Let $C$ be a smooth projective curve of genus $g\geq 2$ and $\pi \colon X\to C\times \PP^1$ be a $\PP^1$-bundle. Then $\Autz(X)$ is relatively maximal if and only if $X=C\times \PP^1 \times \PP^1$ and $\pi$ is the trivial $\PP^1$-bundle over $C\times \PP^1$. In this case, the pair $(X,\pi)$ is superstiff.
\end{proposition}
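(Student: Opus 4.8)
The plan is to prove the two implications separately, establishing the ``if'' direction first since its superstiffness conclusion is reused in the converse. Suppose $X = C\times\PP^1\times\PP^1$ with $\pi$ the projection onto $S = C\times\PP^1$. Then $\tau\pi\colon X\to C$ is the projection onto $C$, whose fibers are $\PP^1\times\PP^1\simeq\FF_0$, so $\tau\pi$ is an $\FF_0$-bundle ($b=0$). Writing $X = (C\times\PP^1)\times_C(C\times\PP^1)$, I would use Lemma \ref{autofiberproduct}\ref{autofiberproduct.1} together with Lemma \ref{geometryofruledsurface}(1) and the fact that $\Autz(C)$ is trivial for $g\geq 2$ to obtain $\Autz(X)\simeq\PGL_2\times\PGL_2$, with the two factors acting on the two $\PP^1$-coordinates and fixing $C$ pointwise. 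Hence every orbit is of the form $\{c\}\times\PP^1\times\PP^1$, of dimension $2$. Lemma \ref{LinkfromFFb} then yields at once that $\Autz(X)$ is relatively maximal and that the pair $(X,\pi)$ is superstiff.

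For the converse, assume $\Autz(X)$ is relatively maximal. First I would remove the jumping fibers: Proposition \ref{removal jumping} produces an $\Autz(X)$-equivariant square birational map $\psi\colon X\dashrightarrow\widetilde X$ onto an $\FF_b$-bundle $\widetilde\pi\colon\widetilde X\to S$. By relative maximality, $\psi\Autz(X)\psi^{-1} = \Autz(\widetilde X)$; composing any $\Autz(\widetilde X)$-equivariant square birational map with $\psi$ then shows that $\Autz(\widetilde X)$ is again relatively maximal. Since $g\geq 2$ forces $\Autz(C)$ to be trivial, the morphism $(\tau\widetilde\pi)_*\colon\Autz(\widetilde X)\to\Autz(C)$ is trivial; thus if $b>0$, Proposition \ref{equivarianttodecomposable} would contradict the relative maximality of $\Autz(\widetilde X)$. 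Hence $b=0$.

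With $b=0$, Lemma \ref{fiberproduct} identifies $\widetilde X$ with a fiber product $(C\times\PP^1)\times_C S'$ over $C$, so Proposition \ref{candidateFF0max} (case $g\geq 2$) forces $\widetilde X\simeq C\times\PP^1\times\PP^1$ with $\widetilde\pi$ the trivial $\PP^1$-bundle. In particular $(\widetilde X,\widetilde\pi)$ is exactly the pair treated in the ``if'' direction, hence superstiff. To conclude, I would run this superstiffness backwards: the inverse $\psi^{-1}\colon\widetilde X\dashrightarrow X$ is $\Autz(\widetilde X)$-equivariant, because $\psi^{-1}\Autz(\widetilde X)\psi = \Autz(X)$, and it is square birational; so superstiffness of $(\widetilde X,\widetilde\pi)$ forces $\psi^{-1}$ and its base map to be isomorphisms. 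Therefore $X\simeq\widetilde X = C\times\PP^1\times\PP^1$ and $\pi$ is the trivial $\PP^1$-bundle, as claimed.

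The main obstacle I anticipate is not geometric but the bookkeeping of transporting the relevant properties along $\psi$: one must verify that relative maximality of $\Autz(X)$ descends to the $\FF_b$-model $\Autz(\widetilde X)$ (so that the cited reduction propositions apply to it), and conversely that the superstiffness of the model $\widetilde X$ can be pushed back through $\psi^{-1}$ to conclude that $X$ \emph{itself}, and not merely a birational model of it, is the trivial bundle. Once the reductions of Section \ref{SectionPreliminaries} are in place the argument is an assembly; the care lies entirely in keeping the equivariance and the ``square'' compatibility straight throughout.
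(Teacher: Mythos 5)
Your proof is correct and follows essentially the same route as the paper: Proposition \ref{removal jumping} and Proposition \ref{equivarianttodecomposable} to force $b=0$, Proposition \ref{candidateFF0max} to identify the $\FF_0$-bundle as $C\times\PP^1\times\PP^1$, and Lemma \ref{LinkfromFFb} for relative maximality and superstiffness. The only difference is that you spell out the bookkeeping (descent of relative maximality to the $\FF_b$-model and the back-transport of superstiffness through $\psi^{-1}$ to conclude that $X$ itself is the trivial bundle), which the paper compresses into the single sentence that ``$\tau\pi$ is an $\FF_0$-bundle''.
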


\begin{proof}
	Assume that $\Autz(X)$ is relatively maximal. By Propositions \ref{removal jumping} and \ref{equivarianttodecomposable}, $\tau\pi$ is an $\FF_0$-bundle. Using Proposition \ref{candidateFF0max}, $X$ is $(C\times \PP^1)$-isomorphic to $C\times \PP^1\times \PP^1$. Conversely, $\Autz(X) \simeq (\PGL_2(\kk))^2$ acts on $X$ with orbits of dimension $2$, so $\Autz(X)$ is relatively maximal and the pair $(X,\pi)$ is superstiff by Lemma \ref{LinkfromFFb}.
\end{proof}

From now on, let $C$ be an elliptic curve. Unlike the case where $C$ is of general type, $\Autz(C)$ acts on $C$ by translations, and the $\FF_b$-bundle $\tau\pi\colon \Autz(X)\to \Autz(C)$ may have no $\Autz(X)$-invariant fibers. Hence, we cannot apply directly Proposition $\ref{equivarianttodecomposable}$ as in the proof above, when $b>0$. If $\tau\pi$ is an $\FF_0$-bundle, $X$ is a fiber product and we obtain:

\begin{proposition}\label{FF0viaCxPP1}
	Let $C$ be an elliptic curve and $X=S\times \PP^1$, where $\tau'\colon S \to C$ is a geometrically ruled surface. Denote by $p_1$ the projection onto $S$ and $p_2=\tau' \times \mathrm{id}_{\PP^1}$. Then $\Autz(X)$ is relatively maximal with respect to $p_1$ and $p_2$, if and only if $X$ is one of the following:
	\begin{enumerate}
		\item $C\times \PP^1 \times \PP^1$,
		\item $\PP(\O_C\oplus \L)\times \PP^1$ for some non-trivial $\L\in \Pic^0(C)$,
		\item $\AAA_0\times \PP^1$,
		\item $\AAA_1\times \PP^1$;
	\end{enumerate}
	and the pairs $(X,p_1)$ and $(X,p_2)$ are superstiff. Moreover, in each case above, $\Autz(X)$ is a maximal connected algebraic subgroup of $\Bir(X)$.
\end{proposition}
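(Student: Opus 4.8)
The plan is to use that $X=S\times\PP^1$ is canonically the fiber product $S\times_C(C\times\PP^1)$: under this identification $p_1$ becomes the projection onto the first factor $S$, $p_2$ becomes the projection onto the second factor $C\times\PP^1$, and both $\tau p_1$ and the composition of $p_2$ with the projection $C\times\PP^1\to C$ coincide with the $\FF_0$-bundle $X\to C$ furnished by Lemma \ref{fiberproduct}. Once this is set up, the statement splits into two independent arguments, each a direct application of the machinery already developed for fiber products of ruled surfaces.

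For the implication ``relatively maximal $\Rightarrow$ $X$ is on the list'', I would invoke Lemma \ref{autofiberproduct}\ref{FF0relativelymax}: if $\Autz(X)$ is relatively maximal for one of the two projections (which we are granted), then both $\Autz(S)$ and $\Autz(C\times\PP^1)$ are maximal connected algebraic subgroups of $\Bir(S)$ and $\Bir(C\times\PP^1)$. The latter is automatic, while the former, read through Theorem \ref{dim2max}\ref{dim2max.1} with $g=1$, says exactly that $S$ is $C$-isomorphic to one of $C\times\PP^1$, $\AAA_0$, $\AAA_1$, or $\PP(\O_C\oplus\L)$ with $\L\in\Pic^0(C)\setminus\{0\}$. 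Hence $X=S\times\PP^1$ is one of the four listed threefolds.

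For the converse, together with superstiffness, I would compute the $\Autz(X)$-orbits. By Lemma \ref{autofiberproduct}\ref{autofiberproduct.1} one has $\Autz(X)\simeq\Autz(S)\times_{\Autz(C)}\Autz(C\times\PP^1)$, and since $\Autz(C\times\PP^1)=\Autz(C)\times\PGL_2(\kk)$ surjects onto $\Autz(C)$ with kernel $\PGL_2(\kk)$, this fiber product is isomorphic to $\Autz(S)\times\PGL_2(\kk)$ acting by $(f,g)\cdot(s,x)=(f(s),g(x))$. Thus the orbit of $(s,x)$ equals $(\Autz(S)\cdot s)\times\PP^1$, of dimension $\dim(\Autz(S)\cdot s)+1$. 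In each of the four cases the homomorphism $\tau_*\colon\Autz(S)\to\Autz(C)$ is surjective by Lemma \ref{geometryofruledsurface}, so every $\Autz(S)$-orbit surjects onto $C$ and has dimension at least $1$; hence every $\Autz(X)$-orbit has dimension at least $2$. Applying Lemma \ref{LinkfromFFb} to the $\FF_0$-bundle $\tau p_1$ and to the $\FF_0$-bundle obtained from $p_2$ then shows that $\Autz(X)$ is relatively maximal for both $p_1$ and $p_2$ and that both pairs $(X,p_1)$ and $(X,p_2)$ are superstiff.

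The one point requiring care is the orbit-dimension bound, whose tight case is $S=\AAA_1$. There $\Autz(\AAA_1)$ is only one-dimensional (an elliptic curve isogenous to $C$, by Lemma \ref{geometryofruledsurface}\ref{geometryofruledsurface.4}), so its orbits on $\AAA_1$ have dimension exactly $1$ and the extra $\PP^1$-factor is genuinely needed to reach dimension $2$. What really drives the bound in all four cases is the surjectivity of $\tau_*$, equivalently the absence of $\Autz(S)$-fixed points on $S$; this is the hypothesis I would verify most carefully before concluding.
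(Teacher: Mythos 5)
Your proof is correct and follows essentially the same route as the paper: the forward implication is Proposition \ref{candidateFF0max} (which you unfold into Lemma \ref{autofiberproduct}\ref{FF0relativelymax} plus Theorem \ref{dim2max}), and the converse is the same orbit-dimension bound via Lemmas \ref{geometryofruledsurface} and \ref{autofiberproduct}, concluded by Lemma \ref{LinkfromFFb}, which is exactly the packaged form of the paper's appeal to Lemma \ref{SarkisovIIIandIV}. Your explicit identification $\Autz(X)\simeq\Autz(S)\times\PGL_2(\kk)$ and the attention to the tight case $S=\AAA_1$ are sound refinements of the same argument, not a different method.
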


\begin{proof}
	The direct implication follows from Proposition \ref{candidateFF0max}. Conversely, let $X$
	be $(C\times \PP^1)$-isomorphic to one of the threefolds in the given list. Then every $\Autz(X)$-orbit has dimension at least two by Lemmas \ref{geometryofruledsurface} and \ref{autofiberproduct}. Hence, $\Autz(X)$ is relatively maximal by Lemma \ref{SarkisovIIIandIV}. The only $\Autz(X)$-Sarkisov diagram is of type IV, that exchanges the two fibrations of the fiber product, and the induced Sarkisov link is an isomorphism. Therefore, $\Autz(X)$ is a maximal connected algebraic subgroup of $\Bir(X)$. Moreover, there is no $\Autz(X)$-equivariant square birational map starting from any of the two $\PP^1$-bundle structures $p_1$ and $p_2$. Thus $(X,p_1)$ and $(X,p_2)$ are superstiff.
\end{proof}

 To treat the cases where $\tau\pi$ has not $\Autz(X)$-invariant fibers, we explicitly compute the moduli space of $\PP^1$-bundles over $C\times \PP^1$ with some fixed invariants $(C\times \PP^1,b,D)$, where $b>0$ and $\deg(D) = 0$ (see Proposition $\ref{degnonnuldec}$).

\subsection{Moduli space of $\PP^1$-bundles over $C\times \PP^1$}

\begin{notation}
	Let $f\in \kk(C)$. Then $\mathrm{div}(f)= \mathrm{Z}(f)- \mathrm{P}(f)$, where $\mathrm{Z}(f)$ and $\mathrm{P}(f)$ denote the divisors of zeroes and poles of $f$, respectively.
\end{notation} 

The following two lemmas are elementary observations, but useful in the explicit computation of the moduli spaces in Lemma $\ref{representativedeg0}$.

\begin{lemma}\label{rationalfunction}
	Let $C$ be an elliptic curve, and $p,q\in C$. Then the following hold:
	\begin{enumerate}
		\item \label{rationalfunction.1} There exists $f\in \kk(C)^*$ such that $\mathrm{P}(f)=p+q$.
		\item \label{rationalfunction.2} For each $n\geq 2$, there exists $f\in \kk(C)^*$ such that $\mathrm{P}(f)=np$.
	\end{enumerate}
\end{lemma}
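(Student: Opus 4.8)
The plan is to deduce both statements from the Riemann--Roch theorem on the elliptic curve $C$, whose canonical class is trivial and whose genus equals $1$. For any divisor $D$ on $C$, Riemann--Roch gives $\ell(D) - \ell(-D) = \deg(D)$, where I write $\ell(D) = \dim_\kk \H^0(C,\O_C(D))$. In particular, whenever $\deg(D) \geq 1$ the term $\ell(-D)$ vanishes, so $\ell(D) = \deg(D)$; I would record this as the key computational input. The subtle point in both parts is that I do not merely want a function whose pole divisor is \emph{bounded above} by the prescribed divisor, but one whose pole divisor is \emph{exactly} the prescribed divisor. The mechanism forcing equality is that on a curve of positive genus no rational function has a single simple pole, i.e.\ $\ell(p) = 1$ for every point $p\in C$; this is precisely where the hypothesis $g(C) = 1 > 0$ enters.

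For \ref{rationalfunction.1}, I would first treat the case $p \neq q$. Since $\deg(p+q) = 2$, we have $\ell(p+q) = 2$, so the space $\H^0(C,\O_C(p+q))$ strictly contains the constants and there exists a non-constant $f$ with $\mathrm{P}(f) \leq p + q$. As $f$ is non-constant it has at least one pole, and its pole divisor is supported in $\{p,q\}$ with order at most one at each point. If $\mathrm{P}(f) = p$, then $f \in \H^0(C,\O_C(p))$, contradicting $\ell(p) = 1$; the symmetric argument excludes $\mathrm{P}(f) = q$. Hence $\mathrm{P}(f) = p + q$. The case $p = q$ coincides with \ref{rationalfunction.2} for $n = 2$, treated below.

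For \ref{rationalfunction.2}, I would compare the nested spaces $\H^0(C,\O_C((n-1)p)) \subseteq \H^0(C,\O_C(np))$. For $n \geq 2$ both divisors have positive degree, so $\ell((n-1)p) = n - 1$ and $\ell(np) = n$; the inclusion is therefore strict, and any $f$ in the complement lies in $\H^0(C,\O_C(np))$ but not in $\H^0(C,\O_C((n-1)p))$. Such an $f$ has no pole outside $p$ and has a pole of order exactly $n$ at $p$, so $\mathrm{P}(f) = np$. I expect no genuine obstacle here beyond the systematic bookkeeping of $\ell(mp) = m$ for $m \geq 1$ together with the vanishing $\ell(-D)=0$ in degree $\geq 1$ and the normalization $\ell(p) = 1$, all of which follow at once from Riemann--Roch in genus one.
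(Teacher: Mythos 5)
Your proof is correct and follows essentially the same route as the paper: both arguments apply Riemann--Roch in genus one to get $\h^0(C,p+q)=2$ and $\H^0(C,np)\setminus \H^0(C,(n-1)p)\neq \emptyset$, and both force the pole divisor to be exactly the prescribed one using the fact that no rational function on a curve of positive genus has a single simple pole. Your write-up merely makes explicit the bookkeeping (and the case $p=q$) that the paper's two-line proof leaves implicit.
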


\begin{proof}
	By Riemann-Roch formula, $\h^0(C,p+q) =2$ and there is no rational function with only one pole, this gives \ref{rationalfunction.1}. For $n\geq 2$, it follows also by Riemann-Roch that $\H^0(C,np)\setminus \H^0(C,(n-1)p)\neq \emptyset$, and this gives \ref{rationalfunction.2}.
\end{proof}

\begin{lemma}\label{transitiondeg0}
	Let $C$ be an elliptic curve with a neutral point $p_0\in C$, and let $p\in C$ such that $p\neq p_0$. For every $t\in \Autz(C)$ such that $p,p_0,t(p),t(p_0)$ are pairwise distinct, the line bundle $\O_C(p-p_0)$ is trivial over $U = C\setminus \{p,p_0\}$ and $V = C\setminus \{t(p),t(p_0)\}$, and we can choose the transition map as
	\[
	\begin{array}{cccc}
		\phi \colon &V\times \AA^1 & \dashrightarrow &U\times \AA^1 \\
		&(x,z)& \longmapsto &(x,f(x)z),
	\end{array}
	\]
	where $f\in \kk(C)^*$ satisfies $\mathrm{div}(f)=(t(p)-t(p_0))-(p-p_0)$.
\end{lemma}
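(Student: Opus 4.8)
The plan is to construct the rational function $f$ directly from the group structure of $\mathrm{Pic}^0(C)$, and then to read off both triviality statements and the transition map from $\mathrm{div}(f)$. The essential input is that translations act trivially on $\mathrm{Pic}^0(C)$. Writing $t=t_a$ for the translation by some $a\in C$ (recall $\Autz(C)=C$ acts by translations), the standard characterization $\mathrm{Pic}^0(C)=\{\L : t_a^*\L\simeq \L \text{ for all } a\in C\}$ for an abelian variety shows that the degree-zero class $[p-p_0]$ is fixed by $t_*=(t^{-1})^*$, i.e.\ $(t(p)-t(p_0))\sim (p-p_0)$. Equivalently, via the group isomorphism $C\to \mathrm{Pic}^0(C)$, $q\mapsto [q-p_0]$, one computes that $[t(p)-t(p_0)]$ and $[p-p_0]$ both correspond to the point $p$. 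Either way, the divisor $(t(p)-t(p_0))-(p-p_0)$ is principal, so a function $f\in\kk(C)^*$ with $\mathrm{div}(f)=(t(p)-t(p_0))-(p-p_0)$ exists, which also justifies the implicit existence claim in the statement.

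Next I would verify the two triviality assertions, using that the pairwise distinctness of $p,p_0,t(p),t(p_0)$ is exactly what guarantees $p,p_0\in V$ and $t(p),t(p_0)\in U$. Over $U=C\setminus\{p,p_0\}$ the divisor $D=p-p_0$ restricts to $0$, hence $\O_C(D)|_U\simeq \O_U$ with local generator the constant function $1$. Over $V=C\setminus\{t(p),t(p_0)\}$ the points $t(p),t(p_0)$ are removed, so $\mathrm{div}(f)|_V=-(p-p_0)=-D|_V$; thus $f$ is a nowhere-vanishing local generator of $\O_C(D)$ over $V$, giving $\O_C(D)|_V\simeq \O_V$.

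Finally I would compute the transition map. On the overlap $U\cap V$, which avoids the entire support $\{p,p_0,t(p),t(p_0)\}$ of $\mathrm{div}(f)$ so that $f$ is regular and invertible there, the generator $f$ over $V$ equals $f$ times the generator $1$ over $U$. Hence the change of trivialization $\psi_U\circ\psi_V^{-1}\colon \O_V\to \O_U$ is multiplication by $f$, which is precisely the asserted map $(x,z)\mapsto (x,f(x)z)$ from $V\times\AA^1$ to $U\times\AA^1$ (consistently with the sign conventions of Lemma \ref{transitionmaplinebundle}, where here $U$ and $V$ play the roles of the sets $C\setminus\mathrm{Supp}(D)$ and $C\setminus\mathrm{Supp}(E)$).

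The only genuinely non-formal step is the first one, the existence of $f$; everything afterward is bookkeeping. I therefore expect the main obstacle to be merely keeping the conventions straight---matching the direction of the transition map with the sign in $\mathrm{div}(f)$, and confirming that the distinctness hypothesis is exactly what lets one see the relevant restrictions as $0$ on $U$ and as $-(p-p_0)$ on $V$.
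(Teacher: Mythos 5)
Your proposal is correct and follows essentially the same route as the paper: the key step in both is the linear equivalence $t(p)-t(p_0)\sim p-p_0$, obtained from the group isomorphism $C\to \Cl^0(C)$, $q\mapsto q-p_0$ (equivalently, translation-invariance of $\Pic^0(C)$), which yields the existence of $f$. The remaining verifications --- triviality of $\O_C(p-p_0)$ over $U$ and $V$ and the identification of the transition map with multiplication by $f$ --- are the same bookkeeping the paper carries out, up to its slightly different convention for the local trivializing functions $\phi_U,\phi_V$.
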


\begin{proof}
	Using the isomorphism $C\to \Pic^0(C)$, $p \mapsto p-p_0$, we have that $D=t(p)-t(p_0)$ is linearly equivalent to $E=p-p_0$, so there exists $f\in \kk(C)^*$ with $\mathrm{div}(f) = D-E$. This implies that the line bundle $\O_C(p-p_0)$ is trivial over the opens $U$ and $V$ defined in the statement. The transition map $\phi$ is given by the multiplication of an element $\mu\in \O_C(U\cap V)$ on the second factor and by construction, $\mu$ can be chosen as $\phi_U/\phi_V$, where $\phi_U = 1$ and $\phi_V = 1/f$.
\end{proof}

Following the ideas of \cite[\S 3.3]{BFT}, we compute explicit trivializations and transition maps for the $\PP^1$-bundles with invariants $(C\times \PP^1,b,D)$; where $C$ is an elliptic curve, $b>0$ and $D$ has degree zero. 

\begin{lemma}\label{representativedeg0}
	Let $C$ be an elliptic curve with a neutral element $p_0\in C$. For any $\PP^1$-bundle $\pi\colon X\to C\times \PP^1$ with invariants $(C\times\PP^1,b,D)$, where $b>0$ and $\deg(D)=0$, the following hold:
	\begin{enumerate}
		\item \label{representativedeg0.1} If $D$ is trivial, then for any $p\neq p_0$, the $\FF_b$-bundle $\tau\pi$ is trivial over $U=C\setminus \{p\} $ and $V=C\setminus \{p_0\}$, and we can choose the transition map as follows:
		\[
		\begin{array}{cccc}
			\phi \colon &V\times \FF_b & \dashrightarrow &U\times \FF_b \\
			&(x,[y_0:y_1\ ;z_0:z_1]) & \longmapsto& (x,[y_0:y_1 + \xi(x) g(z_0,z_1) y_0\ ;z_0:z_1]),
		\end{array}
		\]
		where $\xi\in \kk(C)$ satisfies $\mathrm{P}(\xi) = p+p_0$ and $g\in \kk[z_0,z_1]_b$. Moreover, the $(C\times\PP^1)$-isomorphism class of $X$ is uniquely determined by $g$, up to multiplication by a non-zero scalar.
		\item \label{representativedeg0.2} If $D$ is not trivial, then $X$ is $(C\times \PP^1)$-isomorphic to the decomposable $\PP^1$-bundle 
		\[
		\PP(\O_{C\times \PP^1}\oplus \O_{C\times \PP^1}(b\sigma+\tau^*(D))).
		\]
		Let $p\in C$ such that $D\sim p-p_0$ and $t\in \Autz(C)$ such that $p,p_0,t(p),t(p_0)$ are pairwise distinct. Then the $\FF_b$-bundle $\tau\pi$ is trivial over $U=C\setminus \{p,p_0\}$ and $V=C\setminus \{t(p),t(p_0)\}$, and we can choose the transition map as follows:
		\[
		\begin{array}{cccc}
			\phi\colon & V\times \FF_b & \dashrightarrow& U\times \FF_b \\
			&(x,[y_0:y_1\ ;z_0:z_1]) & \longmapsto &(x,[y_0:f(x) y_1 \ ;z_0:z_1]),
		\end{array}
		\]
		where $f\in \kk(C)^*$ satisfies $\mathrm{div}(f)=(t(p)-t(p_0))-(p-p_0)$.
	\end{enumerate}
\end{lemma}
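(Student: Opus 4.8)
The plan is to specialise the canonical extension of Lemma \ref{FFb-bundle} to the case $S=C\times\PP^1$ and to read off the two stated normal forms from the vanishing or non-vanishing of a single cohomology group on $C$. Since $\tau$ is the trivial $\PP^1$-bundle and $\sigma$ is the constant section with $\O_S(\sigma)\simeq\O_S(1)$, I would first fix local trivializations in which the transition matrices $s_{ij}$ of $\tau$ are all the identity. Lemma \ref{FFb-bundle} then presents $X=\PP(\E)$ via the canonical extension
\[
0 \to \O_S(b\sigma+\tau^*(D)) \to \E \to \O_S \to 0,
\]
whose class lies in $\mathrm{Ext}^1_{\O_S}(\O_S,\O_S(b\sigma+\tau^*(D)))\cong \H^1(S,\O_S(b\sigma+\tau^*(D)))$. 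By Lemma \ref{Sisomorph}, passing to projectivizations, the $(C\times\PP^1)$-isomorphism class of $X$ is determined by this extension class up to multiplication by a nonzero scalar.

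The key computation is the Leray spectral sequence for $\tau$. Using $R^1\tau_*\O_S(b\sigma)=0$ for $b\geq 0$ together with the projection formula $\tau_*\O_S(b\sigma+\tau^*(D))\cong\O_C(D)^{\oplus(b+1)}$, one gets
\[
\H^1(S,\O_S(b\sigma+\tau^*(D)))\;\cong\;\H^1(C,\O_C(D))\otimes\kk[z_0,z_1]_b,
\]
where the factor $\kk[z_0,z_1]_b$ records the fiberwise $\PP^1$-direction. On the elliptic curve $C$, Riemann--Roch gives $\H^1(C,\O_C(D))=0$ when $\deg D=0$ and $D\not\sim 0$, and $\H^1(C,\O_C)\cong\kk$ when $D\sim 0$; this dichotomy separates the two statements. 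For part \ref{representativedeg0.2}, the vanishing forces the extension to split, so $\E$ is decomposable and Lemma \ref{FFb-bundle} \ref{FFb-decomposable} yields $X\simeq\PP(\O_S\oplus\O_S(b\sigma+\tau^*(D)))$. Choosing $U=C\setminus\{p,p_0\}$ and $V=C\setminus\{t(p),t(p_0)\}$, both affine, the degree-zero line bundle $\O_C(D)=\O_C(p-p_0)$ is trivial over each; its transition function is provided by Lemma \ref{transitiondeg0}, and because $s_{ij}=\mathrm{id}$ and $\sigma$ is constant the $b\sigma$-twist is absorbed into the $\FF_b$-charts, leaving only the gluing $y_1\mapsto f(x)y_1$ with $\mathrm{div}(f)=(t(p)-t(p_0))-(p-p_0)$.

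For part \ref{representativedeg0.1} I would use the two-chart cover $U=C\setminus\{p\}$, $V=C\setminus\{p_0\}$. Both are affine, hence form a Leray cover for $\O_C$; moreover the extension $0\to\O_S(b\sigma)\to\E\to\O_S\to0$ splits over $\tau^{-1}(U)$ and $\tau^{-1}(V)$, since there the relevant obstruction is $\H^1(U,\O_U^{\oplus(b+1)})=0$ (and likewise over $V$), so both $\tau^{-1}(U)$ and $\tau^{-1}(V)$ trivialize $\tau\pi$. By Lemma \ref{autotrivializations}, with $\lambda=1$ (as $D\sim0$) and $s=\mathrm{id}$, the transition on $U\cap V$ has the shape $y_1\mapsto y_1+p(x,z_0,z_1)y_0$ with $p\in\O_C(U\cap V)[z_0,z_1]_b$. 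Its class in $\check{\H}^1(\{U,V\},\O_C)\otimes\kk[z_0,z_1]_b$ is the extension class; reducing the coefficient of each monomial modulo the coboundaries $\O_C(U)+\O_C(V)$, I may replace it by a scalar multiple of a fixed generator $\xi$ of $\check{\H}^1(\{U,V\},\O_C)\cong\kk$. Such a $\xi$ with $\mathrm{P}(\xi)=p+p_0$ exists by Lemma \ref{rationalfunction} \eqref{rationalfunction.1} and represents a nonzero class, as an elliptic curve carries no rational function with a single simple pole. This produces $p=\xi(x)g(z_0,z_1)$, and $\H^1(C,\O_C)\cong\kk$ together with Lemma \ref{Sisomorph} (where the equalities $\lambda_{ij}=\lambda'_{ij}=1$ force $\mu_i=\mu_j\in\kk^*$) shows that $g$ is well defined up to a nonzero scalar.

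The main obstacle I anticipate is part \ref{representativedeg0.1}: passing from the abstract one-dimensionality of $\H^1(C,\O_C)$ to the concrete normal form over the explicit two-chart cover. This demands verifying that $U$ and $V$ genuinely trivialize the $\FF_b$-bundle, that the \v{C}ech cocycle can be normalized so that its \emph{entire} dependence on $x$ is carried by the single function $\xi$ with the prescribed poles, and that the residual ambiguity is exactly the scaling $g\mapsto\mu g$. Reconciling these \v{C}ech-level normalizations with the isomorphism criterion of Lemma \ref{Sisomorph} is where the bookkeeping is delicate, whereas part \ref{representativedeg0.2} is comparatively immediate once the cohomology vanishing is in hand.
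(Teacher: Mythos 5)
Your proposal is correct, but it takes a genuinely different route from the paper. The paper's proof is entirely explicit: it fixes a fine trivializing cover, writes the transition maps of $\tau\pi$ as in Lemma \ref{FFb-bundle}, and then uses the equivalence relation of Lemma \ref{Sisomorph} to cancel the poles of the coefficient functions one monomial at a time, using rational functions with prescribed poles supplied by Lemma \ref{rationalfunction}; both the reduction to the two-chart cover and the uniqueness of $g$ up to scalar are obtained by the concrete observation that an elliptic curve carries no rational function with a single simple pole. You instead compute the ambient cohomology group once and for all: the canonical extension class lives in $\mathrm{Ext}^1_{\O_S}(\O_S,\O_S(b\sigma+\tau^*(D)))\cong \H^1(C,\O_C(D))\otimes\kk[z_0,z_1]_b$ by Leray/K\"unneth (valid since $S=C\times\PP^1$ and $R^1\tau_*$ vanishes), and Serre duality on $C$ gives the dichotomy directly --- vanishing when $D$ is non-trivial of degree zero (so the extension splits, which is part \ref{representativedeg0.2}), one-dimensional when $D\sim 0$ (which is part \ref{representativedeg0.1}, with $\xi$ a \v{C}ech generator on the affine cover $\{U,V\}$, nonzero by the same no-single-pole argument). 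Your approach is shorter, explains conceptually where the dichotomy comes from, and yields the moduli description $\PP(\kk[z_0,z_1]_b)$ of Proposition \ref{modulioverCtimesPP^1} immediately; it is also consistent in spirit with Brosius' $\mathrm{Ext}$-theoretic classification (Theorem \ref{Brosius}) that the paper quotes elsewhere. What the paper's hands-on method buys is reusability: the same pole-cancellation technique and the explicit transition maps it produces are what the later sections run on (over $\AAA_0$, $\AAA_1$ and $\PP(\O_C\oplus\L)$, where the base is not a product and the clean K\"unneth identification is no longer available), so the explicit proof doubles as a template. Two details in your write-up deserve to be made explicit, though neither is a gap: (i) the identification of the \v{C}ech cocycle $p$ of the transition map with the extension class of the canonical extension (standard, but it is the bridge between Lemma \ref{Sisomorph} and $\mathrm{Ext}^1$); and (ii) the normalization $\lambda_{UV}=1$ over the two-chart cover, which uses that a cocycle representing the trivial line bundle on a two-set cover is a coboundary of units, before Lemma \ref{autotrivializations} can be applied in the form you state.
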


\begin{proof}
	 Let $(U_i)_i$ be an open cover of $C$ such that each $U_i$ trivializes the $\FF_b$-bundle $\tau \pi\colon X\to C$. By shrinking $U_i$ if necessary, we can assume that $p_0\in U_0$ and for every $i\neq 0$, $U_i \setminus U_0 = \{p_i\}$ for some $p_i\in C$, and $p_0\notin U_i$. We choose the trivializations of $\tau\pi$ such that the induced transition matrices of $\tau$ are identity matrices:
	\[
	\begin{array}{ccc}
		U_0 \times \FF_b & \dashrightarrow &U_i\times \FF_b \\
		\left( x,[y_0:y_1\ ;z_0:z_1] \right) & \longmapsto &\left(x,\left[y_0:\lambda_{i0}(x)y_1 + p_{i0}(x,z_0,z_1)y_0\ ; z_0:z_1\right]\right),
	\end{array}
	\]
	where $\lambda_{i0}\in \O_C(U_{i0})^*$ and $p_{i0}\in \O_C(U_{i0})[z_0,z_1]_b$ by Lemma \ref{FFb-bundle} \ref{FFb-bundle1}. Then by Lemma \ref{Sisomorph} \ref{conditioniso}, we can replace $p_{i0}$ by $p_{i0}'$ without changing the $(C\times \PP^1)$-isomorphism class of $X$, if there exist $\mu_0\in \O_C(U_0)^*$, $q_0\in \O_C(U_0)[z_0,z_1]_b$, $\mu_i\in \O_C(U_i)^*$ and $q_i\in \O_C(U_i)[z_0,z_1]_b$ such that	
	\begin{equation}
		p'_{i0}(x,z_0,z_1) =  \mu_i (x)p_{i0}(x,z_0,z_1)- \mu_i(x) \mu_0^{-1}(x)\lambda_{i0}(x) q_0(x,z_0,z_1)+q_i\left(x, z_0 ,z_1 \right). \label{eqdivtrivial}\tag{$\dagger$}
	\end{equation} 
	In the rest of the proof, we distinguish between the cases where $D$ is trivial and $D$ is non-trivial.

	(1): Fix $p\in C$ such that $p\neq p_0$. We can assume that $p\notin U_0$. By assumption, $D$ is trivial so by Lemma \ref{Sisomorph}, we can choose $\lambda_{i0}=1$ for every $i$. Choosing $\mu_i = \mu_0=1$ in the equality (\ref{eqdivtrivial}), we get:
		\begin{equation}
			p'_{i0}(x,z_0,z_1) =  p_{i0}(x,z_0,z_1)- q_0(x,z_0,z_1)+q_i\left(x, z_0 ,z_1 \right).  \label{eqdivtrivial.1}\tag{$\dagger\dagger$}
		\end{equation}
		Write $p_{i0}(x,z_0,z_1) = \sum_{k=0}^b a_{i0,k}(x) z_0^k z_1^{b-k}$, with $a_{i0,k} \in \O_C(U_{i0}) $ for every $k$. 
		
		Assume that $a_{i0,k}$ admits a pole at $p_i\notin \{p,p_0\}$ of order $n\geq 1$. By Lemma \ref{rationalfunction} \ref{rationalfunction.1}, there exists $f_1 \in \O_C(U_0)$ such that $\mathrm{P}(f_1) =  p + p_i$. Using the equality (\ref{eqdivtrivial.1}) and replacing $p_{i0}$ by $p_{i0}-\alpha f_1^n z_0^kz_1^{b-k}$, for a suitable $\alpha\in \kk^*$, we can decrease the multiplicity of $a_{i0,k}$ at the pole $p_i$. Repeating this finitely many times, we can assume that the rational function $a_{i0,k}$ is regular at $p_i$. Doing this for every $k$, we obtain new transition maps for $\tau\pi$, which are regular on $C\setminus \{p,p_0\}$. This implies that $\tau\pi$ is trivial over $V=C\setminus \{p\}$ and $U=C\setminus \{p_0\}$, and we can write the transition map as
		\[
		\begin{array}{cccc}
			\phi\colon &V\times \FF_b & \dashrightarrow &U\times \FF_b \\ 
			&(x,[y_0:y_1\ ;z_0:z_1]) & \longmapsto &(x,[y_0:y_1 + \widetilde{g}(x,z_0,z_1) y_0\ ;z_0:z_1]),
		\end{array}
		\]
		where $ \widetilde{g}(x,z_0,z_1)=\sum_{k=0}^b a_{k}(x) z_0^k z_1^{b-k}$ and $a_{k}\in \O_C(U\cap V)$ for every $k$.
		
		By Lemma \ref{rationalfunction}, there exists $\xi\in \kk(C)^*$ such that $\mathrm{P}(\xi)=p+p_0$. Rewriting (\ref{eqdivtrivial.1}) with the opens $U$ and $V$, we get
		\begin{equation*}
			g'(x,z_0,z_1) =  \widetilde{g}(x,z_0,z_1)- q_v(x,z_0,z_1)+q_u\left(x, z_0 ,z_1 \right).
		\end{equation*}
		Using the equality above and Lemma \ref{rationalfunction}, with $q_u$ and $q_v$ cancelling the poles of $a_{k}$ of order $n\geq 2$, we can now assume that $a_{k}$ has poles of order at most $1$ at $p$ and at $p_0$. Since $h^0(C,p+p_0)=2$, this implies that for each $k$, there exists $\alpha_k,\beta_k\in \kk$ such that $a_{k} = \alpha_k \xi +\beta_k $. Substracting once more by $\sum_{k=0}^{b} \beta_kz_0^kz_1^{b-k}$, we can now assume that $ \widetilde{g}(x,z_0,z_1)=\xi(x) g(z_0,z_1)$, where $g(z_0,z_1)=\sum_{k=0}^b \alpha_k z_0^k z_1^{b-k}\in \kk[z_0,z_1]_b$. This shows that we can choose the trivializations of $\tau\pi$ and the transition map as stated.
		
		Next, we show that the $(C\times\PP^1)$-isomorphism class of $X$ is uniquely determined by $g\in \kk[z_0,z_1]_b$, up to multiplication by a non-zero scalar. Assume that we can choose two distinct trivializations of $\tau\pi$, which induce transition maps $\phi$ and $\phi'$. By Lemma \ref{Sisomorph}, there exist automorphisms 
		\[
		\begin{array}{cccc}
			\alpha_u \colon &U\times \FF_b & \longrightarrow &U\times \FF_{b} \\
			& (x,[y_0:y_1\ ;z_0:z_1]) & \longmapsto &(x,\left[y_0:\mu_{u}(x)y_1 + h_{u}(x,z_0,z_1)y_0\ ;  z_0:z_1\right]), \\[10pt]
			\alpha_v \colon & V\times \FF_b & \longrightarrow& V\times \FF_{b} \\
			& (x,[y_0:y_1\ ;z_0:z_1]) & \longmapsto& (x,\left[y_0:\mu_{v}(x)y_1 + h_{v}(x,z_0,z_1)y_0\ ;  z_0:z_1\right]), 
		\end{array}
		\]
		with $\mu_u\in \O_C(U)^*$, $\mu_v\in \O_C(V)^*$, $h_{u}\in \O_C(U)[z_0,z_1]_b$, $h_{v}\in \O_C(V)[z_0,z_1]_b$; and such that $ \phi' =\alpha_u \phi\alpha_v^{-1}$. This implies that $\mu_u = \mu_v \in\kk^*$, and we denote by $\mu \in \kk^*$ the corresponding constant, and the equality
		$$\xi(x)(g'(z_0,z_1)-\mu g(z_0,z_1))  =h_u(x,z_0,z_1)- h_v(x,z_0,z_1).$$
		
		 If $g'(z_0,z_1)\neq \mu g(z_0,z_1)$, the LHS is a polynomial in $\kk(C)[z_0,z_1]_b$, having a non-zero coefficient with poles of order $1$ at $p$ and at $p_0$. Since $h_{u}\in \O_C(U)[z_0,z_1]_b$ and $h_{v}\in \O_C(V)[z_0,z_1]_b$, this is not possible, as there is no rational function on $C$ with only one pole. Thus $g'(z_0,z_1)=\mu g(z_0,z_1) $ and this proves \ref{representativedeg0.1}.
		
		(2): Now, we assume that $D$ is not trivial and has degree zero. So $D \sim p-p_0$ for some $p\in C\setminus \{p_0\}$. Let $t\in \Autz(C)$ such that $p,p_0,t(p),t(p_0)$ are pairwise distinct, and $f\in \kk(C)^*$ such that $\mathrm{div}(f)= (t(p)-t(p_0))-(p-p_0)$ as in Lemma \ref{transitiondeg0}.
		
		First, we prove that we can choose $U_0 = C\setminus \{p,t(p),t(p_0)\}$ as a trivializing open subset of $\tau\pi$. Without loss of generality, we can assume that $p,t(p),t(p_0)\notin U_0$. Recall also that from the beginning of the proof, we have $U_i \setminus U_0 = \{p_i\}$ for each $i$. Let $p_i \neq p,t(p),t(p_0)$, then we can choose $\lambda_{i0}=f$, as the coefficients $\{\lambda_{ij}\}_{i,j}$ are the cocycles of the line bundle $\O_C(p-p_0)$ by Lemma \ref{FFb-bundle}. Choosing moreover $\mu_i=\mu_0=1$, the equality (\ref{eqdivtrivial}) becomes
		\begin{equation}
			p'_{i0}(x,z_0,z_1) =  p_{i0}(x,z_0,z_1)-f(x) q_0(x,z_0,z_1)+q_i\left( x,z_0 ,z_1 \right).  \label{eqdivnontrivial}\tag{$\star$}
		\end{equation}
		Write $p_{i0}(x,z_0,z_1) = \sum_{k=0}^b a_{i0,k}(x) z_0^k z_1^{b-k}$, with $a_{i0,k} \in \O_C(U_{i0}) $ for every $k$, and assume that $a_{i0,k}$ has a pole of order $n\geq 1$ at $p_i$.
		
		 By Lemma \ref{rationalfunction} \ref{rationalfunction.1}, there exists $f_i\in \O_C(U_0)$ such that $\mathrm{P}(f_i)=p_i+t(p_0)$. Using (\ref{eqdivnontrivial}) and substracting $a_{i0,k}$ by $fq_0$, where $q_0=\alpha f_i^nz_0^k z_1^{b-k}$ for a suitable $\alpha\in\kk^*$, we replace $a_{i0,k}$ and decrease the multiplicity of the pole at $p_i$. After finitely many steps, we can assume that $a_{i0,k}$ is regular at $p_i$ for each $k$. Moreover, $f(p_i)\in \kk^*$, so the new transition maps are regular at $p_i$ and we can extend the trivialization over $U_0$ to $p_i$. This proves that we can choose $U_0 = C\setminus \{p,t(p),t(p_0)\}$.
		
		Next, we prove that we can also extend the trivialization over $U_0$ to $p$. Let $l$ be the index such that $U_l \setminus U_0 = \{p\}$, and recall that the opens $U_l$ and $U_0$ do not contain $t(p)$ and $t(p_0)$. Again by Lemma \ref{FFb-bundle}, the coefficients $\{\lambda_{ij}\}_{i,j}$ are the cocycles of the line bundle $\O_C(p-p_0)$; hence, we can choose $\lambda_{l0}=1$. Taking again $\mu_l = \mu_0=1$ in (\ref{eqdivtrivial}), we get now the equality (\ref{eqdivtrivial.1}). As before, write $p_{l0}(x,z_0,z_1) = \sum_{k=0}^b a_{l0,k}(x) z_0^k z_1^{b-k}$, with $a_{l0,k} \in \O_C(U_{l0}) $ for every $k$, and assume that $a_{l0,k}$ admits a pole at $p$ of order $n\geq 1$. By Lemma \ref{rationalfunction} \ref{rationalfunction.1}, there exists $f_p \in \O_C(U_0)$ such that $\mathrm{P}(f_p) =  p + t(p_0)$. Using the equality (\ref{eqdivtrivial.1}), we replace $p_{l0}$ by $p_{l0}- \alpha {f_p}^n z_0^kz_1^{b-k}$ for a suitable $\alpha\in \kk^*$; this decreases the multiplicity of $a_{l0,k}$ at the pole $p$. Repeating this process and after finitely many steps, we can assume that $a_{l0,k}$ is regular at $p$ for each $k$. This shows that $\tau\pi$ is trivial over $V=C\setminus \{t(p),t(p_0)\}$.
		
		By the same argument as above, $\tau\pi$ is also trivial over $U=C\setminus \{p,p_0\}$ and we can write the transition map as
		\[
		\begin{array}{cccc}
			\phi\colon & V\times \FF_b & \dashrightarrow& U\times \FF_b \\
			&(x,[y_0:y_1\ ;z_0:z_1]) & \longmapsto &(x,[y_0:f(x) y_1 + g(x,z_0,z_1) y_0\ ;z_0:z_1]),
		\end{array}
		\]
		where $ g(x,z_0,z_1)=\sum_{k=0}^b a_{k}(x) z_0^k z_1^{b-k} \in \O_C(U\cap V)[z_0,z_1]_b$. Rewriting the equality (\ref{eqdivnontrivial}), we get:
		\begin{equation}
			g'(x,z_0,z_1) =  g(x,z_0,z_1)-f(x) q_v(x,z_0,z_1)+q_u\left(x, z_0 ,z_1 \right).  \label{eqdivnontrivial2}\tag{$\star\star$}
		\end{equation}
			
		The rational functions $a_{k}\in \O_C(U\cap V)$ have poles at most at the points $p,p_0,t(p),t(p_0)$. Using (\ref{eqdivnontrivial2}), we will replace $a_{k}$ by zero. For each $k$, we proceed as follows:
			
		\begin{enumerate}
		\item[(i)] If $a_k$ has a pole of order $n\geq 1$ at $t(p_0)$, we replace $g$ by $g-fq_v$, where $q_v=\alpha f_1^{n-1} z_0^k z_1^{b-k}\in \O_C(V)[z_0,z_1]_b$, for a suitable $\alpha\in\kk^*$ and $f_1\in \O_C(V)$ satisfies $\mathrm{P}(f_1) = t(p) + t(p_0)$ (Lemma \ref{rationalfunction}). Since $f$ has a pole of order $1$ at $t(p_0)$, this decreases the multiplicity of the pole of $a_{k}$ at $t(p_0)$. Repeating this step finitely many times, we can assume that $a_k$ is regular at $t(p_0)$.
		
		\item[(ii)] If $a_k$ has a pole of order $m\geq 1$ at $t(p)$, we replace $g$ by $g-fq_v$, where $q_v= \beta f_{m+1} z_0^k z_1^{b-k}$, for a suitable $\beta\in \kk^*$ and $f_{m+1}\in \O_C(V)$ satisfies $\mathrm{P}(f_{m+1})=(m+1)t(p)$ (Lemma \ref{rationalfunction}).
		Since $f$ has a zero of order $1$ at $t(p)$, it follows that $ff_{m+1}$ has a pole of order $m\geq 1$ at $t(p)$. This decreases the multiplicity of $a_k$ at the pole $t(p)$. Repeating this step finitely many times, we can assume that $a_{k}$ is regular at $t(p)$ and $t(p_0)$.
	\end{enumerate}	
	We obtain a new polynomial $g\in \O_C(U)[z_0,z_1]_b$. Using a last time (\ref{eqdivnontrivial2}) with $q_u = -g$, we replace $g$ by the zero polynomial. By Lemma \ref{FFb-bundle} \ref{FFb-decomposable}, it follows that $X$ is $(C\times\PP^1)$-isomorphic to $\PP(\O_{C\times \PP^1}\oplus \O_{C\times \PP^1}(b\sigma+\tau^*(D)))$.
\end{proof}

We obtain the analogue of \cite[Corollary 3.3.8]{BFT}:

\begin{proposition}\label{modulioverCtimesPP^1}
	Assumptions and notations as in Lemma \ref{representativedeg0}. Then the following hold:
	\begin{enumerate}
		\item\label{modulioverCtimesPP^1.1} If $D$ is trivial, then the $(C\times \PP^1)$-isomorphism class of indecomposable $\PP^1$-bundles with invariants $(C\times \PP^1,b,D)$ are parametrised by the projective space 
		$$ \PP(\kk[z_0,z_1]_b).$$
		If $\pi$ is indecomposable, then the image of the morphism $\pi_*\colon \Autz(X)\to \Autz(C\times \PP^1)$ equals $\Autz(C)\times G$, where $G$ is a proper subgroup of $\PGL_2(\kk)$. Else, if $\pi$ is decomposable, then $X$ is the trivial $\FF_b$-bundle $C\times \FF_b$ and $\pi_*$ is surjective and $\ker(\pi_*) \cong \mathbb{G}_m \rtimes \Gamma(\O_{\PP^1}(b))$.
		\item\label{modulioverCtimesPP^1.2} If $D$ is not trivial, then $\pi_*\colon \Autz(X)\to \Autz(C\times \PP^1)$ is surjective and $\mathrm{ker}(\pi_*)=\mathbb{G}_m$. Moreover, the two disjoint sections of $\pi$ corresponding to the line subbundles $\O_{C\times \PP^1}$ and $\O_{C\times \PP^1}(b\sigma+\tau^*(D))$ are $\Autz(X)$-invariant.
	\end{enumerate}
\end{proposition}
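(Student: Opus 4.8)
The plan is to read off both statements from the explicit moduli description in Lemma \ref{representativedeg0}, treating the cases $D$ trivial and $D$ non-trivial separately, and to identify the image of $\pi_*$ using Propositions \ref{imagepi*} and \ref{Autalgebraicgroup}.

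For part \ref{modulioverCtimesPP^1.1}, Lemma \ref{representativedeg0} \ref{representativedeg0.1} shows that the $(C\times\PP^1)$-isomorphism class of any $X$ with invariants $(C\times\PP^1,b,0)$ is encoded by a binary form $g\in\kk[z_0,z_1]_b$, well-defined up to a nonzero scalar; moreover $g=0$ corresponds exactly to the trivial transition map, i.e.\ to the decomposable bundle $X\cong C\times\FF_b$ by Lemma \ref{FFb-bundle} \ref{FFb-decomposable}. Hence the indecomposable bundles are parametrised by the nonzero classes, that is by $\PP(\kk[z_0,z_1]_b)$. When $\pi$ is decomposable I would note that $\pi=\mathrm{id}_C\times\tau_b$ and $\Autz(C\times\FF_b)\simeq\Autz(C)\times\Autz(\FF_b)$ (any automorphism preserves the projection to the elliptic curve $C$); since $b>0$ the ruling $\tau_b$ is $\Aut(\FF_b)$-equivariant and $\Autz(\FF_b)\to\PGL_2(\kk)$ is surjective, so $\pi_*$ is surjective. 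When $\pi$ is indecomposable, Proposition \ref{imagepi*} \ref{imagepi*.1} already gives $\pi_*(\Autz(X))=\Autz(C)\times G$ for some $G\subseteq\PGL_2(\kk)$, so it remains to prove $G$ is proper. The key point is that pullback along $\{\mathrm{id}_C\}\times\rho$, $\rho\in\PGL_2(\kk)$, sends the class $[g]$ to $[g\circ\rho^{-1}]$, i.e.\ realises the standard action of $\PGL_2(\kk)$ on $\PP(\kk[z_0,z_1]_b)=\PP(\mathrm{Sym}^b(\kk^2))$. By Lemma \ref{pullbackiso} any $\rho\in G$ satisfies $\rho^*X\cong_{S}X$, hence fixes $[g]$; thus $G\subseteq\mathrm{Stab}_{\PGL_2(\kk)}([g])$. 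Since $\mathrm{Sym}^b(\kk^2)$ is an irreducible nontrivial representation for $b>0$, it contains no invariant line, so $\PGL_2(\kk)$ has no fixed point on its projectivisation and $\mathrm{Stab}_{\PGL_2(\kk)}([g])$ is a proper subgroup. Therefore $G$ is proper.

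For part \ref{modulioverCtimesPP^1.2}, Lemma \ref{representativedeg0} \ref{representativedeg0.2} gives $X\cong\PP(\O_S\oplus\L)$ with $\L=\O_S(b\sigma+\tau^*(D))$. I would first compute $\H^0(S,\L)=\H^0(\PP^1,\O(b))\otimes\H^0(C,\O_C(D))=0$ and $\H^0(S,\L^{-1})=0$, using $\deg(D)=0$, $D\neq0$ (so $\H^0(C,\O_C(D))=0$) and $b>0$. Consequently $\mathrm{End}_{\O_S}(\O_S\oplus\L)=\kk\times\kk$ is diagonal, so the $S$-automorphisms of $X$ reduce modulo scalars to $\mathbb{G}_m$; this gives $\ker(\pi_*)=\mathbb{G}_m$. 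For surjectivity of $\pi_*$, I would check that every $f=(t,\rho)\in\Autz(C)\times\PGL_2(\kk)=\Autz(S)$ satisfies $f^*\E\cong\E$: the translation part fixes the linear class of $\tau^*(D)$ because $D$ has degree zero, and $\rho$ sends the constant section $\sigma$ to a linearly equivalent constant section, so $\rho^*\L\cong\L$; then $f$ lifts by Proposition \ref{Autalgebraicgroup} and Lemma \ref{pullbackiso}, and since $\PGL_2(\kk)$ is connected, Proposition \ref{Autalgebraicgroup} \ref{Autalgebraicgroup.3} places these lifts in $\pi_*(\Autz(X))$, whence $G=\PGL_2(\kk)$. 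Finally, the surface $S_b=\{y_0=0\}$ is $\Autz(X)$-invariant as the unique locus of $(-b)$-sections; and the section $\{y_1=0\}$, corresponding to the splitting $\O_S\hookrightarrow\E$, is the unique section disjoint from $S_b$ because such splittings form a torsor under $\mathrm{Hom}_{\O_S}(\O_S,\L)=\H^0(S,\L)=0$. Hence both sections are $\Autz(X)$-invariant.

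The main obstacle I anticipate is the indecomposable case of part \ref{modulioverCtimesPP^1.1}: one must be careful that the pullback action on the moduli $\PP(\kk[z_0,z_1]_b)$ is genuinely the nontrivial standard $\PGL_2(\kk)$-representation $\mathrm{Sym}^b(\kk^2)$ rather than a trivial action, since only then does the absence of projective fixed points force $G$ to be proper. Verifying this identification of actions from the explicit transition maps of Lemma \ref{representativedeg0} \ref{representativedeg0.1}, together with tracking that the liftable automorphisms land in the identity component via Proposition \ref{Autalgebraicgroup} \ref{Autalgebraicgroup.3}, are the delicate points; the remaining computations (the $\H^0$ vanishing, the diagonal endomorphism ring, and the uniqueness of the complementary section) are routine.
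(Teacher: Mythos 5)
Your proposal is correct in substance and reaches all the stated conclusions. Part (1) is essentially the paper's own argument: the parametrisation by $\PP(\kk[z_0,z_1]_b)$ from Lemma \ref{representativedeg0}, the form $\Autz(C)\times G$ of the image from Proposition \ref{imagepi*}, and properness of $G$ via the pullback action on the moduli space together with the absence of $\PGL_2(\kk)$-fixed points on $\PP(\kk[z_0,z_1]_b)$ (the paper phrases this as the zero polynomial being the only fixed one); your direct treatment of the decomposable case through $\Autz(C\times\FF_b)\simeq\Autz(C)\times\Autz(\FF_b)$ is a harmless variant of the paper's "fixed point iff decomposable" criterion. In part (2) you take a more sheaf-theoretic route than the paper, which computes $\ker(\pi_*)$ and the invariance of the two sections by an explicit transition-map calculation: your use of $\H^0(S,\L)=\H^0(S,\L^{-1})=0$ for $\L=\O_S(b\sigma+\tau^*(D))$, of the endomorphism algebra of $\O_S\oplus\L$, and of the torsor of splittings under $\mathrm{Hom}(\O_S,\L)=0$ replaces that calculation; both ultimately rest on the same vanishing $\mathrm{H}^0(C,\O_C(D))=0$. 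Your surjectivity argument coincides with the paper's (invariance of the linear class of $b\sigma+\tau^*(D)$, then Proposition \ref{Autalgebraicgroup} \ref{Autalgebraicgroup.3}), though note that deducing a lift of $f$ from $f^*X\cong_S X$ is the converse of Lemma \ref{pullbackiso}, i.e.\ the content of the remark following Proposition \ref{Autalgebraicgroup}, not of that lemma itself.

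There is one genuine, though fillable, gap: the step asserting that $\mathrm{End}_{\O_S}(\O_S\oplus\L)=\kk\times\kk$ implies that the $S$-automorphisms of $X$ reduce modulo scalars to $\mathbb{G}_m$. As the paper recalls before Lemma \ref{lem:equivarianttodecomposable} (citing Grothendieck), an automorphism of $\PP(\E)$ over the base need not come from an automorphism of $\E$: it is induced by an isomorphism $\E\cong\E\otimes\mathcal{N}$ for some line bundle $\mathcal{N}$ on the base, necessarily $2$-torsion, and such twists can contribute a nontrivial finite quotient to the relative automorphism group. To close the gap you must rule this out here: if $\E\otimes\mathcal{N}\cong\E$ with $\E=\O_S\oplus\L$ and $\mathcal{N}$ nontrivial, then Krull--Schmidt (Atiyah) forces $\{\mathcal{N},\ \L\otimes\mathcal{N}\}=\{\O_S,\ \L\}$, hence $\mathcal{N}\cong\L$; this is impossible because $\L=\O_S(b\sigma+\tau^*(D))$ has nonzero component $b>0$ on the free factor of $\Pic(S)$, so it is not torsion. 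With this addition one gets $\Aut_S(X)=\mathbb{G}_m$, and your computation of $\ker(\pi_*)$ and the rest of part (2) go through.
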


\begin{proof}
			(1): By Lemma \ref{representativedeg0} \ref{representativedeg0.1}, the $(C\times \PP^1)$-isomorphism class of $X$ is uniquely determined by a polynomial $g\in \kk[z_0,z_1]_b$, up to multiplication by a non-zero scalar. Hence, the indecomposable $\PP^1$-bundles with invariants $(C\times \PP^1,b,0)$ are parametrised by the projective space 
			$ \PP(\kk[z_0,z_1]_b)$. 
			
			By Proposition \ref{imagepi*} \ref{imagepi*.1}, there exists a subgroup $G\subset \PGL_2(\kk)$ such that $\pi_*(\Autz(X))= \Autz(C)\times G$. Next, we show that $G\subsetneq \PGL_2(\kk)$ is a proper subgroup if and only if $\pi$ is indecomposable. The subgroup $\{\mathrm{id}_C\}\times \PGL_2(\kk)\subset \Autz(C\times \PP^1)$ acts on $\PP(\kk[z_0,z_1]_b)$ as follows: an automorphism 
			$$f = \mathrm{id}_C\times \begin{bmatrix} a&b\\c&d\end{bmatrix} \in \{\mathrm{id}_C\}\times \PGL_2(\kk)$$ sends the $\mathbb{G}_m$-orbit of $g(z_0,z_1)$ to the $\mathbb{G}_m$-orbit of $g\left(a z_0+ b z_1, cz_0+dz_1\right)$, which defines the transition maps of the $\FF_b$-bundle $\tau f^*\pi \colon f^*(X)\to C$. 
			The only fixed point for this action is the zero polynomial, so by Lemma \ref{pullbackiso} and Proposition \ref{Autalgebraicgroup}, $G=\PGL_2(\kk)$ if and only if $\pi$ is decomposable; in which case $X$ is $(C\times \PP^1)$-isomorphic to $C\times \FF_b$. In this case, $\pi_* = \mathrm{id}_C \times (\tau_b)_*$, where $\tau_b\colon \FF_b \to \PP^1$ is the structure morphism. It follows that $\ker(\pi_*) = \ker({\tau_b}_*) = \mathbb{G}_m \rtimes \Gamma(\O_{\PP^1}(b))$.
		
		(2): By Lemma \ref{representativedeg0} \ref{representativedeg0.2}, the unique $\PP^1$-bundle over $C\times \PP^1$ with invariants $(C\times \PP^1,b,D)$, where $D$ is non-trivial and has degree zero, is the decomposable $\PP^1$-bundle $\PP(\O_{C\times \PP^1}\oplus \O_{C\times \PP^1}(b\sigma+\tau^*(D)))$. Since $\deg(D)=0$, the linear class of $b\sigma+\tau^*(D)$ is $\Autz(C\times \PP^1)$-invariant. Again by Proposition \ref{Autalgebraicgroup} \ref{Autalgebraicgroup.3}, we conclude that $\pi_*$ is surjective. 
		
		By Lemma \ref{FFb-bundle}, we can write the transition maps of $\tau\pi$ as
		\[
		\begin{array}{cccc}
			\phi_{ij}\colon & U_j\times \FF_b & \dashrightarrow & U_i\times \FF_b \\
			& (x,[y_0:y_1\ ;z_0:z_1]) & \longmapsto & (x,[y_0:\lambda_{ij}(x)y_1 \ ; z_0:z_1]) 
		\end{array}
		\]
		where $\lambda_{ij}\in \O_C(U_{ij})^*$ are the cocyles of the line bundle $\O_{C}(D)$. An element $f\in \mathrm{ker}(\pi_*)$ induces automorphisms 
		\[
		\begin{array}{cccc}
			f_i\colon & U_i\times \FF_b & \longrightarrow & U_i\times \FF_b \\
			& (x,[y_0:y_1\ ;z_0:z_1]) & \longmapsto & (x,[y_0:\mu_i(x)y_1+q_i(x,z_0,z_1)y_1 \ ; z_0:z_1]) 
		\end{array}
		\]
		where $\mu_i\in \O_C(U_{ij})^*$ and $q_i\in \O_C(U_i)[z_0,z_1]_b$. For every $i,j$, we have $f_i \phi_{ij} = \phi_{ij} f_j$; hence, $\mu_i = \mu_j \in \kk^*$ is a non-zero constant and $q_i = \lambda_{ij} q_j$. Therefore, the coefficients of the polynomials $q_i$ and $q_j$ give global sections of the line bundle $\O_{C}(D)$. Since $D$ is non-trivial of degree zero, we have that $\mathrm{H}^0(C,\O_C(D))=0$; thus, $q_i=q_j=0$ and $\mathrm{ker}(\pi_*)=\mathbb{G}_m$. This also implies that the two sections $\{y_0=0\}$ and $\{y_1=0\}$ corresponding to the line subbundles $\O_{C\times \PP^1}$ and $\O_{C\times \PP^1}(b\sigma+\tau^*(D))$ are $\Autz(X)$-invariant.
\end{proof}

\subsection{$\FF_b$-bundles with relatively maximal automorphism groups}

~\bigskip

In this section, using the moduli space of $\PP^1$-bundles over $C\times \PP^1$, we show that those with relatively maximal automorphism groups are precisely the decomposable ones.

\begin{lemma}\label{CxP^1,Dtrivial}
	Let $C$ be an elliptic curve and $\pi\colon X\to C\times \PP^1$ be a $\PP^1$-bundle with invariants $(C\times \PP^1,b,0)$, where $b>0$. We denote by $S_{b}$ the surface spanned by the $(-b)$-sections along the fibers of $\tau\pi$. Then the following hold:
	\begin{enumerate}
		\item\label{CxP^1,Dtrivial.1} On every fiber $F$ of $\pi$, the kernel of the morphism $\pi_*\colon \Autz(X)\to \Autz(C\times \PP^1)$ acts transitively on $F \setminus (F\cap S_{b})$.
		\item If $\pi$ is indecomposable, then the automorphism group $\Autz(X)$ is not relatively maximal.
	\end{enumerate}
\end{lemma}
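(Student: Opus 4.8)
\emph{Part (1).} I would use the explicit model of Lemma~\ref{representativedeg0}~\ref{representativedeg0.1}: as $D=0$, the $\FF_b$-bundle $\tau\pi$ trivializes over $U=C\setminus\{p\}$ and $V=C\setminus\{p_0\}$ with transition map $[y_0:y_1;z_0:z_1]\mapsto[y_0:y_1+\xi(x)g(z_0,z_1)y_0;z_0:z_1]$, where $g\in\kk[z_0,z_1]_b$ and $S_b=\{y_0=0\}$. For each $h\in\kk[z_0,z_1]_b$, the two local maps $[y_0:y_1;z_0:z_1]\mapsto[y_0:y_1+h(z_0,z_1)y_0;z_0:z_1]$ on $U\times\FF_b$ and on $V\times\FF_b$ commute with the transition map (both merely add a multiple of $y_0$ to $y_1$), so they glue to a global $\alpha_h\in\Autz(X)$ fixing the coordinates $z_0,z_1$ and the base curve; hence $\alpha_h\in\ker(\pi_*)$. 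On the fiber $F$ of $\pi$ over $(x,[z_0:z_1])$ it fixes $S_b\cap F=[0:1]$ and acts on the chart $t=y_1/y_0$ of $F\setminus S_b$ by $t\mapsto t+h(z_0,z_1)$. Since $\O_{\PP^1}(b)$ is globally generated, some $h$ satisfies $h(z_0,z_1)\neq0$, so these translations realise all of $\kk$, giving transitivity of $\ker(\pi_*)$ on $F\setminus S_b$.

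\emph{Part (2), set-up.} The same gluing computation would pin down $\ker(\pi_*)$ exactly. By Lemma~\ref{autotrivializations} any $f\in\ker(\pi_*)$ is locally $[y_0:y_1]\mapsto[y_0:\mu_i(x)y_1+h_i(x,z)y_0]$; the $\mu_i$ glue to a constant $\mu\in\O_C(C)^*=\kk^*$, and compatibility with the transition map yields $(\mu-1)\xi\,g=h_V-h_U$. The one arithmetic input is that, on an elliptic curve, a non-constant function regular away from a single point $p$ has a pole of order $\geq2$ there (as $h^0(C,\O_C(p))=1$); since $\xi$ has simple poles at $p,p_0$ and $g\neq0$ (indecomposability), the equation forces $\mu=1$, whence $h_U=h_V\in\kk[z_0,z_1]_b$. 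Thus $\ker(\pi_*)=\{\alpha_h\}\cong\mathbb{G}_a^{b+1}$ has dimension $b+1$. By Proposition~\ref{modulioverCtimesPP^1}~\ref{modulioverCtimesPP^1.1}, $\pi_*(\Autz(X))=\Autz(C)\times G$ with $G\subsetneq\PGL_2(\kk)$ a proper connected subgroup, so $\dim\Autz(X)=(b+1)+(1+\dim G)$; being proper and connected, $G$ is contained in a Borel subgroup and therefore fixes a point $q\in\PP^1$.

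\emph{Part (2), the equivariant link.} The curve $\Gamma=C\times\{q\}\subset S$ is $\pi_*(\Autz(X))$-invariant, hence $\widetilde\Gamma=S_b\cap\pi^{-1}(\Gamma)$, the intersection of the two invariant surfaces $S_b$ and $\pi^{-1}(\Gamma)$, is an $\Autz(X)$-invariant section of $\pi$ over $\Gamma$. I would then take $\phi\colon X\dashrightarrow X'$ to be the elementary transformation of the $\PP^1$-bundle $\pi$ centred at $\widetilde\Gamma$ over the divisor $\Gamma$. Fibrewise over $C$ this is the elementary transformation of $\FF_b$ at a point of its $(-b)$-section, so $\tau\pi'$ is an $\FF_{b+1}$-bundle; and since $\Gamma\sim\sigma$ on $S$, tracking the canonical extension of Lemma~\ref{FFb-bundle}~\ref{canonicalextension} shows that $X'$ has invariants $(C\times\PP^1,b+1,0)$. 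As $\widetilde\Gamma$ is $\Autz(X)$-invariant, $\phi$ is an $\Autz(X)$-equivariant square birational map over $\mathrm{id}_S$; in particular $\phi\Autz(X)\phi^{-1}\subseteq\Autz(X')$ and $\pi'_*\bigl(\phi\Autz(X)\phi^{-1}\bigr)=\pi_*(\Autz(X))=\Autz(C)\times G$.

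\emph{Part (2), conclusion and main difficulty.} Applying part (1) to $X'$ (which again has $D'=0$) gives $\ker(\pi'_*)\supseteq\mathbb{G}_a^{b+2}$, while the previous identity gives $\mathrm{im}(\pi'_*)\supseteq\Autz(C)\times G$; hence $\dim\Autz(X')\geq(b+2)+(1+\dim G)>(b+1)+(1+\dim G)=\dim\Autz(X)$. As conjugation preserves dimension, $\phi\Autz(X)\phi^{-1}$ is a \emph{proper} subgroup of $\Autz(X')$, so $\Autz(X)$ is not relatively maximal. I expect the main difficulty to be the two exact computations feeding the dimension count: proving $\ker(\pi_*)=\mathbb{G}_a^{b+1}$ (the pole argument forcing $\mu=1$, where the hypothesis $g\neq0$ is indispensable) and verifying that the elementary transformation is equivariant with target invariants precisely $(C\times\PP^1,b+1,0)$, so that part (1) may be reused to bound $\dim\ker(\pi'_*)$ from below.
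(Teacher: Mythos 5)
Your proof is correct, and for most of its length it follows the paper's own route: part (1) coincides with the paper's argument (the paper works with an arbitrary trivializing cover and transition polynomials $p_{ij}$, you with the two-chart model of Lemma~\ref{representativedeg0}; the gluing of the maps $y_1\mapsto y_1+h(z_0,z_1)y_0$ into a subgroup of $\ker(\pi_*)$ isomorphic to $\kk[z_0,z_1]_b$ is identical), and in part (2) you construct exactly the paper's link: properness of $G\subsetneq\PGL_2(\kk)$ from Proposition~\ref{modulioverCtimesPP^1} gives the invariant constant section $\Gamma$, and the elementary transformation $\theta$ centred at $S_b\cap\pi^{-1}(\Gamma)$ lands on a bundle with invariants $(C\times\PP^1,b+1,0)$. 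Where you genuinely diverge is the final properness argument. The paper applies part (1) to $X'$: the base locus of $\theta^{-1}$ is a curve in $\pi'^{-1}(\Gamma)$ disjoint from $S_{b+1}$, hence moved by $\ker(\pi'_*)$, while $\theta\Autz(X)\theta^{-1}$ must preserve it; this gives the strict inclusion with no further computation. You instead prove the exact equality $\ker(\pi_*)\cong\mathbb{G}_a^{b+1}$ --- your pole-order argument forcing $\mu=1$ from $(\mu-1)\xi g=h_V-h_U$ is correct, since indecomposability gives a coefficient $g_k\neq 0$, and then $h_{U,k}$ would be a function on the elliptic curve with a single simple pole at $p$, which is impossible as $h^0(C,\O_C(p))=1$ --- and conclude by the dimension count $\dim\Autz(X')\geq(b+2)+(1+\dim G)>(b+1)+(1+\dim G)=\dim\Autz(X)$. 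Your route costs an extra computation that is genuinely needed (without the upper bound $\dim\ker(\pi_*)\leq b+1$ the comparison collapses), but in exchange it yields a sharper structural statement the paper never establishes: for indecomposable $X$ the kernel is exactly $\mathbb{G}_a^{b+1}$, so $\dim\Autz(X)=b+2+\dim G$. Both arguments ultimately rest on part (1) applied to the target $X'$, yours to bound $\dim\ker(\pi'_*)$ from below, the paper's to move the base locus of $\theta^{-1}$.
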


\begin{proof}
		(1): Let $(U_i)_i$ be a trivializing open cover of $\tau\pi$. By Lemma \ref{FFb-bundle}, we can write the transition maps of $\tau\pi$ as
		\[
		\begin{array}{cccc}
			\phi_{ij}\colon & U_j\times \FF_b & \dasharrow & U_i\times \FF_b \\
			&(x,[y_0:y_1\ ;z_0:z_1]) & \longmapsto& (x,[y_0:y_1+p_{ij}(x,z_0,z_1)y_0\ ;z_0:z_1]),
		\end{array}
		\]
		where $p_{ij}\in \O_C(U_{ij})[z_0,z_1]_b$. Let $q \in \kk[z_0,z_1]_b$. For each $i$, we define the automorphism 
		\[
		\begin{array}{cccc}
		\psi_{i}\colon &  U_i\times \FF_b & \dasharrow  &U_i\times \FF_b \\
		& (x,[y_0:y_1\ ;z_0:z_1]) & \longmapsto & (x,[y_0:y_1+ q(z_0,z_1) y_0\ ;z_0:z_1]).
		\end{array}		
		\]
		Then the equality $\psi_{i} \phi_{ij} = \phi_{ij} \psi_{j}$ holds, so $\psi= \{\psi_{i}\}_i$ defines a $(C\times \PP^1)$-automorphism of $X$. We obtain a subgroup of $\ker(\pi_*)$, that is isomorphic to $\kk[z_0,z_1]_b$ and acts transitively on $F \setminus S_{b}$.
	   
	   (2): By Proposition \ref{modulioverCtimesPP^1} \ref{modulioverCtimesPP^1.1}, $\pi_*(\Autz(X))=\Autz(C)\times G$, where $G\subsetneq \PGL_2(\kk)$ is a proper subgroup. Therefore, there exists a $\pi_*(\Autz(X))$-invariant constant section $\sigma_0$ in $C\times \PP^1$. Write $\sigma_0=\{az_0 - b z_1=0\}$, where $(a,b)\neq (0,0)$. The blowup of $\pi^{-1}(\sigma_0) \cap S_{b} = \{y_0=az_0 - b z_1=0\}$ followed by the contraction of the strict transform of the surface $\pi^{-1}(\sigma_0)=\{az_0 - b z_1=0\}$ yields an $\Autz(X)$-equivariant square birational map $\theta\colon X\dasharrow X'$ above $C\times \PP^1$, which is locally given by
	   \[
	   \begin{array}{cccc}
	   	\theta_i\colon & U_i \times \FF_b &\dashrightarrow & U_i\times \FF_{b+1}\\
	   & 	(x,[y_0:y_1\ ;z_0:z_1])& \longmapsto &	(x,[y_0:(az_0 - b z_1) y_1\ ;z_0:z_1]).
	   \end{array}
	   \]
	   We obtain a $\PP^1$-bundle $X'\to C\times \PP^1$ with invariants $(C\times \PP^1,b+1,0)$ and the base locus of $\theta^{-1}$ is not $\Autz(X')$-invariant by \ref{CxP^1,Dtrivial.1}. Thus, $\theta \Autz(X)\theta^{-1} \subsetneq \Autz(X')$ is a proper subgroup, so $\Autz(X)$ is not relatively maximal.
\end{proof}

\begin{proposition}\label{CxP^1,Dnontrivial}
	Let $C$ be an elliptic curve, $\pi\colon X\to C\times \PP^1$ be a $\PP^1$-bundle such that $\tau\pi$ is an $\FF_b$-bundle over $C$ with $b>0$. Then $\Autz(X)$ is relatively maximal if and only if $X$ is $(C\times \PP^1)$-isomorphic to the decomposable $\PP^1$-bundle
	$$\PP(\O_{C\times \PP^1}\oplus \O_{C\times \PP^1}(b\sigma+\tau^*(D))),$$
	where $D\in \Pic^0(C)$. In this case, $\Autz(X)$ is relatively maximal and the pair $(X,\pi)$ is superstiff. 
\end{proposition}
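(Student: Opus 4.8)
The plan is to prove both implications by combining the moduli computations already carried out over $C\times\PP^1$ with the two available non-maximality mechanisms, Proposition \ref{degnonnuldec} and Lemma \ref{CxP^1,Dtrivial}, on one side, and the orbit criterion Lemma \ref{LinkfromFFb} on the other. Throughout I fix $\sigma$ a constant section with $\O_{C\times\PP^1}(\sigma)\simeq\O_{C\times\PP^1}(1)$; since all constant sections of $\tau$ are pairwise linearly equivalent (Lemma \ref{geometryofruledsurface}), the linear class of $\sigma$ is $\Autz(C\times\PP^1)$-invariant, which is exactly the hypothesis needed to apply Proposition \ref{degnonnuldec}. Let $(C\times\PP^1,b,D)$ be the invariants of $\pi$, with $b>0$.

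For the forward implication, I assume $\Autz(X)$ is relatively maximal and split into three exhaustive cases according to $D$. First, because the linear class of $\sigma$ is $\Autz(C\times\PP^1)$-invariant and $b>0$, Proposition \ref{degnonnuldec} forces $\deg(D)=0$: otherwise $\Autz(X)$ would fail to be relatively maximal. If $D$ is a nonzero divisor of degree zero, Lemma \ref{representativedeg0.2} already identifies $X$ as the decomposable bundle $\PP(\O_{C\times\PP^1}\oplus\O_{C\times\PP^1}(b\sigma+\tau^*(D)))$, which is the asserted form. If instead $D$ is trivial, then Lemma \ref{CxP^1,Dtrivial} shows that an indecomposable bundle with these invariants cannot have a relatively maximal automorphism group; hence $\pi$ must be decomposable, and by Lemma \ref{FFb-decomposable} it is $(C\times\PP^1)$-isomorphic to $\PP(\O_{C\times\PP^1}\oplus\O_{C\times\PP^1}(b\sigma))$, the stated form with $D=0$.

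For the converse, I assume $X\simeq\PP(\O_{C\times\PP^1}\oplus\O_{C\times\PP^1}(b\sigma+\tau^*(D)))$ with $\deg(D)=0$. The crucial input is that $\pi_*\colon\Autz(X)\to\Autz(C\times\PP^1)$ is surjective in both remaining situations: when $D$ is trivial this is the decomposable case of Proposition \ref{modulioverCtimesPP^1.1} (where $X=C\times\FF_b$), and when $D$ is nontrivial of degree zero it is Proposition \ref{modulioverCtimesPP^1.2}. Since $\Autz(C\times\PP^1)=\Autz(C)\times\PGL_2(\kk)$ acts transitively on $C\times\PP^1$, surjectivity of $\pi_*$ implies that for every $x\in X$ the orbit $\Autz(X)\cdot x$ surjects onto $C\times\PP^1$ under $\pi$, so $\dim(\Autz(X)\cdot x)\geq2$. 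Lemma \ref{LinkfromFFb} then delivers simultaneously that $\Autz(X)$ is relatively maximal and that the pair $(X,\pi)$ is superstiff.

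I expect the substantive content of the argument to have been discharged already in the moduli computation (Lemma \ref{representativedeg0}) and in the non-maximality lemmas, so that this proposition is essentially an assembly. The one place demanding care is the bookkeeping of the forward direction: confirming that the cases $\deg(D)\neq0$, $D=0$, and $D\neq0$ of degree zero are exhaustive, and that the genuinely indecomposable case $D=0$ is the one eliminated by Lemma \ref{CxP^1,Dtrivial} rather than by a degree argument. Beyond this I anticipate no real obstacle, since once surjectivity of $\pi_*$ is in hand the transitivity of $\Autz(C\times\PP^1)$ on the base makes the orbit-dimension estimate immediate.
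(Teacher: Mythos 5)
Your proof is correct and follows essentially the same route as the paper: the forward direction via Proposition \ref{degnonnuldec} to force $\deg(D)=0$, then Lemma \ref{CxP^1,Dtrivial} (with the moduli description) for trivial $D$ and Lemma \ref{representativedeg0} for non-trivial $D$; the converse via surjectivity of $\pi_*$ from Proposition \ref{modulioverCtimesPP^1} and the orbit-dimension criterion of Lemma \ref{LinkfromFFb}. Your explicit verification that surjectivity of $\pi_*$ plus transitivity of $\Autz(C\times\PP^1)$ on the base yields orbits of dimension at least two is exactly the step the paper leaves implicit.
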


\begin{proof}
	Assume that $\Autz(X)$ is relatively maximal and let $(C\times \PP^1,b,D)$ be the invariants of $\pi$. By Proposition \ref{degnonnuldec}, $\deg(D) = 0$. If $D$ is trivial, then $X$ is $(C\times \PP^1)$-isomorphic to $\PP(\O_{C\times \PP^1}\oplus \O_{C\times \PP^1}(b\sigma))$ (i.e., isomorphic to the product $C\times \FF_b$) by Proposition \ref{modulioverCtimesPP^1} and Lemma \ref{CxP^1,Dtrivial}. If $D$ is not trivial, then $X$ is $(C\times \PP^1)$-isomorphic to $\PP(\O_{C\times \PP^1}\oplus \O_{C\times \PP^1}(b\sigma+\tau^*(D)))$ by Lemma \ref{representativedeg0}.
	
	Conversely, if $\pi$ is decomposable as above, then $\pi_*$ is surjective by Proposition \ref{modulioverCtimesPP^1}. Therefore, every $\Autz(X)$-orbit has dimension at least $2$, so by Lemma \ref{LinkfromFFb}, the pair $(X,\pi)$ is superstiff and in particular, $\Autz(X)$ is relatively maximal. 
\end{proof}

We can now conclude this section with

\begin{proof}[Proof of Proposition \ref{main_prop:CtimesP^1}]
	This follows from Propositions \ref{prop:ggeq2}, \ref{FF0viaCxPP1}, \ref{modulioverCtimesPP^1} and \ref{CxP^1,Dnontrivial}.
\end{proof}

\section{Automorphisms of $\PP^1$-bundles over $\AAA_1$}\label{SectionAAA1}

In this section, $C$ is an elliptic curve, $\tau\colon \AAA_1 \to C$ is the indecomposable geometrically ruled surface, where $\AAA_1 = \PP(\E_1)$ and $\E_1$ is an indecomposable and normalized rank-$2$ vector bundle which fits into a short exact sequence:
\[
0\to \O_C \to \E_1 \to \L \to 0,
\]
where $\L$ is a line bundle of degree one. Conversely, any line bundle $\L$ of degree one determines a rank-$2$ vector bundle $\E_1$ as above, and the $C$-isomorphism of $\AAA_1$ is independent of the choice of $\L$.
 
\medskip 
 
Recall that the geometrically ruled surface $\AAA_1$ admits infinitely many minimal sections of self-intersection one. We fix a line bundle $\L$ of degree one, that also determines a minimal section $\sigma$ such that $\O_{\AAA_1}(\sigma) \simeq \O_{\PP(\E_1)}(1)$ (see \cite[Example V.2.11.6]{Hartshorne}). As we will see in Lemma \ref{A_1quotient}, all $\Autz(\AAA_1)$-orbits are linearly equivalent: these are elliptic curves $\omega$ mapped $4$-to-$1$ onto $C$, and there exists $D_\sigma \in \Pic(C)$ of degree two, such that
\[
\omega \sim 4\sigma - \tau^*(D_\sigma).
\]
The main result of this section is the following:

\begin{proposition}\label{main_prop:A_1} 
	Let $\pi\colon X\to \AAA_1$ be a $\PP^1$-bundle such that $\tau\pi$ is an $\FF_b$-bundle, with $b\in \mathbb{Z}$. Then $\Autz(X)$ is relatively maximal if and only if one of the following case occurs:
	\begin{enumerate}
		\item Case $b=0$. Then $X$ is isomorphic to one of the following $\PP^1$-bundles:
		\[
		 \AAA_1 \times \PP^1,~ \AAA_1 \times_C \AAA_1,~  \AAA_1 \times_C \PP(\O_C\oplus \O_C(D)),
		 \] where $D\in \Pic^0(C)$ is not two-torsion. Moreover, the pair $(X,\pi)$ is superstiff in the first two cases, and not stiff in the last one. 
		\item\label{main_prop:A_1.2} Case $b = 4n\neq 0$ with $n\in \mathbb{Z}$. Then there exists $D\in \Pic^0(C)$ which is not two-torsion such that 
		\[
		X\cong \PP(\O_{\AAA_1}\oplus \O_{\AAA_1}(4n\sigma + \tau^*(D-nD_\sigma))),
		\]
		and there exists a square birational map $\AAA_1 \times_C \PP(\O_C\oplus \O_C(D)) \dashrightarrow X$ that conjugates \newline $\Autz(\AAA_1\times_C \PP(\O_C\oplus \O_C(D)))$ and $\Autz(X)$.
		
		\item Case $b=4n+2\neq 0$ with $n\in \mathbb{Z}$. Then there exists a non-trivial $2$-divisor $D$ (see Definition \ref{def:(m_2^*,b)}) such that
		\[
		X\cong \PP(\O_{\AAA_1}\oplus \O_{\AAA_1}((4n+2)\sigma + \tau^*(D-nD_\sigma))),
		\]
		and there exists a square birational map 
		$  \PP(\O_{\AAA_1}\oplus \O_{\AAA_1}(2\sigma + \tau^*(D))) \dashrightarrow X$ that conjugates $\Autz(\PP(\O_{\AAA_1}\oplus \O_{\AAA_1}(2\sigma + \tau^*(D))))$ and $\Autz(X)$.
	\end{enumerate}
	Let $\pi' \colon X'\to T$ be a $\PP^1$-bundle over a geometrically ruled surface $T$. Moreover, for $D\in \Pic^0(C)$ non two-torsion and with respect to the projection onto $\AAA_1$, there exists an $\Autz(\AAA_1 \times_C \PP(\O_C\oplus \O_C(D)))$-equivariant square birational map 
	\[
	\AAA_1 \times_C \PP(\O_C\oplus \O_C(D)) \dashrightarrow X',
	\]
	if and only if $T\cong \AAA_1$ and $X'\cong \AAA_1 \times_C \PP(\O_C\oplus \O_C(D))$, or $X'\cong \PP(\O_{\AAA_1}\oplus \O_{\AAA_1}(4n\sigma + \tau^*(D-nD_\sigma)))$ as in case \ref{main_prop:A_1.2}.
\end{proposition}

\subsection{$\AAA_1$ as an elliptic ruled surface}

~\medskip

The geometrically ruled surface $\AAA_1$ can be obtained as a quotient surface $(C\times \PP^1)/(\mathbb{Z}/2\mathbb{Z})^2$, where $(\mathbb{Z}/2\mathbb{Z})^2$ acts diagonally on $C\times \PP^1$. More precisely, we have the following construction:

\begin{lemma}\label{A_1quotient}
	Let $m_2\colon C\to C$ the multiplication by two. The kernel $C[2]$, that is isomorphic to $(\mathbb{Z}/2\mathbb{Z})^2$, acts on $C$ by translations and on $\PP^1$ via $x\mapsto \pm x^{\pm 1}$. We denote by $q\colon C\times \PP^1 \to (C\times \PP^1)/C[2]$ the quotient induced by the diagonal action. Then the following hold:
	\begin{enumerate}
		\item\label{A_1quotient.1} The quotient $(C\times \PP^1)/C[2]$ is $C$-isomorphic to the geometrically ruled surface $\AAA_1$, where the structure morphism is the projection $(C\times \PP^1)/C[2]\to C/C[2]$ composed with the isomorphism $C/C[2]\simeq C$ induced by $m_2$. This isomorphism yields the following cartesian square:
		\[
		\begin{tikzcd}[column sep=4em,row sep = 3em]
			C\times \PP^1 \arrow[r, "q"] \arrow[d, "\tau_1"]& \AAA_1 \arrow[d,"\tau"] \\
			C\arrow[r,"m_2"] & C,
		\end{tikzcd}
		\]
		where $\tau_1$ is the projection onto $C$.
		\item\label{A_1quotient.2} The morphism $q$ is $(\Autz(C)\times \{\mathrm{id}_{\PP^1}\})$-equivariant and induces an isomorphism $$q_*\colon \Autz(C)\times \{\mathrm{id}_{\PP^1}\} \longrightarrow \Autz(\AAA_1).$$ 
		\item\label{A_1quotient.3} Let $\sigma_0$ be a constant section of $C\times \PP^1$. There exists a divisor $D_0$ on $C$, which has degree two, and such that $q^*\sigma \sim \sigma_0+\tau_1^*(D_0)$. 
		
		\item\label{A_1quotient.4} Every $\Autz(\AAA_1)$-orbit $\omega$ is an elliptic curve mapped $4$-to-$1$ onto $C$ and there exists a divisor $D_\sigma$ on $C$, which has degree two, such that $\omega\sim 4\sigma -  \tau^{*}(D_\sigma)$ and $m_2^*(D_\sigma)\sim 4D_0$. Moreover, two $\Autz(\AAA_1)$-orbits are linearly equivalent, i.e., the linear class of $D_\sigma$ does not depend on $\omega$.
	\end{enumerate}
\end{lemma}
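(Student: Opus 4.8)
The plan is to treat the \'etale degree-$4$ cover $q$ as the main tool, pulling every question on $\AAA_1$ back to the trivial bundle $C\times\PP^1$. First I would record the structural facts: translation by a nonzero $2$-torsion point is fixed-point free, so $C[2]$ acts freely on $C$, hence freely (diagonally) on $C\times\PP^1$; thus $q$ is an \'etale Galois cover with group $C[2]$ of degree $4$ and $Y:=(C\times\PP^1)/C[2]$ is a smooth projective surface. For part \ref{A_1quotient.1}, the square is cartesian because for each $c\in C$ the free $C[2]$-action permutes the four sheets over $m_2(c)$, so the restriction $\{c\}\times\PP^1\to\tau^{-1}(m_2(c))$ is an isomorphism; hence $C\times\PP^1\to C\times_{C,m_2}Y$ is a fiberwise isomorphism over $C$, therefore an isomorphism.

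The crux of \ref{A_1quotient.1} is the identification $Y\cong\AAA_1$, which I would obtain through the Segre invariant. Any section $s$ of $Y\to C$ pulls back, via the cartesian square, to a $C[2]$-invariant section $q^*s$ of $\tau_1$, that is, the graph $\Gamma_g$ of a $C[2]$-equivariant morphism $g\colon C\to\PP^1$. Such a $g$ cannot be constant, since a constant value would be a common fixed point of the Klein four-group on $\PP^1$, which does not exist; hence $\deg g\ge 1$. As $q$ is \'etale of degree $4$ one has $(q^*s)^2=4s^2$, while $(q^*s)^2=\deg(g^*T_{\PP^1})=2\deg g$; thus $s^2=\deg(g)/2$ is a positive integer, so $s^2\ge 1$ and $\seg(Y)\ge 1>0$. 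A decomposable ruled surface carries two disjoint sections and so has $\seg\le 0$, whence $Y$ is indecomposable; by Atiyah's classification the only indecomposables over $C$ are $\AAA_0$ and $\AAA_1$, with $\seg=0$ and $\seg=1$, forcing $Y\cong\AAA_1$. For \ref{A_1quotient.2}, the automorphism $t_a\times\mathrm{id}_{\PP^1}$ commutes with the $C[2]$-action (translations commute on $C$, and $t_a\times\mathrm{id}$ is trivial on $\PP^1$), so it descends to $q_*(t_a)$ and $q$ is equivariant; $q_*$ lands in $\Autz(\AAA_1)$ by connectedness and is injective, since $q_*(t_a)=\mathrm{id}$ forces $t_a\times\mathrm{id}\in C[2]$, hence (being trivial on $\PP^1$) the identity, so $a=0$. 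Since $\Autz(C)\cong C$ and, by Lemma \ref{geometryofruledsurface}\ref{geometryofruledsurface.4}, $\Autz(\AAA_1)$ is a connected extension of $C$ by the finite group $(\mathbb{Z}/2\mathbb{Z})^2$ and hence an elliptic curve, an injective homomorphism between two elliptic curves of the same dimension is an isomorphism.

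Parts \ref{A_1quotient.3} and \ref{A_1quotient.4} are then divisor bookkeeping through $q$ and the projection formula. In $\Pic(C\times\PP^1)=\mathbb{Z}\,\sigma_0\oplus\tau_1^*\Pic(C)$, the section $q^*\sigma$ satisfies $q^*\sigma\sim\sigma_0+\tau_1^*(D_0)$ with $2\deg D_0=(q^*\sigma)^2=4\sigma^2=4$ (using $\seg(\AAA_1)=1$), so $\deg D_0=2$. For \ref{A_1quotient.4}, equivariance and part \ref{A_1quotient.2} identify the orbit of $x=q(c_0,p_0)$ with $\omega=q(C\times\{p_0\})$; for general $p_0$ the $C[2]$-orbit of $p_0$ has four points, so $q^{-1}(\omega)$ is a disjoint union of four constant sections and $q$ maps $C\times\{p_0\}$ isomorphically onto $\omega\cong C$, which is $4$-to-$1$ over the base via $\tau\circ q=m_2\circ\tau_1$. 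Writing $\omega\sim 4\sigma+\tau^*E$ (since $\omega\cdot F=4$ for a fiber $F$ of $\tau$) and computing $\omega^2$ from $(q^*\omega)^2=4\omega^2$ together with $q^*\omega\sim 4\sigma_0$ (four disjoint constant sections of self-intersection $0$) gives $\omega^2=0$, hence $16+8\deg E=0$ and $\deg E=-2$; set $D_\sigma=-E$. Pulling $\omega\sim 4\sigma-\tau^*D_\sigma$ back by $q$ and using $q^*\tau^*=\tau_1^*m_2^*$ yields $q^*\omega\sim 4\sigma_0+\tau_1^*(4D_0-m_2^*D_\sigma)$; comparing with $q^*\omega\sim 4\sigma_0$ and invoking injectivity of $\tau_1^*$ gives $m_2^*D_\sigma\sim 4D_0$. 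Finally, the orbits $\{\omega_{p_0}\}$ form an algebraic family over $\PP^1$, so they are algebraically equivalent, and their pairwise differences define a morphism $\PP^1\to\Pic^0(\AAA_1)=\tau^*\Pic^0(C)\cong C$, which is constant; hence all orbits are linearly equivalent and the class of $D_\sigma$ is independent of $\omega$.

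The main obstacle I expect is part \ref{A_1quotient.1}: giving a clean, self-contained proof that $Y\cong\AAA_1$. The two delicate points are ruling out a section of self-intersection $0$ (the equivariant-morphism/common-fixed-point argument) and tracking intersection numbers correctly under the \'etale pullback, after which Atiyah's classification finishes the identification. A secondary subtlety worth flagging is that the three $C[2]$-orbits on $\PP^1$ with nontrivial stabilizer (the fixed pairs $\{0,\infty\}$, $\{1,-1\}$, $\{\pm i\}$) produce finitely many special orbits that are only $2$-to-$1$ over $C$; the statement should be read for a general orbit, which is precisely the one entering the linear-equivalence computation used later.
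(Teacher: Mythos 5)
Your proposal is correct; the differences from the paper's own proof are worth recording. For \ref{A_1quotient.1} the paper does not argue at all: it quotes the identification $(C\times\PP^1)/C[2]\cong\AAA_1$ as well known (citing \cite[Section 3.3.3]{Fong}), whereas you reprove it via the Segre invariant: a section of the quotient pulls back to the graph of a $C[2]$-equivariant map $C\to\PP^1$, which cannot be constant because the Klein four-group has no common fixed point on $\PP^1$, so every section has positive self-intersection and Atiyah's classification (Lemma \ref{geometryofruledsurface}) forces $\AAA_1$; this is sound and makes the lemma self-contained. Parts \ref{A_1quotient.2} and \ref{A_1quotient.3} coincide with the paper up to detail (the paper deduces that $q_*$ is an isomorphism solely from $\dim\Autz(\AAA_1)=1$, leaving injectivity implicit; your computation of $\deg D_0$ is the same projection-formula argument). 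For \ref{A_1quotient.4} the routes genuinely diverge: the paper notes that the projection onto $\PP^1/C[2]\cong\PP^1$ makes $\AAA_1$ an elliptic fibration whose fibers are the orbits, so that any two orbits are linearly equivalent as fibers, and it reads off $\deg D_\sigma=2$ from $m_2^*(D_\sigma)\sim 4D_0$; you instead obtain the degree from $\omega^2=0$ (computed upstairs on $C\times\PP^1$) and get linear equivalence because orbit differences define a morphism from a rational curve to the abelian variety $\Pic^0(\AAA_1)\cong\Pic^0(C)$, hence a constant one. Both are valid. Your closing caveat is moreover a genuine catch that the paper glosses over: the three curves $q(C\times\{p_0\})$ arising from the stabilizer orbits $\{0,\infty\}$, $\{\pm 1\}$, $\{\pm i\}$ are honest $\Autz(\AAA_1)$-orbits but are only $2$-to-$1$ over $C$ with $\omega\cdot F=2$ (they are the reduced structures of the multiple fibers of the elliptic fibration), so statement \ref{A_1quotient.4} --- and the paper's ``the fibers are the $\Autz(\AAA_1)$-orbits'' --- is literally true only for general orbits; this is harmless in the later applications, where one may always choose a general orbit, but your reading is the accurate one.
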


\begin{proof}
	(1) and (2): The fact that $\AAA_1$ is $C$-isomorphic to $(C\times \PP^1)/C[2]$ is well-known; see e.g. \cite[Section 3.3.3]{Fong} for the proof. Moreover, $ \Autz(C)$ acts faithfully on $(C\times \PP^1)/C[2]$, by $t \cdot (x,[u:v]) = (t(x),[u:v])$. Therefore, the morphism $q$ is $(\Autz(C)\times \{\mathrm{id}_{\PP^1}\})$-equivariant and induces a morphism $q_*\colon (\Autz(C)\times \{\mathrm{id}_{\PP^1}\})\to \Autz(\AAA_1)$. By Lemma \ref{geometryofruledsurface}, $\Autz(\AAA_1)$ has dimension one, so $q_*$ is an isomorphism. 
	
	(3): The Picard group of $C\times \PP^1$ is generated by the class of $\sigma_0$ and of the fibers; hence, there exist an integer $a\in \mathbb{Z}$ and a divisor $D_0$ such that $q^*\sigma\sim a \sigma_0 + \tau_1^*(D_0)$. Since $q^*\sigma$ is a section, we get $a=1$. By the projection formula, see e.g. \cite[Proposition 1.10]{Debarre}), and using that $\sigma^2 =1 $, it follows that $(q^*\sigma)^2 = 4$, which implies that $D_0$ has degree two.
	
	(4): Similarly, the Picard group of $\AAA_1$ is generated by the class of $\sigma$ and the fibers of $\tau$, so there exist $b\in \mathbb{Z}$ and $D_\sigma\in \Pic(C)$ such that $\omega \sim b\sigma - \tau^*(D_\sigma )$. By \ref{A_1quotient.1}, the orbit $\omega$ intersects every fiber at $4$ points; hence, $b=4$. Taking the pullback by $q$ both sides, we obtain that $q^*\omega \sim 4\sigma_0+{\tau_1}^*(4D_0 - {m_2}^*(D_\sigma))$. Since $q^*\omega \sim 4\sigma_0$ and $m_2$ is $4$-to-$1$, we get that ${m_2}^*(D_\sigma) \sim 4D_0$ and $D_\sigma$ has degree two. 

	Using the $C$-isomorphism $\AAA_1 \simeq (C\times \PP^1)/C[2]$, the projection onto $\PP^1/C[2]$, which is isomorphic to $\PP^1$, equips $\AAA_1$ with a structure of elliptic fibration, where the fibers are the $\Autz(\AAA_1)$-orbits. Thus, two $\Autz(\AAA_1)$-orbits are linearly equivalent.
\end{proof}	

\begin{remark}
	Let $\pi \colon X \to \AAA_1$ be a $\PP^1$-bundle with invariants $(\AAA_1,b,D)$. This triplet of invariants depends on the choice of a minimal section $\sigma\subset \AAA_1$, which is not $\Autz(\AAA_1)$-invariant. In general, pulling back $\pi$ by automorphisms of $S$ preserves neither $\sigma$ nor $D$. To compute the image of the morphism $\pi_*\colon \Autz(X) \to \Autz(\AAA_1)$, we do not directly compute the invariants of the pullback of $X$ by the automorphisms of $S$. Instead, our strategy is to pullback $X$ via the quotient morphism $q\colon C\times \PP^1 \to \AAA_1$, which yields a $\PP^1$-bundle over $C\times \PP^1$. This approach allows us to use the results from Section $\ref{SectionCxPP1}$.
\end{remark}

\subsection{Pullback bundles of $\PP^1$-bundles over $\AAA_1$}

\begin{lemma}\label{pullbackA1}
	 Let $\pi\colon X\to \AAA_1$ be a $\PP^1$-bundle with invariants $(\AAA_1,b,D)$, where $b>0$ is an integer and $D\in \Pic(C)$. Let $(\AAA_1)_b\subset X$ be the $\Autz(X)$-invariant surface spanned by the $(-b)$-sections along the fibers of $\tau\pi$. Then the following hold:
	\begin{enumerate}
		\item\label{pullbackA1.1} The pullback bundle $q^*\pi$ is a $\PP^1$-bundle with invariants $(C\times \PP^1,b,m_2^*(D)+bD_0)$.
		\item\label{pullbackA1.2} If $\deg(m_2^*(D)+bD_0)\neq 0$, then $\Autz(X)$ is not relatively maximal.
		\item\label{pullbackA1.3} Let $\omega\subset \AAA_1$ be an $\Autz(\AAA_1)$-orbit and $\widetilde{\omega}=(\AAA_1)_b\cap \pi^{-1}(\omega)$. Then the blowup of $\widetilde{\omega}$ followed by the contraction of the strict transform of $\pi^{-1}(\omega)$ yields an $\Autz(X)$-equivariant birational map $X\dasharrow X'$, where $\pi'\colon X'\to \AAA_1$ is a $\PP^1$-bundle with invariants $(\AAA_1,b+4,D-D_\sigma)$.
	\end{enumerate}
\end{lemma}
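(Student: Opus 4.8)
The plan is to handle the three parts in order, exploiting the étale quotient $q\colon C\times\PP^1\to\AAA_1$ of Lemma \ref{A_1quotient} to transport questions about $X$ into the explicit setting over $C\times\PP^1$ developed in the previous section. For \ref{pullbackA1.1}, I would start from the canonical extension of Lemma \ref{FFb-bundle} \ref{canonicalextension}: pick a rank-$2$ bundle $\E$ with $\PP(\E)=X$ fitting into $0\to\O_{\AAA_1}(b\sigma+\tau^*(D))\to\E\to\O_{\AAA_1}\to 0$. Since the diagonal $C[2]$-action is free, $q$ is étale, and pullback is exact, so applying $q^*$ gives a short exact sequence with middle term $q^*\E$ and trivial quotient $\O_{C\times\PP^1}$. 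It then remains to identify the sub-line bundle: combining $q^*\sigma\sim\sigma_0+\tau_1^*(D_0)$ from Lemma \ref{A_1quotient.3} (with $\sigma_0$ linearly equivalent to the fixed constant section of $C\times\PP^1$) and $q^*\tau^*=\tau_1^*m_2^*$ from the cartesian square of Lemma \ref{A_1quotient.1}, one obtains $q^*\O_{\AAA_1}(b\sigma+\tau^*(D))\cong\O_{C\times\PP^1}(b\sigma_0+\tau_1^*(m_2^*(D)+bD_0))$. As $q$ preserves fiber types, $q^*X$ is an $\FF_b$-bundle and the pulled-back sequence is its canonical extension, whence $q^*\pi$ has invariants $(C\times\PP^1,b,m_2^*(D)+bD_0)$.

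For \ref{pullbackA1.2}, the argument mirrors Proposition \ref{degnonnuldec}, but must be routed through $q$ because the minimal sections of $\AAA_1$ are not $\Autz(\AAA_1)$-invariant. By functoriality of Blanchard's lemma $(\tau\pi)_*=\tau_*\circ\pi_*$ (Corollary \ref{blanchard}), and since $\pi_*(\Autz(X))$ is a connected subgroup of the elliptic curve $\Autz(\AAA_1)$ (Lemma \ref{A_1quotient.2}), it is either trivial or all of $\Autz(\AAA_1)$; by Proposition \ref{equivarianttodecomposable} it suffices to show $\pi_*$ is not surjective. Write $D'=m_2^*(D)+bD_0$ with $\deg(D')\neq 0$, and for $g\in\Autz(C)$ set $f=q_*(g,\mathrm{id})\in\Autz(\AAA_1)$. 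The equivariance $fq=q(g,\mathrm{id})$ yields $q^*f^*X\cong(g,\mathrm{id})^*q^*X$, which by \ref{pullbackA1.1} has invariants $(C\times\PP^1,b,g^*D')$. Choosing a translation $g=t_a$ with $\deg(D')\cdot a\neq 0$ in $C$ forces $g^*D'\not\sim D'$, so by Lemma \ref{Sisomorph} the bundle $(g,\mathrm{id})^*q^*X$ is not $(C\times\PP^1)$-isomorphic to $q^*X$; hence $f^*X\not\cong_{\AAA_1}X$ and, by Lemma \ref{pullbackiso}, $f\notin\mathrm{im}(\pi_*)$. Thus $\pi_*$ is not surjective, so $(\tau\pi)_*$ is trivial and $\Autz(X)$ is not relatively maximal.

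For \ref{pullbackA1.3}, I would first record that $\pi^{-1}(\omega)$ and $(\AAA_1)_b$ are both $\Autz(X)$-invariant — the former because $\omega$ is an $\Autz(\AAA_1)$-orbit and $\pi_*$ lands in $\Autz(\AAA_1)$, the latter by construction — so $\widetilde\omega$ is $\Autz(X)$-invariant and the blowup–contraction is an $\Autz(X)$-equivariant square birational map over $\AAA_1$ with base map $\mathrm{id}_{\AAA_1}$; since $\omega$ is smooth and $\widetilde\omega$ is a section of $\pi$ over $\omega$ contained in the negative-section surface, this is an honest elementary transformation producing a $\PP^1$-bundle $\pi'\colon X'\to\AAA_1$. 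The fiber type is read on the generic fiber $\FF_b$ of $\tau\pi$: because $\omega\cdot(\text{fiber of }\tau)=4$ by Lemma \ref{A_1quotient.4}, the transformation performs four elementary transformations centered at four points of the $(-b)$-section, raising $b$ to $b+4$. Finally $c_1(\E')=c_1(\E)+[\omega]$, and substituting $c_1(\E)=b\sigma+\tau^*(D)$ together with $[\omega]=4\sigma-\tau^*(D_\sigma)$ (Lemma \ref{A_1quotient.4}) gives $c_1(\E')=(b+4)\sigma+\tau^*(D-D_\sigma)$; since the new negative section is the strict transform of $(\AAA_1)_b$ and the canonical extension of $\E'$ has trivial quotient, Lemma \ref{FFb-bundle} \ref{canonicalextension} lets us read off the invariants $(\AAA_1,b+4,D-D_\sigma)$.

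The step I expect to be the main obstacle is the \emph{exact} determination of the divisor invariant in \ref{pullbackA1.3}. Pulling the construction back through $q$ only pins $D'$ down modulo the finite kernel of $m_2^*$ on $\Pic^0(C)$, so the computation must be carried out intrinsically over $\AAA_1$: one has to recognize the blowup–contraction as the \emph{upper} elementary modification of $\E$ along the smooth divisor $\omega$, verify that $c_1$ increases by exactly $[\omega]=4\sigma-\tau^*(D_\sigma)$ (the sign being fixed by $b'=b+4>b$), and confirm that the strict transform of $(\AAA_1)_b$ furnishes the sub-bundle with trivial quotient. Maintaining a consistent convention for $\PP(\E)$ and for the direction of the modification throughout is the delicate point.
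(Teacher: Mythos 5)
Your proposal is correct, and it follows the paper's global strategy of transporting everything through the quotient $q\colon C\times\PP^1\to\AAA_1$; part (1) coincides with the paper's proof essentially verbatim (the paper does not even need the \'etaleness of $q$, since a short exact sequence of locally free sheaves stays exact under any pullback). The differences are in execution. For (2), the paper applies Proposition \ref{degnonnuldec} to $q^*X$ to conclude that $(\tau_1 q^*\pi)_*$ is trivial, and then derives a contradiction by lifting a hypothetical $f\in\Autz(X)$ with $\pi_*(f)\neq\mathrm{id}$ to an automorphism of $q^*X=(C\times\PP^1)\times_{\AAA_1}X$ inside the fiber product $(\Autz(C)\times\{\mathrm{id}_{\PP^1}\})\times_{\Autz(\AAA_1)}\Autz(X)$. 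You never pass to $\Autz(q^*X)$: you exhibit a translation-induced $f=q_*(g,\mathrm{id})$ whose pullback bundle has a different divisor invariant, detected after applying $q^*$ via Lemma \ref{Sisomorph}, so $f\notin\mathrm{im}(\pi_*)$ by Lemma \ref{pullbackiso}, and the dichotomy for connected subgroups of the elliptic curve $\Autz(\AAA_1)$ forces $\pi_*$, hence $(\tau\pi)_*$, to be trivial. This is sound; it re-runs the mechanism of Proposition \ref{degnonnuldec} one level up instead of citing it as a black box over $C\times\PP^1$, trading the paper's group-lifting step for a moduli-detection step. For (3), the paper chooses trivializations as in Lemma \ref{FFb-bundle} and computes the new transition maps explicitly, identifying the factors $\delta_i\,\delta_j^{-1}$ as the cocycles of $\O_{\AAA_1}(\omega)\simeq\O_{\AAA_1}(4\sigma-\tau^*(D_\sigma))$; your intrinsic version (upper elementary modification of $\E$ along $\omega$, $c_1(\E')=c_1(\E)+[\omega]$, trivial quotient carried by the strict transform of $(\AAA_1)_b$, and injectivity of $\tau^*$ on divisor classes to extract $D-D_\sigma$) is the same computation in sheaf-theoretic packaging, and the sign/convention issue you flag as the delicate point is precisely what the paper's cocycle computation settles automatically.
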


\begin{proof}
	(1): By Lemma \ref{FFb-bundle} \ref{canonicalextension}, there exists a rank-$2$ vector bundle $\E$ over $\AAA_1$, such that $\PP(\E) \simeq X$, and which fits into the short exact sequence
	\[
	0 \to \O_{\AAA_1}(b\sigma + \tau^*(D)) \to \E \to \O_{\AAA_1} \to 0.
	\]
	By Lemma \ref{A_1quotient}, we have $q^*\tau^*(D) = \tau_1^*m_2^*(D)$ and the section $q^*\sigma$ is linearly equivalent to $\sigma_0 +\tau_1^*(D_0)$. Pulling back the short exact sequence by $q\colon C\times \PP^1 \to \AAA_1$, we get:
	\[
	0 \to \O_{C\times \PP^1}(b\sigma_0 + \tau_1^*(m_2^*(D)+bD_0)) \to q^*\E \to \O_{C\times \PP^1} \to 0.
	\]
	This shows that $q^*\pi$ has invariants $(C\times \PP^1,b,m_2^*(D)+bD_0)$.
	
	(2): If $\deg(m_2^*(D)+bD_0)\neq 0$, it follows from Proposition \ref{degnonnuldec} that $(\tau_1q^*\pi)_*\colon \Autz(q^*X)\to \Autz(C)$ is trivial. Assume by contradiction that there exists $f\in \Autz(X)$ such that $\pi_*(f)\in \Autz(\AAA_1)$ is not the identity. Then by Lemma \ref{A_1quotient}, there exists $g\in (\Autz(C)\times \{\mathrm{id}_{\PP^1}\})$ such that $q_*(g) = \pi_*(f)$. Let 
	\[
	G = (\Autz(C)\times \{\mathrm{id}_{\PP^1}\}) \times_{\Autz(\AAA_1)} \Autz(X),
	\]
	that is a subset of $\Autz(q^*X)$ isomorphic to $\Autz(X)$, since $q_*$ is an isomorphism. Then $(g\times f) \in G$ is an automorphism of $q^*X = (C\times \PP^1)\times_{\AAA_1} X$ and its image by the morphism $(\tau_1 q^*\pi)_* \colon \Autz(q^*X) \to \Autz(C)$ is not the identity, which is a contradiction. Thus $\pi_*$ is trivial and by Proposition \ref{equivarianttodecomposable}, the automorphism group $\Autz(X)$ is not relatively maximal.
	
	(3): By Lemma \ref{FFb-bundle} \ref{FFb-bundle1}, we can choose trivializations of $\tau\pi$ such that the transition maps are
	\[
	\begin{array}{cccc}
		\phi_{ij}\colon & U_j\times \FF_b & \dasharrow & U_i\times \FF_b \\
		& (x,[y_0:y_1\ ;z_0:z_1]) & \longmapsto & \left(x,\left[y_0:\lambda_{ij}(x)y_1 + p_{ij}(x,z_0,z_1)y_0\ ;s_{ij}(x)\cdot 
		\begin{pmatrix}
			z_0 \\ z_1
		\end{pmatrix}\right] \right)
	\end{array}
	\]
	with $\lambda_{ij}\in \O_C(U_{ij})^*$, $p_{ij} \in \O_C(U_{ij})[z_0,z_1]_b$ and where $s_{ij}\in \GL_2(O_C(U_{ij}))$ are the transition matrices of $\tau\colon \AAA_1\to C$. Under this choice of trivializations, the $\Autz(\AAA_1)$-invariant surface $(\AAA_1)_b$ is $\{y_0=0\}$, and by Blanchard's lemma, $\widetilde{\omega}$ is also $\Autz(X)$-invariant. In each fiber of $\tau\pi$, that is isomorphic to $\FF_b$, the $(-b)$-section intersects $\widetilde{\omega}$ at four points counting with multiplicities. The blowup of $\widetilde{\omega}$ followed by the contraction of the strict transform of $\pi^{-1}(\omega)$ yields an $\Autz(X)$-equivariant birational map $\psi\colon X\dasharrow X'$, where $\pi'\colon X'\to \AAA_1$ is a $\PP^1$-bundle. Locally, $\psi$ is given by 
	\[
	\begin{array}{cccc}
		\psi_{i}\colon & U_i \times \FF_b & \dashrightarrow & U_i \times \FF_{b+4} \\
		& (x,[y_0:y_1\ ;z_0:z_1]) & \longmapsto & (x,[y_0:\delta_i(x,z_0,z_1)y_1\ ;z_0:z_1]),
	\end{array}
	\]
	where $\delta_i\in \O_C(U_{i})[z_0,z_1]_4$ satisfies $(\mathrm{div}(\delta_i))_{|U_i \times \PP^1} = \omega_{|U_i\times \PP^1}$. Computing the birational map $\psi_i \phi_{ij} \psi_j \colon U_j\times \FF_{b+4} \dasharrow  U_i\times \FF_{b+4}$, we get that $(x,[y_0:y_1\ ;z_0:z_1]) \in U_{ij}\times \FF_{b+4}$ is mapped to
	\[
	\bigg(x,\bigg[y_0:
	\delta_i \left(x,s_{ij}(x)\cdot 
	\begin{pmatrix}
		z_0 \\ z_1
	\end{pmatrix}\right)	\delta_j^{-1}(x,z_0,z_1)\left(\lambda_{ij}(x)y_1 + p_{ij}(x,z_0,z_1)y_0\right) 
	; s_{ij}(x)\cdot \begin{pmatrix} z_0\\z_1 \end{pmatrix}\bigg]\bigg).
	\]		
	The quantities $\delta_i \left(x,s_{ij}(x)\cdot 
	\begin{pmatrix}
		z_0 \\ z_1
	\end{pmatrix}\right)	\delta_j^{-1}(x,z_0,z_1)$ are the cocyles of the line bundle $\O_{\AAA_1}(\omega)$. Since $\omega\sim 4\sigma -  \tau^{*}(D_\sigma)$ by Lemma \ref{A_1quotient}, it follows that the $\PP^1$-bundle $X'\to \AAA_1$ has invariants $(\AAA_1,b+4,D-D_\sigma)$. 
\end{proof}

If $\Autz(X)$ is relatively maximal and $\pi \colon X\to \AAA_1$ has invariants $(\AAA_1,b,D)$, then $\deg(m_2^*(D) + bD_0)=0$ by Lemma $\ref{pullbackA1}$. Since $m_2$ is a $4$-to-$1$ morphism and $D_0$ has degree two, this implies that $b=-2\deg(D)$. When $b$ is even, we show in the following lemma that we can lift the action of $C[2]$ to $q^*X$, and the corresponding quotient $q^*X/C[2]$ is $X$.

\begin{lemma}\label{C[2]actiononFb}
 		Let $b>0$ be an even integer, $\pi\colon X\to \AAA_1$ be a $\PP^1$-bundle such that $\tau\pi$ is an $\FF_b$-bundle over $C$, and $p_X\colon q^*X \to X$ the projection onto $X$. We denote by $\iota\colon C[2] \to \PGL_2(\kk)$ the injective group homomorphism induced by the action of $C[2]$ on $\PP^1$, given by $x\mapsto \pm x^{\pm 1}$. Then the following hold:
 		\begin{enumerate}
			\item\label{C[2]actiononFb.1} There exists a lifting $\hat{\iota}\colon C[2] \to \GL_2(\kk)/\mu_b$ of $\iota$, such that for any $C[2]$-invariant open subset $U\subset C$, the finite group $C[2]$ acts diagonally on $U\times \FF_b$ as follows:
			\[
			\begin{array}{ccc}
				C[2] \times (U\times \FF_b) & \longrightarrow & U\times \FF_b \\
				(t,(x,[y_0:y_1\ ;z_0:z_1])) & \longmapsto & \left(t(x),\left[y_0:y_1\ ;\hat{\iota}(t)\cdot \begin{pmatrix} z_0\\z_1\end{pmatrix}\right]\right).
			\end{array}	
			\]
			\item\label{C[2]actiononFb.2} There exist trivializations of the $\FF_b$-bundle $\tau_1q^*\pi\colon q^*X \to C$ such that every trivializing open subset $U\subset C$ is $C[2]$-invariant, and via these trivializations, $p_X$ is locally the quotient of $U\times \FF_b$ by the action of $C[2]$ given in \ref{C[2]actiononFb.1}.
			\item The diagonal action of $C[2]$ on $C\times \PP^1$ lifts to an action on $q^*X$, such that $q^*X/C[2]\cong X$ as $\PP^1$-bundles over $\AAA_1$.
 		\end{enumerate}
\end{lemma}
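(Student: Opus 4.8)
The plan is to write down the lift explicitly and then identify the quotient using the local description already established in \ref{C[2]actiononFb.1} and \ref{C[2]actiononFb.2}. Recall that $q^*X=(C\times\PP^1)\times_{\AAA_1}X$ carries two projections, $q^*\pi\colon q^*X\to C\times\PP^1$ and $p_X\colon q^*X\to X$, and that $q$ is the quotient by $C[2]$, so $q\circ t=q$ for every $t\in C[2]$. First I would define the action of $C[2]$ on $q^*X$ through the first factor, by $t\cdot(w,x)=(t\cdot w,x)$ for $(w,x)\in(C\times\PP^1)\times_{\AAA_1}X$; this is well defined precisely because $q(t\cdot w)=q(w)=\pi(x)$, so $(t\cdot w,x)$ still lies in the fibre product. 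By construction $q^*\pi$ is $C[2]$-equivariant, so this is a genuine lift of the diagonal action on $C\times\PP^1$, whereas $p_X$ is $C[2]$-invariant because $C[2]$ acts trivially on the second factor.

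Next I would check that this globally defined action coincides, in the trivializations provided by \ref{C[2]actiononFb.2}, with the diagonal action of \ref{C[2]actiononFb.1}. This is a uniqueness statement: by the universal property of the fibre product, a self-map of $q^*X$ is determined by its two projections, so any lift of the action on $C\times\PP^1$ for which $q^*\pi$ is equivariant and $p_X$ is invariant must equal the first-factor action above. The action of \ref{C[2]actiononFb.1} fixes the $y$-coordinates, hence is $p_X$-invariant by \ref{C[2]actiononFb.2}, and acts on the $z$-coordinates through $\hat{\iota}$, which lifts $\iota$ and therefore makes $q^*\pi$ equivariant; thus the two actions agree.

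With this identification, I would form the quotient $q^*X/C[2]$, which exists since $C[2]$ is finite and $q^*X$ is projective. The invariant morphism $p_X$ factors as $p_X=\overline{p_X}\circ\rho$, where $\rho\colon q^*X\to q^*X/C[2]$ is the quotient map and $\overline{p_X}\colon q^*X/C[2]\to X$ is the induced morphism. By \ref{C[2]actiononFb.2}, over every $C[2]$-invariant open $U\subset C$ the map $p_X$ is exactly the quotient of $U\times\FF_b$ by the action of \ref{C[2]actiononFb.1}; hence $\overline{p_X}$ restricts to an isomorphism over each such chart, and since these charts cover $q^*X/C[2]$ and $\overline{p_X}$ is a single global morphism, it is an isomorphism. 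Finally, $q^*\pi$ being equivariant descends to $\overline{q^*\pi}\colon q^*X/C[2]\to(C\times\PP^1)/C[2]=\AAA_1$, giving the $\PP^1$-bundle structure of the quotient; the relation $\pi\circ p_X=q\circ q^*\pi$ then descends to $\pi\circ\overline{p_X}=\overline{q^*\pi}$, so $\overline{p_X}$ is an isomorphism of $\PP^1$-bundles over $\AAA_1$.

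Most of the genuine work has already been carried out in \ref{C[2]actiononFb.1} and \ref{C[2]actiononFb.2}; the step I expect to require the most care is the compatibility argument of the second paragraph, ensuring that the globally defined first-factor action really is the locally described diagonal action, so that the local quotient identifications of \ref{C[2]actiononFb.2} glue into the single global isomorphism $\overline{p_X}$. A minor additional point is to confirm that $\overline{q^*\pi}$ is a $\PP^1$-bundle rather than merely a fibration, which again follows from the local picture $(U\times\FF_b)/C[2]\cong X$ over the corresponding open of $\AAA_1$.
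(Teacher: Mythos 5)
Your argument for part (3) is correct as far as it goes, and it is essentially how (3) follows from (2) in the paper as well: the first-factor action $t\cdot(w,x)=(t\cdot w,x)$ on $(C\times \PP^1)\times_{\AAA_1}X$ is the natural lift, the universal property of the fibre product identifies it on charts with the diagonal action of \ref{C[2]actiononFb.1}, and the local quotient description of \ref{C[2]actiononFb.2} then glues to the isomorphism $q^*X/C[2]\cong X$ of $\PP^1$-bundles over $\AAA_1$ (your gluing discussion is in fact more explicit than the paper's).

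The genuine gap is that parts \ref{C[2]actiononFb.1} and \ref{C[2]actiononFb.2} are assertions of the lemma, not prior results, and your proposal proves neither; tellingly, it never uses the hypothesis that $b$ is even, which is exactly what \ref{C[2]actiononFb.1} requires. For \ref{C[2]actiononFb.1}, a class in $\PGL_2(\kk)$ does not act on $\FF_b$ fixing $[y_0:y_1]$ --- rescaling a matrix by $\rho$ changes $[y_0:y_1\ ;z_0:z_1]$ into $[\rho^b y_0:y_1\ ;z_0:z_1]$ --- so one must lift $\iota$ to $\GL_2(\kk)/\mu_b$; the four signed coordinate-permutation matrices only form a subgroup of $\GL_2(\kk)/\mu_b$ isomorphic to $(\mathbb{Z}/2\mathbb{Z})^2$ because $b$ even forces $-I\in \mu_b$ (the product of the two non-trivial involutions squares to $-I$). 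For \ref{C[2]actiononFb.2}, which is the technical heart of the lemma, one must actually produce the trivializations: the paper pulls back a trivialization $(\tau\pi)^{-1}(V)\cong V\times \FF_b$ of $X$ to $U=m_2^{-1}(V)$, writes $q_{|U\times\PP^1}$ as $(x,[u:v])\mapsto (m_2(x),[f_0:f_1])$ and shows $f_0,f_1$ have fibre degree one by a $4$-to-$1$ count, defines the model quotient $\widetilde{q}\colon U\times\FF_b\to V\times\FF_b$, obtains a comparison morphism $f\colon U\times\FF_b\to (U\times\PP^1)\times_{V\times\PP^1}(V\times\FF_b)$ from the universal property of the fibre product, and proves $f$ is an isomorphism by a bijectivity count together with Zariski's Main Theorem. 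Without this construction (or a substitute for it), the local quotient charts on which your third paragraph relies do not exist, so the proposal establishes only the easiest third of the statement.
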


\begin{proof}
	(1): Since $b$ is even, $-1$ belongs to the group of $b$-th root of unity $\mu_b$. This implies that 
	$$
	H=\left\{ \begin{pmatrix} 1 & 0 \\ 0 & 1 \end{pmatrix}, \begin{pmatrix} 1 & 0 \\ 0 & -1 \end{pmatrix}, \begin{pmatrix} 0 &1  \\1  & 0\end{pmatrix}, \begin{pmatrix} 0& -1\\1  &0 \end{pmatrix} \right\}
	$$
	is a subgroup of $\GL_2(\kk)/\mu_b$. We get an injective group homomorphism $\hat{\iota}\colon C[2] \to \GL_2(\kk)/\mu_b$, with image $H$, such that the composition with the quotient morphism $\GL_2(\kk)/\mu_b \to \PGL_2(\kk)$ coincides with $\iota$. Since $U$ is $C[2]$-invariant, we can define the diagonal action on $U\times \FF_b$ as stated.
	
	(2) and (3): Let $V$ be a trivializing open subset of $\tau\pi\colon X\to C$. Then $U=m_2^{-1}(V)$ is $C[2]$-invariant and the pullback of $(\tau\pi)^{-1}(V)\simeq V\times \FF_b$ by $q$ yields the following cartesian squares:
	\[
	\begin{tikzcd}[column sep=2em,row sep = 3em]
		(U\times \PP^1) \times_{V\times \PP^1} (V\times\FF_b) \arrow[r, "p_2",swap] \arrow[d, "p_1",swap]& V\times \FF_b \arrow[d,"\mathrm{id}_{V}\times \tau_b"] \\
		U\times \PP^1\arrow[r,"q_{|U\times \PP^1}", swap] \arrow[d, "{\tau_1}_{|U\times \PP^1}",swap] & V\times \PP^1\arrow[d, "\tau_{|V\times \PP^1}"]\\
		U \arrow[r, "{m_2}_{|U}",swap]& V,
	\end{tikzcd}
	\]
	where $p_2$ is the restriction of the morphism $p_X$ to the open subset $(U\times \PP^1) \times_{V\times \PP^1} (V\times\FF_b)$.
	
	There exist homogeneous polynomials $f_0,f_1\in \O_C(U)[z_0,z_1]_b$ of same degree $d$ such that
	\[q_{|U\times \PP^1}(x,[z_0:z_1]) = (m_2(x),[f_0(x,z_0,z_1):f_1(x,z_0,z_1)]).\]
	Since $m_2$ and $q$ are both $4$-to-$1$ morphisms, this implies that $d=1$.
	
	Moreover, the variety $U\times \FF_b$ is equipped with the morphisms $\mathrm{id}_{U}\times \tau_b\colon U\times \FF_b \to U\times \PP^1$, and 
	$$
	\begin{array}{cccc}
		\widetilde{q}\colon & U\times \FF_b & \longrightarrow & V\times \FF_b \\
		& (x,[y_0:y_1\ ;z_0:z_1])  & \longmapsto & (m_2(x),[y_0:y_1\ ;f_0(x,z_0,z_1):f_1(x,z_0,z_1)]),
	\end{array}
	$$
	which is the quotient of $U\times \FF_b$ by the diagonal action of $C[2]$ given in \ref{C[2]actiononFb.1}. By the universal property, there exists a unique morphism
	 $$
	 \begin{array}{cccc}
	 	f\colon & U\times \FF_b & \longmapsto & (U\times \PP^1) \times_{V\times \PP^1} (V\times\FF_b) \\
	 	& (x,[y_0:y_1\ ;z_0:z_1])  & \longmapsto & (x,[z_0:z_1]),(m_2(x),[y_0:y_1\ ;f_0(x,z_0,z_1):f_1(x,z_0,z_1)]),
	 \end{array}
	 $$
	 such that $p_2 f = \tilde{q} $ and $p_1 f = \mathrm{id}_{U}\times \tau_b$. Since $p_2$ and $\widetilde{q}$ are both $4$-to-$1$ onto $V\times \FF_b$, it follows that $f$ is bijective. Moreover, $U\times \FF_b$ is connected and $(U\times \PP^1) \times_{V\times \PP^1} (V\times\FF_b)$ is smooth; thus, $f$ is an isomorphism by Zariski's Main Theorem. We obtain new trivializations of $\tau_1 q^*\pi \colon q^*X \to C$ which identify $p_X$ with the quotient by the action of $C[2]$ defined in \ref{C[2]actiononFb.1}.
\end{proof}

We have seen that if $\Autz(X)$ is relatively maximal, then $\deg(m_2^*(D) + bD_0)=0$. Equivalently, $b=-2\deg(D)$. Next, we distinguish the cases where $m_2^*(D) + bD_0$ is trivial and non-trivial of degree zero.

\begin{lemma}\label{pullbackA1.trivialinv}
	Let $\pi\colon X\to \AAA_1$ be a $\PP^1$-bundle with invariants $(\AAA_1,b,D)$, where $b>0$ is an even integer and $D$ a divisor such that $m_2^*(D)+bD_0\sim 0$. Then $\Autz(X)$ is not relatively maximal.
\end{lemma}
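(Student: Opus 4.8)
The plan is to produce an explicit $\Autz(X)$-equivariant square birational map $\psi\colon X\dashrightarrow X'$ over $\AAA_1$ that strictly enlarges $\Autz(X)$, contradicting relative maximality. The map will be the elementary transformation along a general $\Autz(\AAA_1)$-orbit supplied by Lemma \ref{pullbackA1} \ref{pullbackA1.3}, and the enlargement will come from unipotent automorphisms of $X'$ that exist precisely because the invariant of $X'$ pulls back trivially to $C\times\PP^1$. These are exactly the automorphisms that vanish in the non-trivial case \ref{thmA.x}, which explains why that case remains relatively maximal and this one does not. (If one prefers, the subcase where $(\tau\pi)_*$ is trivial can instead be dispatched immediately by Proposition \ref{equivarianttodecomposable}; the argument below however does not require any hypothesis on $(\tau\pi)_*$.)

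First I would fix a general $\Autz(\AAA_1)$-orbit $\omega\subset\AAA_1$ and apply Lemma \ref{pullbackA1} \ref{pullbackA1.3}: blowing up $\widetilde{\omega}=(\AAA_1)_b\cap \pi^{-1}(\omega)$ and contracting the strict transform of $\pi^{-1}(\omega)$ yields an $\Autz(X)$-equivariant square birational map $\psi\colon X\dashrightarrow X'$, where $\pi'\colon X'\to\AAA_1$ has invariants $(\AAA_1,b+4,D-D_\sigma)$. This is equivariant with no assumption on $(\tau\pi)_*$, since $\omega$ is $\Autz(\AAA_1)$-invariant, hence $\pi_*(\Autz(X))$-invariant, and $\widetilde{\omega}$ is $\Autz(X)$-invariant by Blanchard's lemma. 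Using Lemma \ref{pullbackA1} \ref{pullbackA1.1} together with $m_2^*(D_\sigma)\sim 4D_0$ from Lemma \ref{A_1quotient} \ref{A_1quotient.4}, the pullback $q^*X'$ has invariants
\[
\big(C\times\PP^1,\, b+4,\, m_2^*(D-D_\sigma)+(b+4)D_0\big),
\]
and $m_2^*(D-D_\sigma)+(b+4)D_0 \sim \big(m_2^*(D)+bD_0\big)+\big(4D_0-m_2^*(D_\sigma)\big)\sim 0$. Thus $q^*X'$ still has trivial invariant over $C\times\PP^1$, and $b+4$ is again even.

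Next I would exhibit automorphisms of $X'$ not preserving the base locus of $\psi^{-1}$. In the local coordinates of Lemma \ref{pullbackA1} \ref{pullbackA1.3}, the indeterminacy locus of $\psi^{-1}$ is the curve $B=\{y_1=0\}\cap (\pi')^{-1}(\omega)$, which is disjoint from the $\Autz(X')$-invariant surface $(\AAA_1)_{b+4}=\{y_0=0\}$. Since $b+4$ is even and $q^*X'$ has trivial invariant, Lemma \ref{C[2]actiononFb} identifies $X'$ with $q^*X'/C[2]$, and the automorphisms of $q^*X'$ of the form $[y_0:y_1\,;z_0:z_1]\mapsto[y_0:y_1+p(z_0,z_1)y_0\,;z_0:z_1]$ with $p\in\kk[z_0,z_1]_{b+4}$ provided by Lemma \ref{CxP^1,Dtrivial} \ref{CxP^1,Dtrivial.1} descend to $\ker(\pi'_*)$ whenever $p$ is invariant under the action of $C[2]$ on $\kk[z_0,z_1]_{b+4}$. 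For a general orbit $\omega$, namely one lying over a general point of the elliptic fibration $\AAA_1\to\PP^1/C[2]$ of Lemma \ref{A_1quotient}, there is such a $C[2]$-invariant form $p$ of degree $b+4\ge 6$ that is not identically zero on $\omega$; the induced automorphism $\alpha_p\in\Autz(X')$ then sends $B$ to $\{y_1=p\,y_0\}\cap(\pi')^{-1}(\omega)\neq B$. Since every element of $\psi\Autz(X)\psi^{-1}$ preserves the base locus of $\psi^{-1}$ while $\alpha_p$ does not, we get $\psi\Autz(X)\psi^{-1}\subsetneq \Autz(X')$, so $\Autz(X)$ is not relatively maximal.

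The hard part is the descent step in the third paragraph: one must check that the $C[2]$-invariant unipotent automorphisms of $q^*X'$ genuinely descend to elements of $\Autz(X')$, and that a general orbit $\omega$ avoids the common zero locus of all such invariant forms — equivalently, that the evaluation $\kk[z_0,z_1]_{b+4}^{C[2]}\to \kk$ at the point of $\PP^1/C[2]$ below $\omega$ is nonzero. This is exactly where the hypothesis $m_2^*(D)+bD_0\sim 0$ enters: it forces $q^*X'$, and hence $X'$, to carry these extra automorphisms, in sharp contrast with the non-trivial case, where the corresponding line bundle on $C$ has no nonzero global sections, $\ker(\pi'_*)=\mathbb{G}_m$, and the analogous base locus stays invariant.
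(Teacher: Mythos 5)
Your proposal follows the paper's own proof essentially step for step: the same elementary transformation along an $\Autz(\AAA_1)$-orbit from Lemma \ref{pullbackA1} \ref{pullbackA1.3}, the same computation (via $m_2^*(D_\sigma)\sim 4D_0$) showing that $q^*X'$ again has trivial invariant, and the same descent of $C[2]$-invariant unipotent automorphisms of $q^*X'$ through the identification $X'\cong q^*X'/C[2]$ of Lemma \ref{C[2]actiononFb} to produce an element of $\Autz(X')$ moving the base locus of $\psi^{-1}$. If anything, you are slightly more careful than the paper, which uses the fixed invariant form $z_0^{b+4}+z_1^{b+4}$ for an arbitrary orbit $\omega$ (a form that vanishes identically on $q^{-1}(\omega)$ for the finitely many special orbits lying over the roots of $z^{b+4}=-1$), whereas your choice of a general orbit $\omega$ avoids this issue outright.
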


\begin{proof}
	Let $\omega$ be an $\Autz(\AAA_1)$-orbit. By Lemma \ref{pullbackA1}, the blowup of $\pi^{-1}(\omega)\cap (\AAA_1)_b$ followed by the contraction of $\pi^{-1}(\omega)$ yields an $\Autz(X)$-equivariant square birational map $\psi \colon X\dashrightarrow X'$, where $\pi' \colon X' \to \AAA_1$ is a $\PP^1$-bundle with invariants $(\AAA_1,b+4,D-D_\sigma)$. The pullback bundle $q^*X'\to C\times \PP^1$ has invariants $(C\times \PP^1,b+4,m_2^*(D-D_\sigma)+(b+4)D_0)$. Since $m_2^*(D_\sigma) \sim4D_0$ by Lemma \ref{A_1quotient} \ref{A_1quotient.4}, it follows that the divisor $m_2^*(D-D_\sigma)+(b+4)D_0\sim m_2^*(D)+bD_0$, which is trivial by assumption. 
	
	By Lemma \ref{FFb-bundle}, we can choose the trivializations of $\tau_1q^*\pi'\colon q^*X'\to C$ such that the transition maps are
		\[
		\begin{array}{cccc}
			\phi_{ij}\colon & U_j \times \FF_{b+4} & \dasharrow & U_i \times\FF_{b+4} \\
			& (x,[y_0:y_1\ ; z_0:z_1]) & \longmapsto & (x,[y_0:y_1+p_{ij}(x,z_0,z_1)y_0\ ; z_0:z_1]),
		\end{array}
		\]
		for some $p_{ij}\in \O_C(U_{ij})[z_0,z_1]_{b+4}$. Shrinking the $U_i$ if necessary, we can also assume that each $U_i$ is $C[2]$-invariant, and $C[2]$ acts on $U_i\times \FF_b$ via the action defined in Lemma \ref{C[2]actiononFb}. For every $\mu\in \kk$ and index $i$, we define the automorphism 
		\[
		\begin{array}{cccc}
			h_i \colon& U_i \times \FF_{b+4} & \longrightarrow & U_i \times\FF_{b+4} \\
			& (x,[y_0:y_1\ ; z_0:z_1]) & \longmapsto & (x,[y_0:y_1+\mu (z_0^{b+4}+z_1^{b+4})y_0\ ; z_0:z_1]).
		\end{array}
		\]
		Then $h_i \phi_{ij} = \phi_{ij} h_j$, so this yields an element $h\in \Autz(q^*X')$. 
		
		The integer $b$ is even, so $h$ maps $C[2]$-orbits to $C[2]$-orbits, and it follows that the quotient morphism $q^*X'\to q^*X'/C[2]$ is $h$-equivariant. By Lemma \ref{C[2]actiononFb}, $q^*X'/C[2]$ is $\AAA_1$-isomorphic to $X'$; hence, $h$ induces an element of $\Autz(X')$ which does not leave invariant the base locus of $\psi^{-1}$. We conclude that $\psi \Autz(X) \psi^{-1} \subsetneq \Autz(X')$.
\end{proof}

\begin{definition}\label{def:(m_2^*,b)}
	We say that a divisor $D$ on $C$ is a \emph{non-trivial $2$-divisor} if $m_2^*(D)-2\deg(D)D_0$ is not trivial and has degree zero.
\end{definition}

\begin{lemma}\label{SarkisovoverA1}
	Let $\pi\colon X\to \AAA_1$ be a $\PP^1$-bundle with invariants $(\AAA_1,b,D)$, where $b>0$ is an even integer and $D$ a non-trivial $2$-divisor of degree $-b/2$, and let $\omega\subset \AAA_1$ be an $\Autz(\AAA_1)$-orbit. Then the following hold:
	\begin{enumerate}
		\item\label{SarkisovoverA1.0} $X$ is isomorphic to the decomposable $\PP^1$-bundle
		\[
		\PP(\O_{\AAA_1} \oplus \O_{\AAA_1}(b\sigma + \tau^*(D)))
		\]
		and the two disjoint sections $S_0$ and $S_1$  corresponding respectively to the line subbundles $\O_{\AAA_1}(b\sigma + \tau^*(D)) $ and $ \O_{\AAA_1}$ are $\Autz(X)$-invariant. Moreover, $\Autz(X)$ fits into a short exact sequence
		\[
		1\to \mathbb{G}_m \to \Autz(X)\to \Autz(\AAA_1) \to 1.
		\]
		\item\label{SarkisovoverA1.1} The blowup of $\pi^{-1}(\omega)\cap S_0$ followed by the contraction of the strict transform of $\pi^{-1}(\omega)$ yields an $\Autz(X)$-equivariant square birational map $\psi \colon X\dasharrow X'$ such that $\psi \Autz(X) \psi^{-1} = \Autz(X')$, where $X'=\PP(\O_{\AAA_1} \oplus \O_{\AAA_1}((b+4)\sigma + \tau^*(D-D_\sigma)))$. Moreover, $D-D_\sigma$ is a non-trivial $2$-divisor of degree $-(b+4)/2$.
		\item\label{SarkisovoverA1.2} The blowup of $\pi^{-1}(\omega)\cap S_1$ followed by the contraction of the strict transform of $\pi^{-1}(\omega)$ yields an $\Autz(X)$-equivariant square birational map $\phi\colon X\dasharrow X''$ such that  $\phi \Autz(X) \phi^{-1} = \Autz(X'')$, where $X''=\PP(\O_{\AAA_1} \oplus \O_{\AAA_1}((b-4)\sigma + \tau^*(D+D_\sigma)))$.
		Moreover, $D+D_\sigma$ is a non-trivial $2$-divisor of degree $-(b-4)/2$.
		\item\label{SarkisovoverA1.4} Every $\Autz(X)$-equivariant Sarkisov link starting from the $\PP^1$-bundle $\pi\colon X\to \AAA_1$ is given in \ref{SarkisovoverA1.1} or \ref{SarkisovoverA1.2}.
	\end{enumerate}
\end{lemma}

\begin{proof}
	\ref{SarkisovoverA1.0} By assumption, the divisor $\widetilde{D}=m_2^*(D)+bD_0$ is non-trivial of degree zero. By Lemmas \ref{modulioverCtimesPP^1} and \ref{pullbackA1}, the $\PP^1$-bundle $q^*\pi\colon q^*X\to C\times \PP^1$ is decomposable and admits two $\Autz(q^*X)$-invariant disjoint sections. In particular, these two sections are also $C[2]$-invariant and their images $S_0$ and $S_1$ are also disjoint sections of $\pi$. Hence, the $\PP^1$-bundle $\pi \colon X\to \AAA_1$ is decomposable and $\AAA_1$-isomorphic to $\PP(\O_{\AAA_1} \oplus \O_{\AAA_1}(b\sigma + \tau^*(D)))$.  
	
	Next, we show that $\pi_*$ is surjective by constructing explicitly an element $f\in \Autz(X)$ such that $\pi_*(f)$ is not the identity. Let $(U_i)_i$ be a trivializing open cover of $q^*\pi \tau_1\colon q^*X \to C$ such that each $U_i$ is $C[2]$-invariant and the transition maps are
	\[
	\begin{array}{cccc}
		\phi_{ij}\colon & U_j\times \FF_b & \dasharrow & U_i\times \FF_b \\
		&(x,[y_0:y_1\ ;z_0:z_1]) &\longmapsto & \left(x,\left[y_0:\widetilde{\lambda}_{ij}(x)y_1\ ;
			z_0 : z_1
	\right] \right),
	\end{array}
	\]
	where $\widetilde{\lambda}_{ij}\in \O_C(U_{ij})^*$ are the cocycles of $\mathcal{O}(\widetilde{D})$. 
	
	Since $\deg(\widetilde{D})=0$, every translation $t\in \Autz(C)$ lifts to an automorphism of the line bundle $\O_C(\widetilde{D})$, which is locally of the form
	\[
	\begin{array}{cccc}
		\widetilde{t}_i\colon & V_i\times \AA^1 & \longrightarrow & t(V_i) \times \AA^1 \\
		& (x,y) & \longmapsto & (t(x),\mu_i(x)y),
	\end{array}
	\]
	where $V_i=m_2(U_i)\subset C$ and $ \mu_i\in \O_C(V_i)^*$. Writing the transition maps of the line bundle $\O_C(\widetilde{D})$ as 
	\[
	\begin{minipage}{8cm}
		\[
		\begin{array}{ccccccc}
			V_j\times \AA^1 & \dashrightarrow & V_i \times \AA^1   \\
			(x,z) & \longmapsto & (x,\widetilde{\lambda}_{ij}(x)z),
		\end{array}
		\]	
	\end{minipage}
	\begin{minipage}{8cm}
		\[
		\begin{array}{ccccccc}
			 t(V_j)\times \AA^1 & \dashrightarrow & t(V_i) \times \AA^1 \\
			 (x,z) & \longmapsto & (x,\widetilde{\lambda}_{t(ij)}(x)z),
		\end{array}
		\]	
	\end{minipage}
	\]
	where $\widetilde{\lambda}_{ij}\in \O_C(V_{ij})^*$ and $\widetilde{\lambda}_{t(ij)}\in \O_C(t(V_{ij}))^*$. Since the $(\widetilde{t}_i)_i$ commute with the transition maps above, we obtain the gluing condition:
	\begin{equation}\label{eqn:gluing}
		\mu_i(x) = \widetilde{\lambda}_{t(ij)}(t(x)) \widetilde{\lambda}_{ij}(x)^{-1} \mu_j(x). \tag{$\dagger$}
	\end{equation}
	Hence, the $(\mu_i)_i$ give a trivializing section $\mu$ of the line bundle $\O_C(t^*\widetilde{D}) \otimes \O_C(\widetilde{D})^\vee$. 
	
	Moreover, $m_2^*\colon \Pic^0(C)\to \Pic^0(C)$ is the multiplication by two, so there exists a non-trivial divisor $D'$ of degree zero such that $m_2^*(D') = \widetilde{D}$. In particular, we can choose $\mu$ as a pullback section of $\O_C(D')$, i.e., we can assume that $\mu$ is $C[2]$-invariant. Then we define the following isomorphisms:
	\[
	\begin{array}{cccc}
		\widetilde{f}_i \colon & U_i\times \FF_b & \longrightarrow & t(U_i) \times \FF_b \\
		& (x,[y_0:y_1 \ ;z_0:z_1]) & \longmapsto & (t(x),[y_0:\mu_i(x)y_1 \ ;z_0:z_1]),
	\end{array}
	\]
	 and using (\ref{eqn:gluing}), we obtain that $\widetilde{f}_i\phi_{ij} = \phi_{t(ij)} \widetilde{f}_j$, where $\phi_{ij}\colon U_j\times \FF_b \dashrightarrow U_i\times \FF_b$ and $\phi_{t(ij)}\colon t(U_j) \times \FF_b \dashrightarrow t(U_i) \times \FF_b$ denote the transition maps of the $\FF_b$-bundle $q^*\pi \tau_1 \colon q^*X\to C$. Therefore, the $(f_i)_i$ yield an element $\widetilde{f}\in \Autz(q^*X)$, that maps $C[2]$-orbits to $C[2]$-orbits for the action given in Lemma \ref{C[2]actiononFb}. The quotient morphism $p_X \colon q^*X \to X$ is $\widetilde{f}$-equivariant and yields an element $f\in \Autz(X)$ that acts non-trivially on $\AAA_1$, whose automorphism group is one-dimensional by Lemma \ref{Autalgebraicgroup}. Thus, $\pi_*$ is surjective.
	
	Let $f\in \Autz(X)$. By Lemma \ref{A_1quotient}, we have the isomorphism $q_*\colon \Autz(C)\times \{\mathrm{id}_{\PP^1}\}\simeq \Autz(\AAA_1)$. This induces a closed embedding 
	\[
	\begin{array}{cccc}
		i\colon & \Autz(X) & \longrightarrow &   \Autz(q^*X) \\
		& f & \longmapsto & (q_*^{-1}(\pi_*(f)) \times \mathrm{id}_{\PP^1}) \times f 
	\end{array}
	\]
	with image $(\Autz(C) \times \{\mathrm{id}_{\PP^1}\}) \times_{\Autz(\AAA_1)} \Autz(X)$. By Lemma \ref{modulioverCtimesPP^1}, the two disjoint sections of $q^*X$ are $\Autz(q^*X)$-invariant; hence, $S_0$ and $S_1$ are $\Autz(X)$-invariant. Moreover, since $\pi$ is decomposable, it follows that $\mathbb{G}_m\subset \ker(\pi_*)$. Since $i(\ker(\pi_*))\subset \ker(q^*\pi) =\mathbb{G}_m$  and $q_*\colon \Autz(C) \times \{\mathrm{id}_{\PP^1}\} \to \Autz(\AAA_1)$ is an isomorphism, this implies that $\ker(\pi_*)= \mathbb{G}_m$.

	\ref{SarkisovoverA1.1} In Lemma \ref{pullbackA1} \ref{pullbackA1.3}, we prove that $\psi\colon X\dashrightarrow X'$ is $\Autz(X)$-equivariant and $\pi'\colon X'\to \AAA_1$ has invariants $(\AAA_1,b+4,D-D_\sigma)$. Using that $\deg(D_\sigma)=2$ and $m_2^*(D_\sigma) = D_0$, we get that $D-D_\sigma$ is a non-trivial $2$-divisor of degree $-(b+4)/2$. Therefore, $X'$ is $\AAA_1$-isomorphic to $\PP(\O_{\AAA_1} \oplus \O_{\AAA_1}((b+4)\sigma + \tau^*(D-D_\sigma)))$ and the base locus of $\psi$ is $\Autz(X)$-invariant. Therefore $\psi \Autz(X) \psi^{-1} = \Autz(X')$. 
	
	\ref{SarkisovoverA1.2} is proven similarly as \ref{SarkisovoverA1.1}, except if $b=4$, then we obtain that $\tau \pi''\colon X''\to C$ is the $\FF_0$-bundle $\PP(\O_C\oplus \O_C(D+D_\sigma)) \times_C \AAA_1$. Notice that by assumption, $\deg(D) = -2$. If $D+D_\sigma\sim 0$, we get that $m_2^*(D) + 4D_0 \sim 0$, which contradicts that $D$ is a non-trivial $2$-divisor. Therefore, $D+D_\sigma$ is non-trivial of degree $0$ and by Lemmas \ref{geometryofruledsurface} and \ref{autofiberproduct}, the base locus of $\phi^{-1}$ is an $\Autz(X'')$-invariant section. Thus, $\phi\Autz(X) \phi^{-1} = \Autz(X'')$. 
	
	Now, we prove \ref{SarkisovoverA1.4}. Since $\pi$ is decomposable, the connected group $\mathbb{G}_m$ acts fiberwise on the complement of the two disjoint sections $S_0$ and $S_1$. Moreover, the morphism induced by Blanchard's lemma $\pi_*\colon \Autz(X)\to \Autz(\AAA_1)$ is surjective.
	Therefore, the only $\Autz(X)$-orbits having dimension one are contained in $S_0$ or $S_1$; and they equal $\pi^{-1}(\omega)\cap S_0$ or $\pi^{-1}(\omega)\cap S_1$ for some $\Autz(\AAA_1)$-orbit $\omega$. By the $2$-ray game, the blowup of such $\Autz(X)$-orbit yields a Sarkisov diagram of type II, and depending whether this $\Autz(X)$-orbit lies in $S_0$ or $S_1$, we obtain the cases \ref{SarkisovoverA1.1} or \ref{SarkisovoverA1.2} stated in this lemma. Finally, by Lemma \ref{SarkisovIIIandIV} \ref{SarkisovIIIandIV.1}, there is no $\Autz(X)$-equivariant Sarkisov diagram of type III and IV starting from $\pi\colon X\to \AAA_1$.
\end{proof}

	Under the assumptions of Lemma $\ref{SarkisovoverA1}$, every $\Autz(X)$-equivariant square birational map has target a
	$\PP^1$-bundle with invariants $(\AAA_1,b-4,D+D_\sigma)$ or $(\AAA_1,b+4,D-D_\sigma)$. By induction, we get that $\Autz(X)$ is conjugate to $\Autz(\widetilde{X})$, where $\widetilde{\pi}\colon \widetilde{X}\to \AAA_1$ is a $\PP^1$-bundle such that $\tau\widetilde{\pi}\colon \widetilde{X}\to C$ is an $\FF_0$-bundle over $C$ or an $\FF_2$-bundle over $C$. The following proposition treats the case of $\FF_2$-bundles.

\begin{proposition}\label{4n+2}
	Let $\pi\colon X\to \AAA_1$ be the decomposable $\PP^1$-bundle $\PP(\O_{\AAA_1}\oplus \O_{\AAA_1}(2\sigma+\tau^*(D)))$, where $D$ is a non-trivial $2$-divisor. Then the following hold:
	\begin{enumerate}
		\item\label{4n+2.1} Let $X'$ be a $\PP^1$-bundle over $\AAA_1$. There exists an $\Autz(X)$-equivariant square birational map $X\dashrightarrow X'$ if and only if there exists $n\in \mathbb{Z}$ such that 
		\[
		X'\cong \PP(\O_{\AAA_1}\oplus \O_{\AAA_1}((4n+2)\sigma + \tau^*(D-nD_\sigma))).
		\]
		Moreover, this square birational map conjugates $\Autz(X)$ and $\Autz(X')$.
		\item\label{4n+2.4} The group $\Autz(X)$ is relatively maximal but the pair $(X,\pi)$ is not stiff. 
	\end{enumerate}
\end{proposition}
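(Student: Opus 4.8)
The plan is to run the $\Autz(X)$-equivariant Sarkisov program and feed it into the exhaustive list of links provided by Lemma \ref{SarkisovoverA1}. Write $X_n = \PP(\O_{\AAA_1}\oplus \O_{\AAA_1}((4n+2)\sigma + \tau^*(D-nD_\sigma)))$ for $n\in\mathbb{Z}$, so that $X = X_0$; when the coefficient $4n+2$ is negative one reads $X_n$ through the isomorphism $\FF_b\cong\FF_{-b}$, i.e.\ after tensoring the defining rank-$2$ bundle by a line bundle so that the invariant becomes positive. First I would verify that every $X_n$ again satisfies the hypotheses of Lemma \ref{SarkisovoverA1}: using $\deg(D_\sigma)=2$ and $m_2^*(D_\sigma)\sim 4D_0$ from Lemma \ref{A_1quotient} \ref{A_1quotient.4}, one computes $m_2^*(D-nD_\sigma)+(4n+2)D_0\sim m_2^*(D)+2D_0$, which is non-trivial of degree zero; hence $D-nD_\sigma$ is again a non-trivial $2$-divisor of degree $-(4n+2)/2$. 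The decisive elementary point is that $4n+2\equiv 2\pmod 4$, so $|4n+2|\geq 2$ for every $n$ and no member of the family is an $\FF_0$- or $\FF_1$-bundle.

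Next I would compute how the two links of Lemma \ref{SarkisovoverA1} act on the index. Blowing up $\pi^{-1}(\omega)\cap S_0$ sends the invariants $(4n+2, D-nD_\sigma)$ to $(4n+6, D-(n+1)D_\sigma)$, i.e.\ $X_n\dasharrow X_{n+1}$, while blowing up $\pi^{-1}(\omega)\cap S_1$ sends them to $(4n-2, D-(n-1)D_\sigma)$, i.e.\ $X_n\dasharrow X_{n-1}$. Since each such link $\psi$ satisfies $\psi\Autz(X_n)\psi^{-1} = \Autz(X_{n\pm1})$ by Lemma \ref{SarkisovoverA1}, composing the appropriate chain starting at $X_0=X$ conjugates $\Autz(X)$ onto $\Autz(X_n)$; this gives the ``if'' direction of \ref{4n+2.1}. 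For the ``only if'' direction, let $\phi\colon X\dasharrow X'$ be a birational map with $\phi\Autz(X)\phi^{-1} = \Autz(X')$ and $X'$ a $\PP^1$-bundle. By Theorem \ref{Floris} I decompose $\phi$ into $\Autz(X)$-equivariant links. Because $|4n+2|\geq 2$ at every stage, Lemma \ref{SarkisovIIIandIV} \ref{SarkisovIIIandIV.1} rules out all links of type III and IV, so the base $\AAA_1$ is never changed, and Lemma \ref{SarkisovoverA1} \ref{SarkisovoverA1.4} shows the only remaining equivariant links are the two above. An induction on the length of the decomposition then forces every intermediate Mori fiber space to be some $X_m$, whence $X'=X_n$.

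Statement \ref{4n+2.4} is then formal. Any $\Autz(X)$-equivariant square birational map decomposes, by the same argument, into the links of Lemma \ref{SarkisovoverA1}, each of which realizes an equality of conjugated groups; hence $\phi\Autz(X)\phi^{-1} = \Autz(X')$ always holds and $\Autz(X)$ is relatively maximal. On the other hand, the link $X_0\dasharrow X_1$ is an $\Autz(X)$-equivariant square birational map that is not an isomorphism, and $X_0$ and $X_1$ are not square isomorphic since their fibers over $C$ are the non-isomorphic Hirzebruch surfaces $\FF_2$ and $\FF_6$; therefore $(X,\pi)$ is not stiff.

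The step I expect to be the main obstacle is the ``only if'' part of \ref{4n+2.1}: one must guarantee that the Sarkisov decomposition of an a priori arbitrary conjugating birational map cannot leave the family $\{X_n\}$ --- neither by a base-changing type III or IV link (excluded precisely because the congruence $4n+2\equiv2\pmod4$ keeps us off the $\FF_0$- and $\FF_1$-cases) nor by landing on a $\PP^1$-bundle over $\AAA_1$ outside the list (excluded by the exhaustive description in Lemma \ref{SarkisovoverA1} \ref{SarkisovoverA1.4}). Some bookkeeping is also needed to reinterpret $X_n$ with $4n+2<0$ as a bundle with positive invariant, where the pullback invariant is replaced by its negative, so that Lemma \ref{SarkisovoverA1} genuinely applies at each intermediate step.
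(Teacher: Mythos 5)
Your proposal is correct and follows essentially the same route as the paper: decompose any $\Autz(X)$-equivariant (square) birational map into equivariant Sarkisov links via Theorem \ref{Floris}, then apply Lemma \ref{SarkisovoverA1} inductively, checking at each step that the target stays in the family $\{X_n\}$ because $D-nD_\sigma$ remains a non-trivial $2$-divisor and $4n+2\equiv 2 \pmod 4$ keeps the bundles away from the $\FF_0$- and $\FF_1$-cases. The paper's proof is just a terse two-sentence version of this argument; your explicit index bookkeeping (including the renormalization of negative coefficients via $\FF_b\cong\FF_{-b}$) and the $\FF_2$-versus-$\FF_6$ fiber comparison for non-stiffness are exactly the details it leaves implicit.
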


\begin{proof}
	 By Theorem \ref{Floris}, every $\Autz(X)$-equivariant birational map $X\dashrightarrow X'$, where $X'$ is a Mori fiber space, can be decomposed as product of $\Autz(X)$-equivariant Sarkisov links. Using Lemma \ref{SarkisovoverA1} inductively, we obtain \ref{4n+2.1}. Then \ref{4n+2.4} follows from \ref{4n+2.1}.
\end{proof}

\subsection{The $\FF_0$-bundles arising from a $\PP^1$-bundle over $\AAA_1$}

~\medskip

Next, we focus on the case of fiber products over $C$ of $\AAA_1$ with the following geometrically ruled surfaces:
\begin{enumerate}
	\item $C\times \PP^1$, 
	\item $\AAA_0$,
	\item $\AAA_1$, 
	\item $\PP(\O_C\oplus \L)$, where $\L$ is a non-trivial line bundle of degree zero.
\end{enumerate}
 In the first case, we have $\AAA_1\times_C (C\times \PP^1)  \cong \AAA_1 \times \PP^1$. By Proposition $\ref{FF0viaCxPP1}$, we already know that the pair $(\AAA_1\times \PP^1,\pi\colon \AAA_1\times \PP^1\to \AAA_1)$ is superstiff. In this section, we treat the other three fiber products one by one. Before, let us state the following lemma which will be useful:

\begin{lemma}\label{lem:A1_no_Sarkisov}
	Let $S$ be a geometrically ruled surface over $C$ and $\pi\colon X\to S$ a $\PP^1$-bundle. Assume that all $\Autz(X)$-orbits are of dimension at least one, and that those of dimension one are smooth curves mapped $4$-to-$1$ onto $S$. Then there is no $\Autz(X)$-equivariant Sarkisov diagram of type I and II starting from $\pi$.
\end{lemma}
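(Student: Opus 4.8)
The plan is to argue by contradiction using the two-ray game, reducing everything to a single intersection number against the fibres of $\pi$. Suppose there is an $\Autz(X)$-equivariant Sarkisov diagram of type I or II whose LHS Mori fibre space is $\pi\colon X\to S$. Such a link begins with an $\Autz(X)$-equivariant extremal divisorial extraction $\beta\colon Z\to X$, and by equivariance its center $\Gamma$ is $\Autz(X)$-invariant. Since every $\Autz(X)$-orbit has dimension at least one, $\Gamma$ contains no fixed point and is therefore not a point; being the irreducible center of an extremal extraction and $\Autz(X)$-invariant, it is the closure of a single one-dimensional orbit. As orbits of an algebraic group action are smooth and homogeneous, $\Gamma$ is a smooth curve, and by hypothesis $\pi_{|\Gamma}\colon \Gamma\to \omega:=\pi(\Gamma)\subset S$ has degree $4$; in particular $\Gamma$ is horizontal, contained in the ruled surface $W:=\pi^{-1}(\omega)$, and meets the fibre $l=\pi^{-1}(s)$ over a general point $s\in\omega$ in $\Gamma\cdot l=4$ points. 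Thus $\beta$ is the blow-up of the smooth curve $\Gamma$, with exceptional divisor $E$ and $K_Z=\beta^*K_X+E$.

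First I would compute the decisive intersection number. Let $\tilde l$ denote the strict transform of $l$ in $Z$. Since the four points of $\Gamma\cap l$ lie on the single rational curve $l$, we have $\tilde l\cdot E=\Gamma\cdot l=4$, and because $-K_X\cdot l=2$ (as $l$ is a fibre of the $\PP^1$-bundle $\pi$),
\[
-K_Z\cdot \tilde l=-\beta^*K_X\cdot \tilde l-E\cdot \tilde l=-K_X\cdot l-4=2-4=-2<0.
\]
So $\tilde l$ is a $K_Z$-positive curve. By contrast, when $\Gamma$ is a section, i.e.\ $\Gamma\cdot l=1$, the same computation gives $-K_Z\cdot\tilde l=1>0$ and yields the genuine type II links of Lemma \ref{SarkisovoverA1}; the whole point is the jump caused by the four intersection points lying on one fibre.

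Next I would identify the two rays of the game. The extraction $\beta$ is an isomorphism over $S\setminus\omega$, so $Z\to S$ is a $\PP^1$-bundle there and $\rho(Z/S)=\rho(X/S)+1=2$; hence $\overline{\mathrm{NE}}(Z/S)$ is a two-dimensional cone. Its two extremal rays are generated by the class $[e]$ of a fibre of $E\to\Gamma$ (with $e\cdot E=-1$, spanning the $\beta$-contraction ray $R_1$) and by $[\tilde l]$ (with $\tilde l\cdot E=4$, spanning the complementary ray $R_2$); both classes are vertical over $S$ and have opposite signs against $E$, so they are distinct extremal generators. The ray $R_1$ merely contracts $E$ back to $X$, i.e.\ returns to the LHS Mori fibre space, so the link must proceed through $R_2$. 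Running the relevant $(K_Z+\tfrac1\mu\mathcal{H})$-MMP of the Sarkisov program, where $\mathcal{H}$ is the pullback of an ample divisor and is therefore nef, the operation on $R_2$ must be $(K_Z+\tfrac1\mu\mathcal{H})$-negative; but $\mathcal{H}\cdot\tilde l\geq 0$ and $K_Z\cdot\tilde l=2$ give $(K_Z+\tfrac1\mu\mathcal{H})\cdot\tilde l>0$ for every $\mu>0$. Hence $R_2$ can never be processed, and no Sarkisov diagram of type I or II can start from $\pi\colon X\to S$. Equivalently, contracting $R_2$ would contract the ruled surface $\tilde W$ (the strict transform of $W$) along its ruling $\tilde l$ with $K_Z\cdot \tilde l>0$, which is not a step of the program and produces non-terminal singularities, exactly as in the obstruction of Lemma \ref{SarkisovIIIandIV} \ref{SarkisovIIIandIV.1}.

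The main obstacle is the bookkeeping of the two-ray game: one must verify that $[\tilde l]$ really spans the second extremal ray of $\overline{\mathrm{NE}}(Z/S)$, and that in the Sarkisov/MMP formalism a type I or II link is forced to process precisely this ray rather than terminating at once or being re-read as a type III or IV link. I would settle this exactly as in the proof of Lemma \ref{SarkisovIIIandIV}: the fibration $Z\to S$ is locally trivial away from $\omega$, so the only freedom of the game lies over $\omega$, and the sign computation $-K_Z\cdot\tilde l=2-\Gamma\cdot l<0$ is what makes every candidate continuation fall outside the program.
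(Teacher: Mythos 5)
Your proposal is correct and takes essentially the same route as the paper's own proof: both arguments observe that a type I or II link must start with the blow-up of an invariant one-dimensional orbit (necessarily a $4$-to-$1$ curve over $S$), identify the second ray of the two-ray game with the class of strict transforms of fibers of $\pi$ through the four intersection points, and compute that this class has intersection $+2$ against the canonical divisor, so that the required contraction would be $K$-positive and yield non-terminal singularities, which the Sarkisov program prohibits. The only cosmetic difference is that you phrase the final obstruction via the $(K+\tfrac1\mu\mathcal{H})$-MMP with scaling, while the paper directly invokes the non-terminality of the resulting contraction.
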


\begin{proof}
	An $\Autz(X)$-Sarkisov diagram of type I or II is entirely determined by the $2$-ray game, which in turn is determined by the data of the blowup of a one-dimensional $\Autz(X)$-orbit $\omega$. 
	Let $\eta\colon Y \to X$ be the blow-up of $\omega$. The relative cone of effective curves $\mathrm{NE}(Y/S)$ is generated by the class of a fiber $f$ of $\eta$, that is isomorphic to $\PP^1$, and the class of the preimage of a fiber of $\pi$, that we denote by $g$. The class of $f$ generates an extremal ray of $\mathrm{NE}(Y/S)$, whose contraction corresponds to the birational morphism $\eta$. Therefore, the other extremal ray is generated by the class of $ag-bf$, where $a,b\geq 0$ and $b/a$ is maximal. 
	
	Let $\gamma$ be a fiber of $\pi$ passing through $\omega$ at $4$ distinct points, and $\widetilde{\gamma}$ the strict transform of $\gamma$ by $\eta$. Then $\eta^*\gamma \sim 4f + \widetilde{\gamma}$. It follows that $\widetilde{\gamma}$ is numerically equivalent to $g-4f$, and this implies that $b/a\geq 4$. Moreover, we have
	$$\mathrm{K}_Y\cdot (g-4f)=\mathrm{K}_X\cdot \eta_*(g) - 4E\cdot f = -2 +4 >0,$$
	where $E$ denotes the exceptional divisor of $\eta$, and the equality $E\cdot f = -1$ follows from \cite[Lemma 2.2.14]{Iskovskikh}. The curves numerically equivalent to $g-4f$ correspond to the strict transforms of fibers of $\pi$ passing through $\omega$ at $4$ points. Hence, these curves span the surface $\pi^{-1}(\pi(\omega))$ and are positive against $\mathrm{K}_Y$. Thus, the contraction of this surface yields a variety with non-terminal singularities, which is prohibited by the Sarkisov program.	
\end{proof}

\begin{proposition}\label{A_0A_1notmax}
	Let $X=\AAA_0\times_C \AAA_1$, let $p_1$ and $p_2$ be the projections onto $\AAA_0$ and $\AAA_1$, respectively. Then the following hold:
	\begin{enumerate}
		\item $\Autz(X)$ is relatively maximal with respect to $p_1$ and the pair $(X,p_1)$ is superstiff.
		\item $\Autz(X)$ is not relatively maximal with respect to $p_2$.
	\end{enumerate} 
\end{proposition}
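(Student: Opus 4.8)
The plan is to read the $\Autz(X)$-orbits off the description $\Autz(X)\simeq\Autz(\AAA_0)\times_{\Autz(C)}\Autz(\AAA_1)$ from Lemma~\ref{autofiberproduct}~\ref{autofiberproduct.1}, and then to exploit the asymmetry between the two factors. By Lemma~\ref{geometryofruledsurface} this group has dimension two: it surjects onto $\Autz(C)=C$ with kernel $\mathbb{G}_a\times(\mathbb{Z}/2\mathbb{Z})^2$, whose factor $\mathbb{G}_a$ acts on the fibres of $\AAA_0$ while fixing the minimal section $\sigma_0$. Writing a point of $X$ as $(x_0,x_1)$ over a common point of $C$, transitivity of $\Autz(C)$ forces every orbit to have dimension at least one; the $\mathbb{G}_a$-action makes the orbit two-dimensional whenever $x_0\notin\sigma_0$, while for $x_0\in\sigma_0$ the factor $\mathbb{G}_a$ fixes $x_0$, so the orbit is one-dimensional, lies in the invariant surface $\Sigma_0=p_1^{-1}(\sigma_0)\cong\AAA_1$, and coincides there with the $\Autz(\AAA_1)$-orbit $\omega$ of $x_1$. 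Thus the one-dimensional orbits are exactly the curves $\omega'\subset\Sigma_0$ lying over the $\Autz(\AAA_1)$-orbits $\omega\subset\AAA_1$, which are $4$-to-$1$ elliptic curves above $C$ by Lemma~\ref{A_1quotient}~\ref{A_1quotient.4}.

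For (1), I would observe that via $p_1$ each such $\omega'$ maps onto the curve $\sigma_0\subset\AAA_0$ and meets a general fibre of $p_1$ over $\sigma_0$ in four points; hence all one-dimensional $\Autz(X)$-orbits are $4$-to-$1$ curves above $\AAA_0$. Lemma~\ref{lem:A1_no_Sarkisov} then excludes every $\Autz(X)$-equivariant Sarkisov diagram of type I and II with $p_1$ as left-hand Mori fibre space. Since $X\to\AAA_0\to C$ is an $\FF_0$-bundle (Lemma~\ref{fiberproduct}), the only remaining diagram is the type IV exchange of the two projections (Lemma~\ref{SarkisovIIIandIV}~\ref{SarkisovIIIandIV.3}); its induced link is the identity on $X$ but switches the base to $\AAA_1$, so it is not a square birational map with respect to $p_1$. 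By the equivariant Sarkisov program (Theorem~\ref{Floris}), every $\Autz(X)$-equivariant square birational map with respect to $p_1$ is therefore an isomorphism, so $(X,p_1)$ is superstiff and $\Autz(X)$ is relatively maximal with respect to $p_1$.

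For (2), the same orbits play the opposite role: $\Sigma_0$ is now a section of $p_2\colon X\to\AAA_1$, so each $\omega'$ is a section of $p_2$ above $\omega$. The plan is to perform the elementary transformation blowing up $\omega'$ and contracting the strict transform of $p_2^{-1}(\omega)$; as both loci are $\Autz(X)$-invariant, this is an $\Autz(X)$-equivariant square birational map $\phi\colon X\dashrightarrow X'$ over $\mathrm{id}_{\AAA_1}$, with $\pi'\colon X'\to\AAA_1$ again a $\PP^1$-bundle. Computing the transition data as in Lemma~\ref{pullbackA1}~\ref{pullbackA1.3}, starting from the canonical extension $0\to\O_{\AAA_1}\to\tau^*\E_0\to\O_{\AAA_1}\to 0$ of $X=\PP(\tau^*\E_0)$ and using $\omega\sim 4\sigma-\tau^*(D_\sigma)$, I expect $X'$ to have invariants $(\AAA_1,4,-D_\sigma)$, that is $X'=\PP(\E'')$ with $0\to\O_{\AAA_1}(4\sigma-\tau^*(D_\sigma))\to\E''\to\O_{\AAA_1}\to 0$.

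It then remains to show that the inclusion $\phi\Autz(X)\phi^{-1}\subseteq\Autz(X')$ is strict. The key input is the positivity $h^0(\AAA_1,\O_{\AAA_1}(\omega))\geq 2$: by Lemma~\ref{A_1quotient}~\ref{A_1quotient.4} the $\Autz(\AAA_1)$-orbits form a one-parameter family of linearly equivalent effective divisors, so $|\omega|$ is at least a pencil. Each section of $\O_{\AAA_1}(\omega)=\O_{\AAA_1}(4\sigma-\tau^*(D_\sigma))$ yields, through the extension, a nilpotent endomorphism of $\E''$ and hence a fibrewise unipotent automorphism of $X'$ fixing the section $\{y_0=0\}$; this produces a subgroup $\mathbb{G}_a^2\subseteq\ker(\pi'_*)$. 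On the other hand $\phi(\ker p_{2*})\phi^{-1}\subseteq\ker(\pi'_*)$ has dimension one, since $\ker(p_{2*})=\mathbb{G}_a$. As $\phi$ is a square map over $\mathrm{id}_{\AAA_1}$ and $p_{2*}$ is surjective, $\pi'_*$ maps $\phi\Autz(X)\phi^{-1}$ onto $\Autz(\AAA_1)$, so $\dim\Autz(X')\geq\dim\ker(\pi'_*)+\dim\Autz(\AAA_1)\geq 3>2=\dim\phi\Autz(X)\phi^{-1}$ and the inclusion is strict; hence $\Autz(X)$ is not relatively maximal with respect to $p_2$. The main obstacle is this final step: identifying $X'$ after the elementary transformation and exhibiting the extra fibrewise unipotent symmetries, which rest on $h^0(\AAA_1,\O_{\AAA_1}(\omega))\geq 2$, a positivity absent for the original fibre product (where $b=0$).
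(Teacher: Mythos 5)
Your proof of (1) is essentially the paper's own argument: identify the one-dimensional $\Autz(X)$-orbits as the curves $p_2^{-1}(\omega)\cap p_1^{-1}(\sigma_0)$, note they are $4$-to-$1$ above $\AAA_0$, and combine Lemma \ref{lem:A1_no_Sarkisov} with Lemma \ref{SarkisovIIIandIV} and Theorem \ref{Floris}. Your proof of (2) also coincides with the paper's up to the key construction: the equivariant elementary transformation $\phi\colon X\dashrightarrow X'$ centered at $\omega'$, and the computation that $\pi'\colon X'\to\AAA_1$ has invariants $(\AAA_1,4,-D_\sigma)$. Where you genuinely diverge is the strictness argument. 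The paper observes $m_2^*(-D_\sigma)+4D_0\sim 0$ (Lemma \ref{pullbackA1}) and invokes Lemma \ref{pullbackA1.trivialinv}, whose proof performs a further equivariant link $X'\dashrightarrow X''$ and produces the extra automorphisms on the $C[2]$-cover $q^*X''$ over $C\times\PP^1$ (fibrewise unipotent maps built from $C[2]$-invariant binary forms), descending them via Lemma \ref{C[2]actiononFb}; non-maximality of $\Autz(X)$ with respect to $p_2$ then follows by composing the two links. You instead obtain strictness already at $X'$, intrinsically: a global section of $\O_{\AAA_1}(\omega)=\O_{\AAA_1}(4\sigma-\tau^*(D_\sigma))$ gives a square-zero endomorphism of the canonical extension $\E''$ of Lemma \ref{FFb-bundle} \ref{canonicalextension}, hence a subgroup $\mathbb{G}_a^{h^0}\subseteq\ker(\pi'_*)$, and $h^0\geq 2$ because the orbits move in the pencil cut out by the elliptic fibration of Lemma \ref{A_1quotient}; since $\pi'_*$ restricted to $\phi\Autz(X)\phi^{-1}$ already surjects onto $\Autz(\AAA_1)$, the count $\dim\Autz(X')\geq\dim\ker(\pi'_*)+1\geq 3>2=\dim\Autz(X)$ forces $\phi\Autz(X)\phi^{-1}\subsetneq\Autz(X')$. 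Both arguments are correct (your $\mathbb{G}_a^2$ is exactly the descent of the $C[2]$-invariant forms $z_0^4+z_1^4$, $z_0^2z_1^2$ appearing in the paper's covering construction, so the two are secretly the same automorphisms). What each buys: your route is more self-contained for this single case, bypassing the covering machinery of Lemmas \ref{C[2]actiononFb} and \ref{pullbackA1.trivialinv}, and it pins down the dimension jump at $X'$ itself; the paper's route proves the reusable general statement of Lemma \ref{pullbackA1.trivialinv} (all even $b$ with $m_2^*(D)+bD_0\sim 0$), which is needed again in Lemmas \ref{SarkisovoverA1} and \ref{lem:SLA1max}, and yields the sharper geometric fact that the indeterminacy locus of the inverse link is not $\Autz$-invariant.
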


\begin{proof}
	 We denote by $\sigma_0\subset \AAA_0$ the unique $\Autz(\AAA_0)$-invariant section. By Lemma \ref{autofiberproduct} and Proposition \ref{candidateFF0max}, we have the isomorphism $\Autz(X)\simeq \Autz(\AAA_0)\times_{\Autz(C)} \Autz(\AAA_1)$ and the morphism $(\tau p_1)_*\colon \Autz(X) \to \Autz(C)$, which also equals $(\tau p_2)_*$, is surjective. By Lemma \ref{geometryofruledsurface}, the $\Autz(X)$-orbits of dimension one are contained in the surface $S=(p_1)^{-1}(\sigma_0) \subset X$, which is $\Autz(X)$-invariant, and are precisely the curves of the form $\widetilde{\omega} = p_2^{-1}(\omega) \cap S$, where $\omega\subset \AAA_1$ is an $\Autz(\AAA_1)$-orbit. 
	
	(1): The curves $\widetilde{\omega}$ are mapped $4$-to-$1$ onto $\AAA_0$ via $p_1$. By Lemmas \ref{SarkisovIIIandIV} and \ref{lem:A1_no_Sarkisov}, the only $\Autz(X)$-equivariant diagram is of type IV and exchanges the two fibrations $p_1$ and $p_2$. Therefore, $\Autz(X)$ is relatively maximal with respect to $p_1$ and $(X,p_1)$ is superstiff. 
	
	(2): Fix an $\Autz(X)$-orbit $\widetilde{\omega}$ of dimension one. Using Lemmas \ref{transitionAtiyah} \ref{transitionAtiyah.2} and \ref{fiberproduct}, we can choose the trivializations of $\tau p_2$ such that the transition maps equal
	\[
	\begin{array}{cccc}
		\phi_{ij} \colon & U_j\times \FF_0 & \dasharrow & U_i\times \FF_0\\
		& (x,[y_0:y_1\ ; z_0:z_1 ]) & \longmapsto & \left(x,\left[y_0: y_1 + \alpha_{ij}(x)y_0 \ ; s_{ij}(x)\cdot 
		\begin{pmatrix}
			z_0\\z_1
		\end{pmatrix} \right] \right),
	\end{array}
	\]
	where $s_{ij} \in \GL_2(\O_C(U_{ij}))$ are the transition matrices of $\AAA_1$ and $\alpha_{ij}\in \O_C(U_{ij})^*$. Under this choice of trivializations, $S =\{y_0=0\}$.
	 The restriction of $S$ on each fiber of the $\FF_0$-bundle $\tau p_2\colon X\to C$ is a constant section of $\FF_0$ which intersects $\widetilde{\omega}$ at four points counting with multiplicities. 
	 Then the blowup of $\widetilde{\omega}$ followed by the contraction of $p_2^{-1}(\omega)$ yields an $\Autz(X)$-equivariant square birational map $\psi\colon X\dasharrow X'$, where $\pi'\colon X'\to \AAA_1$ is a $\PP^1$-bundle, and $\psi$ is locally given by 
	 \[
	 \begin{array}{cccc}
	 	\psi_{i}\colon & U_i \times \FF_0 & \dashrightarrow & U_i \times \FF_{4} \\
	 	& (x,[y_0:y_1\ ;z_0:z_1]) & \longmapsto & (x,[y_0:\delta_i(x,z_0,z_1)y_1\ ;z_0:z_1]),
	 \end{array}
	 \]
	 where $\delta_i\in \O_C(U_{i})[z_0,z_1]_4$ is such that $(\mathrm{div}(\delta_i))_{|U_i \times \PP^1} = \omega_{|U_i\times \PP^1}$. The quantities 
	$$\delta_i\left(x,s_{ij}(x)\cdot 
	\begin{pmatrix}
		z_0\\z_1
	\end{pmatrix}\right) \delta_j(x,z_0,z_1)^{-1}$$ are the cocycles of the line bundle $\O_{\AAA_1}(\omega)$, which is isomorphic to $\O_{\AAA_1}(4\sigma - \tau^*(D_\sigma))$ by Lemma \ref{A_1quotient}. Computing $\psi_i \phi_{ij} \psi_j^{-1}$, we get that the $\PP^1$-bundle $\pi'\colon X'\to \AAA_1$ has invariants $(\AAA_1,4,-D_\sigma)$. By Lemma \ref{pullbackA1}, $m_2^*(-D_\sigma)+4D_0 \sim 0$, and by Lemma \ref{pullbackA1.trivialinv}, $\Autz(X')$ is not relatively maximal. 
\end{proof}

\begin{proposition}\label{A_1A_1max}
	Let $ X = \AAA_1\times _C\AAA_1$, let $p_1$ and $p_2$ be the projections onto the first and second factor, respectively. Then the following hold:
	\begin{enumerate}
		\item\label{A_1max.1} There exists a unique $\Autz(X)$-equivariant Sarkisov diagram starting from $p_1$, that is the following one of type IV:
		\[
		\begin{tikzcd}[column sep=2em,row sep = 3em]
			X \arrow[d,"p_1",swap] \arrow[rr,equal] & & X  \arrow[d,"p_2"]  \\
			\AAA_1 \arrow[rd,"\tau",swap] & & \AAA_1 \arrow[ld,"\tau"] \\
			& C &,
		\end{tikzcd}
		\] 
		and where $p_2$ denotes the projection on the first factor.
		\item With respect to either projection, the automorphism group $\Autz(X)$ is relatively maximal and the pairs $(X,p_1)$ and $(X,p_2)$
		are superstiff. Moreover, $\Autz(X)$ is a maximal connected algebraic subgroup of $\Bir(X)$.
	\end{enumerate}
\end{proposition}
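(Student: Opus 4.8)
The plan is to run the $\Autz(X)$-equivariant Sarkisov program from the Mori fiber space $\pi=p_2\colon X\to \AAA_1$ and to show that the only available diagram is the type IV one of the statement. First I would record the general structure: since $X=\AAA_1\times_C\AAA_1$ is a fiber product, $\tau\pi$ is an $\FF_0$-bundle (Lemma \ref{fiberproduct}), and $\Autz(X)\simeq \Autz(\AAA_1)\times_{\Autz(C)}\Autz(\AAA_1)$ with $(\tau\pi)_*$ surjective onto $\Autz(C)$ (Lemma \ref{autofiberproduct} and Proposition \ref{candidateFF0max}). Consequently every $\Autz(X)$-orbit dominates $C$ and therefore has dimension at least one; as $\Autz(\AAA_1)$ is one-dimensional (Lemma \ref{geometryofruledsurface}\ref{geometryofruledsurface.4}), all orbits are in fact curves.

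The heart of the argument is to understand these orbit-curves relative to $\pi$. Using the description of $\AAA_1$ as the quotient $(C\times\PP^1)/C[2]$ and the isomorphism $q_*\colon \Autz(C)\xrightarrow{\sim}\Autz(\AAA_1)$ of Lemma \ref{A_1quotient}, I would show that the image under $\pi=p_2$ of a one-dimensional orbit is an $\Autz(\AAA_1)$-orbit $\omega\subset\AAA_1$, which by Lemma \ref{A_1quotient}\ref{A_1quotient.4} is a $4$-to-$1$ elliptic curve over $C$, and that such an orbit meets the general fiber of $\pi$ in four points, i.e.\ is a $4$-to-$1$ curve above $\AAA_1$. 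Granting this, Lemma \ref{lem:A1_no_Sarkisov} immediately rules out every $\Autz(X)$-equivariant Sarkisov diagram of type I and II having $\pi$ as the left-hand side Mori fiber space, since the $K$-positivity estimate there obstructs the contraction of the strict transform of $\pi^{-1}(\omega)$.

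It then remains to treat types III and IV. Because $\tau\pi$ is an $\FF_0$-bundle ($b=0$), Lemma \ref{SarkisovIIIandIV}\ref{SarkisovIIIandIV.3} shows that the unique such diagram is the type IV one exchanging the two projections $p_2=\pi$ and $p_1$, whose induced link is an isomorphism of $X$. This is exactly the diagram displayed in \ref{A_1max.1}, which proves part (1). For part (2), by Floris' equivariant Sarkisov theorem (Theorem \ref{Floris}) every $\Autz(X)$-equivariant birational map to a Mori fiber space factors as a product of equivariant links; since the only link keeps $X$ fixed and merely switches the two $\PP^1$-bundle structures, it produces no nontrivial $\Autz(X)$-equivariant square birational map over $\AAA_1$. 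Hence $\Autz(X)$ is relatively maximal and the pair $(X,\pi)$ is superstiff.

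I expect the main obstacle to be the orbit computation of the second paragraph: verifying that the one-dimensional orbits are genuinely $4$-to-$1$ above $\AAA_1$—so that Lemma \ref{lem:A1_no_Sarkisov} applies and blocks the elementary transformations—requires carefully tracking how $\Autz(X)$ acts on both factors through the $C[2]$-quotient presentation of $\AAA_1$, and this is the delicate point on which the whole argument rests. Once the intersection number with the fibers of $\pi$ is pinned down, the remaining steps are formal consequences of the Sarkisov lemmas already established.
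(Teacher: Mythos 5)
Your proposal is essentially the paper's own proof: the paper likewise rules out equivariant diagrams of types I and II by observing that every one-dimensional $\Autz(X)$-orbit is a $4$-to-$1$ curve above the $\Autz(\AAA_1)$-orbit $\pi(\omega)$ and invoking Lemma~\ref{lem:A1_no_Sarkisov}, settles types III and IV via Lemma~\ref{SarkisovIIIandIV}, and deduces part (2) from Theorem~\ref{Floris}. The orbit count you defer as the delicate point is exactly what the paper reads off from Lemmas~\ref{geometryofruledsurface} and~\ref{autofiberproduct}: under the identification $\Autz(X)\simeq \Autz(\AAA_1)\times_{\Autz(C)}\Autz(\AAA_1)$, the pairs $(\epsilon,\mathrm{id})$ with $\epsilon$ ranging over the $(\mathbb{Z}/2\mathbb{Z})^2$-kernel of $\tau_*\colon \Autz(\AAA_1)\to\Autz(C)$ move a point of $X$ within its $\pi$-fiber and produce the four points, so the claim comes from the fiber-product description of the group (note that it is this description, and not the one-parameter ``diagonal'' subgroup alone, that yields the four intersection points --- precisely the care your last paragraph calls for).
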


\begin{proof}
		(1): By Lemma \ref{SarkisovIIIandIV}, there is no Sarkisov diagram of type III starting from the $\PP^1$-bundle $p_1$, and there exists a unique one of type IV, which is the one given in the statement. Assume that there exists also an $\Autz(X)$-Sarkisov diagram of type I or II. This latter is entirely determined by the $2$-ray game, which in turn is determined by the datum of the blowup of an $\Autz(X)$-orbit $\omega$. By Lemmas \ref{geometryofruledsurface} and \ref{autofiberproduct}, we have an isomorphism $\Autz(X)\simeq \Autz(\AAA_1)\times_{\Autz(C)} \Autz(\AAA_1)$, and the curve $\omega$ is mapped $4$-to-$1$ onto the $\Autz(\AAA_1)$-orbit $p_1(\omega)$. Then we conclude with Lemma \ref{lem:A1_no_Sarkisov}.
		
		(2): Let $\phi\colon X\dasharrow X'$ be an $\Autz(X)$-equivariant birational map. By Theorem \ref{Floris}, $\phi$ decomposes as product of $\Autz(X)$-Sarkisov links. As seen in \ref{A_1max.1}, the only $\Autz(X)$-Sarkisov diagram starting from $p_1$ is of type IV, and where the induced Sarkisov link is an automorphism. Therefore, $\Autz(X)$ is a maximal connected algebraic subgroup of $\Bir(X)$, it is also relatively maximal with respect to $p_1$ and the pair $(X,p_1)$ is superstiff. By symmetry, the same holds for the pair $(X,p_2)$. 
\end{proof}

The last fiber product to consider is $X=\PP(\O_C\oplus \O_C(D))\times_C \AAA_1$. As a $\PP^1$-bundle over $\AAA_1$, we prove in Lemma \ref{lem:SLA1max} and Proposition \ref{SLA1max} that $\Autz(X)$ is relatively maximal if and only if $D$ is not two-torsion.

\begin{lemma}\label{lem:SLA1max}
	Let $ S=\PP(\O_C\oplus \O_C(D))$ with $D\in \Pic^0(C)$ non-trivial. Let $X=S\times_C \AAA_1$, and let $p_1$ and $p_2$ be the projections onto $S$ and $\AAA_1$, respectively. Then the following hold:
	
	\begin{enumerate} 
		\item As a $\PP^1$-bundle over $\AAA_1$, we have $X\cong \PP(\O_{\AAA_1}\oplus \O_{\AAA_1}(\tau^*(D)))$. We denote by $S_0$ and $S_1$ the disjoint sections corresponding respectively to the line subbundles $\O_{\AAA_1}(\tau^*(D)) \subset \O_{\AAA_1}\oplus \O_{\AAA_1}(\tau^*(D))$ and $\O_{\AAA_1}\subset \O_{\AAA_1}\oplus \O_{\AAA_1}(\tau^*(D))$.
		\item\label{SLA1max.1} There exists a unique $\Autz(X)$-equivariant Sarkisov diagram starting from $p_1$, that is of type IV and is the following one:
		\[
		\begin{tikzcd}[column sep=2em,row sep = 3em]
			X \arrow[d,"p_1",swap] \arrow[rr,equal] & & X  \arrow[d,"p_2"]  \\
			S \arrow[rd,"\tau'",swap] & & \AAA_1 \arrow[ld,"\tau"] \\
			& C &,
		\end{tikzcd}
		\] 
		where $\tau'\colon S\to C$ is the structure morphism.
		Moreover, $\Autz(X)$ is relatively maximal with respect to $p_1\colon X\to S$, and the pair $(X,p_1)$ is superstiff. 
		\item\label{SLA1max.2} The only $\Autz(X)$-equivariant Sarkisov diagrams starting from $p_2$ are either the inverse of the diagram given in \ref{SLA1max.1}, or the following one of type $II$:
		\[
		\begin{tikzcd}[column sep=4em,row sep = 3em]
			Y\arrow[d,"\eta",swap] \arrow[r,equal] & Y \arrow[d,"\kappa"] \\
			X \arrow[d,"p_2",swap]  & X_4  \arrow[d,"\pi'"]  \\
			\AAA_1 \arrow[r,equal]  & \AAA_1,
		\end{tikzcd}
		\] 
		where $\eta$ is the blowup of an $\Autz(X)$-orbit $\widetilde{\omega}$ of dimension one lying in $S_0$ or in $S_1$, and $\kappa$ is the contraction of the strict transform of $p_2^{-1}(p_2(\widetilde{\omega}))$. Two cases arise:
		\bigskip
		\begin{enumerate}
			\item If $\widetilde{\omega}\subset S_0$, then $X_4$ is $\AAA_1$-isomorphic to $\PP(\O_{\AAA_1}\oplus \O_{\AAA_1}(4\sigma + \tau^*(D-D_\sigma)))$. 
			\item If $\widetilde{\omega}\subset S_1$, then $X_4$ is $\AAA_1$-isomorphic to $\PP(\O_{\AAA_1}\oplus \O_{\AAA_1}(-4\sigma + \tau^*(D+D_\sigma)))$. 
		\end{enumerate}
		\bigskip
		In both cases, if $D$ is a two-torsion divisor, then $\Autz(X)$ is not relatively maximal. If $D$ is not two-torsion, then we have $\phi \Autz(X)\phi^{-1} = \Autz(X_4)$, where $\phi = \kappa\eta^{-1}$ is the induced $\Autz(X)$-equivariant Sarkisov link.
	\end{enumerate}
\end{lemma}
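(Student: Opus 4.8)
The plan is to reduce everything to an explicit decomposable model together with a precise orbit analysis, and then read off both parts from the $\Autz(X)$-Sarkisov game. First I would establish the preliminary identification. Since $X=S\times_C\AAA_1$ is the base change of $S=\PP(\O_C\oplus\O_C(D))$ along $\tau\colon\AAA_1\to C$, compatibility of projectivization with base change gives $X\cong\PP(\O_{\AAA_1}\oplus\O_{\AAA_1}(\tau^*D))$ over $\AAA_1$, and the two disjoint $\Autz(S)$-invariant sections $\sigma_D,\sigma$ of $\tau'$ (Lemma \ref{geometryofruledsurface}\ref{geometryofruledsurface.2}) pull back to the disjoint sections $S_0=p_1^{-1}(\sigma_D)$ and $S_1=p_1^{-1}(\sigma)$, which are $\Autz(X)$-invariant. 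By Lemma \ref{autofiberproduct}\ref{autofiberproduct.1}, $\Autz(X)\cong\Autz(S)\times_{\Autz(C)}\Autz(\AAA_1)$, and both projections to $\Autz(S)$ and to $\Autz(\AAA_1)$ are surjective because $\Autz(S)\to\Autz(C)$ and $\Autz(\AAA_1)\to\Autz(C)$ are (Lemma \ref{geometryofruledsurface}). Hence the image under $p_1$ (resp.\ $\pi$) of any $\Autz(X)$-orbit is an $\Autz(S)$-orbit (resp.\ an $\Autz(\AAA_1)$-orbit), and combining this with the orbit lists of Lemma \ref{geometryofruledsurface} shows that the generic orbit is $2$-dimensional while the one-dimensional orbits are exactly the curves $\widetilde\omega=S_i\cap\pi^{-1}(\omega)$ for an $\Autz(\AAA_1)$-orbit $\omega$, lying in $S_0$ or $S_1$. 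The decisive feature, by Lemma \ref{A_1quotient}\ref{A_1quotient.4}, is that such $\widetilde\omega$ is $4$-to-$1$ over $S$ via $p_1$ but maps isomorphically onto $\omega$ via $\pi$.

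For part (1), I would run the $2$-ray game with $p_1\colon X\to S$ as LHS Mori fiber space. Since $\tau'p_1$ is an $\FF_0$-bundle, Lemma \ref{SarkisovIIIandIV}\ref{SarkisovIIIandIV.3} gives the type IV diagram exchanging $p_1$ and $\pi$ as the unique diagram of type III/IV; and since the one-dimensional orbits are $4$-to-$1$ over $S$, Lemma \ref{lem:A1_no_Sarkisov} excludes all type I and II diagrams. Thus the only $\Autz(X)$-equivariant diagram with $p_1$ as LHS is the type IV exchange, which does not produce a square birational map preserving $p_1$; exactly as in Proposition \ref{A_0A_1notmax}(1) this yields that $\Autz(X)$ is relatively maximal with respect to $p_1$ and that $(X,p_1)$ is superstiff.

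For part (2), with $\pi\colon X\to\AAA_1$ as LHS, the type III/IV diagrams again reduce to the inverse of the type IV exchange. The new phenomenon is that blowing up a one-dimensional orbit $\widetilde\omega\subset S_i$ is now admissible, since $\widetilde\omega$ is a section over $\omega$: the $2$-ray game contracts the strict transform of $\pi^{-1}(\omega)$ and produces a genuine type II elementary transformation $\phi\colon X\dashrightarrow X_4$ over $\AAA_1$. Writing the transition maps of the decomposable $\FF_0$-bundle $\tau\pi$ as in Lemma \ref{FFb-bundle} and repeating the computation of Lemma \ref{pullbackA1}\ref{pullbackA1.3} (as in Proposition \ref{A_0A_1notmax}(2)), the effect is to tensor the relevant line subbundle by $\O_{\AAA_1}(\pm\omega)$ with $\omega\sim4\sigma-\tau^*D_\sigma$; this gives $X_4\cong\PP(\O_{\AAA_1}\oplus\O_{\AAA_1}(4\sigma+\tau^*(D-D_\sigma)))$ when $\widetilde\omega\subset S_0$ and $X_4\cong\PP(\O_{\AAA_1}\oplus\O_{\AAA_1}(-4\sigma+\tau^*(D+D_\sigma)))$ when $\widetilde\omega\subset S_1$. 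As these $\widetilde\omega$ are the only one-dimensional orbits, these are the only type I/II diagrams.

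Finally I would settle the dichotomy. By Lemma \ref{pullbackA1}\ref{pullbackA1.1} together with $m_2^*D_\sigma\sim4D_0$ (Lemma \ref{A_1quotient}\ref{A_1quotient.4}), the pullback $q^*X_4$ has invariant divisor $m_2^*(D-D_\sigma)+4D_0\sim m_2^*D$ in case (a), and $-m_2^*D$ in case (b) after normalizing $X_4\cong\PP(\O_{\AAA_1}\oplus\O_{\AAA_1}(4\sigma-\tau^*(D+D_\sigma)))$; since $m_2^*$ acts as multiplication by $2$ on $\Pic^0(C)$, this divisor is trivial precisely when $D$ is two-torsion. If $D$ is two-torsion, then $X_4$ is an even $\FF_4$-bundle with trivial pullback invariant, so Lemma \ref{pullbackA1.trivialinv} provides an $\Autz(X_4)$-equivariant square birational map strictly enlarging $\Autz(X_4)$; composing it with $\phi$ gives an $\Autz(X)$-equivariant square birational map strictly enlarging $\Autz(X)$, so $\Autz(X)$ is not relatively maximal. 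If $D$ is not two-torsion, then $D-D_\sigma$ (resp.\ $-(D+D_\sigma)$) is a non-trivial $2$-divisor, so by Lemma \ref{SarkisovoverA1} and Proposition \ref{Autalgebraicgroup} the group $\Autz(X_4)$ is connected of dimension $2$, with $\pi'_*$ surjective and kernel $\mathbb{G}_m$, exactly like $\Autz(X)$. The main obstacle is precisely this last equality: the Sarkisov link $\phi$ only gives a priori an inclusion $\phi\Autz(X)\phi^{-1}\subseteq\Autz(X_4)$, so I must combine the dimension count ($\dim\Autz(X)=2$ from the fiber product and $\dim\Autz(X_4)=2$ from the surjectivity of $\pi'_*$ with one-dimensional kernel) with the connectedness of $\Autz(X_4)$ to upgrade the inclusion to $\phi\Autz(X)\phi^{-1}=\Autz(X_4)$; verifying the bundle bookkeeping of the elementary transformations and the identity $m_2^*D\sim2D$ on $\Pic^0(C)$ are the remaining points requiring care.
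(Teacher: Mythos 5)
Your proposal is correct and, for most of its length, follows the same route as the paper: the identification $X\cong\PP(\O_{\AAA_1}\oplus\O_{\AAA_1}(\tau^*(D)))$, the orbit analysis via the fiber-product description of $\Autz(X)$, the exclusion of type I/II diagrams over $S$ via Lemma \ref{lem:A1_no_Sarkisov} together with the type IV diagram of Lemma \ref{SarkisovIIIandIV}, the transition-map computation identifying $X_4$, and the use of Lemma \ref{pullbackA1.trivialinv} in the two-torsion case (your explicit composition argument there is exactly what the paper leaves implicit).

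Where you genuinely diverge is the last step, the equality $\phi\Autz(X)\phi^{-1}=\Autz(X_4)$ when $D$ is not two-torsion. The paper gets the reverse inclusion geometrically: the base locus of $\phi^{-1}$ lies in an $\Autz(X_4)$-invariant section of $X_4$ and is an $\Autz(X_4)$-orbit, so $\phi^{-1}$ is itself $\Autz(X_4)$-equivariant. You instead compare dimensions: $\dim\Autz(X)=2$ from the fiber product, $\dim\Autz(X_4)=2$ from surjectivity of $\pi'_*$ with kernel $\mathbb{G}_m$, and then connectedness forces equality of the closed connected subgroup $\phi\Autz(X)\phi^{-1}$ with $\Autz(X_4)$. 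This is a valid alternative, but note that neither Lemma \ref{SarkisovoverA1} nor Proposition \ref{Autalgebraicgroup} actually states that $\ker(\pi'_*)=\mathbb{G}_m$; you need a short extra argument. It does follow from Lemma \ref{SarkisovoverA1}: since both disjoint sections of $X_4$ are $\Autz(X_4)$-invariant, any element of $\ker(\pi'_*)$ acts on each fiber fixing two points, hence is given fiberwise by a diagonal automorphism whose ratio is a regular function $\AAA_1\to\mathbb{G}_m$, necessarily constant by projectivity; so $\ker(\pi'_*)=\mathbb{G}_m$. (Alternatively, pull back along $q$ and invoke Proposition \ref{modulioverCtimesPP^1}, using that $m_2^*(D)$ is non-trivial.) With that supplement your dimension count closes; the paper's equivariance argument has the advantage of not needing the kernel at all, while yours has the advantage of being insensitive to where exactly the base locus of $\phi^{-1}$ sits.
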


\begin{proof}
	(1): As in Lemma \ref{fiberproduct}, we can choose the trivializations of $\tau p_2$ such that the transition maps equal:
	\[
	\begin{array}{cccc}\label{transition_FF0}\tag{$\clubsuit$}
		\phi_{ij}\colon & U_j \times \FF_0 & \dashrightarrow & U_i \times \FF_0 \\
		& (x,[y_0:y_1 \ ; z_0:z_1]) & \longmapsto & \left(x,\left[y_0:\lambda_{ij}(x)y_1 \ ; s_{ij}(x)\cdot \begin{pmatrix} z_0 \\ z_1 \end{pmatrix}\right]\right),
	\end{array}
	\]
	where $\lambda_{ij}\in \O_C(U_{ij})^*$ are the cocyles of the line bundle $\O_C(D)$ and $s_{ij}\in \GL_2(O_C(U_{ij}))$ are the transition matrices of $\AAA_1$. In particular, $X$ is $\AAA_1$-isomorphic to $\PP(\O_{\AAA_1}\oplus \O_{\AAA_1}(\tau^*(D)))$. 
	
	(2): The proof is similar to the one of Lemma \ref{A_1A_1max} \ref{A_1max.1}. By Lemmas \ref{geometryofruledsurface} and \ref{autofiberproduct}, we have an isomorphism $\Autz(X)\simeq \Autz(S) \times_{\Autz(C)} \Autz(\AAA_1)$ and the morphisms $\tau_*\colon \Autz(\AAA_1)\to \Autz(C)$ and $\tau'_*\colon \Autz(S)\to \Autz(C)$ are both surjective. This implies that $ (p_1)_*\colon \Autz(X)\to \Autz(S)$ is also surjective. Moreover, by Lemma \ref{geometryofruledsurface}, $S$ admits exactly two minimal sections $s_0$ and $s_1$, which are $\Autz(S)$-invariant and such that $(p_1)^{-1}(s_0)=S_0$ and $(p_1)^{-1}(s_1)=S_1$. The complement of $s_0 \cup s_1$ is an $\Autz(S)$-orbit of dimension two. Therefore, the $\Autz(X)$-orbits of minimal dimension are one-dimensional: they are precisely of the form $S_0\cap p_2^{-1}(\omega)$ and $S_1\cap p_2^{-1}(\omega)$, where $\omega$ is an $\Autz(\AAA_1)$-orbit of dimension one. 
 	Every $\Autz(X)$-Sarkisov diagram of type I or II is entirely determined by the blowup of such an $\Autz(X)$-orbit, which is a smooth curve mapped $4$-to-$1$ onto $s_0$ or $s_1$. By Lemma \ref{lem:A1_no_Sarkisov}, there is no $\Autz(X)$-equivariant Sarkisov diagram of type I and II starting from $p_1\colon X \to S$, so $\Autz(X)$ is relatively maximal with respect to $p_1$ and the pair $(X,p_1)$ is superstiff. The $\Autz(X)$-Sarkisov diagram of type IV given in the statement follows from Lemma \ref{SarkisovIIIandIV} \ref{SarkisovIIIandIV.3}.
	
	(3): Now we show that every $\Autz(X)$-equivariant Sarkisov diagram of type I and II is actually of type II and is the one given in statement. 
	As in \ref{SLA1max.1}, we can only blowup a one-dimensional $\Autz(X)$-orbit $\widetilde{\omega}$ of the form $S_0\cap p_2^{-1}(\omega)$ or $S_1\cap p_2^{-1}(\omega)$, where $\omega$ is an $\Autz(\AAA_1)$-orbit.  
	
	We work with the trivializations $\phi_{ij}$ above (cf. \ref{transition_FF0}). Under this choice of trivializations, we have that $S_0 = \{y_0=0\}$ and $S_1 = \{y_1=0\}$.
	Let $\eta\colon Y\to X$ be the blowup of an $\Autz(X)$-orbit $\widetilde{\omega}\subset S_0$. The contraction of the strict transform of $p_2^{-1}(\omega)$ gives a birational morphism $\kappa \colon Y\to X_4$. This induces an $\Autz(X)$-equivariant birational map $\psi\colon X\dashrightarrow X_4$ locally given by
	\[
	\begin{array}{cccc}
		\psi_i \colon & U_i \times \FF_0 & \dashrightarrow & U_i \times \FF_4 \\
		& (x,[y_0:y_1\ ;z_0:z_1]) & \longmapsto & (x,[y_0:\delta_i(x,z_0,z_1)y_1\ ;z_0:z_1]),
	\end{array}
	\]
	where $\delta_i\in \O_C(U_{i})[z_0,z_1]_4$ satisfies $(\mathrm{div}(\delta_i))_{|U_i \times \PP^1} = \omega_{|U_i\times \PP^1}$. The quantities 
	$$\delta_i\left(x,s_{ij}(x)\cdot 
	\begin{pmatrix}
		z_0\\z_1
	\end{pmatrix}\right) \delta_j(x,z_0,z_1)^{-1}$$ are the cocyles of the line bundle $\O_{\AAA_1}(\omega)$, which is isomorphic to $\O_{\AAA_1}(4\sigma - \tau^*(D_\sigma))$ by Lemma \ref{A_1quotient}. Computing $\psi_i\phi_{ij}\psi_j^{-1}$, that are the transition maps of $\tau\pi'\colon X_4 \to C$, we get that $X_4$ is $\AAA_1$-isomorphic to $\PP(\O_{\AAA_1}\oplus \O_{\AAA_1}(4\sigma +\tau^*(D-D_\sigma)))$. 
	
	Since $m_2^*(D_\sigma)\sim 4D_0$ by Lemma \ref{A_1quotient} \ref{A_1quotient.4}, we get that $m_2^*(D- D_\sigma) +4D_0= m_2^*(D) $. The morphism $m_2^*\colon \Pic^0(C)\to \Pic^0(C)$ is the multiplication by two on the elliptic curve $\Pic^0(C)$. So, if $D\in \Pic^0(C)$ is a two-torsion divisor, it follows by Lemma \ref{pullbackA1.trivialinv} that $\Autz(X)$ is not relatively maximal. 
	Else, $D\in \Pic^0(C)$ is not two-torsion, i.e., $D-D_\sigma$ is a non-trivial $2$-divisor of degree $-2$. Then by Lemma \ref{SarkisovoverA1}, $\pi'\colon X_4\to \AAA_1$ is decomposable. The base locus of $\psi^{-1}$ is contained in the section $\{y_1=0\}$, which is $\Autz(X_4)$-invariant, and is an $\Autz(X_4)$-orbit. Thus $\psi \Autz(X) \psi^{-1} = \Autz(X_4)$.

	Similarly, if we blowup an $\Autz(X)$-orbit $\widetilde{\omega}\subset S_1$ and contract the strict transform of $p_2^{-1}(\omega)$, we get an $\Autz(X)$-equivariant birational map $\psi\colon X\dashrightarrow X_{-4}$ locally given by
	\[
	\begin{array}{cccc}
		\psi_i \colon & U_i \times \FF_0 & \dashrightarrow & U_i \times \FF_{-4} \\
		& (x,[y_0:y_1\ ;z_0:z_1]) & \longmapsto & (x,[\delta_i(x,z_0,z_1)y_0:y_1\ ;z_0:z_1]).
	\end{array}
	\]
	Therefore, we get that $X_{-4}$ is $\AAA_1$-isomorphic to $\PP(\O_{\AAA_1}\oplus \O_{\AAA_1}(-4\sigma + \tau^*(D+D_\sigma)))$, which is $\AAA_1$-isomorphic to $\PP(\O_{\AAA_1}\oplus \O_{\AAA_1}(4\sigma - \tau^*(D+D_\sigma)))$. Mutatis mutandis, the rest of the proof works the same. We get that $m_2^*(-D-D_\sigma) + 4D_0 = -m_2^*(D)$, $\Autz(X)$ is not relatively maximal if $D$ is two-torsion, else $\psi \Autz(X) \psi^{-1} = \Autz(X_{-4})$. 
\end{proof}

\begin{proposition}\label{SLA1max}
	Assumptions as in Lemma $\ref{lem:SLA1max}$ and assume moreover than $D$ is not two-torsion. Then the following hold:
	\begin{enumerate}
		\item There exists an $\Autz(X)$-equivariant square birational map $X\dashrightarrow X'$, where $X' $ is a $\PP^1$-bundle over $\AAA_1$,
		if and only if there exists $n\in \mathbb{Z}$ such that
		\[
		X'\cong \PP(\O_{\AAA_1}\oplus \O_{\AAA_1}(4n\sigma + \tau^*(D-nD_\sigma))).
		\]
		Moreover, this square birational map conjugates $\Autz(X)$ and $\Autz(X')$.
		\item $\Autz(X)$ is relatively maximal with respect to $p_2\colon X\to \AAA_1$, but the pair $(X,p_2)$ is not stiff. 
	\end{enumerate}
\end{proposition}

\begin{proof}
	 Consider the $\PP^1$-bundle $p_2\colon X \to \AAA_1$. By Lemma \ref{lem:SLA1max}, the only $\Autz(X)$-equivariant square birational map starting from $X$ are of the form $\phi\colon X\dashrightarrow X_4$, where 
	 \[
	 X_4=\PP(\O_{\AAA_1}\oplus \O_{\AAA_1}(\pm 4\sigma + \tau^*(D \mp D_\sigma))),
	 \]
	 and $\phi \Autz(X)\phi^{-1} = \Autz(X_4)$. 
	
	Assume first that the base locus of $\phi$ is $\widetilde{\omega} \subset S_0$, then $X_4=\PP(\O_{\AAA_1}\oplus \O_{\AAA_1}(4\sigma + \tau^*(D-D_\sigma)))$. Since $D$ is not two-torsion, $D-D_\sigma$ is a non-trivial $2$-divisor of degree $-2$. By Lemma \ref{SarkisovoverA1}, we have exactly two choices of $\Autz(X_4)$-equivariant square birational maps: the second is the inverse of $\phi$, and the first one is an $\Autz(X_4)$-equivariant Sarkisov link $\phi_4\colon X_4\dashrightarrow X_8$, where $X_8=\PP(\O_{\AAA_1}\oplus \O_{\AAA_1}(8\sigma + \tau^*(D-2D_\sigma)))$, and $\phi_4 \Autz(X_4)\phi_4^{-1} = \Autz(X_8)$. We obtain again that $D-2D_\sigma$ is a non-trivial $2$-divisor of degree $-4$, so we can iterate the process. By induction, for each $n>0$, we obtain a new $\Autz(X_{4n})$-equivariant Sarkisov link $\phi_{4n}\colon X_{4n} \dashrightarrow X_{4(n+1)}$, where $X_{4n}=\PP(\O_{\AAA_1}\oplus \O_{\AAA_1}(4n\sigma + \tau^*(D -n D_\sigma)))$, and such that $\phi_{4n} \Autz(X_{4n})\phi_{4n}^{-1} = \Autz(X_{4(n+1)})$. 
	The case $n<0$ is identical, instead we choose $\widetilde{\omega}\subset S_1$. 
	
	At each step, those are all the possible $\Autz(X)$-equivariant square birational maps, so this proves (1). This also implies that $\Autz(X)$ is relatively maximal with respect to $p_2$ but that the pair $(X,p_2)$ is not stiff. 
\end{proof}

We now gather the results of this section to prove Proposition \ref{main_prop:A_1}.

\begin{proof}[Proof of Proposition \ref{main_prop:A_1}]
	Assume that $\Autz(X)$ is relatively maximal and $\tau \pi$ is an $\FF_b$-bundle with $b>0$. By Lemmas \ref{pullbackA1} and \ref{pullbackA1.trivialinv}, we have that $m_2^*(D)+bD_0$ is a non-trivial divisor of degree zero. Equivalently, $D$ is non-trivial 2-divisor (see Definition \ref{def:(m_2^*,b)}) and $b>0$ is an even integer. Then by Lemma \ref{SarkisovoverA1},
	we have
	\[
	X\cong \PP(\O_{\AAA_1} \oplus \O_{\AAA_1}(b\sigma + \tau^*(D))).
	\]
	Applying Lemma \ref{SarkisovoverA1} inductively, we obtain an $\Autz(X)$-equivariant square birational map $X\dashrightarrow X'$, where $\pi'\colon X\to \AAA_1$ is an $\FF_0$-bundle if $b$ is a multiple of $4$, and an $\FF_2$-bundle otherwise. Two cases arise: 
	\begin{enumerate}
		\item The case where $\tau\pi$ is an $\FF_0$-bundle is studied in Propositions \ref{A_0A_1notmax} and \ref{A_1A_1max}, Lemma \ref{lem:SLA1max} and Proposition \ref{SLA1max}. This gives the first two cases of Proposition \ref{main_prop:A_1}.
		\item If $\tau\pi$ is an $\FF_2$-bundle, the result follows from Proposition \ref{4n+2}. This gives the last case of Proposition \ref{main_prop:A_1}.
	\end{enumerate}
\end{proof}

\section{Automorphisms of $\PP^1$-bundles over $\AAA_0$}\label{SectionAAA0}

In this section, $C$ is an elliptic curve, $\tau\colon \AAA_0\to C$ is the unique indecomposable geometrically ruled surface admitting a unique section $\sigma$ of self-intersection zero, which is of minimal self-intersection, and $\O_{\AAA_0}(\sigma) \simeq \O_{\AAA_0}(1)$ (see e.g.\ \cite[V. Proposition 2.9]{Hartshorne}). We will show that if $X$ is a $\PP^1$-bundle over $\AAA_0$ such that $\Autz(X)$ is relatively maximal, then $\Autz(X)$ is conjugate to the automorphism group of an $\FF_0$-bundle. Our main proposition is:

\medskip

\begin{proposition}\label{main_prop:A0}
	Let $\pi\colon X\to \AAA_0$ be a $\PP^1$-bundle such that $\tau\pi$ is an $\FF_b$-bundle for some $b\in \mathbb{Z}$. Then $\Autz(X)$ is relatively maximal if and only if one of the following case occurs:
	\begin{enumerate}
		\item Case $b=0$. Then $X$ is isomorphic to one of the following $\PP^1$-bundles:
		\[
		\AAA_0 \times \PP^1,~\AAA_0 \times_C \AAA_1,~ \AAA_0 \times_C \PP(\O_C\oplus \O_C(D)),
		\]
		where $D\in \Pic^0(C)$ is non-trivial. The pair $(X,\pi)$ is superstiff in the first two cases, and not stiff in the last one.
		\item\label{main_prop:A0.2} Case $b\neq 0$. Then there exists $D\in \Pic^0(C)$ non-trivial such that
		\[
		X \cong \PP(\O_{\AAA_0}\oplus \O_{\AAA_0}(b\sigma + \tau^*(D) )),
		\]
		and there exists an $\Autz(X)$-equivariant square birational map $X\dashrightarrow \PP(\O_{\AAA_0} \oplus \O_{\AAA_0}(\tau^*(D)))$. The latter is isomorphic to the fiber product $\AAA_0 \times_C \PP(\O_C \oplus \O_C(D))$.
	\end{enumerate}
	Let $\pi' \colon X'\to T$ be a $\PP^1$-bundle over a geometrically ruled surface $T$.  Moreover, there exists an $\Autz(\AAA_0 \times_C \PP(\O_C\oplus \O_C(D)))$-equivariant square birational map
	\[
	\AAA_0 \times_C \PP(\O_C\oplus \O_C(D)) \dashrightarrow X'
	\]
	if and only if $T \cong \AAA_0$, and either $X' \cong \AAA_0 \times_C \PP(\O_C\oplus \O_C(D))$ or $X'\cong \PP(\O_{\AAA_0}\oplus \O_{\AAA_0}(b\sigma + \tau^*(D) ))$ as in case \ref{main_prop:A0.2}.
\end{proposition}

\medskip

If $b>0$, by virtue of Proposition $\ref{degnonnuldec}$, it suffices to consider the $\PP^1$-bundles with invariants $(\AAA_0,b,D)$, where $\deg(D)=0$. In what follows, we distinguish between the case where $D$ is trivial and the case where it is non-trivial of degree zero.

\subsection{$\PP^1$-bundles with invariants $(\AAA_0,b,0)$ with $b>0$}\label{section:(A_0,b,0)}

~\bigskip

Using explicit transition maps, we first show there exists a unique indecomposable $\PP^1$-bundle with invariants $(\AAA_0,b,0)$ with $b>0$.

\begin{lemma}\label{A0_trivial_representative}
	Let $b>0$, $p,q\in C$ distinct, and $\pi\colon X\to \AAA_0$ be a $\PP^1$-bundle with invariants $(\AAA_0,b,0)$. 
	Then the $\FF_b$-bundle $\tau\pi\colon X\to C$ is trivial over $C\setminus \{p\}$ and $C\setminus \{q\}$ and we can choose the transition maps as
	\[
	\begin{array}{cccc}
		\phi \colon & (C\setminus \{q\}) \times \FF_b & \dashrightarrow & (C\setminus \{p\}) \times \FF_b \\
		& (x,[y_0:y_1 \ ; z_0:z_1]) & \longmapsto & \left(x,\left[y_0:y_1 + a_0 \alpha(x)^{-1}z_1^{b}y_0\ ; z_0: \alpha(x)^{-1}z_0+ z_1\right]\right),
	\end{array}
	\]
	where $\alpha\in \kk(C)^*$ has a zero exactly of order one at $p$ and $q$, and $a_0\in \{0,1\}$. If $a_0=0$, then $X$ is $ \AAA_0$-isomorphic to $\PP(\O_{\AAA_0}\oplus \O_{\AAA_0}(b\sigma))$, else $a_0=1$ and $\pi$ is the unique indecomposable $\PP^1$-bundle with invariants $(\AAA_0,b,0)$.
\end{lemma}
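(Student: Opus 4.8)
The plan is to present this as a Čech-cocycle computation, exactly parallel to the proof of Lemma~\ref{representativedeg0}\ref{representativedeg0.1}, but using the nontrivial transition matrix of $\AAA_0$. First I would fix, via Lemma~\ref{rationalfunction}\ref{rationalfunction.1}, a function with pole divisor $p+q$ and set $\alpha$ to be its inverse, so that $\mathrm{Z}(\alpha)=p+q$ consists of two simple zeroes. Then $\alpha^{-1}$ is regular and invertible on the affine overlap $C\setminus\{p,q\}$, and by Lemma~\ref{transitionAtiyah}\ref{transitionAtiyah.2} the matrix $s=\bigl(\begin{smallmatrix}1&0\\ \alpha^{-1}&1\end{smallmatrix}\bigr)$ is a transition matrix for $\AAA_0$ with respect to the two-element cover $\{U,V\}=\{C\setminus\{p\},C\setminus\{q\}\}$, with $\sigma=\{z_0=0\}$. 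I would check this genuinely presents $\AAA_0$ and not $C\times\PP^1$: the Čech class of $\alpha^{-1}$ in $H^1(C,\O_C)$ is nonzero, since a coboundary $g-h$ with $g\in\O_C(U),\,h\in\O_C(V)$ realising $\alpha^{-1}$ would force a rational function with a single simple pole, which does not exist on an elliptic curve. Since the invariants are $(\AAA_0,b,0)$, the divisor $D$ is trivial, so by Lemma~\ref{FFb-bundle}\ref{FFb-bundle1},\ref{canonicalextension} the $\FF_b$-bundle $\tau\pi$ is trivial over $U$ and over $V$ (both affine), $\lambda$ is the cocycle of $\O_C(D)=\O_C$, and after absorbing it into the trivialisations (Lemma~\ref{Sisomorph}) the transition map takes the stated shape with $\lambda=1$ and a single $p\in\O_C(U\cap V)[z_0,z_1]_b$.

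Next I would reduce $p$. By Lemma~\ref{Sisomorph}\ref{conditioniso} with $\lambda=\lambda'=1$, the forced relation $\lambda'=\mu_U\mu_V^{-1}\lambda$ gives $\mu_U=\mu_V=:\mu\in\kk^*$, and the $S$-isomorphism class of $X$ is unchanged under
\[
p\ \longmapsto\ \mu\,p-\mu\,q_V(z_0,z_1)+q_U\!\left(x,s\cdot\begin{pmatrix}z_0\\z_1\end{pmatrix}\right),\qquad q_U\in\O_C(U)[z_0,z_1]_b,\ q_V\in\O_C(V)[z_0,z_1]_b.
\]
The key observation is that this is precisely the recipe for Čech $1$-cochains modulo coboundaries (up to the overall scalar $\mu$) for the sheaf $\mathcal{V}:=\tau_*\O_{\AAA_0}(b\sigma)=\mathrm{Sym}^b(\E_0)$ on $C$ with respect to the affine cover $\{U,V\}$: the twist by $s$ is exactly how a section in the $U$-chart is re-expressed in the $V$-chart. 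Thus the equivalence classes of $p$ under the relation with $\mu=1$ are the elements of $H^1(C,\mathrm{Sym}^b\E_0)$, and via Lemma~\ref{Brosiusrem} this class is the extension class of $\E$ in $\mathrm{Ext}^1_{\O_S}(\O_S,\O_S(b\sigma))\cong H^1(\AAA_0,\O(b\sigma))\cong H^1(C,\mathrm{Sym}^b\E_0)$, where the last isomorphism uses $R^1\tau_*\O(b\sigma)=0$ and Leray.

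To finish I would compute this cohomology group. By Atiyah \cite{Atiyah}, $\mathrm{Sym}^b\E_0$ is the indecomposable rank-$(b+1)$, degree-zero bundle with $\h^0=1$, and it is self-dual; Serre duality on $C$ (where $\omega_C\simeq\O_C$) then gives $\h^1(C,\mathrm{Sym}^b\E_0)=\h^0(C,\mathrm{Sym}^b\E_0)=1$. Hence $H^1$ is one-dimensional, so up to the scalar $\mu$ there are exactly two classes. The cocycle $\alpha^{-1}z_1^b$ is not a coboundary (its $z_1^b$-coefficient $\alpha^{-1}$ has simple poles at both $p$ and $q$, which by the same no-single-simple-pole argument cannot be written as $g-h$), so it represents a generator. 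Therefore every $p$ is equivalent, after scaling, to $a_0\,\alpha^{-1}z_1^b$ with $a_0\in\{0,1\}$: the class $0$ corresponds to a coboundary, whence $p$ may be taken to be $0$ and $X\cong\PP(\O_{\AAA_0}\oplus\O_{\AAA_0}(b\sigma))$ by Lemma~\ref{FFb-bundle}\ref{FFb-decomposable}; the nonzero class gives, up to $\AAA_0$-isomorphism, the unique indecomposable $\PP^1$-bundle, with transition map as stated. The main obstacle I anticipate is purely the cohomological input: cleanly identifying the reduction relation with Čech coboundaries for $\mathrm{Sym}^b\E_0$ and invoking Atiyah's structure theory to pin down $\h^1=1$; the explicit bookkeeping of $\alpha$, the cover, and the non-coboundary check is then routine.
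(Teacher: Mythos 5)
Your proposal is mathematically sound, but it follows a genuinely different route from the paper. The paper's proof is an explicit, elementary Čech reduction: starting from an arbitrary trivializing cover, it iterates over the coefficients $c_{ij,k}$ of $p_{ij}$, killing poles away from $p,q$ with functions supplied by Lemma~\ref{rationalfunction}, reducing to $c_{ij,k}=a_k\alpha^{-1}+b_k$ via $\h^0(C,p+q)=2$, and then — this is the combinatorial heart — exploiting the binomial cross-terms $\binom{b-s}{t}c_{i,s}\alpha^{-t}z_0^{s+t}z_1^{b-(s+t)}$ produced by the unipotent transition matrix of $\AAA_0$ to cancel $a_k\alpha^{-1}$ for all $k\geq 1$ by a constant choice at level $k-1$; triviality over $C\setminus\{p\}$ and $C\setminus\{q\}$ is obtained along the way. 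You instead identify the reduction relation of Lemma~\ref{Sisomorph} (at $\mu=1$) with Čech coboundaries for $\tau_*\O_{\AAA_0}(b\sigma)\cong\mathrm{Sym}^b\E_0$ on the two-element affine cover, so that isomorphism classes become $H^1(C,\mathrm{Sym}^b\E_0)/\kk^*$, and you compute $\h^1=1$ from Atiyah's $\mathrm{Sym}^b\E_0\cong F_{b+1}$ together with Riemann--Roch/Serre duality; the explicit cocycle $\alpha^{-1}z_1^b$ is then checked to be a non-coboundary (your pole argument is correct: a coboundary would produce a rational function on $C$ with a single simple pole). Your approach buys conceptual clarity — it explains \emph{why} there are exactly two classes — at the cost of heavier input (Leray, Atiyah's tensor/symmetric-power structure theory); the paper's approach is longer but self-contained and constructive, and generalizes to the situations in the paper where the relevant $H^1$ is not one-dimensional.

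One step in your write-up is incorrectly justified and needs patching: the claim that $\tau\pi$ is trivial over $U=C\setminus\{p\}$ and $V=C\setminus\{q\}$ does not follow from Lemma~\ref{FFb-bundle}, which only provides \emph{some} trivializing open cover with no control on the size of its members. Within your cohomological framework the correct argument is: for any affine $W\subset C$ one has $H^1(\tau^{-1}(W),\O_S(b\sigma))\cong H^1(W,\mathrm{Sym}^b\E_0)=0$ (Leray, using $R^1\tau_*\O_S(b\sigma)=0$, plus vanishing of higher cohomology of quasi-coherent sheaves on affines), so the canonical extension of Lemma~\ref{FFb-bundle}~\ref{canonicalextension} splits over $\tau^{-1}(W)$; likewise $\E_0$ splits over $W$, so $\AAA_0|_W\cong W\times\PP^1$ with $\O_{\AAA_0}(\sigma)|_{\tau^{-1}(W)}$ identified with the pullback of $\O_{\PP^1}(1)$, whence $(\tau\pi)^{-1}(W)\cong W\times\FF_b$. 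With this inserted, your proof is complete.
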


\begin{proof}
	Let $\alpha\in \kk(C)^*$ such that $\mathrm{P}(\alpha^{-1})=p+q$, where $\mathrm{P}$ denotes the pole divisor. By Lemma \ref{transitionAtiyah} \ref{transitionAtiyah.2}, we can choose trivializations of $\tau$ such that $p\in U_j$, the section $\sigma$ is the zero section $\{z_0=0\}$ and the transition maps of $\tau$ equal
	\[
	\begin{array}{ccc}
		U_j\times \PP^1 & \dashrightarrow & U_i \times \PP^1 \\
		(x,[z_0:z_1]) & \longmapsto & (x,[z_0:\alpha^{-1}(x)z_0+z_1]).
	\end{array}
	\]
	Taking the trivializing open subsets smaller if necessary, we can assume that $p\notin U_i$ for $i\neq j$ and $q\notin U_j$, and these opens trivialize also the $\FF_b$-bundle $\tau\pi \colon X\to C$. We write the transition maps of $\tau\pi$ as
	\[
	\begin{array}{cccc}
		& U_j\times \FF_b & \dashrightarrow & U_i\times \FF_b \\
		&(x,[y_0:y_1 \ ;z_0:z_1]) & \longmapsto &  \left(x,\left[y_0:y_1 +p_{ij}(x,z_0,z_1)y_0 \ ;z_0:\alpha^{-1}(x)z_0+ z_1\right]\right),
	\end{array}
	\]
	where $p_{ij}\in \O_C(U_{ij})[z_0,z_1]_b$. Using Lemma \ref{Sisomorph} with $\mu_i = \mu_j =1$ and
	\[
	\begin{array}{ccccc}
		& p_{ij}(x,z_0,z_1)& = & \sum_{k=0}^{b} c_{ij,k}(x) z_0^kz_1^{b-k}\in \O_C(U_{ij})[z_0,z_1]_b,& \\
		& q_j(x,z_0,z_1) & = & \sum_{k=0}^{b} c_{j,k}(x) z_0^kz_1^{b-k}\in \O_C(U_j)[z_0,z_1]_b, &\\
		& q_i(x,z_0,z_1) & = & \sum_{k=0}^{b} c_{i,k}(x) z_0^kz_1^{b-k}\in  \O_C(U_i)[z_0,z_1]_b,&
	\end{array}
	\]
	we can replace $p_{ij}(x,z_0,z_1)$ by 
	\begin{equation}\label{A0_trivial_=}
			p'_{ij}(x,z_0,z_1) = \sum_{k=0}^{b} c_{ij,k}(x) z_0^kz_1^{b-k} - \sum_{k=0}^{b} c_{j,k}(x) z_0^kz_1^{b-k} + \sum_{\substack{s,t\in \{0,\cdots, b\}, \\ s+t \leq b}} \binom{b-s}{t}c_{i,s}(x) \alpha^{-t}(x)z_0^{s+t}z_1^{b-(s+t)},\tag{$\star$}
	\end{equation}
	without changing the $\AAA_0$-isomorphism class of $X$. 
	
	For every integer $n\geq 2$ and $z\in C$, there exists $f_{n,z}\in \O_C(C\setminus \{z\})$ such that $\mathrm{P}(f_{n,z}) = nz$; if moreover $z\neq p$ (resp.\ $z\neq q$), there exists $f_{1,\{z,p\}} \in \O_C(C\setminus \{z,p\})$ (resp.\ $f_{1,\{z,q\}}\in \O_C(C\setminus \{z,q\})$) such that $\mathrm{P}(f_{1,\{z,p\}}) = z+p$ (resp.\ $\mathrm{P}(f_{1,\{z,q\}}) = z+q$) (see Lemma \ref{rationalfunction}).
	
	We proceed by iteration over $k\geq 0$. For each $k$, we proceed as follows:
	\begin{enumerate}
		\item[(i)] Assume that $c_{ij,k}$ has a pole of order $n\geq 1$ at a point $z\neq p,q$. If $z \in U_j\setminus U_i $, we can use the equality (\ref{A0_trivial_=}) with $c_{i,k} = d_k f_{1,\{z,p\}}^n$, where $d_k\in \kk^*$, we get new coefficients $c_{ij,l}$ for $l\geq k$ but we do not change the coefficients $c_{ij,l}$ for $l<k$. Moreover, choosing a suitable $d_k\in \kk^*$, we also decrease the order the pole of $c_{ij,k}$ at $z$. If $z \in U_i \setminus U_j$, we choose instead $c_{j,k} =  d_k f_{1,\{z,q\}}^n$ with a suitable $d_k\in \kk^*$ in (\ref{A0_trivial_=}) to decrease the order of the pole at $z$. Repeating this process and after finitely many steps, we can assume $c_{ij,k}$ is regular at $z$ and its poles are only at $p$ and $q$. 
		\item[(ii)] Now assume that $c_{ij,k}$ has a pole of order $n\geq 2$ at $p\in U_j$ (resp.\ $q\in U_i$ for some $i$), then we can also decrease its order by taking $c_{i,k} = d_k f_{n,p} \in \O_C(U_i)$ (resp.\ $c_{j,k} = d_k f_{n,q}\in \O_C(U_j)$) with a suitable $d_k\in \kk^*$. Here as well, we get new coefficients $c_{ij,l}$ for $l\geq k$ but we do not change the coefficients $c_{ij,l}$ for $l<k$. After finitely many steps, we can now assume that $c_{ij,k}$ has a pole of order at most one at $p$ and $q$ and is regular on $C\setminus \{p,q\}$. Since $h^0(C,p+q) = 2$, this implies that $c_{ij,k} = a_k \alpha^{-1} +b_k$ for some $a_k,b_k\in \kk$. Using once more (\ref{A0_trivial_=}) with $c_{j,k} = b_k$, we can assume now that $c_{ij,k} = a_k \alpha^{-1}$. 
		\item[(iii)] If $k\geq 1$ and $a_k\neq 0$, we use once more (\ref{A0_trivial_=}) with $c_{j,k-1} = c_{i,k-1} = -\frac{a_k}{b-(k-1)} \in \kk^*$; this replaces $c_{ij,k}$ by zero and we do not change $c_{ij,l}$ for $l<k$ and we get new coefficients $c_{ij,l}$ for $l>k$. 
	\end{enumerate}

	After the iteration, we can assume now that $c_{ij,0} = a_0 \alpha^{-1}$ for some $a_0\in \kk$ and $c_{ij,k} = 0$ for $k>0$. Finally, if $a_0\neq 0$, we use once more Lemma \ref{Sisomorph} with $\mu_i = \mu_j = 1/a_0$ and $q_i = q_j =0$, this replaces $a_0$ by $1$ without changing the $\AAA_0$-isomorphism class of $X$. Since $\alpha^{-1}$ has poles exactly at $p\in U_j$ and $q\in U_i$, this implies that $\tau\pi\colon X\to C$ is trivial over $U_j = C\setminus \{q\}$ and $U_i = C\setminus \{p\}$, and we can write the transition maps as stated. If $a_0 = 0$, then $\pi$ is decomposable and $X$ is $\AAA_0$-isomorphic to $\PP(\O_{\AAA_0}\oplus \O_{\AAA_0}(b\sigma))$ by Lemma \ref{FFb-bundle} \ref{FFb-decomposable}. In this case, the restriction of $\pi\colon X\to \AAA_0$ on $\sigma = \{z_0=0\}$ gives a $\PP^1$-bundle which is trivial. Else, $a_0=1$, the restriction of $\pi$ on $\sigma=\{z_0=0\}$ is isomorphic to $\AAA_0\to \sigma$ and a fortiori, the $\PP^1$-bundle $\pi\colon X\to \AAA_0$ is indecomposable.
\end{proof}

By the previous lemma, a $\PP^1$-bundle $X\to \AAA_0$ with invariants $(\AAA_0,b,0)$ is either $\PP(\O_{\AAA_0}\oplus \O_{\AAA_0}(b\sigma))$, or an indecomposable $\PP^1$-bundle which is uniquely determined by those invariants. In the following, we show that in both cases, $\Autz(X)$ is not relatively maximal.

\begin{lemma}\label{equivarianceoverA0trivial}
	Let $b>0$ and $X= \PP(\O_{\AAA_0}\oplus \O_{\AAA_0}(b\sigma))$. 
	Then the following hold:
	\begin{enumerate}
		\item\label{equivarianceoverA0trivial.1} There exists an $\Autz(X)$-invariant curve $D\subset X$, whose blowup followed by the contraction of the strict transform of $\pi^{-1}(\sigma)$ yields an $\Autz(X)$-equivariant square birational map $$X \dashrightarrow \PP(\O_{\AAA_0}\oplus \O_{\AAA_0}((b-1)\sigma)).$$
		\item\label{equivarianceoverA0trivial.2} $\Autz(X)$ is not relatively maximal and is conjugate to a proper connected algebraic subgroup of $\Autz(\PP^1 \times \AAA_0)$.
	\end{enumerate}
\end{lemma}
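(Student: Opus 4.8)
The plan is to prove both parts by producing a single $\Autz(X)$-equivariant elementary transformation over $\AAA_0$ that lowers the fiber invariant $b$ by one, and then iterating it down to the trivial bundle $\AAA_0\times\PP^1$. For part (1), I would first record the group-theoretic input: by Proposition \ref{imagepi*} \ref{imagepi*.2} the morphism $\pi_*\colon\Autz(X)\to\Autz(\AAA_0)$ is surjective, and by Lemma \ref{geometryofruledsurface} \ref{geometryofruledsurface.3} the section $\sigma$ is the unique $\Autz(\AAA_0)$-invariant section of $\tau$; hence the surface $\pi^{-1}(\sigma)$ is $\Autz(X)$-invariant. Writing $\E=\O_{\AAA_0}\oplus\O_{\AAA_0}(b\sigma)$, let $S_b=\{y_0=0\}$ and $S_0=\{y_1=0\}$ be the two disjoint sections of $\pi$ corresponding to the subbundles $\O_{\AAA_0}(b\sigma)$ and $\O_{\AAA_0}$ (see Lemma \ref{FFb-bundle}). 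The curve I want to blow up is $D:=S_0\cap\pi^{-1}(\sigma)$, the positive ($b$-)section of the Hirzebruch fibers over $\sigma$. The subtle point — and the one that makes the lemma work — is that although $S_0$ is not globally $\Autz(X)$-invariant (the unipotent shears in $\ker\pi_*$ move it), its trace $D$ on $\pi^{-1}(\sigma)$ is invariant: these shears are parametrised by $H^0(\AAA_0,\O_{\AAA_0}(b\sigma))$, which by Atiyah's classification is one-dimensional and generated by the section $s_\sigma^{\otimes b}$ vanishing along $\sigma$. Consequently the shears restrict to the identity on $\pi^{-1}(\sigma)$, and the residual action there factors through $\mathbb{G}_m\times\Autz(C)$, which fixes both $D$ and $S_b\cap\pi^{-1}(\sigma)$.

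Next I would perform the elementary transformation: blow up $D$ and contract the strict transform of $\pi^{-1}(\sigma)$. Since both centers are $\Autz(X)$-invariant, the resulting birational map $X\dashrightarrow X'$ is $\Autz(X)$-equivariant and square over $\AAA_0$. To identify $X'$ I would run the local computation with the transition maps of Lemma \ref{FFb-bundle}, exactly as in the proof of Proposition \ref{equivarianttodecomposable}; equivalently, on the level of vector bundles this is the elementary modification $\E'=\ker(\E\to\O_{\AAA_0}(b\sigma)|_\sigma)=\O_{\AAA_0}\oplus\O_{\AAA_0}((b-1)\sigma)$. Because $D$ meets each Hirzebruch fiber $\FF_b$ of $\tau\pi$ on its positive $(b)$-section, the transformation lowers the fiber type to $\FF_{b-1}$ and introduces no $\tau^*$-twist, so $X'\cong\PP(\O_{\AAA_0}\oplus\O_{\AAA_0}((b-1)\sigma))$, proving (1).

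For part (2) I would iterate (1) a total of $b$ times to obtain an $\Autz(X)$-equivariant square birational map $\Phi\colon X\dashrightarrow\PP(\O_{\AAA_0}\oplus\O_{\AAA_0})=\AAA_0\times\PP^1$. It then remains a dimension count. On one hand $\pi_*$ is surjective with $\dim\Autz(\AAA_0)=2$ (Lemma \ref{geometryofruledsurface} \ref{geometryofruledsurface.3}) and $\ker\pi_*=\mathrm{Aut}^\circ_{\AAA_0}(X)\cong\mathbb{G}_m\ltimes\mathbb{G}_a$ has dimension $2$ — here I use $\mathrm{Hom}(\O_{\AAA_0}(b\sigma),\O_{\AAA_0})=H^0(\O_{\AAA_0}(-b\sigma))=0$ and $\mathrm{Hom}(\O_{\AAA_0},\O_{\AAA_0}(b\sigma))\cong\kk$ — so $\dim\Autz(X)=4$. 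On the other hand $\Autz(\AAA_0\times\PP^1)\cong\Autz(\AAA_0)\times\PGL_2(\kk)$ by Lemma \ref{autofiberproduct} \ref{autofiberproduct.1}, of dimension $5$. Since conjugation preserves dimension, $\Phi\Autz(X)\Phi^{-1}$ is a $4$-dimensional subgroup of the $5$-dimensional group $\Autz(\AAA_0\times\PP^1)$, hence a proper one; this shows both that $\Autz(X)$ is not relatively maximal and that it is conjugate to a proper connected algebraic subgroup of $\Autz(\PP^1\times\AAA_0)$.

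The main obstacle is the invariance of $D$ in part (1): a priori one expects to blow up a globally invariant section, but the only such section is $S_b$, whose blow-up would \emph{raise} $b$ rather than lower it. The resolution rests on the special geometry of $\AAA_0$ — the uniqueness and the vanishing on $\sigma$ of the generator of $H^0(\O_{\AAA_0}(b\sigma))$ — which forces the unipotent part of $\ker\pi_*$ to act trivially on $\pi^{-1}(\sigma)$ and thereby renders $D=S_0\cap\pi^{-1}(\sigma)$ invariant. Getting the direction of the elementary transformation and the absence of a $\tau^*$-twist correct is the second delicate point, which I would settle by the explicit local transition maps.
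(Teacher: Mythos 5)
Your proof is correct, and in both halves it takes a genuinely different route from the paper's. In part (1) the paper proves the invariance of $D=S_0\cap\pi^{-1}(\sigma)$ by explicit computation: it writes an arbitrary element of $\ker(\pi_*)$ in the two charts of Lemma \ref{A0_trivial_representative}, compares the coefficients of $z_1^b$ and $z_0z_1^{b-1}$ across the transition map, and uses the fact that no rational function on an elliptic curve has a single simple pole to force the coefficient $a_{p,0}$ to vanish. You obtain the same key fact structurally: the unipotent part of $\ker(\pi_*)$ is $\mathrm{Hom}(\O_{\AAA_0},\O_{\AAA_0}(b\sigma))=H^0(C,\mathrm{Sym}^b\E_0)$, which by Atiyah's classification (in characteristic zero $\mathrm{Sym}^b\E_0$ is the unique indecomposable rank-$(b{+}1)$ degree-zero bundle with a section, so $h^0=1$) is generated by $s_\sigma^{\otimes b}$, which vanishes along $\sigma$; hence the shears act trivially on $\pi^{-1}(\sigma)$. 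The two arguments are equivalent --- the paper's cocycle computation in effect exhibits the shear space as $\kk\cdot z_0^b$, confirming your cohomological description --- yours being more conceptual, the paper's self-contained in its transition-map formalism. The one step you assert rather than prove is that the residual action on $\pi^{-1}(\sigma)\cong C\times\PP^1$ factors through $\mathbb{G}_m\times\Autz(C)$; to close it, note that the image of the normal subgroup $\ker(\pi_*)$ in $\Aut(\pi^{-1}(\sigma))$ is a normal $\mathbb{G}_m$ whose fixed locus is the union of the two disjoint sections, so the connected group $\Autz(X)$ permutes and therefore preserves each of them (the paper is equally terse at the corresponding point, so this is a matter of exposition, not a gap). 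In part (2) the paper concludes properness of the conjugated subgroup from superstiffness of the pair $(\PP^1\times\AAA_0,p_2)$ (Lemma \ref{FF0viaCxPP1}, which rests on the equivariant Sarkisov program), whereas your dimension count $\dim\Autz(X)=\dim\ker(\pi_*)+\dim\Autz(\AAA_0)=2+2=4<5=\dim\Autz(\PP^1\times\AAA_0)$ is more elementary, avoids the Sarkisov machinery entirely, and makes the codimension explicit; both are valid, since a single equivariant square birational map whose conjugate lands in a proper subgroup suffices to contradict relative maximality.
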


\begin{proof}
	(1): Let $S\subset X$ be the section of $\pi\colon X\to \AAA_0$ corresponding to the line subbundle $\O_{\AAA_0}\subset \O_{\AAA_0}\oplus \O_{\AAA_0}(b\sigma)$ and $D=\pi^{-1}(\sigma)\cap S$. We choose trivializations of $\tau\pi$ such that the transition maps are:
	\[
	\begin{array}{cccc}
		\phi \colon & (C\setminus \{q\}) \times \FF_b & \dashrightarrow & (C\setminus \{p\}) \times \FF_b \\
		& (x,[y_0:y_1 \ ; z_0:z_1]) & \longmapsto & \left(x,\left[y_0:y_1 \ ; z_0: \alpha(x)^{-1}z_0+ z_1\right]\right).
	\end{array}
	\]
	Under this choice of trivializations, we have $D=\{y_1=z_0=0\}$. 
	
	Let $f \in \mathrm{ker}(\pi_*)$. Via the trivializations of the $\FF_b$-bundle $\tau\pi\colon X\to C$, the automorphism $f$ induces the automorphisms
	\[
	\begin{array}{cccc}
		f_q \colon & (C\setminus \{q\})\times \FF_b & \longrightarrow & (C\setminus \{q\})\times \FF_b \\
		& (x,[y_0:y_1\ ; z_0:z_1]) & \longmapsto & \left(x,\left[y_0:\mu_q(x)y_1 + \left(\sum_{k=0}^{b} a_{q,k}(x)z_0^kz_1^{b-k}\right)y_0\ ; z_0:z_1\right]\right), \\
		f_p \colon & (C\setminus \{p\})\times \FF_b & \longrightarrow & (C\setminus \{p\})\times \FF_b \\
		& (x,[y_0:y_1\ ; z_0:z_1]) & \longmapsto & \left(x,\left[y_0:\mu_p(x)y_1 + \left(\sum_{k=0}^{b} a_{p,k}(x)z_0^kz_1^{b-k}\right)y_0\ ; z_0:z_1\right]\right) ,
	\end{array}
	\]
	where $\mu_q \in \O_C(C\setminus \{q\})^*$, $\mu_p \in \O_C(C\setminus \{p\})^*$, $a_{q,k} \in \O_C(C\setminus \{q\})$ and $a_{p,k} \in \O_C(C\setminus \{p\})$ for every $k\in \{0,\cdots,b\}$. The condition $\phi f_q = f_p \phi$ implies, by identifying the coefficients in front of $z_1^b$ and $z_0z_1^{b-1}$, the following equalities:
	\begin{align}
		a_{q,0}(x)  & =   a_{p,0}(x), \label{eq0}\\
		a_{q,1}(x)  & =  b\alpha^{-1}(x)a_{p,0}(x)+a_{p,1}(x).\label{eq1}
	\end{align}
	The equality (\ref{eq0}) implies that $a_{p,0} = a_{q,0} \in \kk$. The rational function $\alpha^{-1}$ has poles exactly at $p$ and $q$, while $a_{p,1}$ is a constant or a rational function having poles of order at least two exactly at $q$. Using (\ref{eq1}), we obtain $a_{p,0}=0$ and this implies that the curve $D$ is $\mathrm{ker}(\pi_*)$-invariant. Moreover, the morphism $\pi_*$ is surjective by Proposition \ref{imagepi*}, and since $\pi$ is decomposable, we also have $\mathbb{G}_m\subset \mathrm{ker}(\pi_*)$. This implies that $D$ is $\Autz(X)$-invariant. 
	
	The blowup of the curve $D$ followed by the contraction of the strict transform of $\pi^{-1}(\sigma)$ yields an $\Autz(X)$-equivariant birational map $\psi\colon X\dashrightarrow X_{b-1}$ locally given by
	\[
	\begin{array}{ccc}
		U\times \FF_b & \dashrightarrow & U\times \FF_{b-1} \\
		(x,[y_0:y_1 \ ; z_0:z_1]) & \longmapsto & (x,[z_0y_0:y_1 \ ; z_0:z_1]),
	\end{array}
	\]
	where $U=C\setminus \{q\}$ or $C\setminus \{p\}$. We get that the transition maps of the $\FF_{b-1}$-bundle $\tau\pi_{b-1}\colon X_{b-1}\to C$ are
	\[
	\begin{array}{cccc}
		\phi_{b-1}\colon & (C\setminus \{q\})\times \FF_{b-1} & \dashrightarrow & (C\setminus \{p\})\times \FF_{b-1} \\
		& (x,[y_0:y_1 \ ; z_0:z_1]) & \longmapsto & (x,[y_0:y_1 \ ; z_0:\alpha^{-1}(x)z_0+z_1]).
	\end{array}
	\]
	Then $X_{b-1}=\PP(\O_{\AAA_0} \oplus \O_{\AAA_0}((b-1)\sigma))$ by Lemmas \ref{fiberproduct} and \ref{A0_trivial_representative}.
	
	(2): We repeat the process to decrease $b$. Eventually, we obtain an $\Autz(X)$-equivariant birational map $X\dashrightarrow  \PP(\O_{\AAA_0}\oplus \O_{\AAA_0})$, and the latter is isomorphic to $\PP^1 \times \AAA_0$. By Proposition \ref{FF0viaCxPP1}, the pair $(\PP^1\times \AAA_0,p_2\colon \PP^1\times \AAA_0 \to \AAA_0)$ is superstiff, so $\Autz(X)$ is conjugate to a proper subgroup of $\Autz(\PP^1 \times \AAA_0)$; thus, $\Autz(X)$ is not relatively maximal.
\end{proof}

\begin{notation}
	For $b>0$, we denote by $X_{(\AAA_0,b,0)}$ the unique indecomposable $\PP^1$-bundle with invariants $(\AAA_0,b,0)$.
\end{notation}

\begin{lemma}\label{AAAnotmax}
	For every $b>0$, there exists an $\Autz(X_{(\AAA_0,b,0)})$-equivariant square birational map $X_{(\AAA_0,b,0)} \dashrightarrow \PP(\O_{\AAA_0}\oplus \O_{\AAA_0}((b+1)\sigma))$ and $\Autz(X_{(\AAA_0,b,0)})$ is not relatively maximal.
\end{lemma}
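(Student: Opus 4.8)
The plan is to mimic the elementary transformation used in Lemma \ref{equivarianceoverA0trivial}, but arranged so as to \emph{increase} the fiber degree and thereby reach a decomposable bundle. Write $X=X_{(\AAA_0,b,0)}$ and fix the trivializations over the two charts $C\setminus\{q\}$ and $C\setminus\{p\}$ together with the transition map given by Lemma \ref{A0_trivial_representative} in the indecomposable case $a_0=1$, namely
\[
\phi\colon (x,[y_0:y_1\ ;z_0:z_1])\longmapsto \left(x,\left[y_0: y_1+\alpha^{-1}(x)z_1^b y_0\ ; z_0:\alpha^{-1}(x)z_0+z_1\right]\right).
\]
First I would locate an $\Autz(X)$-invariant curve to blow up. The surface $S_b=\{y_0=0\}$ spanned by the $(-b)$-sections along the fibers of $\tau\pi$ is $\Autz(X)$-invariant; moreover $\sigma$ is the unique minimal section of $\AAA_0$, hence $\Autz(\AAA_0)$-invariant by Lemma \ref{geometryofruledsurface} \ref{geometryofruledsurface.3}, so Blanchard's lemma (Corollary \ref{blanchard}) makes $\pi^{-1}(\sigma)=\{z_0=0\}$ invariant as well. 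Consequently the section $D'=S_b\cap\pi^{-1}(\sigma)=\{y_0=z_0=0\}$ is $\Autz(X)$-invariant.

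Next I would perform the elementary transformation consisting of the blowup of $D'$ followed by the contraction of the strict transform of $\pi^{-1}(\sigma)$; since $D'$ lies on the $(-b)$-section surface $S_b$, this raises the fiber degree to $b+1$. Locally this birational map is $\psi\colon [y_0:y_1\ ;z_0:z_1]\mapsto [y_0:z_0 y_1\ ;z_0:z_1]$, which is square above $\AAA_0$ and $\Autz(X)$-equivariant because both $D'$ and $\pi^{-1}(\sigma)$ are invariant and $\psi$ leaves the base coordinates $[z_0:z_1]$ untouched. A direct computation of $\psi_i\,\phi\,\psi_j^{-1}$ then yields the transition map of the resulting $\FF_{b+1}$-bundle $\pi'\colon X'\to\AAA_0$:
\[
[Y_0:Y_1\ ;Z_0:Z_1]\longmapsto \left[Y_0: Y_1+\alpha^{-1}(x)Z_0 Z_1^b\,Y_0\ ; Z_0:\alpha^{-1}(x)Z_0+Z_1\right].
\]
As the coefficient of $Y_1$ is $1$, the bundle $X'$ has invariants $(\AAA_0,b+1,0)$.

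To identify $X'$ with the decomposable bundle $\PP(\O_{\AAA_0}\oplus\O_{\AAA_0}((b+1)\sigma))$, I would restrict the transition map above to $\sigma=\{Z_0=0\}$: the term $\alpha^{-1}Z_0 Z_1^b$ then vanishes, so $X'|_\sigma\to C$ is the trivial $\PP^1$-bundle. By the dichotomy of Lemma \ref{A0_trivial_representative}, a $\PP^1$-bundle with invariants $(\AAA_0,b+1,0)$ whose restriction to $\sigma$ is trivial is necessarily decomposable; hence $X'\cong\PP(\O_{\AAA_0}\oplus\O_{\AAA_0}((b+1)\sigma))$, which provides the announced $\Autz(X)$-equivariant birational map. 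Finally, composing with the equivariant square birational maps of Lemma \ref{equivarianceoverA0trivial} \ref{equivarianceoverA0trivial.2}, which conjugate $\Autz(\PP(\O_{\AAA_0}\oplus\O_{\AAA_0}((b+1)\sigma)))$ to a proper connected algebraic subgroup of $\Autz(\PP^1\times\AAA_0)$, I obtain an $\Autz(X)$-equivariant square birational map $X\dashrightarrow\PP^1\times\AAA_0$ conjugating $\Autz(X)$ to a proper subgroup of $\Autz(\PP^1\times\AAA_0)$; by Definition \ref{def} this shows that $\Autz(X_{(\AAA_0,b,0)})$ is not relatively maximal.

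The main obstacle is to carry out the transition-map computation correctly and, above all, to verify that the elementary transformation lands on the decomposable bundle rather than on $X_{(\AAA_0,b+1,0)}$; the restriction-to-$\sigma$ criterion from Lemma \ref{A0_trivial_representative} is precisely what settles this efficiently, sidestepping a direct reduction via Lemma \ref{Sisomorph}. A secondary point requiring care is the bookkeeping of the $(\mathbb{G}_m)^2$-weights of $\FF_b$, so as to confirm that $\psi$ is a well-defined birational map $\FF_b\dashrightarrow\FF_{b+1}$ and that $\phi'$ genuinely has polynomial part of degree $b+1$.
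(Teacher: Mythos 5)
Your proof is correct and follows the paper's construction almost step for step: you blow up the same $\Autz(X)$-invariant curve $\{y_0=z_0=0\}=S_b\cap\pi^{-1}(\sigma)$, contract the strict transform of $\pi^{-1}(\sigma)$, compute the same transition map with polynomial part $\alpha^{-1}Z_0Z_1^b$, and conclude non-relative-maximality by composing with the equivariant maps of Lemma \ref{equivarianceoverA0trivial}. The one genuine point of divergence is how you identify the resulting bundle $X'$ with the decomposable one: the paper reruns the coefficient-killing iteration of Lemma \ref{A0_trivial_representative} (its equation ($\star$)) to replace $\alpha^{-1}Z_0Z_1^b$ by zero, whereas you invoke the dichotomy of that lemma together with the restriction-to-$\sigma$ criterion. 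Your shortcut is sound: by Lemma \ref{A0_trivial_representative} there are exactly two $\AAA_0$-isomorphism classes with invariants $(\AAA_0,b+1,0)$, and its proof shows the indecomposable one restricts over $\sigma$ to $\AAA_0$ while the decomposable one restricts to the trivial bundle; since restriction over $\sigma$ is an invariant of the $\AAA_0$-isomorphism class and your $X'$ visibly restricts trivially (set $Z_0=0$ in the transition map), $X'$ must be $\PP(\O_{\AAA_0}\oplus\O_{\AAA_0}((b+1)\sigma))$. This buys a cleaner argument that avoids redoing the normalization bookkeeping, at the cost of relying on the uniqueness statement of Lemma \ref{A0_trivial_representative} rather than an explicit change of trivialization.
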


\begin{proof}
	We denote by $(\AAA_0)_b\subset X_{(\AAA_0,b,0)}$ the surface obtained by taking the union of the $(-b)$-sections along the fibers of $\tau\pi\colon X_{(\AAA_0,b,0)}\to C$. We choose the trivializations of $\tau\pi\colon X_{(\AAA_0,b,0)}\to C$ given in Lemma \ref{A0_trivial_representative}, such that the transition maps of $\tau \pi$ equal
	\[
	\begin{array}{cccc}
		\phi_{pq} \colon & (C\setminus \{q\}) \times \FF_b & \dashrightarrow & (C\setminus \{p\}) \times \FF_b \\
		& (x,[y_0:y_1 \ ; z_0:z_1]) & \longmapsto & \left(x,\left[y_0:y_1 + \alpha(x)^{-1}z_1^{b}y_0\ ; z_0: \alpha(x)^{-1}z_0+ z_1\right]\right),
	\end{array}
	\]
	and under this choice, we have $(\AAA_0)_b = \{y_0=0\}$.
	Since the unique minimal section $\sigma$ of $\tau\colon \AAA_0\to C$ is $\Autz(\AAA_0)$-invariant, it follows by Blanchard's lemma that the curve $\pi^{-1}(\sigma)\cap (\AAA_0)_b = \{y_0=z_0=0\}$ is $\Autz(X_{(\AAA_0,b,0)})$-invariant. Its blowup followed by the contraction of the strict transform of $\pi^{-1}(\sigma)$ yields an $\Autz(X_{(\AAA_0,b,0)})$-equivariant birational map $X_{(\AAA_0,b,0)} \dashrightarrow X_{b+1}$, locally given by 
	\[
	\begin{array}{ccc}
		U \times \FF_b & \dashrightarrow & U\times \FF_{b+1} \\
		(x,[y_0:y_1 \ ; z_0:z_1]) & \longmapsto & (x,[y_0:z_0y_1 \ ; z_0:z_1]),
	\end{array}
	\]
	where $U=C\setminus \{p\}$ or $C\setminus \{q\}$.
	We obtain that the transition maps of $\pi_{b+1}\colon X_{b+1}\to \AAA_0$ equal
	\[
	\begin{array}{cccc}
		& (C\setminus \{q\}) \times \FF_{b+1} & \dashrightarrow & (C\setminus \{p\}) \times \FF_{b+1} \\
		& (x,[y_0:y_1 \ ; z_0:z_1]) & \longmapsto & \left(x,\left[y_0:y_1 +  \alpha(x)^{-1}z_0z_1^{b}y_0\ ; z_0: \alpha(x)^{-1}z_0+ z_1\right]\right).
	\end{array}
	\]
	By Lemma \ref{FFb-bundle}, the $\PP^1$-bundle $\pi_{b+1}$ has invariants $(\AAA_0,b+1,0)$. 
	
	Next we show that $X_{b+1}$ is $\AAA_0$-isomorphic to the decomposable $\PP^1$-bundle $\PP(\O_{\AAA_0}\oplus \O_{\AAA_0}((b+1)\sigma))$. We use the same notations as in the proof of Lemma \ref{A0_trivial_representative}, with $U_j = C\setminus \{q\}$, $U_i = C\setminus \{p\}$, $c_{ij,k} = \alpha^{-1}$ if $k=1$ and else, $c_{ij,k}=0$. Using the equality (\ref{A0_trivial_=}) in the proof of Lemma \ref{A0_trivial_representative}, without changing the $\AAA_0$-isomorphism class of $X_{b+1}$, we can replace $p_{ij}$ by:
	\[
	 \sum_{k=0}^{b+1} c_{ij,k}(x) z_0^kz_1^{(b+1)-k} - \sum_{k=0}^{b+1} c_{j,k}(x) z_0^kz_1^{(b+1)-k} + \sum_{\substack{s,t\in \{0,\cdots, b+1\}, \\ s+t \leq b+1}} \binom{(b+1)-s}{t}c_{i,s}(x) \alpha^{-t}(x)z_0^{s+t}z_1^{(b+1)-(s+t)}.
	\]
	Applying the same iteration as in the proof of Lemma \ref{A0_trivial_representative}, we replace all the $c_{ij}$ by zero.
\end{proof}

\begin{proposition}\label{prop:(A0,b,0)}
	Let $b>0$ and $\pi\colon X\to \AAA_0$ be a $\PP^1$-bundle with invariants $(\AAA_0,b,0)$. Then $\Autz(X)$ is not relatively maximal.
\end{proposition}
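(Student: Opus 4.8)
The plan is to observe that this proposition is a direct synthesis of the two preceding lemmas, via the dichotomy established in Lemma \ref{A0_trivial_representative}. First I would invoke Lemma \ref{A0_trivial_representative}: any $\PP^1$-bundle $\pi\colon X\to \AAA_0$ with invariants $(\AAA_0,b,0)$ falls into exactly one of two classes according to the scalar $a_0\in\{0,1\}$ appearing in the normalized transition maps. If $a_0=0$, then $X$ is $\AAA_0$-isomorphic to the decomposable bundle $\PP(\O_{\AAA_0}\oplus \O_{\AAA_0}(b\sigma))$; if $a_0=1$, then $X$ is the unique indecomposable bundle $X_{(\AAA_0,b,0)}$. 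Thus it suffices to settle the claim separately in each of these two cases.

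In the decomposable case, I would simply appeal to Lemma \ref{equivarianceoverA0trivial} \ref{equivarianceoverA0trivial.2}, which already asserts that $\Autz(\PP(\O_{\AAA_0}\oplus \O_{\AAA_0}(b\sigma)))$ is not relatively maximal (indeed it is conjugate to a proper connected algebraic subgroup of $\Autz(\PP^1\times \AAA_0)$, obtained by iteratively decreasing $b$ to $0$). In the indecomposable case, I would invoke Lemma \ref{AAAnotmax}, which produces an $\Autz(X_{(\AAA_0,b,0)})$-equivariant birational map to $\PP(\O_{\AAA_0}\oplus \O_{\AAA_0}((b+1)\sigma))$ and concludes that $\Autz(X_{(\AAA_0,b,0)})$ is not relatively maximal (this reduces to the decomposable case just treated). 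Since both cases yield the conclusion, the proposition follows.

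There is no genuine obstacle here, as all the substantive geometric and cohomological work — the explicit normalization of transition maps over $\AAA_0$, the $\Autz(X)$-invariance of the relevant sections, and the construction of the destabilizing equivariant birational maps — has already been carried out in Lemmas \ref{A0_trivial_representative}, \ref{equivarianceoverA0trivial}, and \ref{AAAnotmax}. The only point requiring a word of care is to make sure the case distinction from Lemma \ref{A0_trivial_representative} is genuinely exhaustive, i.e.\ that every bundle with invariants $(\AAA_0,b,0)$ is $\AAA_0$-isomorphic to one of the two listed models; this is exactly the content of that lemma, so the present proof is essentially a one-line corollary assembling the two branches.
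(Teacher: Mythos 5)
Your proof is correct and follows exactly the paper's own route: the paper's proof of Proposition \ref{prop:(A0,b,0)} is literally "a direct consequence of the lemmas of this section," namely the dichotomy of Lemma \ref{A0_trivial_representative} combined with Lemma \ref{equivarianceoverA0trivial} for the decomposable case and Lemma \ref{AAAnotmax} for the indecomposable one. You have merely made the assembly explicit, which is fine.
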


\begin{proof}
	This is a direct consequence of the lemmas of this section.
\end{proof}

\subsection{$\PP^1$-bundles with invariants $(\AAA_0,b,D)$ with $b>0$ and $D\in \Pic^0(C)$ non-trivial}

~\bigskip

A crucial element in the proof of the following lemma is that $\mathrm{Ext}^1(\O_C,\O_C(D)) = 0$. This will guarantee us that the $\PP^1$-bundle $\pi_{|\pi^{-1}(\sigma)}\colon X_{|\pi^{-1}(\sigma)}\to \sigma$ admits two disjoint invariant sections, whose blowups determine the only two $\Autz(X)$-equivariant Sarkisov diagrams of type II starting from $\pi$. This also allows us to show that $\pi$ is the decomposable $\PP(\O_{\AAA_0} \oplus \O_{\AAA_0}(b\sigma+\tau^*(D)))$, whose automorphism group is conjugate to $\Autz(\PP(\O_{\AAA_0} \oplus \O_{\AAA_0}(\tau^*(D))))$.

\begin{lemma}\label{A0_nontrivial}
	Let $\pi\colon X\to \AAA_0$ be a $\PP^1$-bundle with invariants $(\AAA_0,b,D)$ where $b>0$ and $D\in \Pic^0(C)\setminus \{0\}$. We denote by $(\AAA_0)_b\subset X$ the surface spanned by the $(-b)$-sections along the fibers of $\tau\pi$. Then the following hold:
	\begin{enumerate}
		\item\label{A0_nontrivial.1} There exists an $\Autz(X)$-invariant curve $l_0^X= \pi^{-1}(\sigma)\cap (\AAA_0)_b$, whose blowup followed by the contraction of the strict transform of $\pi^{-1}(\sigma)$ induces an $\Autz(X)$-equivariant square birational map $\phi_{b,b+1}$ and the following commutative diagram:
		\[
		\begin{tikzcd}
			X \arrow[rr,dashed,"\phi_{b,b+1}"] \arrow[rd,"\pi" swap] && X_{b+1}\arrow[ld,"\pi_{b+1}"] \\
			& \AAA_0 & ,
		\end{tikzcd}
		\]
		where $\pi_{b+1}\colon X_{b+1}\to \AAA_0$ is a $\PP^1$-bundle with invariants $(\AAA_0,b+1,D)$.
		\item\label{A0_nontrivial.2} There exists an $\Autz(X)$-invariant curve $l_1^X\subset \pi^{-1}(\sigma)$ disjoint from $l_0^X$, whose blowup followed by the contraction of the strict transform of $\pi^{-1}(\sigma)$ induces an $\Autz(X)$-equivariant square birational map $\psi_{b,b-1}$ and the following commutative diagram:
		\[
		\begin{tikzcd}
			X_{b-1} \arrow[rd,"\pi_{b-1}" swap] && X\arrow[ld,"\pi"] \arrow[ll,dashed,"\psi_{b,b-1}" swap] \\
			& \AAA_0 & ,
		\end{tikzcd}
		\]
		where $\pi_{b-1}\colon X_{b-1}\to \AAA_0$ is a $\PP^1$-bundle with invariants $(\AAA_0,b-1,D)$ if $b\geq 2$; and $X_0$ is the $\PP^1$-bundle $\PP(\O_{\AAA_0} \oplus \O_{\AAA_0}(\tau^*(D)))$.
		\item\label{A0_nontrivial.3} The $\PP^1$-bundle $\pi\colon X \to \AAA_0$ is isomorphic to the decomposable $\PP^1$-bundle \[ X\cong \PP(\O_{\AAA_0} \oplus \O_{\AAA_0}(b\sigma +\tau^*(D)))\]
		and $ \Autz(X)$ is conjugate to $\Autz(\PP(\O_{\AAA_0} \oplus \O_{\AAA_0}(\tau^*(D))))$ via a square birational map.
	\end{enumerate}
	Moreover, the only $\Autz(X)$-equivariant Sarkisov diagrams of type II starting from $\pi\colon X\to \AAA_0$ are those given in \ref{A0_nontrivial.1} and \ref{A0_nontrivial.2}.
\end{lemma}

\begin{proof}	
		By Lemma \ref{FFb-bundle}, we can choose the trivializations of $\tau\pi$ such that the transition maps equal
		\[
		\begin{array}{cccc}
			\theta_{ij}\colon &U_j\times \FF_b & \dasharrow  &U_i\times \FF_b \\
			&(x,[y_0,y_1\ ;z_0,z_1]) & \longmapsto &(x,[y_0:\lambda_{ij}(x) y_1 + p_{ij}(x,z_0,z_1)y_0\ ; z_0:\alpha_{ij}(x)z_0+z_1]), 
		\end{array}
		\]
		where $\lambda_{ij}\in O_C(U_{ij})^*$ denotes the cocyles of the line bundle $\O_C(D)$, $\alpha_{ij}\in \O_C(U_{ij})$ and $p_{ij}\in \O_C(U_{ij})[z_0,z_1]_b$. Moreover, the lower triangular matrices
		$\begin{pmatrix}
			1 & 0 \\
			\alpha_{ij} & 1
		\end{pmatrix}$
		are the transition matrices of $\AAA_0$, by Lemma \ref{transitionAtiyah}. Under this choice of trivializations, $\sigma$ is the zero section $\{z_0=0\}\subset \AAA_0$.
		
		Restricting the transition maps $\theta_{ij}$ on the surface $\pi^{-1}(\sigma) = \{z_0=0\}$, we get that the transition maps of the $\PP^1$-bundle $\pi_{|\pi^{-1}(\sigma)} \colon \pi^{-1}(\sigma) \to \sigma$ equals
		\[
		\begin{array}{cccc}
			(\theta_{ij})_{|\pi^{-1}(\sigma)} \colon & U_j\times \PP^1 & \dashrightarrow & U_i\times \PP^1 \\
			& (x,[y_0:y_1]) & \longmapsto & (x,[y_0:\lambda_{ij}(x) y_1 + p_{ij}(x,0,1)y_0]).
		\end{array}
		\]
		Therefore, there exists a rank-$2$ vector bundle $\E_\sigma$ over $\sigma$ such that $\PP(\E_\sigma) \simeq \pi^{-1}(\sigma)$ and $\PP(\E_\sigma)$ has a section given by $\{y_0=0\}$ which corresponds to the line subbundle $\O_\sigma(D)\subset \E_\sigma$. The quotient $\E_\sigma/\O_\sigma(D)$ is trivial; hence, $\E_\sigma$ fits into a short exact sequence
		$$
		0 \to \O_\sigma(D) \to \E_\sigma \to \O_\sigma \to 0. 
		$$
		Since $D$ is non-trivial of degree zero, we have that $\mathrm{Ext}^1(\O_\sigma,\O_\sigma(D))=0$, i.e., $\E_\sigma \simeq \O_\sigma \oplus \O_\sigma(D)$ and $\pi^{-1}(\sigma) \simeq \PP(\O_\sigma \oplus \O_\sigma(D))$. The section $\sigma$ is isomorphic to the elliptic curve $C$, hence $\pi^{-1}(\sigma)$ admits two disjoint sections of self-intersection zero which are $\Autz(\pi^{-1}(\sigma))$-invariant by Lemma \ref{geometryofruledsurface} \ref{geometryofruledsurface.2}. These two disjoint sections $l_0^X=\{y_0=z_0=0\}$ and $l_1^X$ correspond respectively to the line subbundles $\O_\sigma(D)\subset \E_\sigma$ and $\O_\sigma\subset \E_\sigma$. Since $\sigma$ is $\Autz(\AAA_0)$-invariant, it follows by Blanchard's lemma that $\pi^{-1}(\sigma)$ is $\Autz(X)$-invariant. Hence $l_0^X$ and $l_1^X$ are $\Autz(X)$-invariant as well. 
		
		Without loss of generality, we can assume that the $l_1^X$ equals locally $\{(x,[1:v_i(x)\ ; 0:1])\} \subset U_i\times \FF_b$ for some $v_i \in \O_C(U_i)$. Applying the isomorphisms
		\[
		\begin{array}{cccc}
			& U_i\times \FF_b & \dashrightarrow & U_i\times \FF_b \\
			& (x,[y_0:y_1\ ; z_0:z_1]) & \longmapsto & (x,[y_0:y_1-v_i(x)z_1^by_0\ ; z_0:z_1]),
		\end{array}
		\]
		we don't change the invariants and we can now assume that $l_1^X = \{y_1=z_0=0\}$. In particular, we have that $z_0$ divides $p_{ij}$.
		
		(1): The blowup of $l_0^X$ followed by the contraction of the strict transform of $\pi^{-1}(\sigma)$ yields an $\Autz(X)$-equivariant birational map $\phi\colon X\dashrightarrow X_{b+1}$, where $\pi_{b+1}\colon X_{b+1} \to \AAA_0$ is a $\PP^1$-bundle such that $\tau\pi_{b+1}$ is an $\FF_{b+1}$-bundle, and $\phi$ is locally given by 
		\[
		\begin{array}{cccc}
			\phi_i \colon & U_i \times \FF_b & \dashrightarrow & U_i \times \FF_{b+1} \\
			& (x,[y_0:y_1 \ ; z_0:z_1 ]) & \longmapsto & (x,[y_0:z_0y_1 \ ; z_0:z_1 ]).
		\end{array}
		\]
		Computing $\phi_i \theta_{ij}\phi_j^{-1}$, we get the transition maps of the $\FF_{b+1}$-bundle $\tau\pi_{b+1}\colon X_{b+1}\to C$ and we obtain that the $\PP^1$-bundle $\pi_{b+1}$ has invariants $(\AAA_0,b+1,D)$.
		
		(2): Similarly, the blowup of $l^X_{1}$ followed by the contraction of the strict transform of $\pi^{-1}(\sigma)$ yields an $\Autz(X)$-equivariant birational map $\psi\colon X\dasharrow X_{b-1}$, where $\pi_{b-1}\colon X_{b-1}\to \AAA_0$ is a $\PP^1$-bundle such that $\tau\pi_{b-1}$ is an $\FF_{b-1}$-bundle, and $\psi$ is locally given by:
		\[
		\begin{array}{cccc}
			\psi_i\colon & U_i\times \FF_b & \dasharrow & U_i\times \FF_{b-1} \\
			&(x,[y_0:y_1\ ;z_0:z_1]) & \longmapsto & (x,[z_0 y_0:y_1\ ; z_0:z_1]).
		\end{array}
		\]
		Computing $\psi_i \theta_{ij}\psi_j^{-1}$, we get the transition maps of the $\FF_{b-1}$-bundle $\tau\pi_{b-1}\colon X_{b-1}\to C$ equal
		\[
		\begin{array}{cccc}
		 & U_j\times \FF_{b-1} & \dasharrow & U_i\times \FF_{b-1} \\
		&(x,[y_0:y_1\ ;z_0:z_1]) & \longmapsto & (x,[ y_0:\lambda_{ij}(x)y_1 + p_{ij}(x,z_0,z_1)z_0^{-1}y_0\ ; z_0:z_1]).
		\end{array}		
		\]
		 If $b\geq 2$, we obtain that the $\PP^1$-bundle $\pi_{b-1}$ has invariants $(\AAA_0,b-1,D)$. If $b=1$, we obtain that $X_0$ is isomorphic to the fiber product $S\times_C \AAA_0$, where $S$ is a geometrically ruled surface with transition maps
		 \[
		 \begin{array}{cccc}
		 	& U_j\times \PP^1 & \dasharrow & U_i\times \PP^1 \\
		 	&(x,[y_0:y_1]) & \longmapsto & (x,[ y_0:\lambda_{ij}(x)y_1 + p_{ij}(x,z_0,z_1)z_0^{-1}y_0]).
		 \end{array}		
		 \]
		 Therefore, $S$ is the projectivization of a rank-$2$ vector bundle, which yields an element of $\mathrm{Ext}^1(\O_C,\O_C(D))$. The latter is the zero vector space since $D$ is non-trivial of degree zero; thus, $S$ is $C$-isomorphic to the decomposable $\PP^1$-bundle $\PP(\O_C \oplus \O_C(D))$. Thus, $X$ is $\AAA_0$-isomorphic to $\PP(\O_{\AAA_0} \oplus \O_{\AAA_0}(\tau^*(D)))$.
		
		(3): Apply inductively \ref{A0_nontrivial.2}. For each $b>1$, the base locus of $\psi_{b,b-1}^{-1}$ is the $\Autz(X_{b-1})$-invariant curve $\pi_{b-1}^{-1}(\sigma) \cap (\AAA_0)_{b-1}$. For $b=0$, the base locus of $\psi_{1,0}^{-1}$ is the curve $l_0^{X_0} = \{y_0 = z_0 =0 \} \subset X_0$, which is one of the two disjoint sections of the $\PP^1$-bundle $(\pi_0)_{|\pi_0^{-1}(\sigma)}\colon \pi_0^{-1}(\sigma) \to \sigma$; hence, $l_0^{X_0}$ is $\Autz(X_0)$-invariant by Lemmas \ref{geometryofruledsurface} and \ref{autofiberproduct}. Therefore, $\Autz(X)$ is conjugate to $\Autz(X_0)$, where $X_0=\PP(\O_{\AAA_0} \oplus \O_{\AAA_0}(\tau^*(D)))$.
		
		Moreover, we can choose new trivializations of $\tau \pi_0\colon X_0\to C$ such that the transition maps are
		\[
		\begin{array}{cccc}
		 &U_j\times \FF_0 & \dasharrow  &U_i\times \FF_0 \\
			&(x,[y_0,y_1\ ;z_0,z_1]) & \longmapsto &(x,[y_0:\lambda_{ij}(x) y_1 \ ; z_0:\alpha_{ij}(x)z_0+z_1]).
		\end{array}
		\]
		The blowup of the curve $l_0^{X_0} = \{y_0 = z_0 =0 \} \subset X_0$ followed by the contraction of the strict transform of $\pi_0^{-1}(\sigma)$ is the inverse of $\psi_{1,0}\colon X_1 \dashrightarrow X_0$ obtained at the last step of the induction above. This gives new trivializations of $\tau\pi_1$ such that the transition maps equal 	
		\[
		\begin{array}{cccc}
			 &U_j\times \FF_1 & \dasharrow  &U_i\times \FF_1 \\
			&(x,[y_0,y_1\ ;z_0,z_1]) & \longmapsto &(x,[y_0:\lambda_{ij}(x) y_1\ ; z_0:\alpha_{ij}(x)z_0+z_1]),
		\end{array}
		\]
		and this shows that $\tau\pi_1$ is the decomposable $\PP^1$-bundle $\PP(\O_{\AAA_0} \oplus \O_{\AAA_0}(\sigma +\tau^*(D)))$. For each $k\in \{1,\ldots,b-1\}$, using the birational map $\phi_{k,k+1}\colon X_{k}\dashrightarrow X_{k+1}$, we get new transition maps for the $\FF_{k+1}$-bundle $\tau\pi_{k+1}\colon X_{k+1}\to C$, that only depend on $\lambda_{ij}\in \O_C(U_{ij})^*$ and $\alpha\in \O_C(U_{ij})$. Therefore, we obtain that $X=\PP(\O_{\AAA_0} \oplus \O_{\AAA_0}(b\sigma +\tau^*(D)))$ by Lemma \ref{FFb-bundle}.
		
		The morphism $\pi_*\colon \Autz(X)\to \Autz(\AAA_0)$ is also surjective by Proposition \ref{imagepi*} \ref{imagepi*.2}. Since $\pi$ is decomposable, the group $\mathbb{G}_m$ acts on the fibers of $\pi$ by fixing the two disjoint sections of $\pi$ corresponding to the line subbundles $\O_{\AAA_0}(b\sigma +\tau^*(D))$ and $\O_{\AAA_0}$. Therefore, $l_0^X$ and $l_1^X$ are the only $\Autz(X)$-orbits of dimension one and are the $\Autz(X)$-orbits of minimal dimension. This implies that the only $\Autz(X)$-equivariant Sarkisov diagrams of type II are those given in \ref{A0_nontrivial.1} and \ref{A0_nontrivial.2}.
\end{proof}

Combining the results of this section, we reduce to the classification to the study of $\FF_0$-bundles:

\begin{proposition}\label{equivariantto0.A0}
	Let $\pi\colon X\to \AAA_0$ be a $\PP^1$-bundle with invariants $(\AAA_0,b,D)$, where $b>0$, and such that $\Autz(X)$ is relatively maximal. Then $D$ is non-trivial of degree zero and there exists an $\Autz(X)$-equivariant square birational map $X\dashrightarrow \PP(\O_{\AAA_0} \oplus \O_{\AAA_0}(\tau^*(D)))$. 
\end{proposition}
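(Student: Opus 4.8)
The plan is to assemble the lemmas proved in this section into a short argument, treating separately the cases where $D$ has nonzero degree and where $D$ has degree zero. First I would invoke Proposition \ref{degnonnuldec}: the surface $\tau\colon \AAA_0 \to C$ admits the unique minimal section $\sigma$ with $\O_{\AAA_0}(\sigma)\simeq \O_{\AAA_0}(1)$, whose linear class is $\Autz(\AAA_0)$-invariant by Lemma \ref{geometryofruledsurface} \ref{geometryofruledsurface.3}. Hence if $\deg(D)\neq 0$, then $(\tau\pi)_*\colon \Autz(X)\to \Autz(C)$ is trivial and $\Autz(X)$ is not relatively maximal by Proposition \ref{equivarianttodecomposable}, contradicting our hypothesis. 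Thus $\deg(D)=0$.

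Next I would rule out the case $D=0$. By Proposition \ref{prop:(A0,b,0)}, any $\PP^1$-bundle over $\AAA_0$ with invariants $(\AAA_0,b,0)$ for $b>0$ has an automorphism group that is \emph{not} relatively maximal. Since we are assuming $\Autz(X)$ is relatively maximal, this excludes $D=0$. Therefore $D$ must be non-trivial of degree zero, which establishes the first assertion.

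Finally, with $D\in \Cl^0(C)\setminus\{0\}$ and $b>0$, I would apply Lemma \ref{A0_nontrivial} \ref{A0_nontrivial.3} directly: it produces, via the inductive chain of equivariant birational maps $\psi_{k,k-1}$ (blowing up the invariant curve $l_1^X\subset \pi^{-1}(\sigma)$ and contracting the strict transform of $\pi^{-1}(\sigma)$ at each step), an $\Autz(X)$-equivariant birational map $X\dashrightarrow \PP(\O_{\AAA_0}\oplus \O_{\AAA_0}(\tau^*(D)))$, and moreover identifies $X$ itself with the decomposable bundle $\PP(\O_{\AAA_0}\oplus \O_{\AAA_0}(b\sigma+\tau^*(D)))$. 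This is exactly the desired conclusion.

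Since every ingredient is already in place, there is essentially no obstacle remaining: the proposition is a formal corollary of Propositions \ref{degnonnuldec}, \ref{equivarianttodecomposable}, and \ref{prop:(A0,b,0)} together with Lemma \ref{A0_nontrivial}. The only point requiring a moment of care is the logical structure of the case distinction — one must verify that the two excluded cases ($\deg(D)\neq 0$ and $D=0$) genuinely cover all possibilities other than $D$ non-trivial of degree zero, so that the relative maximality hypothesis forces the remaining case, after which Lemma \ref{A0_nontrivial} \ref{A0_nontrivial.3} supplies both the birational map and the explicit description of $X$.
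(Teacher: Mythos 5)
Your proposal is correct and follows essentially the same route as the paper: Proposition \ref{degnonnuldec} forces $\deg(D)=0$, Proposition \ref{prop:(A0,b,0)} excludes $D=0$ by contradiction with relative maximality, and Lemma \ref{A0_nontrivial} then supplies the $\Autz(X)$-equivariant birational map to $\PP(\O_{\AAA_0}\oplus\O_{\AAA_0}(\tau^*(D)))$. The extra details you add (verifying the invariance of $\sigma$ needed for Proposition \ref{degnonnuldec}, and recalling the inductive chain $\psi_{b,b-1}$ inside Lemma \ref{A0_nontrivial}) are consistent with the paper's argument, which simply cites these results directly.
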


\begin{proof}
	By Proposition \ref{degnonnuldec}, $\deg(D)=0$. If $D$ is trivial, then $\Autz(X)$ is not relatively maximal by Section \ref{section:(A_0,b,0)}, which is a contradiction. Then $D$ is non-trivial of degree zero and the statement follows from Lemma \ref{A0_nontrivial}.
\end{proof}

\subsection{The decomposable $\PP^1$-bundle $\PP(\O_{\AAA_0} \oplus \O_{\AAA_0}(\tau^*(D)))$}

\begin{proposition}\label{prop:decomposable_over_A0}
	Let $X= \PP(\O_{\AAA_0} \oplus \O_{\AAA_0}(\tau^*(D)))$ with structure morphism $\pi\colon X\to \AAA_0$ and where $D$ is non-trivial of degree zero. Then the following hold:
	\begin{enumerate}
		\item $\Autz(X)$ is relatively maximal, but the pair $(X,\pi)$ is not stiff.
		\item Let $\pi'\colon X'\to S$ be a $\PP^1$-bundle and $S$ be a geometrically ruled surface over $C$. Then $\Autz(X)$ and $\Autz(X')$ are conjugate via a square birational map if and only if there exists $b\in \mathbb{Z}$ such that $$X'\cong \PP(\O_{\AAA_0}\oplus \O_{\AAA_0}(b\sigma + \tau^*(D) )).$$
	\end{enumerate}
\end{proposition}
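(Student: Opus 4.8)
The plan is to realise $X$ as the fiber product $\AAA_0\times_C S_D$ with $S_D=\PP(\O_C\oplus\O_C(D))$, and then to classify all $\Autz(X)$-equivariant Sarkisov links having $\pi$ as the LHS Mori fiber space, following the template of Proposition \ref{SLA1max}. First I would observe that $\O_{\AAA_0}\oplus\O_{\AAA_0}(\tau^*D)=\tau^*(\O_C\oplus\O_C(D))$, so that $X\cong\AAA_0\times_C S_D$ with $\pi$ the first projection; in particular $\tau\pi$ is an $\FF_0$-bundle and $\pi$ has invariants $(\AAA_0,0,D)$. By Lemma \ref{autofiberproduct} we have $\Autz(X)\cong\Autz(\AAA_0)\times_{\Autz(C)}\Autz(S_D)$, and since $\Autz(S_D)$ surjects onto $\Autz(C)$ (Lemma \ref{geometryofruledsurface}), both $\pi_*\colon\Autz(X)\to\Autz(\AAA_0)$ and $(\tau\pi)_*\colon\Autz(X)\to\Autz(C)$ are surjective.

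Next I would pin down the $\Autz(X)$-orbits. As $\Autz(X)$ surjects onto $\Autz(C)\cong C$, which acts by translations, every orbit dominates $C$, so there are no fixed points. Feeding the orbit descriptions of Lemma \ref{geometryofruledsurface} \ref{geometryofruledsurface.3} (for $\AAA_0$, with minimal section $\sigma$) and \ref{geometryofruledsurface.2} (for $S_D$, with minimal sections $s$ and $s_D$) into the fiber-product structure of $\Autz(X)$, I expect the only orbits of dimension one to be the two disjoint sections $C_0=\sigma\times_C s_D$ and $C_1=\sigma\times_C s$. Both lie over $\sigma\subset\AAA_0$ inside the $\Autz(X)$-invariant surface $\pi^{-1}(\sigma)\cong S_D$, and they are exactly the two sections of $\pi^{-1}(\sigma)\to\sigma$, i.e.\ the analogues for $b=0$ of the curves $l_0^X,l_1^X$ of Lemma \ref{A0_nontrivial}.

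Then I would enumerate the equivariant links. By Lemma \ref{SarkisovIIIandIV} \ref{SarkisovIIIandIV.3} the only type III or IV diagram is the type IV exchanging the two projections; its link is an isomorphism, hence conjugates $\Autz(X)$ to itself, but it is not square with respect to $\pi$. Since $C_0,C_1$ are the only orbits of dimension less than two, the only type I or II links blow up $C_0$ or $C_1$ and contract the strict transform of $\pi^{-1}(\sigma)$. These are elementary transformations along $\sigma$, and the elementary-modification computation $\E'=\ker(\E\to i_*(\E|_\sigma/L))$ (with $\E=\O_{\AAA_0}\oplus\O_{\AAA_0}(\tau^*D)$, so $\det\E'=\det\E\otimes\O_{\AAA_0}(-\sigma)$) shows that they produce the two bundles $X_{\pm1}=\PP(\O_{\AAA_0}\oplus\O_{\AAA_0}(\pm\sigma+\tau^*D))$. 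In each case the base locus of the inverse link is again a one-dimensional orbit of the target: for $X_{\pm1}$ this is the content of the last assertion of Lemma \ref{A0_nontrivial} (after renormalising $X_{-1}$ by tensoring to the invariants $(\AAA_0,1,-D)$ and using $\PP(\O_{\AAA_0}\oplus\O_{\AAA_0}(\tau^*(-D)))\cong X$). Hence both links are equivariant in both directions, yielding the equalities $\phi\Autz(X)\phi^{-1}=\Autz(X_{\pm1})$.

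Finally I would iterate and appeal to Theorem \ref{Floris}. Repeating the two type II links, Lemma \ref{A0_nontrivial} ensures that at each $X_b=\PP(\O_{\AAA_0}\oplus\O_{\AAA_0}(b\sigma+\tau^*D))$ the only type II links raise or lower $b$ by one while fixing $D$, so the family $\{X_b\}_{b\in\mathbb{Z}}$ is closed under equivariant type II links and each link conjugates the automorphism group onto that of its target with equality. A square birational map with respect to $\pi$ induces an $\Autz(\AAA_0)$-equivariant birational map of bases $\AAA_0\dashrightarrow S'$; by maximality of $\Autz(\AAA_0)$ (Theorem \ref{dim2max}) this forces $S'\cong\AAA_0$, so $\phi$ is a composite of elementary transformations along divisors of $\AAA_0$, that is, of the type II links above. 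Hence $\phi\Autz(X)\phi^{-1}=\Autz(X')$ for every such $\phi$, proving that $\Autz(X)$ is relatively maximal, and the square-birational targets are exactly the $X_b$, giving the equivalence of part (2); since $X_{\pm1}$ is not $\AAA_0$-isomorphic to $X$ — the invariant $b$ distinguishes them by Remark \ref{rem.tauto} — the pair $(X,\pi)$ is not stiff. The main obstacle I anticipate is the bookkeeping in the elementary-transformation step: one must check that both directions yield bundles $\PP(\O_{\AAA_0}\oplus\O_{\AAA_0}(b\sigma+\tau^*D))$ with $D$ unchanged, which relies on the normalisation $\PP(\O_{\AAA_0}\oplus\O_{\AAA_0}(\tau^*(-D)))\cong X$ absorbing the sign flip in the $-\sigma$ direction; the remaining maximality is then inherited from Lemma \ref{A0_nontrivial} through the Sarkisov induction.
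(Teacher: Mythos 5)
Your proposal is correct and follows essentially the same route as the paper's proof: realizing $X$ as the fiber product $\AAA_0\times_C \PP(\O_C\oplus\O_C(D))$, identifying via Lemmas \ref{geometryofruledsurface} and \ref{autofiberproduct} the two one-dimensional orbits over $\sigma$, ruling out other links by Lemma \ref{SarkisovIIIandIV}, and iterating the two type II links through Lemma \ref{A0_nontrivial} and the Sarkisov decomposition to obtain exactly the family $\PP(\O_{\AAA_0}\oplus\O_{\AAA_0}(b\sigma+\tau^*(D)))$, $b\in\mathbb{Z}$. The only cosmetic differences are that you phrase the elementary transformation sheaf-theoretically where the paper writes explicit transition maps, and you make the appeal to Theorem \ref{Floris} explicit where the paper leaves it implicit.
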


\begin{proof}
	The $\PP^1$-bundle $ \PP(\O_{\AAA_0} \oplus \O_{\AAA_0}(\tau^*(D)))$ is $\AAA_0$-isomorphic to the fiber product $\PP(\O_{C} \oplus \O_{C}(D))\times_C \AAA_0$. By Lemma \ref{SarkisovIIIandIV}, there exists a unique Sarkisov diagram of type IV that permutes the two fibrations of the fiber product. Moreover, an $\Autz(X)$-Sarkisov diagram of type I or II is uniquely determined by the datum of the blowup of an $\Autz(X)$-orbit of dimension one. By Lemmas \ref{geometryofruledsurface} and \ref{autofiberproduct}, it follows that the $\Autz(X)$-orbits of dimension one are precisely the intersection of $\pi^{-1}(\sigma)$ with the two disjoint sections of $\pi$ corresponding to the line subbundles $\O_{\AAA_0}\subset \O_{\AAA_0} \oplus \O_{\AAA_0}(\tau^*(D))$ and $\O_{\AAA_0}(\tau^*(D))\subset \O_{\AAA_0} \oplus \O_{\AAA_0}(\tau^*(D))$. 
	
	We can choose trivializations of $\tau\pi$ such that the transition maps equal:
	\[
	\begin{array}{ccc}
		U_j \times \FF_0 & \dashrightarrow & U_i \times \FF_0 \\
		(x,[y_0:y_1 \ ; z_0 : z_1]) & \longmapsto & (x,[y_0:\lambda_{ij}(x)y_1 \ ; z_0 : \alpha_{ij}(x)z_0 +  z_1]),
	\end{array}
	\]
	where $\alpha_{ij} \in \O_C(U_{ij})$ and $\lambda_{ij} \in \O_C(U_{ij})^*$ are the cocycles of $\O_C(D)$. Under this choice of trivializations, the two $\Autz(X)$-invariants curves are $l_{00} =\{y_0 = z_0 =0\}$ and $l_{10} =\{y_1 = z_0 =0\}$. The blowup of $l_{00}$ (resp. $l_{10}$) followed by the contraction of strict transform yields an $\Autz(X)$-equivariant birational map $X\dashrightarrow  \PP(\O_{\AAA_0} \oplus \O_{\AAA_0}(\sigma+ \tau^*(D)))$ (resp. $\PP(\O_{\AAA_0} \oplus \O_{\AAA_0}(-\sigma+ \tau^*(D)))$). Using the equivariant birational maps in Lemma \ref{A0_nontrivial}, we obtain an $\Autz(X)$-equivariant birational map $X\dashrightarrow \PP(\O_{\AAA_0} \oplus \O_{\AAA_0}(b\sigma+ \tau^*(D)))$ for every $b\in \mathbb{Z}$, which conjugates the respective automorphism groups. 
	
	When $b>0$, it also follows from Lemma \ref{A0_nontrivial} that the $\Autz(X)$-Sarkisov diagrams of type II starting from $\PP(\O_{\AAA_0} \oplus \O_{\AAA_0}(b\sigma+ \tau^*(D)))$ yield the Sarkisov links: $$\PP(\O_{\AAA_0} \oplus \O_{\AAA_0}(b\sigma+ \tau^*(D))) \dashrightarrow \PP(\O_{\AAA_0} \oplus \O_{\AAA_0}((b\pm 1)\sigma+ \tau^*(D))).$$
	Thus, there is no other $\PP^1$-bundle $\pi\colon X'\to S'$ than the ones stated, such that $\Autz(X)$ and $\Autz(X')$ are conjugate via a square birational map. 
\end{proof}

The fiber product $\PP(\O_{C} \oplus \O_{C}(D))\times_C \AAA_0$ is equipped with two projections: one onto $\AAA_0$ and another one onto $\PP(\O_{C} \oplus \O_{C}(D))$. The second projection is a $\PP^1$-bundle over $\PP(\O_{C} \oplus \O_{C}(D))$, whose automorphism group will be studied in the final section, see Lemma $\ref{A0xS}$. 

\subsection{The fiber product $\AAA_0\times_C \AAA_0$}

\begin{proposition}\label{A0A0notmax}
	For both projections onto $\AAA_0$, the automorphism group $\Autz(\AAA_0\times_C \AAA_0)$ is not relatively maximal.
\end{proposition}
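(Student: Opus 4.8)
By the involution of $X=\AAA_0\times_C\AAA_0$ exchanging the two factors, which swaps the two projections onto $\AAA_0$, it suffices to treat the projection $p_1$ onto the first factor. First I would set up the geometry: let $\sigma_0\subset\AAA_0$ be the unique minimal section, which is $\Autz(\AAA_0)$-invariant by Lemma \ref{geometryofruledsurface} \ref{geometryofruledsurface.3}, and write $S_1=p_1^{-1}(\sigma_0)$ and $S_2=p_2^{-1}(\sigma_0)$ for the preimages of the minimal sections of the two factors. By Blanchard's lemma (Corollary \ref{blanchard}) both $S_1$ and $S_2$ are $\Autz(X)$-invariant surfaces, and hence so is the curve $\Sigma=S_1\cap S_2=\sigma_0\times_C\sigma_0$. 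Note that $S_2$ is a section of $p_1$ over all of $\AAA_0$ and that, via $p_1$, the curve $\Sigma$ maps isomorphically onto $\sigma_0\subset\AAA_0$; thus $\Sigma$ is a section of $p_1$ over the curve $\sigma_0$.

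Next I would perform the elementary transformation of $p_1$ centred at $\Sigma$: blow up $\Sigma$ and contract the strict transform of $S_1=p_1^{-1}(\sigma_0)$. Since both centres are $\Autz(X)$-invariant, this produces an $\Autz(X)$-equivariant square birational map $\phi\colon X\dashrightarrow X'$ over $\AAA_0$, the induced map on the base $\AAA_0$ being the identity, where $\pi'\colon X'\to\AAA_0$ is again a $\PP^1$-bundle. The key computation is to identify the invariants of $\pi'$. Writing $X$ as $\AAA_0$-isomorphic to $\PP(\tau^*\E_0)$, the pullback of Atiyah's extension furnishes the canonical sequence $0\to\O_{\AAA_0}\to\tau^*\E_0\to\O_{\AAA_0}\to 0$, which in the sense of Definition \ref{definvariants} is the $\FF_0$-analogue of invariants $(\AAA_0,0,0)$; here $\Sigma$ is exactly the intersection of $p_1^{-1}(\sigma_0)$ with the section $S_2$ coming from the sub-line-bundle $\O_{\AAA_0}$. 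A transition-map computation identical to the one in Lemma \ref{A0_nontrivial} \ref{A0_nontrivial.1}—where the hypothesis $D\neq 0$ is used only to exhibit the invariant curve to be blown up, which here is supplied directly by the invariance of $S_1$ and $S_2$—then shows that $\tau\pi'$ is an $\FF_1$-bundle and that $\pi'$ has invariants $(\AAA_0,1,0)$; in particular the invariant divisor remains trivial.

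Finally I would conclude by contradiction. By Proposition \ref{prop:(A0,b,0)} (with $b=1$), the group $\Autz(X')$ is not relatively maximal. On the other hand, if $\Autz(X)$ were relatively maximal, then applying the definition to the equivariant square birational map $\phi$ would give $\phi\,\Autz(X)\,\phi^{-1}=\Autz(X')$; and relative maximality transfers along such an equality, since for any $\Autz(X')$-equivariant square birational map $\psi\colon X'\dashrightarrow X''$ the composite $\psi\phi$ is again $\Autz(X)$-equivariant and square, whence $\psi\,\Autz(X')\,\psi^{-1}=(\psi\phi)\,\Autz(X)\,(\psi\phi)^{-1}=\Autz(X'')$. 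Thus $\Autz(X')$ would be relatively maximal, a contradiction. Hence $\Autz(X)$ is not relatively maximal with respect to $p_1$, and by the symmetry noted at the outset, nor with respect to $p_2$. I expect the main obstacle to be the middle step, namely verifying that the invariant divisor of $X'$ is genuinely trivial and not merely of degree zero: a nontrivial degree-zero divisor would instead make $\Autz(X')$ relatively maximal by Proposition \ref{prop:decomposable_over_A0}, and the whole argument would collapse.
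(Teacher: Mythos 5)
Your proof is correct and follows essentially the same route as the paper's: both identify the $\Autz(X)$-invariant curve $\sigma_0\times_C\sigma_0$, perform the elementary transformation (blowup of this curve followed by contraction of the strict transform of the preimage of the minimal section of the base) to obtain an equivariant square birational map to a $\PP^1$-bundle with invariants $(\AAA_0,1,0)$, and then invoke Proposition \ref{prop:(A0,b,0)}. The only differences are presentational: the paper verifies the invariants by an explicit transition-map computation rather than by citing the computation of Lemma \ref{A0_nontrivial}, and it leaves implicit the final transfer-of-relative-maximality argument along the equivariant map, which you rightly spell out.
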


\begin{proof}
	Without loss of generality, we consider the projection onto the second factor $\pi\colon \AAA_0\times_C \AAA_0 \to \AAA_0$. Let $\sigma$ be the unique minimal section of $\tau\colon \AAA_0\to C$. We also choose trivializations of $\tau$ such that the transition maps of $\tau\pi$ equal
	\[
	\begin{array}{ccc}
		U_j \times \FF_0  & \dashrightarrow & U_i \times \FF_0  \\
		(x,[y_0:y_1 \ ; z_0 : z_1 ]) & \longmapsto & (x,[y_0: \alpha(x)^{-1}y_0 + y_1\ ; z_0 : \alpha(x)^{-1}z_0+z_1 ]),
	\end{array}
	\]
	By Lemma \ref{autofiberproduct}, we have that $\Autz(\AAA_0\times_C \AAA_0) \simeq \Autz(\AAA_0) \times_{\Autz(C)} \Autz(\AAA_0)$, so the curve $\widetilde{C}=\{y_0=z_0=0\}$ is $\Autz(\AAA_0\times_C \AAA_0)$-invariant. The blowup of $\widetilde{C}$ followed by the contraction of the strict transform of $\pi^{-1}(\sigma)$ yields an $\Autz(\AAA_0\times_C \AAA_0)$-equivariant square birational map $\AAA_0\times_C \AAA_0\dashrightarrow X_1$ locally given by
	\[
	\begin{array}{ccc}
		U_k \times \FF_0 & \dashrightarrow & U_k\times \FF_1 \\
		(x,[y_0:y_1 \ ; z_0 : z_1 ]) & \longmapsto & (x,[y_0:z_0y_1 \ ; z_0 : z_1 ]),
	\end{array}
	\]
	where $U_k$ is any trivializing open subset of $\tau\pi$. Then $\pi_1\colon X_1\to \AAA_0$ is a $\PP^1$-bundle such that $\tau\pi_1\colon X_1\to C$ is the $\FF_1$-bundle with the following transition maps
	\[
	\begin{array}{cccc}
		& U_j \times \FF_1 & \dashrightarrow & U_i \times \FF_1 \\
		& (x,[y_0:y_1 \ ; z_0 : z_1 ]) & \longmapsto & (x,[y_0:y_1 + \alpha(x)^{-1}z_0y_0 \ ; z_0 : \alpha(x)^{-1}z_0+z_1 ]).
	\end{array}
	\]
	In particular, $\pi_1$ has invariants $(\AAA_0,1,0)$; thus, $\Autz(X_1)$ is not relatively maximal by Proposition \ref{prop:(A0,b,0)}.
\end{proof}

Gathering the results of this section, we now conclude with:

\begin{proof}[Proof of Proposition \ref{main_prop:A0}]
	Assume that $\tau\pi$ is an $\FF_b$-bundle with $b>0$ and $\Autz(X)$ is relatively maximal. Then by Propositions \ref{degnonnuldec} and \ref{prop:(A0,b,0)}, the $\PP^1$-bundle $\pi$ has invariants $(\AAA_0,b,D)$, where $D\in \Pic^0(C)$ is non-trivial. By Lemma \ref{A0_nontrivial}, we have
	\[
	X \cong \PP(\O_{\AAA_0}\oplus \O_{\AAA_0}(b\sigma + \tau^*(D))),
	\]
	and there exists an $\Autz(X)$-equivariant square birational map $X \dashrightarrow \PP(\O_{\AAA_0}\oplus \O_{\AAA_0}( \tau^*(D)))$. This reduces to the case of $\FF_0$-bundles arising from a $\PP^1$-bundle over $\AAA_0$. Those are studied through Propositions \ref{FF0viaCxPP1}, \ref{A_0A_1notmax}, \ref{prop:decomposable_over_A0}, and \ref{A0A0notmax}.
\end{proof}

\section{Automorphisms of $\PP^1$-bundles over $S=\PP(\O_C\oplus \L)$, with $\L\in \Pic^0(C)$ non-trivial}\label{SectionSL}

Let $C$ be an elliptic curve and $\L$ a non-trivial line bundle of degree zero. In this section, $S$ denotes the geometrically ruled surface $\PP(\O_C\oplus \L)$, with the structure morphism $\tau\colon S\to C$, which admits two disjoint minimal sections of self-intersection. Those are not linearly equivalent but are $\Autz(S)$-invariant. We fix $\sigma$ to be the section of $\tau$ corresponding to the line subbundle $\L\subset \O_C\oplus \L$ such that $\O_S(\sigma) \simeq \O_S(1)$. 

\bigskip

The strategy of this section is similar as for $\PP^1$-bundles over $\AAA_0$. We show that if $X$ is a $\PP^1$-bundle over $S$ such that $\Autz(X)$ is relatively maximal, then $\Autz(X)$ is conjugate to the automorphism group of an $\FF_0$-bundle. The main result of this section is the following:

\begin{proposition}\label{main_prop:S}
	Let $\pi\colon X \to S$ be a $\PP^1$-bundle such that $\tau\pi$ is an $\FF_b$-bundle, with $b\in \mathbb{Z}$, and let $D$ be a divisor such that $\O_C(D)\cong \L$. Then $\Autz(X)$ is relatively maximal if and only if one of the following case occurs:
	\begin{enumerate}
		\item Case $b=0$. Then $X$ is isomorphic to one of the following $\PP^1$-bundles:
		\[
		S \times \PP^1,~S \times_C \AAA_1,~S \times_C \AAA_0,~ S \times_C \PP(\O_C\oplus \O_C(E));
		\]
		provided that $D$ has infinite order in the third case, and that in the last case $E\in \Pic^0(C)$ is non-trivial and not a multiple of $D$ in the last case. Moreover, the pair $(X,\pi)$ is superstiff in the first two cases, and it is not stiff in the last two.
		\item\label{main_prop:S.2} Case $b\neq 0$. Then $X$ is isomorphic to one of the following $\PP^1$-bundles:
		\begin{enumerate}
			\item\label{main_prop:S.2.i} The indecomposable $\PP^1$-bundle $X\cong \AAA_{(S,b,nD)}$ for some $b>0$, $n\in \{0,\ldots,b\}$, and $D$ has infinite order (see Lemma \ref{lem:induction}). There exists an $\Autz(X)$-equivariant square birational map 
			\[
			X \dashrightarrow S\times_C\AAA_0 = \AAA_{(S,0,0)}.
			\]
			\item\label{main_prop:S.2.ii} The decomposable $\PP^1$-bundle 
			$X\cong \PP(\O_{S}\oplus \O_{S}(b\sigma +{\tau}^{*}(nD+E)))$,
			for some $n\in \mathbb{Z}$, provided that $E$ is not a multiple of $D$. There exists an $\Autz(X)$-equivariant square birational map 
			\[
			X \dashrightarrow \PP(\O_{S}\oplus \O_{S}({\tau}^{*}(mD+E)))
			\]
			for every $m\in \mathbb{Z}$.
		\end{enumerate}
	\end{enumerate}
	Let $\pi'\colon X'\to T$ be a $\PP^1$-bundle over a geometrically ruled surface $T$. Moreover, the following hold:
	\begin{itemize}
		\item There exists an $\Autz(S\times_C \AAA_0)$-equivariant square birational map 
		\[
		S\times_C \AAA_0 \dashrightarrow X'
		\]
		if and only if $T\cong S$ and $X' \cong \AAA_{(S,b,nD)}$ as in Case \ref{main_prop:S.2} \ref{main_prop:S.2.i}.
		\item There exists an $\Autz(S\times_C \PP(\O_C\oplus \O_C(E)))$-equivariant square birational map 
		\[
		S\times_C \PP(\O_C\oplus \O_C(E)) \dashrightarrow X'
		\]
		if and only if $T\cong S$ and $X' \cong \PP(\O_{S}\oplus \O_{S}(b\sigma \oplus {\tau}^{*}(nD+E)))$ for some $b,n\in \mathbb{Z}$. If $b\neq 0$, then $X'$ is a $\PP^1$-bundle as in Case \ref{main_prop:S.2} \ref{main_prop:S.2.ii}.
	\end{itemize}
\end{proposition}

\subsection{$\PP^1$-bundles with invariants $(\PP(\O_C\oplus \L),b,0)$ with $b>0$}

\begin{lemma}\label{L-torsion}
	Assume that $\L$ is a torsion line bundle and let $\pi\colon X\to S$ be a $\PP^1$-bundle with invariants $(S,b,0)$, where $b>0$. Then $\Autz(X)$ is not relatively maximal.
\end{lemma}

\begin{proof}
	By Lemma \ref{FFb-bundle}, we can choose the trivializations of $\tau\pi$ such that the transition maps equal
	\[
	\begin{array}{ccc}
		U_j\times \FF_b & \dashrightarrow & U_i\times \FF_b \\
		(x,[y_0:y_1\ ; z_0:z_1]) & \longmapsto & (x,[y_0:y_1+ p_{ij}(x,z_0,z_1)y_0\ ; z_0:a_{ij}(x)z_1]),
	\end{array}
	\]
	where $p_{ij}\in \O_C(U_{ij})[z_0,z_1]_b$ and $a_{ij} \in \O_C(U_{ij})^*$ are the cocycles of $\L$. Under this choice of trivializations, the section $\sigma$ is the zero section $\{z_0=0\}$.
	The curve $\{y_0 =z_0 = 0\}$ is $\Autz(X)$-invariant, as it is the intersection of $\pi^{-1}(\sigma)$ with the surface spanned by the $(-b)$-sections along the fibers of $\tau\pi$. Its blowup followed by the contraction of the strict transform of the surface $\pi^{-1}(\sigma)=\{z_0=0\}$ yields an $\Autz(X)$-equivariant square birational map $X\dasharrow X_{b+1}$, which is locally given by
	\[
	\begin{array}{ccc}
		U_i\times \FF_b & \dashrightarrow & U_i\times \FF_{b+1} \\
		(x,[y_0:y_1\ ;z_0:z_1]) & \longmapsto & (x,[y_0:z_0y_1\ ;z_0:z_1]).
	\end{array}
	\]
	Computing the transition maps of the $\FF_{b+1}$-bundle $\tau\pi_{b+1}\colon X_{b+1}\to C$, we get that the $\PP^1$-bundle $\pi_{b+1}\colon X_{b+1}\to S$ has invariants $(S,b+1,0)$. Repeating this process, we get an $\Autz(X)$-equivariant square birational map $\phi\colon X\dasharrow X_{B}$ for some $B>b$ which is a multiple of the order of $\L$. In particular, $\L^B \simeq \O_C$. The transition maps of the $\FF_{B}$-bundle $\tau\pi_B \colon X_B \to C$ equal
	\[
	\begin{array}{cccc}
		\psi_{ij}\colon & U_j\times \FF_{B}& \dashrightarrow & U_i\times \FF_{B} \\
		&(x,[y_0:y_1\ ;z_0:z_1]) & \longmapsto & (x,[y_0:y_1 + z_0^{B-b}p_{ij}(x,z_0,z_1)y_0\ ;z_0:a_{ij}(x)z_1]).
	\end{array}
	\]
	The base locus of $\phi^{-1}$ is the curve $l_{10}^{X_B} = \{y_1=z_0=0\}\subset X_{B}$. Since $\L^{-B} \simeq \O_C$, there exists $\xi \in \Gamma(C,\L^{-B})\setminus \{0\}$ vanishing nowhere, such that $\xi_{|U_i}  =a_{ij}^{-B} \xi_{|U_j}$. Then, define the following automorphisms: 
	\[
	\begin{array}{cccc}
		f_{i}\colon & U_i\times \FF_{B} & \dashrightarrow & U_i\times \FF_{B} \\
		& (x,[y_0:y_1\ ;z_0:z_1]) & \longmapsto & (x,[y_0:y_1+ \xi_{|U_i}(x) z_1^By_0\ ;z_0:z_1]).
	\end{array}
	\]
	Since $f_{i} \psi_{ij} = \psi_{ij} f_{j}$, the $(f_i)_i$ yields an element $f\in \Autz(X)$ which does not leave invariant the curve $l_{10}^{X_B}$. Thus, $\Autz(X)$ is not relatively maximal.
\end{proof}

Next, we show that if $\L$ has infinite order, then there exists an $\Autz(X)$-equivariant birational map to an $\FF_0$-bundle.

\begin{lemma}\label{Sinvarianttriviallem}
	Assume $\L$ has infinite order and let $\pi\colon X\to S$ be a $\PP^1$-bundle with invariants $(S,b,0)$, where $b>0$. We denote by $S_b\subset X$ the surface spanned by the $(-b)$-sections along the fibers of $\tau\pi$. Then the following hold:
	\begin{enumerate}
		\item \label{Sinvarianttriviallem.1} There exists an $\Autz(X)$-invariant curve $l^X_{10}\subset \pi^{-1}(\sigma)$, disjoint from $S_b$, such that its blowup followed by the contraction of the strict transform of $\pi^{-1}(\sigma)$ yields an $\Autz(X)$-equivariant birational map $\psi_{b,b-1}$ and the following commutative diagram
		\[
		\begin{tikzcd}
			X_{b-1} \arrow[rd, "\pi_{b-1}" swap] && \arrow[ll,dashed,"\psi_{b,b-1}" swap] \arrow[ld,"\pi"] X \\
			& S &,
		\end{tikzcd}
		\]		
	 	where $\pi_{b-1}\colon X_{b-1}\to S$ is a $\PP^1$-bundle such that $\tau\pi_{b-1}$ is an $\FF_{b-1}$-bundle. Moreover, if $b > 1$, then $\pi_{b-1}$ has invariants $(S,b-1, 0)$.
		\item\label{Sinvarianttriviallem.3} $\Autz(X)$ is conjugate to an algebraic subgroup of $\Autz(X_0)$, where $X_0 = \PP^1\times S$ or $\AAA_0\times_C S$.
	\end{enumerate}
\end{lemma}	

\begin{proof}
	We fix trivializations of $\tau$ such that $\sigma$ is the zero section $\{z_0=0\}$, and the transition maps of $(\tau\pi)$ equal
	\[
	\begin{array}{cccc}
		\phi_{ij}\colon & U_j\times \FF_b & \dashrightarrow & U_i\times \FF_b \\
		&(x,[y_0:y_1\ ;z_0:z_1]) & \longmapsto  & (x,[y_0:y_1 + p_{ij}(x,z_0,z_1)y_0\ ;z_0:a_{ij}(x)z_1]),
	\end{array}
	\]
	where $a_{ij}\in \O_C(U_{ij})^*$ are the cocycles of $\L$ and $p_{ij}(x,z_0,z_1)\in \O_C(U_{ij})[z_0,z_1]_b$.
	
	(1) By Lemma \ref{FFb-bundle}, there exists a rank $2$-vector bundle $\E$ such that $\PP(\E)\simeq X$ which fits into the short exact sequence 
	$$0\to \O_S(b\sigma) \to \E \to \O_S\to 0.$$ 
	Restricting to $\sigma$ yields a $\PP^1$-bundle $\pi_{\sigma}\colon \PP(\E_{\sigma})\to \sigma$, where $\E_{\sigma}$ is a rank-$2$ vector bundle which fits into the exact sequence
	$$0\to \O_{\sigma}(b \sigma_{|\sigma}) \to \E_{\sigma} \to \O_{\sigma}\to 0.$$ 
	Taking $z_0=0$ in the transition maps of $\tau\pi$, we get that the transition maps of the $\PP^1$-bundle $\pi_{\sigma}$ over $\sigma$ equal
	\[
	\begin{array}{ccc}
		U_j \times \PP^1 & \dasharrow &U_i\times \PP^1 \\
		(x,[y_0:y_1]) & \longmapsto & (x,[y_0:a_{ij}(x)^{-b} y_1+ p_{ij}(x,0,1)y_0]).
	\end{array}
	\]
	This shows that $\O_\sigma(b\sigma_{|\sigma}) \simeq \L^{-b}$. Since $\L$ has infinite order, it implies that $\L^{-b}$ is a non-trivial line bundle of degree zero and $\mathrm{Ext}^1(\O_{\sigma},\L^{-b})=0$; hence, $\E_{\sigma}$ is $\sigma$-isomorphic to $\O_{\sigma} \oplus \L^{-b}$. By Lemma \ref{geometryofruledsurface}, the geometrically ruled surface $\PP(\E_\sigma)$ admits exactly two disjoint sections of self-intersection zero. This yields two $\Autz(X)$-invariant curves: $l_{00}^X = \pi^{-1}(\sigma) \cap S_b$, and $l_{10}^X\subset \pi^{-1}(\sigma)$ disjoint from $S_b$.
	
	Next, we show that we can choose the trivializations of $\tau\pi$, such that $l_{10}^X=\{y_1=z_0=0\}$. For each $i$, there exist $u_i,v_i \in \O_\sigma(U_i)$ such that ${l_{10}^X}_{|U_i} = \{x,[u_i(x):v_i(x)\ ;0:1]\}$. Since $l_{10}^X$ is disjoint from the curve $l_{00}^X$, we can also assume that $u_i=1$. 
	For each $i$, we define the automorphism 
	\[
	\begin{array}{cccc}
		f_i\colon &  U_i\times \FF_b & \longrightarrow &U_i\times\FF_b\\
		& (x,[y_0:y_1\ ;z_0:z_1])& \longmapsto &(x,[y_0: y_1 - v_i(x)z_1^by_0\ ;\ z_0:z_1]),
	\end{array}
	\]
	which sends $l_{10}^X$ to $\{y_1=z_0=0\}$. Computing the composition $f_i \phi_{ij} f_j^{-1}$, we obtain the new transition maps of $\tau\pi$. Those replace $p_{ij}$ by new polynomials $\tilde{p}_{ij}\in \O_C(U_{ij})[z_0,z_1]_b$ such that $\tilde{p}_{ij}(x,0,1) = 0$, i.e., $z_0$ divides $\tilde{p}_{ij}$.
	The blowup of $l_{10}^X = \{y_1=z_0=0\}\subset X$ followed by the contraction of the strict transform of $\pi^{-1}(\sigma)=\{z_0=0\}$ yields an $\Autz(X)$-equivariant birational map $\psi_{b,b-1}\colon X\dasharrow X_{b-1}$, where $\tau\pi_{b-1}\colon X_{b-1}\to C$ is an $\FF_{b-1}$-bundle, and $\psi_{b,b-1}$ is locally given by 
	\[
	\begin{array}{ccc}
		U_i\times \FF_b & \dashrightarrow & U_i\times \FF_{b-1} \\
		(x,[y_0:y_1\ ;z_0:z_1]) &\longmapsto &(x,[z_0y_0:y_1\ ;z_0:z_1]).
	\end{array}
	\]
	Computing the transition maps of $\tau \pi_{b-1}$, we obtain:
		\[
	\begin{array}{cccc}
		 & U_j\times \FF_b & \dashrightarrow & U_i\times \FF_b \\
		&(x,[y_0:y_1\ ;z_0:z_1]) & \longmapsto  & (x,[y_0:y_1 + z_0^{-1}\tilde{p}_{ij}(x,z_0,z_1)y_0\ ;z_0:a_{ij}(x)z_1]),
	\end{array}
	\]
	which is well-defined as $z_0$ divides $\tilde{p}_{ij}$.
	 If $b>1$, we obtain that $\pi_{b-1}\colon X_{b-1}\to S$ has invariants $(S,b-1,0)$. 
	 
	 (2): Applying inductively \ref{Sinvarianttriviallem.1}, we get an $\Autz(X)$-equivariant birational map $X\dashrightarrow X_0$, where $X_0$ is the $\FF_0$-bundle over $C$ with the transition maps
	\[
	\begin{array}{ccc}
		U_j \times \FF_0 & \dashrightarrow & U_i\times \FF_0 \\
		(x,[y_0:y_1 \ ;z_0:z_1]) & \longmapsto & (x,[y_0:y_1 + c_{ij}(x)y_0 \ ;z_0:a_{ij}(x)z_1]),
	\end{array}
	\]
	and where $c_{ij} \in \O_C(U_{ij})$. This implies that $X_0 \simeq \widetilde{S} \times_C S$, where $\widetilde{S}$ is a $\PP^1$-bundle over $C$ with transition matrices 
	$$
	\begin{pmatrix}
		1 & 0 \\ c_{ij} & 1
	\end{pmatrix}.
	$$
	
	Therefore, the geometrically ruled surface $\widetilde{S}$ is the projectivization of a rank-$2$ vector bundle which yields an element of $\mathrm{Ext}^1(\O_C,\O_C)$. This latter $\kk$-vector space is a one-dimensional: if the extension splits then $\widetilde{S}\cong C\times \PP^1$; else, $\widetilde{S}\cong \AAA_0$. 
\end{proof}

\subsection{$\PP^1$-bundles with invariants $(\PP(\O_C\oplus \L),b,D)$ such that $b>0$ and $D\in \Pic^0(C)$ non-trivial }

~\bigskip

Recall that $S = \PP(\O_C\oplus \L)$ admits two disjoint sections of minimal self-intersection, corresponding to the line subbundles $\O_C$ and $\L$. We also choose $\sigma$ corresponding to $\L$ and such that $\O_S(1) \simeq \O_S(\sigma)$. 

\bigskip

The idea of the following lemma is similar to Lemmas $\ref{A0_nontrivial}$ and $\ref{Sinvarianttriviallem}$. Instead of blowing up curves contained in $\pi^{-1}(\sigma)$, we search for curves contained in $\pi^{-1}(\tilde{\sigma})$, where $\tilde{\sigma}$ is the section corresponding to the line subbundle $\O_C\subset \O_C \oplus \L$. Since $\sigma$ and $\tilde{\sigma}$ are disjoint, we have $\O_S(b\sigma)_{|\tilde{\sigma}} \cong \O_{\tilde{\sigma}}$, so the restriction of the short exact sequence
\[
0 \to \O_S(b\sigma+\tau^*(D)) \to \E \to \O_S \to 0
\]
yields an element of $\mathrm{Ext}^1(\O_{\tilde{\sigma}}, \O_{\tilde{\sigma}}(D))$, which is again trivial. Therefore, the surface $\pi^{-1}(\tilde{\sigma})$ contains two $\Autz(X)$-invariant curves.

\begin{lemma}\label{SLnontriv}
		Let $\pi\colon X\to S$ be a $\PP^1$-bundle with invariants $(S,b,D)$, where $b>0$ and $D$ is non-trivial of degree zero. The following hold:
		\begin{enumerate}
			\item\label{SLnontriv.1} There exists an $\Autz(X)$-invariant curve $l_X^{11}\subset \pi^{-1}(\tilde{\sigma})$, whose blowup followed by the contraction of the strict transform of $\pi^{-1}(\tilde{\sigma})$ yields an $\Autz(X)$-equivariant square birational map $\psi_{b,b-1}$ and the following commutative diagram:
			\[
			\begin{tikzcd}
				X_{b-1} \arrow[rd, "\pi_{b-1}" swap] && \arrow[ll,dashed,"\psi_{b,b-1}" swap] \arrow[ld,"\pi"] X \\
				& S &,
			\end{tikzcd}
			\]		
			where $\pi_{b-1}\colon X_{b-1} \to S$ is a $\PP^1$-bundle such that $\tau \pi_{b-1}$ is an $\FF_{b-1}$-bundle over $C$. If $b>1$, then $\pi_{b-1}$ has invariants $(S,b-1,D-\D)$, where $\D\in \Pic^0(C)$ is such that $\L \cong \O_C(\D)$.
			\item If $\Autz(X)$ is relatively maximal, then $\Autz(X)$ is conjugate to an algebraic subgroup of the automorphism group of an $\FF_0$-bundle.
		\end{enumerate} 	
\end{lemma}

\begin{proof}	
		(1): By Lemma \ref{FFb-bundle} \ref{canonicalextension}, there exists a rank $2$-vector bundle $\E$ over $S$ such that $\PP(\E)\simeq X$, which fits into the short exact sequence $$0\to \O_S(b\sigma+\tau^*(D)) \to \E \to \O_S\to 0.$$ 
		The restriction to $\tilde{\sigma}$ yields the short exact sequence $$0\to \O_{\tilde{\sigma}}(b \sigma_{| \tilde{\sigma}}+D) \to \E_{\tilde{\sigma}} \to \O_{\tilde{\sigma}}\to 0.$$ 
		Since the sections $\sigma$ et $\tilde{\sigma}$ are disjoint, the $0$-cycle $\sigma\cdot \tilde{\sigma}$ is trivial, so $\O_\sigma(b \sigma_{| \tilde{\sigma}})\simeq \O_{\tilde{\sigma}}$. Since $D$ is non-trivial of degree $0$, it follows that $\mathrm{Ext}^1(\O_{\tilde{\sigma}},\O_{\tilde{\sigma}}(D)) =0$, so the geometrically ruled surface $X_{\tilde{\sigma}}$ is decomposable and isomorphic to $\PP(\O_{\tilde{\sigma}} \oplus \O_{\tilde{\sigma}}(D))$; hence, it admits exactly two disjoint sections of self-intersection zero by Lemma \ref{geometryofruledsurface}. This gives two $\Autz(X)$-invariant curves contained in $\pi^{-1}(\tilde{\sigma})$: $l^{X}_{01}\subset S_b$, where $S_b$ is the surface spanned by the $(-b)$-sections along the fibers of $\tau\pi$, and $l^{X}_{11}$ is disjoint from $S_b$.
		
		As in the proof of Lemma \ref{Sinvarianttriviallem}, we can choose the trivializations of $\tau\pi$ such that the transition maps equal
		\[
		\begin{array}{cccc}
		\phi_{ij}	&	U_j \times \FF_b & \dasharrow &U_i\times \FF_b \\
			& (x,[y_0:y_1\ ;z_0:z_1]) & \longmapsto &(x,[y_0:\lambda_{ij}(x) y_1+ p_{ij}(x,z_0,z_1)y_0\ ; z_0: a_{ij}(x)z_1]),
		\end{array}
		\]
		where $a_{ij},\lambda_{ij}\in \O_{\tilde{\sigma}}(U_{ij})^*$ are the cocycles of the line bundles $\L$ and $\O_{C}(D)$, respectively; and $z_1$ divides $p_{ij}(x,z_0,z_1)\in \O_{\tilde{\sigma}}(U_{ij})[z_0,z_1]_b$. Under this choice of trivializations, we have $l_{01}^X = \{y_0 = z_1 = 0\}$ and $l_{11}^X = \{y_1 = z_1 = 0\}$
		
		The blowup of $l^X_{11}=\{y_1=z_1=0\}$ followed by the contraction of the strict transform of the surface $\pi^{-1}(\tilde{\sigma})=\{z_1=0\}$ yields an $\Autz(X)$-equivariant birational map $\psi_{b,b-1} \colon X \dashrightarrow X_{b-1}$, locally given by
		 \[
		 \begin{array}{cccc}
		 			\psi_i\colon & U_i \times \FF_b & \dashrightarrow &U_i\times \FF_{b-1} \\
		& (x,[y_0:y_1\ ;z_0:z_1])& \longmapsto &(x,[z_1y_0: y_1 \ ;\ z_0:z_1]).
		 \end{array}
		 \]
		 The projection onto $(x,[z_0:z_1])$ equips $X_{b-1}$ with a structure of $\PP^1$-bundle $\pi_{b-1}\colon X_{b-1}\to S$ such that $\tau \pi_{b-1}$ is an $\FF_{b-1}$-bundle over $C$. Computing $\psi_i \phi_{ij} \psi_j^{-1}$, we obtain the transition maps of $\tau\pi_{b-1}$:
	 	\[
	 	\begin{array}{cccc}
			&	U_j\times \FF_{b-1} & \dashrightarrow &U_i\times \FF_{b-1}\\
			& (x,[y_0:y_1\ ;z_0:z_1])& \longmapsto & (x,[y_0:{a_{ij}(x)}^{-1}\lambda_{ij}(x)y_1 +{a_{ij}(x)}^{-1}z_1^{-1} p_{ij}(x,z_0,z_1)y_0\ ;z_0:a_{ij}(x)z_1]).
	 	\end{array}
	 	\]
		This is well-defined as $z_1$ divides $p_{ij}$. The coefficients ${a_{ij}(x)}^{-1}\lambda_{ij}(x)$ are the cocyles of the line bundle $\L^{-1} \otimes \O_C(D)$. Therefore, if $b>1$, then $\pi_{b-1}$ has invariants $(S,b-1, D-\D)$. 
		
		(2): We apply \ref{SLnontriv.1} inductively. The induction terminates when one of the two following cases arise: either we obtain an $\Autz(X)$-equivariant birational map $X\dashrightarrow X_0$, where $X_0$ is an $\FF_0$-bundle, or $D-n\D$ is trivial for some $n<b$. In the second case, $\L$ has infinite order by Lemma \ref{L-torsion}, and we conclude using Lemma \ref{Sinvarianttriviallem}. 
\end{proof}

Combining the previous results, we obtain the analogue of Proposition $\ref{equivariantto0.A0}$:

\begin{proposition}\label{equivariantto0.SL}
	Let $\pi\colon X\to S$ be a $\PP^1$-bundle with invariants $(S,b,D)$, where $b>0$, and such that $\Autz(X)$ is relatively maximal. Then $\deg(D)=0$ and there exists an $\Autz(X)$-equivariant square birational map $X\dashrightarrow X'$, where $\pi'\colon X'\to S$ is a $\PP^1$-bundle such that $\tau\pi'$ is an $\FF_0$-bundle.
\end{proposition}

By Proposition $\ref{equivariantto0.SL}$, it remains to study the automorphism groups of $\FF_0$-bundles over $C$ that arise from a $\PP^1$-bundle structure over $\PP(\O_C\oplus \L)$. 

\subsection{The fiber product $\AAA_0\times_C \PP(\O_C\oplus \L)$}

~\medskip

Recall that we work over an elliptic curve $C$. Let $D$ a non-trivial divisor of degree zero such that $\L\cong \O_C(D)$ and 
\[
S=\PP(\O_C\oplus \O_C(D)).
\] 
We denote by 
$\tau\colon S\to C$ and $\tau_0\colon \AAA_0\to C$ the structure morphisms.

\begin{lemma}\label{lem:induction}
	Let $\AAA_0\times_C S$. For every integer $b\geq 0 $ and $n\in \{0,\cdots,b\}$, there exists a $\PP^1$-bundle  \[
	\pi_{(S,b,nD)}\colon \AAA_{(S,b,nD)}\to S
	\]
	and an $\Autz(\AAA_0\times_C S)$-equivariant square birational map $\AAA_0\times_C S\dashrightarrow \AAA_{(S,b,nD)}$. Moreover, the following hold:
	\begin{enumerate}
		\item For $b=n=0$, then $\AAA_{(s,0,0)}=\AAA_0 \times_C S$ and $\pi_{(s,0,0)}$ is the projection onto $S$.
		
		\item For $b \geq 0$ and $n\in \{0,\ldots ,b\}$, we construct $\AAA_{(S,b+1,nD)}$ and $\AAA_{(S,b+1,(n+1)D)}$ from $\AAA_{(S,b,nD)}$. The $\PP^1$-bundle $\AAA_{(S,b,nD)}$ has two $\Autz(\AAA_{(S,b,nD)})$-invariant curves $l_{00}$ and $l_{01}$ over the minimal sections of $S$. Let $m\in \{0,1\}$, then the blowup of $l_{0m}$ followed by the contraction of the strict transform of $\pi_{(S,b,nD)}^{-1}(\pi_{(S,b,nD)}(l_{0m}))$ yields an $\Autz(\AAA_{(S,b,nD)})$-equivariant square birational map 
		\[
		\phi_{b,n,m}\colon \AAA_{(S,b,nD)} \dashrightarrow \AAA_{(S,b+1,(n+m)D)}.
		\]
		Moreover, the $\PP^1$-bundle $\AAA_{(S,b+1,(n+m)D)}$ has invariants $(S,b+1,(n+m)D)$.
		\item The $\FF_b$-bundle $\tau \pi_{(S,b,nD)}$ has transition maps of the form
		\[
		\begin{array}{ccc}
			U_j \times \FF_b & \dashrightarrow & U_i\times \FF_b \\
			(x,[y_0:y_1 \ ;z_0:z_1]) & \longmapsto & (x,[y_0:a_{ij}(x)^ny_1 + a_{ij}(x)^n \alpha_{ij}(x) z_0^{b-n}z_1^ny_0 \ ;z_0:a_{ij}(x)z_1]).
		\end{array}
		\]
		where 
		\[
		\begin{pmatrix}
			1 & \alpha_{ij} \\ 0 & 1
		\end{pmatrix}
		\text{ and }		
		\begin{pmatrix}
			1 & 0 \\ 0 & a_{ij}
		\end{pmatrix}
		\]
		are respectively the transition matrices of $\AAA_0$ and $S$.
	\end{enumerate}
\end{lemma}
	
\begin{proof}
	 Let $p_1,p_2$ be the projections from $\AAA_0\times_C S$ onto the first and second factors, respectively. Let $\sigma_0$ be the unique minimal section of $\tau_0$, and let $\sigma$ and $\sigma_D$ be the two disjoint minimal sections of $\tau$ (see Lemma \ref{geometryofruledsurface}). By Lemma \ref{autofiberproduct}, the curves $l_{00}=p_1^{-1}(\sigma_0) \cap p_2^{-1}(\sigma_D) $ and $l_{01}=p_1^{-1}(\sigma_0) \cap p_2^{-1}(\sigma)$ are $\Autz(X)$-invariant.
	
	We can choose open subsets $U_i\subset C$ which trivialize simultaneously the $\PP^1$-bundles $\tau$ and $\tau_0$, and such that the transition maps of the $\FF_0$-bundle $\AAA_0 \times_C S$ over $C$ equal
	\[
	\begin{array}{ccc}
		U_j \times \FF_0 & \dashrightarrow & U_i\times \FF_0 \\
		(x,[y_0:y_1\ ;z_0:z_1]) & \longmapsto & (x,[y_0:y_1+\alpha_{ij}(x)y_0\ ;z_0:a_{ij}(x)z_1]),
	\end{array}
	\]
	where $a_{ij}\in \O_C(U_{ij})^*$ are the cocycles of the line bundle $\O_C(D)$, and $\alpha_{ij}\in \O_C(U_{ij})$ are given in Lemma $\ref{transitionAtiyah}$. 
	
	Under this choice of coordinates, $l_{00} = \{ y_0=z_0=0\}$ and $l_{01}=\{y_0=z_1=0\}$. For $n\in\{0,1\}$, the blowup $l_{0i}$ followed by the contraction of $p_2^{-1}(p_2(l_{0n}))$ yields an $\Autz(\AAA_0\times_C S)$-equivariant birational map $\phi_{0,0,n}\colon \AAA_0\times_C S\dashrightarrow \AAA_{(S,1,nD)}$, where $\AAA_{(S,1,nD)}$ is a $\PP^1$-bundle with invariants $(S,1,nD)$. This can be seen as follows.
	Locally, $\phi_{0,0,n}$ is given by
	\[
	\begin{array}{ccc}
		U \times \FF_0  & \dashrightarrow & U \times \FF_1 \\
		(x,[y_0:y_1\ ;z_0:z_1]) & \longmapsto & (x,[y_0:z_n y_1 \ ;z_0:z_1]).
	\end{array}
	\]
	Hence, it follows that the transition maps of $\AAA_{(S,1,0)}$ and $\AAA_{(S,1,D)}$ respectively equal
	\[
	\begin{array}{ccc}
		U \times \FF_0  & \dashrightarrow & U \times \FF_1 \\
		(x,[y_0:y_1\ ;z_0:z_1]) & \longmapsto & (x,[y_0: y_1 +\alpha_{ij}(x)y_0z_0 \ ;z_0:a_{ij}(x)z_1]).,\\
		
		(x,[y_0:y_1\ ;z_0:z_1]) & \longmapsto & (x,[y_0: a_{ij}(x) y_1 +a_{ij}(x) \alpha_{ij}(x)y_0z_1 \ ;z_0:a_{ij}(x)z_1]).
	\end{array}
	\]
	
	Assume from now on that $b>0$ and let $X=\AAA_{(S,b,nD)}$. The surface $\{y_0=0\}$ is spanned by the $(-b)$-sections along the fibers of the $\FF_b$-bundle $\tau \pi_{(S,b,nD)}$ over $C$; hence, it is $\Autz(X)$-invariant. In particular, the two curves $l_{00}^X = \{y_0=z_0=0\}$ and $l_{01}^X = \{y_0=z_1=0\}$ are $\Autz(X)$-invariant. Then the blowup of $l_{0m}$ followed by the contraction of the strict transform of $\pi_{(S,b,nD)}^{-1}(\pi_{(S,b,nD)}(l_{0m}))$ yields an $\Autz(X)$-equivariant square birational map $\phi_{b,n,m}\colon X\dashrightarrow \AAA_{(S,b+1,(n+m)D)}$, locally given by 
	\[
	\begin{array}{ccc}
		U_i \times \FF_b & \dashrightarrow & U_i\times \FF_{b+1} \\
		(x,[y_0:y_1\ ; z_0:z_1]) & \longmapsto & (x,[y_0:z_m y_1\ ; z_0:z_1]).
	\end{array}
	\]
	At each step above, using the birational map $\phi_{b,n,m}$ and the transition maps to $\AAA_0\times_C S$, we obtain the transition maps of the $\AAA_{(S,b,nD)}$.
\end{proof}

In the following lemma, we show that $\AAA_0\times_C S$ is equivariantly birational to all the other models $\AAA_{(S,b,nD)}$, provided that $D$ has infinite order.

\begin{lemma}\label{equivariantlinksfromAAA}
	 Assume that $D$ has infinite order and let $X=\AAA_{(S,b,nD)}$. Then the $\Autz(X)$-invariant curves are all of the form 
	 \[
	 l_{kl}^X = \{y_k = z_l = 0 \},
	 \]
	 where $k,l\in \{0,1\}$. The blowup of $l_{kl}^X $ followed by the contraction of the strict transform of $\pi_{(S,b,nD)}^{-1} (\pi_{(S,b,nD)}(l_{kl}^X))$ yields an $\Autz(X)$-equivariant square birational map $\phi_{kl}\colon X\dashrightarrow Y$. For each pair $(b,n)$, the following table lists the possible choices for $l_{kl}$ and the resulting varieties $Y$:
	
	\bigskip 
	
	\begin{center}
		\begin{tabular}{|c|c|c|}
			\hline 
			$(b,n)$ & $(k,l)$ & $ Y$ \\
			\hline
			$(0,0)$ & 
			\begin{tabular}{c}
				$(0,0)$  \\ \hline $(0,1)$
			\end{tabular} & 
			\begin{tabular}{c}
				$\AAA_{(S,1,0)}$  \\ \hline $\AAA_{(S,1,D)}$
			\end{tabular} \\
			
			\hline 
			$b>n=0$ & 
			\begin{tabular}{c}
				$(0,0)$  \\ \hline $(0,1)$ \\ \hline $(1,0)$
			\end{tabular} & 
			\begin{tabular}{c}
				$\AAA_{(S,b+1,0)}$  \\ \hline $\AAA_{(S,b+1,D)}$ \\ \hline $\AAA_{(S,b-1,0)}$
			\end{tabular} \\
			\hline
			
			\hline 
			$b=n>0$ & 
			\begin{tabular}{c}
				$(0,0)$  \\ \hline $(0,1)$ \\ \hline $(1,1)$
			\end{tabular} & 
			\begin{tabular}{c}
				$\AAA_{(S,b+1,bD)}$  \\ \hline $\AAA_{(S,b+1,(b+1)D)}$ \\ \hline $\AAA_{(S,b-1,(b-1)D)}$
			\end{tabular} \\
			\hline
			
			$b>n>0$ & 
			\begin{tabular}{c}
				$(0,0)$  \\ \hline $(0,1)$ \\ \hline $(1,0)$ \\ \hline $(1,1)$
			\end{tabular} & 
			\begin{tabular}{c}
				$\AAA_{(S,b+1,nD)}$  \\ \hline $\AAA_{(S,b+1,(n+1)D)}$ \\ \hline $\AAA_{(S,b-1,nD)}$ \\ \hline $\AAA_{(S,b-1,(n-1)D)}$
			\end{tabular} \\
			\hline
			
		\end{tabular}
	\end{center}

	\bigskip

	Moreover, every $\phi_{kl}$ is induced by a Sarkisov diagram of type II starting from $\pi_{(S,b,nD)}\colon X\to S$ and such that $\phi_{kl} \Autz(X) \phi_{kl}^{-1} = \Autz(Y)$. Finally, there are no other $\Autz(X)$-equivariant Sarkisov diagrams of type II apart from those listed above.
\end{lemma}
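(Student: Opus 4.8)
The plan is to follow the same template already established in Lemmas \ref{A0_nontrivial}, \ref{Sinvarianttriviallem}, and \ref{SLnontriv}, where the strategy is to first identify all $\Autz(X)$-invariant curves of dimension one, then compute the effect of blowing them up on the triplet of invariants. First I would determine the $\Autz(X)$-orbit structure: by Lemma \ref{autofiberproduct} \ref{autofiberproduct.1} we have $\Autz(X) \simeq \Autz(S) \times_{\Autz(C)} \Autz(\AAA_0)$ for the base model, and since $D$ has infinite order, Lemma \ref{geometryofruledsurface} \ref{geometryofruledsurface.3} and \ref{geometryofruledsurface.2} tell us that the $\Autz(S)$-invariant sections are $\sigma$ and $\tilde\sigma$ (corresponding to the two minimal sections of $S$), and the unique $\Autz(\AAA_0)$-invariant section of $\AAA_0$ is $\{y_0=0\}$. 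The point is that, in the explicit trivializations of Definition \ref{defAAA}, the surface $\{y_0=0\}$ spanned by the $(-b)$-sections and the surface $\{y_1=0\}$ (when it is $\Autz(X)$-invariant) meet $\pi^{-1}(\sigma)=\{z_0=0\}$ and $\pi^{-1}(\tilde\sigma)=\{z_1=0\}$ in the curves $l^X_{kl}=\{y_k=z_l=0\}$. I would argue, exactly as in the proofs of the cited lemmas (restricting the canonical extension of Lemma \ref{FFb-bundle} \ref{canonicalextension} to $\sigma$ and to $\tilde\sigma$ and using that $\mathrm{Ext}^1$ vanishes for a non-trivial degree-zero line bundle), that these are precisely the one-dimensional $\Autz(X)$-orbits, and which of the four curves actually exist depends on whether $n=0$ (so $\{y_1=0\}$ restricted to $\tilde\sigma$ splits or not) and whether $n=b$; this is what produces the case distinction on $(b,n)$ in the table.

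Next I would carry out the transition-map computation for each admissible $(k,l)$. The blowup of $l^X_{kl}$ followed by the contraction of the strict transform of $\pi^{-1}(\pi(l^X_{kl}))$ is locally the elementary transformation $(x,[y_0:y_1;z_0:z_1]) \mapsto (x,[z_l y_0 : y_1 ; z_0:z_1])$ or its inverse, exactly as written out in Lemmas \ref{A0_nontrivial} and \ref{SLnontriv}. Conjugating the transition maps of Definition \ref{defAAA} by these local maps and reading off the resulting cocycles, I obtain a new $\FF_{b\pm 1}$-bundle whose transition maps are again of the form in Definition \ref{defAAA} with the shifted parameters. The bookkeeping is that contracting toward $\{z_0=0\}$ raises $b$ by one and may raise $n$, while contracting toward $\{z_1=0\}$ lowers $b$ by one and may lower $n$; matching the power of $a_{ij}$ appearing on the $y_1$-coefficient against $a_{ij}(x)^n$ gives the precise value of the new $n$ recorded in the table. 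Each such $\phi_{kl}$ is square over $S$ because the base point lies over an $\Autz(S)$-invariant section, hence the base locus of $\phi_{kl}^{-1}$ is again $\Autz(Y)$-invariant, giving $\phi_{kl}\Autz(X)\phi_{kl}^{-1}=\Autz(Y)$.

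To see that these $\phi_{kl}$ are exactly the Sarkisov diagrams of type II, I would invoke the $2$-ray game: a type I or II diagram with $\pi_{(S,b,nD)}$ as LHS Mori fiber space is determined by the blowup of a one-dimensional $\Autz(X)$-orbit, and by the first step these are all among the $l^X_{kl}$. Since each $l^X_{kl}$ is a section over its image (an elliptic curve isomorphic to $C$) rather than a multisection, the second extremal ray of $\mathrm{NE}(Y/S)$ contracts the strict transform of a $\pi_{(S,b,nD)}$-fiber and yields a genuine $\PP^1$-bundle, so the diagram is of type II and is the one just computed; Lemma \ref{SarkisovIIIandIV} \ref{SarkisovIIIandIV.1} rules out type III and IV for $b\geq 2$, and the $b\in\{0,1\}$ endpoints are handled by the explicit models. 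The main obstacle I anticipate is purely organizational rather than conceptual: correctly tracking how $n$ transforms under each of the four elementary moves across all four regimes of $(b,n)$, and verifying at the boundaries $n=0$ and $n=b$ that the curves $\{y_1=0\}$ or $\{y_0=0\}$ are genuinely $\Autz(X)$-invariant (so that the corresponding row is present) rather than sweeping out a two-dimensional orbit. This requires care because the vanishing of $\mathrm{Ext}^1$ that guarantees invariance relies on $D$, $nD$, or $(b-n)D$ being non-trivial of degree zero, which is exactly where the hypothesis that $D$ has infinite order is used.
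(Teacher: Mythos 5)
Your overall strategy does match the paper's: locate the one-dimensional $\Autz(X)$-orbits inside the two invariant surfaces $\pi_{(S,b,nD)}^{-1}(\sigma)$ and $\pi_{(S,b,nD)}^{-1}(\sigma')$, realize each $\phi_{kl}$ as an explicit elementary transformation on the trivializations of Definition \ref{defAAA}, read off the new $(b,n)$ from the cocycles, and get $\phi_{kl}\Autz(X)\phi_{kl}^{-1}=\Autz(Y)$ by running the table backwards from $Y$; the transition-map bookkeeping you describe is exactly what the paper does. The gap is in the two exhaustiveness claims, which are the actual content of the lemma. First, your orbit analysis is anchored on the fiber-product description $\Autz(X)\simeq \Autz(S)\times_{\Autz(C)}\Autz(\AAA_0)$, which is only available for the base model $b=n=0$: for general $(b,n)$ the bundle $\AAA_{(S,b,nD)}$ is indecomposable (Proposition \ref{prop:A_SbnD}) and is not a fiber product, so Lemma \ref{autofiberproduct} says nothing about it. To confine the one-dimensional orbits to $\pi^{-1}(\sigma)\cup\pi^{-1}(\sigma')$ you need surjectivity of $(\pi_{(S,b,nD)})_*\colon \Autz(X)\to\Autz(S)$, which the paper obtains from Lemma \ref{imagepi*} \ref{imagepi*.2} (Brosius' canonical extension plus anti-affineness of $\Autz(S)$); without it, the image of $(\pi_{(S,b,nD)})_*$ could a priori be a one-dimensional subgroup, whose orbits on $S$ are curves, and one-dimensional $\Autz(X)$-orbits could then sit over points of $S\setminus(\sigma\cup\sigma')$.

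Second, and more seriously, the $\mathrm{Ext}^1$-vanishing argument you import from Lemmas \ref{A0_nontrivial} and \ref{SLnontriv} only shows that the restricted bundles $\pi^{-1}(\sigma)\to\sigma$ and $\pi^{-1}(\sigma')\to\sigma'$ are decomposable (or isomorphic to $\AAA_0$), i.e., it \emph{produces} the curves $l_{kl}^X$; it does not show that these are the \emph{only} invariant curves in those surfaces, which is what the statement asserts and what the claim ``no other type II diagrams'' rests on. The paper rules out further invariant curves with Lemma \ref{trickBrion}: the complement of the named sections in $\PP(\O_\sigma\oplus\O_\sigma(-bD))$ (resp.\ of the minimal section in $\AAA_0$) is identified with the corresponding automorphism group, which is anti-affine and hence contains no complete curve -- and anti-affineness holds precisely because $-bD$, equivalently $D$, has \emph{infinite} order. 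This is where the infinite-order hypothesis genuinely enters, so your closing diagnosis that it is needed only to make $D$, $nD$, $(b-n)D$ non-trivial (for $\mathrm{Ext}^1$ vanishing) is mistaken: non-triviality would already follow from $D$ having finite order larger than $b$, and your argument would then yield the same exhaustive table for such $D$, a conclusion the method cannot support and which is inconsistent with the paper's separate treatment of the finite-order case (Lemma \ref{L-torsion}, Proposition \ref{prop:A_SbnD}), where relative maximality fails. To close the gap, insert Lemma \ref{imagepi*} \ref{imagepi*.2} for surjectivity of $(\pi_{(S,b,nD)})_*$ and Lemma \ref{trickBrion} for the absence of additional invariant curves in the two invariant surfaces.
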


\begin{proof}
	By Lemma \ref{geometryofruledsurface}, $S$ admits two disjoint $\Autz(S)$-invariant sections $\sigma=\{z_0=0\}$ and $\sigma' =\{z_1=0\}$, which respectively correspond to the line subbundles $\O_C(D)\subset \O_C\oplus \O_C(D)$ and $\O_C\subset \O_C\oplus \O_C(D)$. Using Lemma \ref{autofiberproduct} and Proposition \ref{imagepi*}, we obtain that the morphism ${\pi_{(S,b,nD)}}_*\colon \Autz(X) \to \Autz(S)$ is surjective. Again by Lemma \ref{geometryofruledsurface}, the $\Autz(X)$-orbit of any point in $X\setminus \pi_{(S,b,nD)}^{-1}(\sigma\cup \sigma')$ has dimension at least two. Therefore, the one-dimensional $\Autz(X)$-orbits are contained in $\pi_{(S,b,nD)}^{-1}(\sigma)$ or $\pi_{(S,b,nD)}^{-1}(\sigma')$.
	
	If $b=n=0$, then $\Autz(X)\simeq \Autz(\AAA_0)\times_{\Autz(C)} \Autz(S)$ by Lemma \ref{autofiberproduct}. Let $\sigma_0$ be the unique minimal section of $\AAA_0$ and $\pi_0\colon \AAA_0\times_C S\to \AAA_0$, $[y_0:y_1 \ ;z_0 : z_1]\mapsto [y_0:y_1]$ be the projection on the first factor. Then by Lemma \ref{geometryofruledsurface}, there exist exactly two $\Autz(X)$-invariant curves $l_{00}^X=\pi_0^{-1}(\sigma_0) \cap \pi_{(S,0,0)}^{-1}(\sigma) = \{ y_0=z_0=0\}$ and $l_{01}^X=\pi_0^{-1}(\sigma_0) \cap \pi_{(S,0,0)}^{-1}(\sigma')=\{y_0=z_1=0\}$. Their blowups induce the square birational maps $\phi_{(S,0,0)}\colon X \dashrightarrow \AAA_{(S,1,0)}$ and $\phi_{(S,0,1)}\colon X \dashrightarrow \AAA_{(S,1,D)}$ in Lemma \ref{lem:induction}.
	
	Assume from now on that $b>0$. The cases $(k,l) = (0,0)$ and $(0,1)$ follow again from Lemma \ref{lem:induction}. It remains to study the cases where $k=1$. 
	
	If $n=0$, taking $z_0=0$ in the transition maps of $\tau\pi_{(S,b,0)}$, we get that the $\PP^1$-bundle $\pi_{(S,b,0)}^{-1}(\sigma)\to \sigma$ has transition maps
	\[
	\begin{array}{ccc}
		U_j \times \PP^1 & \dashrightarrow & U_i\times \PP^1 \\
		(x,[y_0:y_1 ]) & \longmapsto & (x,[y_0:a_{ij}(x)^{-b}y_1]);
	\end{array}
	\]
	hence, it is isomorphic to $\PP(\O_\sigma\oplus \O_\sigma(-bD))$. Since $D$ has infinite order, it follows that $-bD$ is not trivial, so $\PP(\O_\sigma\oplus \O_\sigma(-bD))$ admits two disjoint invariant sections, by Lemma \ref{geometryofruledsurface}. The latters give two disjoint $\Autz(X)$-invariant curves: one is the curve $l_{00}^X$ that we have already considered, and the other one is $l_{10}^X=\{y_1 = z_0 = 0\}$. Moreover, by Lemma \ref{trickBrion}, there is no other $\Autz(X)$-invariant curve in the geometrically ruled surface $\PP(\O_\sigma\oplus \O_\sigma(-bD))$. Taking $z_1=0$ in the transition maps of $\tau\pi_{(S,b,0)}$, we get that $\pi_{(S,b,0)}^{-1}(\sigma')$ is isomorphic to $\AAA_0$, so again by Lemma \ref{trickBrion}, there is no other $\Autz(X)$-invariant curve than $l_{01}^X$ in the surface $\pi_{(S,b,0)}^{-1}(\sigma')$. Thus, $l_{00}^X$, $l_{01}^X$ and $l_{10}^X$ are the only $\Autz(X)$-invariant curves.
	
	The blowup of $l_{10}^X$ followed by the contraction of the strict transform of $\pi_{(S,b,0)}^{-1}(\sigma)$ yields a $\Autz(X)$-equivariant birational map $\phi_{10}$, locally given by 
	\[
	\begin{array}{ccc}
		U_i \times \FF_b & \dashrightarrow & U_i\times \FF_{b-1} \\
		(x,[y_0:y_1\ ; z_0:z_1]) & \longmapsto & (x,[z_0y_0: y_1\ ; z_0:z_1]).
	\end{array}
	\]
	Computing the transition maps, we get that $Y=\AAA_{(S,b-1,0)}$, and this proves the the case $b>n=0$.
	
	The last two cases are proven similarly: the restriction above $\sigma$ and $\sigma'$ gives a $\PP^1$-bundle which is either isomorphic to $\AAA_0$ and it contains a unique $\Autz(X)$-invariant curve, or a decomposable geometrically ruled surface with exactly two disjoint $\Autz(X)$-invariant curves. Finally, using the table with $Y$ as the source variety, we obtain the inverse of $\phi_{kl}$, which is an $\Autz(Y)$-equivariant birational map $Y\dashrightarrow X$; thus, $\phi_{kl}\Autz(X)\phi_{kl}^{-1} = \Autz(Y)$.
\end{proof}

\begin{proposition}\label{prop:A_SbnD}
	The $\PP^1$-bundle $\pi_{(S,b,nD)}\colon \AAA_{(S,b,nD)}\to S$ is indecomposable. Then $\Autz(\AAA_{(S,b,nD)})$ is relatively maximal if and only if $D$ has infinite order, but the pair $(\AAA_{(S,b,nD)},\pi_{(S,b,nD)})$ is not stiff. 
\end{proposition}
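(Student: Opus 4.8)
The plan is to establish three separate assertions: indecomposability of $\pi_{(S,b,nD)}$, the equivalence ``$\Autz$ relatively maximal $\iff D$ infinite order,'' and the failure of stiffness. I would organise the argument around the structural analysis already carried out in Lemma \ref{equivariantlinksfromAAA} and the two-sided reduction lemmas of the previous section.

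First, for indecomposability: I would restrict the $\PP^1$-bundle to the two invariant sections $\sigma$ and $\sigma'$ of $\tau\colon S\to C$, as in the proof of Lemma \ref{equivariantlinksfromAAA}. Taking $z_1=0$ (respectively $z_0=0$) in the transition maps of Definition \ref{defAAA} shows that at least one of the restrictions $\pi_{(S,b,nD)}^{-1}(\sigma)\to\sigma$ or $\pi_{(S,b,nD)}^{-1}(\sigma')\to\sigma'$ is isomorphic to $\AAA_0$ (this is precisely where the coefficient $\alpha_{ij}$ from Lemma \ref{transitionAtiyah} \ref{transitionAtiyah.2} survives and produces a non-split extension of $\O_C$ by $\O_C$). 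Since $\AAA_0$ is indecomposable, the ambient $\PP^1$-bundle cannot admit two disjoint sections, hence $\pi_{(S,b,nD)}$ is indecomposable. Alternatively, if $\pi_{(S,b,nD)}$ were decomposable, its transition maps could be chosen with $p_{ij}=0$ by Lemma \ref{FFb-bundle} \ref{FFb-decomposable}, contradicting the presence of the nonzero $\alpha_{ij}z_0^{b-n}z_1^n$ term that cannot be eliminated via the isomorphism criterion of Lemma \ref{Sisomorph}.

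Second, the relative maximality criterion. The forward direction (relatively maximal $\Rightarrow D$ infinite order) is the contrapositive: if $D$ has finite order, then $\O_C(D)=\L$ is torsion, and applying the equivariant square birational maps of Lemma \ref{equivariantlinksfromAAA} I can reduce to an $\FF_b$-bundle with invariants $(S,b,0)$ where $S=\PP(\O_C\oplus\L)$ with $\L$ torsion; Lemma \ref{L-torsion} then shows $\Autz$ is not relatively maximal, and since conjugation by a square birational map preserves relative maximality, neither is $\Autz(\AAA_{(S,b,nD)})$. For the converse, assuming $D$ has infinite order, I would invoke Theorem \ref{Floris}: any $\Autz(X)$-equivariant birational map to a Mori fiber space decomposes into equivariant Sarkisov links. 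By Lemma \ref{equivariantlinksfromAAA}, every Sarkisov diagram of type II with $\pi_{(S,b,nD)}$ on the left has target another $\AAA_{(S,b',n'D)}$ and satisfies $\phi\Autz(X)\phi^{-1}=\Autz(Y)$; and by Lemma \ref{SarkisovIIIandIV} \ref{SarkisovIIIandIV.1} there are no type III/IV diagrams since $b>0$ forces $b\geq 2$ after the first blow-up or the fibration is the unique type-IV exchange which is an isomorphism on automorphism groups. Hence every equivariant square birational map conjugates $\Autz(X)$ onto the full automorphism group of its target, which is the definition of relative maximality.

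Third, the failure of stiffness follows immediately from the same table: Lemma \ref{equivariantlinksfromAAA} exhibits explicit $\Autz(X)$-equivariant square birational maps $\phi_{kl}\colon \AAA_{(S,b,nD)}\dashrightarrow \AAA_{(S,b\pm1,n'D)}$ that are \emph{not} square isomorphisms (they change the integer $b$, hence the fiber $\FF_b$ is replaced by $\FF_{b\pm1}$, so by Lemma \ref{Sisomorph} the source and target are not square isomorphic). This directly contradicts the definition of stiffness, so $(\AAA_{(S,b,nD)},\pi_{(S,b,nD)})$ is not stiff. The main obstacle I anticipate is bookkeeping in the converse direction of the second assertion: I must verify that the Sarkisov links enumerated in Lemma \ref{equivariantlinksfromAAA} genuinely exhaust \emph{all} equivariant links (including ruling out type I diagrams), and that the induction on $(b,n)$ never escapes the family $\{\AAA_{(S,b',n'D)}\}$ nor hits a divisor of finite order along the way — the latter is guaranteed precisely by the infinite-order hypothesis, since $nD$ remains non-torsion for all $n\neq 0$.
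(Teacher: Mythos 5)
Your treatment of relative maximality and of non-stiffness follows the paper's own route (Theorem \ref{Floris} plus the table of Lemma \ref{equivariantlinksfromAAA} for the infinite-order direction; reduction to invariants $(S,b',0)$ and Lemma \ref{L-torsion} for the torsion direction; the change of $b$ under $\phi_{kl}$ for non-stiffness), and those parts are essentially correct. One small caveat there: Lemma \ref{equivariantlinksfromAAA} is stated under the hypothesis that $D$ has \emph{infinite} order, so in the torsion case you cannot cite it wholesale; you must instead observe, as the paper implicitly does, that the particular map $\phi_{01}$ (blowup of $l_{01}=\{y_0=z_1=0\}$ followed by the contraction) is equivariant for reasons independent of the order of $D$, namely because $\{y_0=0\}$ and $\{z_1=0\}$ are invariant surfaces.

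The genuine gap is in your indecomposability argument. Your primary claim --- that at least one of the restrictions $\pi_{(S,b,nD)}^{-1}(\sigma)\to\sigma$, $\pi_{(S,b,nD)}^{-1}(\sigma')\to\sigma'$ is isomorphic to $\AAA_0$ --- is \emph{false} whenever $b>n>0$: the cross term $a_{ij}^n\alpha_{ij}z_0^{b-n}z_1^n$ contains positive powers of both $z_0$ and $z_1$, so it vanishes upon restriction to $\{z_0=0\}$ \emph{and} to $\{z_1=0\}$, and both restrictions are then decomposable ruled surfaces of the form $\PP(\O_C\oplus\O_C(\cdot))$. The restriction argument only settles the cases $n=0$ and $n=b$. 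Your fallback --- that the term $\alpha_{ij}z_0^{b-n}z_1^n$ ``cannot be eliminated via the isomorphism criterion of Lemma \ref{Sisomorph}'' --- is precisely the assertion to be proved, not an argument: Lemma \ref{Sisomorph} allows corrections by polynomials $q_i\in\O_C(U_i)[z_0,z_1]_b$, $q_j\in\O_C(U_j)[z_0,z_1]_b$ twisted by the transition data, and ruling out a coboundary relation of this kind requires work. The paper closes this case by a descending induction: if $\AAA_{(S,b,nD)}$ with $b>n>0$ were decomposable, one could choose diagonal transition maps; the restriction over $\sigma'$ is then $\PP(\O_{\sigma'}\oplus\O_{\sigma'}(nD))$, whose second invariant section produces the curve $l_{11}$, and the elementary transformation $\phi_{11}$ yields $\AAA_{(S,b-1,(n-1)D)}$ with transition maps depending only on the cocycles $a_{ij}$, hence again decomposable. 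Iterating reaches $\AAA_{(S,b-n,0)}$ decomposable, contradicting the case $n=0$ (where the restriction over $\sigma'$ is $\AAA_0$). Without this induction, or some substitute computation showing the cohomological non-triviality of the extension class, your proof of indecomposability does not cover the range $b>n>0$.
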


\begin{proof}
	First, we consider the case $n=0$. Then the surface $\pi_{(S,b,0)}^{-1}(\sigma') = \{z_1=0\}$ is isomorphic to $\AAA_0$. Therefore, $\pi_{(S,b,0)}$ does not admit two disjoint sections, i.e., it is indecomposable. The same argument holds if $b=n>0$, and considering the surface $\pi_{(S,b,bD)}^{-1}(\sigma) = \{z_0=0\}$.
	
	Assume that $b>n>0$, and by contradiction that $\AAA_{(S,b,nD)}$ is decomposable. Then $\pi_{(S,b,nD)}$ has invariants $(S,b,nD)$ and there exist trivializations of $\tau\pi_{(S,b,nD)}$ such that the transition maps equal
	\[
	\begin{array}{cccc}
		\theta_{ij} \colon & U_j \times \FF_b & \dashrightarrow & U_i\times \FF_b \\
		&(x,[y_0:y_1 \ ;z_0:z_1]) & \longmapsto & (x,[y_0:a_{ij}(x)^ny_1 \ ;z_0:a_{ij}(x)z_1]).
	\end{array}
	\]
	The surface $\pi_{(S,b,nD)}^{-1}(\sigma') = \{z_1=0\}$ is now isomorphic to $\PP(\O_{\sigma'}\oplus \O_{\sigma'}(nD))$. The section corresponding to the line subbundle $\O_{\sigma'}$ yields an $\Autz(X)$-invariant curve $l_{11}^X$, which is the same as the one in the last line of the table of Lemma \ref{equivariantlinksfromAAA}. This gives a birational map $\phi_{11}\colon X\dashrightarrow \AAA_{(S,b-1,(n-1)D)}$, locally given by 
	\[
	\begin{array}{cccc}
		\phi_{11,i}\colon & U_i \times \FF_b & \dashrightarrow & U_i\times \FF_{b-1} \\
		& (x,[y_0:y_1\ ; z_0:z_1]) & \longmapsto & (x,[z_1y_0: y_1\ ; z_0:z_1]).
	\end{array}
	\]
	Computing $\phi_{11,i}\theta_{ij}\phi_{11,j}^{-1}$, we obtain new transition maps for $\AAA_{(S,b-1,(n-1)D)}$, which depend only on the cocyles $a_{ij}\in \O_C(U_{ij})^*$. This implies that $\AAA_{(S,b-1,(n-1)D)}$ is also decomposable by Lemma \ref{FFb-bundle}. Repeating this process finitely many times, we obtain by induction that $\AAA_{(S,b-n,0)}$ is decomposable, which is a contradiction.
	
	Assume that $D$ has infinite order. By Theorem \ref{Floris}, every $\Autz(\AAA_{(S,b,nD)})$-equivariant birational factors through a sequence of $\Autz(\AAA_{(S,b,nD)})$-equivariant Sarkisov links. The table given in Lemma \ref{equivariantlinksfromAAA} lists all such links of type II, each of which conjugates $\Autz(\AAA_{(S,b,nD)})$ with the automorphism group of the target. Therefore, $\Autz(\AAA_{(S,b,nD)})$ is relatively maximal and is conjugate to any other $\Autz(\AAA_{(S,b',n'D)})$, where $b'\geq 0$ and $0\leq n' \leq b'$. 
	
	Finally, if $D$ has finite order, we use the $\Autz(X)$-equivariant square birational map $\phi_{01}$ inductively to construct an $\Autz(X)$-equivariant square birational map $\AAA_{(S,b,nD)} \dashrightarrow \AAA_{(S,b',0)}$, where $b'>0$, and we conclude with Lemma \ref{L-torsion}.
\end{proof}

Combining the previous results on $\AAA_0\times_C S$, we obtain the main result of this section:

\begin{proposition}\label{A0xS}
	Let $X=\AAA_0\times_C S$, let $p_1$ and $p_2$ denote the projections onto $\AAA_0$ and onto $S$, respectively. Let $\pi'\colon X'\to S'$ be a $\PP^1$-bundle over a geometrically ruled surface $S'$. Then the following hold:
	\bigskip
	\begin{enumerate}
		\item With respect to $p_1$, $\Autz(X)$ is relatively maximal and $\Autz(X)$ is conjugate to $\Autz(X')$ via a square birational map if and only if $S'=\AAA_0$ and $X'\cong \PP(\O_{\AAA_0}\oplus \O_{\AAA_0}(b\sigma +\tau^*(D)))$, where $b\in \mathbb{Z}$ and $\sigma$ denotes the unique minimal section of $\tau\colon \AAA_0 \to C.$
		\bigskip 
		\item With respect to $p_2$, $\Autz(X)$ is relatively maximal if and only if $D$ has infinite order. In that case, $\Autz(X)$ is conjugate to $\Autz(X')$ via a square birational map if and only if $S' = S$ and $X'\cong \AAA_{(S,b,nD)}$, where $b\geq 0$ and $0\leq n\leq b$.
	\end{enumerate}
\end{proposition}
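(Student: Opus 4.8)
The plan is to combine the structural results already established for the fiber product $\AAA_0\times_C S$ with the equivariant Sarkisov analysis of the models $\AAA_{(S,b,nD)}$. By Lemma \ref{fiberproduct} the variety $X=\AAA_0\times_C S$ carries two $\PP^1$-bundle structures, given by the projections $p_1\colon X\to \AAA_0$ and $p_2\colon X\to S$, and by Lemma \ref{SarkisovIIIandIV} \ref{SarkisovIIIandIV.3} the only Sarkisov diagram of type IV attached to either structure is the one exchanging $p_1$ and $p_2$. Thus the entire analysis reduces to understanding the $\Autz(X)$-equivariant Sarkisov links of type I and II, for which Theorem \ref{Floris} guarantees that every equivariant square birational map factors through such links. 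The main tool is that by Lemma \ref{autofiberproduct} we have $\Autz(X)\simeq \Autz(\AAA_0)\times_{\Autz(C)}\Autz(S)$, and by Lemmas \ref{geometryofruledsurface} and \ref{imagepi*} both projections $(p_i)_*$ are surjective onto the respective automorphism groups, so the one-dimensional $\Autz(X)$-orbits are confined to the preimages of the invariant sections of $\AAA_0$ and of $S$.

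For statement (1), I would argue as in Proposition \ref{A_1A_1max} and Lemma \ref{lem:SLA1max}. Since $(p_1)_*$ is surjective and $\AAA_0$ has a unique $\Autz(\AAA_0)$-invariant section $\sigma_0$ with complement an orbit of dimension two (Lemma \ref{geometryofruledsurface} \ref{geometryofruledsurface.3}), the one-dimensional $\Autz(X)$-orbits all lie in $p_1^{-1}(\sigma_0)$. By Lemma \ref{geometryofruledsurface} \ref{geometryofruledsurface.2} the ruled surface $p_1^{-1}(\sigma_0)\simeq \PP(\O_{\sigma_0}\oplus\O_{\sigma_0}(D))$ admits exactly two disjoint invariant sections, giving two $\Autz(X)$-invariant curves of the form $p_1^{-1}(\sigma_0)\cap p_2^{-1}(\text{invariant section of }S)$. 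Blowing up either of these and running the $2$-ray game produces, via the explicit transition-map computations of Lemma \ref{A0_nontrivial}, the models $\PP(\O_{\AAA_0}\oplus\O_{\AAA_0}(b\sigma\pm\tau^*(D)))$ for $b\in\mathbb Z$; the base loci of the inverse links are invariant by Lemmas \ref{geometryofruledsurface} and \ref{autofiberproduct}, so each link conjugates the respective automorphism groups, and Lemma \ref{A0_nontrivial} shows these are the only diagrams of type II. Relative maximality of $(X,p_1)$ and superstiffness follow exactly as in those earlier arguments, and combining with Proposition \ref{prop:decomposable_over_A0} yields the stated list of models $X'$ over $\AAA_0$.

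For statement (2), I would identify $X=\AAA_0\times_C S=\AAA_{(S,0,0)}$ as in Definition \ref{defAAA} and invoke the complete bookkeeping of Lemma \ref{equivariantlinksfromAAA}. When $D$ has infinite order, the table there enumerates all $\Autz(\AAA_{(S,b,nD)})$-equivariant Sarkisov diagrams of type II, each conjugating the automorphism group to that of the target $\AAA_{(S,b',n'D)}$ with $b'\geq 0$ and $0\leq n'\leq b'$; since by Lemma \ref{SarkisovIIIandIV} \ref{SarkisovIIIandIV.1} there are no diagrams of type III or IV producing a square birational map (apart from the type IV exchange already accounted for), Theorem \ref{Floris} shows that the conjugacy class of $\Autz(X)$ with respect to $p_2$ consists precisely of the groups $\Autz(\AAA_{(S,b,nD)})$. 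This gives relative maximality with respect to $p_2$, and non-stiffness since the indices $(b,n)$ genuinely vary (Proposition \ref{prop:A_SbnD}). Conversely, if $D$ has finite order, then iterating the square birational map $\phi_{01}$ of Lemma \ref{equivariantlinksfromAAA} reaches some $\AAA_{(S,b',0)}$ with $b'>0$, whose automorphism group is not relatively maximal by Lemma \ref{L-torsion}; hence $\Autz(X)$ is not relatively maximal with respect to $p_2$.

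The main obstacle I anticipate is not any single computation but rather ensuring that the enumeration of one-dimensional $\Autz(X)$-orbits is exhaustive, so that no equivariant Sarkisov link is overlooked. This hinges on Lemma \ref{trickBrion}, which rules out complete curves in $\Autz(\AAA_0)$ and $\Autz(S)$ and thereby forbids extra invariant curves inside the copies of $\AAA_0$ appearing as restrictions $\pi^{-1}(\sigma')$; the argument must carefully track, via the restrictions of the transition maps over each invariant section, whether a given fiber of the restricted bundle is isomorphic to $\AAA_0$ (one invariant curve) or to a decomposable surface $\PP(\O\oplus\O(mD))$ with $mD$ nontrivial (two invariant curves). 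Getting this dichotomy right for every pair $(b,n)$ is exactly what underlies the four cases of the table in Lemma \ref{equivariantlinksfromAAA}, and it is where the infinite-order hypothesis on $D$ is indispensable.
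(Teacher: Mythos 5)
Your overall route coincides with the paper's: the paper proves this proposition simply by citing Propositions \ref{prop:decomposable_over_A0} and \ref{prop:A_SbnD}, and what you write out for statements (1) and (2) is precisely the content of those two propositions — the invariant-curve enumeration and type II links of Lemma \ref{A0_nontrivial} for the projection to $\AAA_0$, the table of Lemma \ref{equivariantlinksfromAAA} together with Theorem \ref{Floris} for the projection to $S$, and Lemma \ref{L-torsion} for the finite-order case — so your proof is essentially the paper's argument unwound, including the correct appeal to Lemma \ref{trickBrion} for exhaustiveness of the orbit enumeration.

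One claim in your treatment of statement (1) is, however, false, and it contradicts both the proposition you are proving and your own preceding sentences: the pair $(X,p_1)$ is not superstiff — it is not even stiff. You yourself exhibit type II Sarkisov links over $\AAA_0$ whose targets $\PP(\O_{\AAA_0}\oplus\O_{\AAA_0}(b\sigma+\tau^*(D)))$, $b\in\mathbb{Z}\setminus\{0\}$, are $\FF_b$-bundles over $C$ and hence not square isomorphic to $X$; this non-stiffness is exactly what Theorem \ref{thmB} records in case (v). The analogy with Proposition \ref{A_1A_1max} and Lemma \ref{lem:SLA1max} breaks down precisely here: superstiffness in those results rests on Lemma \ref{lem:A1_no_Sarkisov}, which requires the one-dimensional orbits to be $4$-to-$1$ curves over the base, whereas in $X=\AAA_0\times_C S$ the two invariant curves map isomorphically onto the minimal section $\sigma_0\subset\AAA_0$, which is why type II links exist at all. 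Relative maximality with respect to $p_1$ therefore does not come from the absence of links but from the fact — which you do establish — that every such link $\phi$ satisfies $\phi\Autz(X)\phi^{-1}=\Autz(X')$, so no equivariant square birational map can conjugate $\Autz(X)$ into a strictly larger group. Once the superstiffness sentence is deleted, your argument is correct and matches the paper's.
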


\begin{proof}
	This follows from Propositions \ref{prop:decomposable_over_A0} and \ref{prop:A_SbnD}.
\end{proof}

\subsection{The fiber product $\PP(\O_C\oplus \O_C(D))\times_C \PP(\O_C\oplus \O_C(E))$}

~\bigskip

Recall that $D,E\in \Pic^0(C)$ are non-trivial. In this section, we denote by $\tau_D\colon S_D \to C$ and $\tau_E \colon S_E\to C$ the $\PP^1$-bundles $\PP(\O_C\oplus \O_C(D))$ and $\PP(\O_C\oplus \O_C(E))$, respectively. We denote by $\sigma_D$ and $\sigma_E$ the sections of $\tau_D$ and of $\tau_E$ corresponding to the line subbundles $\O_C(D)\subset \O_C\oplus \O_C(D)$ and $\O_C(E) \subset \O_C\oplus \O_C(E)$, respectively.

\bigskip

The natural projections $\pi_D \colon S_D \times_C S_E\to S_D$ and $\pi_E \colon S_D \times_C S_E\to S_E$ are precisely the decomposable $\PP^1$-bundles $\PP(\O_{S_D} \oplus \O_{S_D}(\tau_D^*(E)))$ and $\PP(\O_{S_E} \oplus \O_{S_E}(\tau_E^*(D)))$. We first start with the following elementary observation:

\begin{lemma}\label{lem:D+nEtrivial}
		If $E$ is a multiple of $D$, then $\Autz(S_D \times_C S_E)$ is not relatively maximal with respect to the $\PP^1$-bundle structure $\pi_D$.
\end{lemma}

\begin{proof}
	Let $X=S_D \times_C S_E$ and assume first that $nD+E$ is trivial for some $n>0$. By Lemma \ref{FFb-bundle}, we can choose the trivializations of $\tau_D\pi_D$ such that the transition maps equal
	\[
	\begin{array}{ccc}
		U_j \times \FF_0 & \dashrightarrow & U_i \times \FF_0 \\
		(x,[y_0:y_1 \ ;z_0:z_1]) &\longmapsto & (x,[y_0:e_{ij}(x)y_1 \ ;z_0:d_{ij}(x)z_1]),
	\end{array}
	\]
	where $d_{ij},e_{ij}\in \O_C(U_{ij})^*$ are the cocycles of $\O_C(D)$ and $\O_C(E)$, respectively. Under this choice of trivializations, we have $\sigma =\{z_1=0\}\subset S_D$ and $\sigma_E =\{y_0=0\} \subset S_E$. By Lemmas \ref{geometryofruledsurface} and \ref{autofiberproduct}, the curve $l_{01}^X=\{y_0=z_1=0\}$ is $\Autz(X)$-invariant. Its blowup followed by the contraction of the strict transform of $\pi_D^{-1}(\sigma)$ yields an $\Autz(X)$-equivariant square birational map $X\dashrightarrow X_1$, locally given by
	\[
	\begin{array}{ccc}
		U_i \times \FF_0 & \dashrightarrow & U_i \times \FF_1 \\
		(x,[y_0:y_1 \ ;z_0:z_1]) &\longmapsto & (x,[y_0:z_1y_1 \ ;z_0:z_1]),
	\end{array}
	\]
	and where $\pi_1\colon X_1 \to S_D$ is a $\PP^1$-bundle. Computing the transition maps of the $\FF_1$-bundle $\tau_D\pi_1\colon X_1 \to C$, we get 
	\[
	\begin{array}{ccc}
		U_j \times \FF_1 & \dashrightarrow & U_i \times \FF_1 \\
		(x,[y_0:y_1 \ ;z_0:z_1]) &\longmapsto & (x,[y_0:d_{ij}(x)e_{ij}(x)y_1 \ ;z_0:d_{ij}(x)z_1]).
	\end{array}
	\]
	In particular, by Lemma \ref{FFb-bundle}, $\pi_1$ is the decomposable $\PP^1$-bundle $\PP(\O_{S_D}\oplus \O_{S_D}(\sigma_D +{\tau_D}^*(D+E)))$. 
	
	Let $b\in \mathbb{Z}$ and $X_b = \PP(\O_{S_D}\oplus \O_{S_D}(b\sigma_D +\tau_E^*(bD+E)))$. As above, for each $b>0$, there exists an $\Autz(X_b)$-invariant curve $l^{X_b}_{01}=\{y_0 = z_1 =0\}$, which induces an $\Autz(X_b)$-equivariant square birational map $\phi_b\colon X_b \dashrightarrow X_{b+1}$. By induction, for the integer $n>0$ such that $nD+E$ is trivial, we get an $\Autz(X)$-equivariant square birational map $X\dashrightarrow X_n$, where the $\FF_n$-bundle $\tau_D\pi_n\colon X_n\to C$ has transition maps
	\[
	\begin{array}{ccc}
		U_j \times \FF_n & \dashrightarrow & U_i \times \FF_n \\
		(x,[y_0:y_1 \ ;z_0:z_1]) &\longmapsto & (x,[y_0:d_{ij}(x)^ne_{ij}(x)y_1 \ ;z_0:d_{ij}(x)z_1]).
	\end{array}
	\]
	Since $D+nE$ is trivial, there exist $\mu_j\in \O_C(U_j)^*$ and $\mu_i\in \O_C(U_i)^*$ such that $d_{ij}^n e_{ij}= \mu_i^{-1} \mu_j$. Using Lemma \ref{Sisomorph}, we can replace $d_{ij}^ne_{ij}$ by $1$ in the transition maps above. 
	
	Now, to show that $\Autz(X)$ is not relatively maximal, it is sufficient to see that the curve $l^{X_n}_{11}=\{y_1=z_1=0\} \subset X_n$ is not $\Autz(X_n)$-invariant. Let $\mu \in \mathbb{G}_a$, for each $i$, we define the automorphism 
	\[
	\begin{array}{cccc}
		f_{\mu,i} \colon & U_i \times \FF_n & \dashrightarrow & U_i \times \FF_n \\
		& (x,[y_0:y_1 \ ;  z_0:z_1]) & \longmapsto & (x,[y_0:y_1 + \mu z_0^by_0\ ;  z_0:z_1]).
	\end{array}
	\]
	These automorphisms commute with the transition maps of $\tau_E\pi_n\colon X_n\to C$, so they give an automorphism $f_\mu \in \Autz(X_n)$ which does not preserve the curve $l^{X_n}_{11}$. 
	
	The proof for $n<0$ is similar. Instead at each step, we blowup the $\Autz(X_b)$-invariant curve $l^{X_{b}}_{11} = \{y_1 = z_1 =0 \}$ and contract the strict transform of $\pi_b^{-1}(\sigma)$. This yields an $\Autz(X_b)$-equivariant birational map $X_b \dashrightarrow X_{b-1}$. The end of the proof follows as in the case $n>0$.
\end{proof}

\begin{lemma}\label{invariantcurvesDDD}
	Let $b\in \mathbb{Z}$, let $E'\in \Pic^0(C)$ non-trivial which is not a multiple of $D$. Let 
	\[
	X=\PP(\O_{S_D}\oplus \O_{S_D}(b\sigma_D + {\tau_D}^*( E')))
	\]
	be the decomposable $\PP^1$-bundle with structure morphism $\pi\colon X\to S_D$. Then there exist exactly four $\Autz(X)$-invariant curves $l_{kl}^X$, where $k,l\in \{0,1\}$. For each $k,l$, the blowup of $l^X_{kl}$ followed by the contraction of the strict transform of ${\pi^{-1}}(\pi(l^X_{kl}))$ yields an $\Autz(X)$-equivariant square birational map $\phi_{kl}\colon X\dashrightarrow X_{kl}$, where $X_{kl}$ is given by the following table:
	\bigskip
	\begin{center}
		\begin{tabular}{|c|c|}
			\hline 
			$(k,l)$ & $X_{kl}$ \\
			\hline 
			$(0,0)$ &  $\PP(\O_{S_D}\oplus \O_{S_D}((b+1)\sigma_D + {\tau_D}^*( E')))$ \\
			\hline 
			$(0,1)$ &  $\PP(\O_{S_D}\oplus \O_{S_D}((b+1)\sigma_D + {\tau_D}^*( E'+D)))$ \\
			\hline 
			$(1,0)$ &  $\PP(\O_{S_D}\oplus \O_{S_D}((b-1)\sigma_D + {\tau_D}^*( E')))$ \\			
			\hline 
			$(1,1)$ &  $\PP(\O_{S_D}\oplus \O_{S_D}((b-1)\sigma_D + {\tau_D}^*( E'-D)))$ \\			
			\hline
		\end{tabular}
	\end{center}
	
	\bigskip
	
	Moreover, each $\phi_{kl}$ is induced by an $\Autz(X)$-equivariant Sarkisov diagram of type $II$ and we have 
	\[
	\phi_{kl} \Autz(X) \phi_{kl}^{-1} =\Autz(X_{kl}).
	\]
	Finally, there are no other $\Autz(X)$-equivariant Sarkisov diagrams of type II starting from $\pi$ apart from those listed above.
\end{lemma}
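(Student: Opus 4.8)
The plan is to follow closely the pattern of Lemmas \ref{A0_nontrivial} and \ref{equivariantlinksfromAAA}, exploiting that $X$ is decomposable over $S_E$. The first step is to show that $\pi_*\colon\Autz(X)\to\Autz(S_E)$ is surjective. Since $X=\PP(\O_{S_E}\oplus\O_{S_E}(b\sigma_E+\tau_E^*(D')))$ is decomposable, the torus $\mathbb{G}_m\subseteq\ker(\pi_*)$ acts fibrewise, fixing the two disjoint sections $S_0=\{y_0=0\}$ and $S_1=\{y_1=0\}$. For surjectivity I would check that $f^*X\cong_{S_E}X$ for every $f\in\Autz(S_E)$: the linear class of $\sigma_E$ is $\Autz(S_E)$-invariant by Lemma \ref{geometryofruledsurface} \ref{geometryofruledsurface.2}, and $\deg(D')=0$ forces $\tau_{E*}(f)^*(D')\sim D'$ because translations act trivially on $\Cl^0(C)$; hence $f^*\E\cong\E$. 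Lifting $f$ then follows from the Remark after Proposition \ref{Autalgebraicgroup} together with the identity $\Pi(\Aut(X))^\circ=\pi_*(\Autz(X))$ of Proposition \ref{Autalgebraicgroup} \ref{Autalgebraicgroup.3} when $b>0$; for $b=0$ the bundle is the fibre product $S_{D'}\times_C S_E$ and surjectivity is immediate from Lemma \ref{autofiberproduct}, while $b<0$ reduces to $b>0$ via $\PP(\E)\cong\PP(\E^\vee)$. In particular $\pi_*$ surjects onto $\Autz(C)$, so every $\Autz(X)$-orbit dominates $C$.

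The second step is to classify the one-dimensional orbits. Any such orbit is $\mathbb{G}_m$-invariant and dominates $C$, hence lies in $S_0\cup S_1$; restricting the surjection $\pi_*$ and using that the one-dimensional $\Autz(S_E)$-orbits are exactly $\sigma_E$ and $\tilde\sigma_E$ (Lemma \ref{geometryofruledsurface} \ref{geometryofruledsurface.2}), these orbits lie over $\sigma_E\cup\tilde\sigma_E$. This isolates the four curves $l_{kl}^X=\{y_k=z_l=0\}$, and each is a single orbit because $\Autz(X)$ acts transitively on $C$. To see that none of these degenerates, I would compute the restrictions $\pi^{-1}(\tilde\sigma_E)\cong\PP(\O_C\oplus\O_C(D'))$ and $\pi^{-1}(\sigma_E)\cong\PP(\O_C\oplus\O_C(D'-bE))$, using $\sigma_E\cdot\tilde\sigma_E=0$ and the relation $\tilde\sigma_E\sim\sigma_E+\tau_E^*(E)$, which gives $\O_{S_E}(\sigma_E)|_{\sigma_E}\cong\O_C(-E)$. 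The hypothesis $D'+nE\not\sim0$ for all $n$ guarantees that both invariants $D'$ and $D'-bE$ are non-trivial of degree zero, so by Lemma \ref{geometryofruledsurface} \ref{geometryofruledsurface.2} each restriction carries exactly two disjoint invariant sections; this yields exactly four invariant curves.

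The third step realises $\phi_{kl}$ as the elementary transformation blowing up $l_{kl}^X$ and contracting the strict transform of $\pi^{-1}(\pi(l_{kl}^X))$, exactly as in Lemma \ref{A0_nontrivial}. Writing the transition maps of the $\FF_b$-bundle $\tau_E\pi$ through Lemma \ref{FFb-bundle} and composing with the local form of the elementary transformation, a cocycle computation reads off the target: blowing up inside $S_0$ raises $b$ to $b+1$, inside $S_1$ lowers it to $b-1$, while the factor coming from the $z_l$-coordinate multiplies the cocycles of the invariant line bundle by those of $\O_C(E)$ to the power $0$ (for $l$ over $\sigma_E$) or $\pm1$ (for $l$ over $\tilde\sigma_E$), producing the four entries of the table. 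Since $l_{kl}^X$ is $\Autz(X)$-invariant, $\phi_{kl}$ is $\Autz(X)$-equivariant, giving one inclusion; applying the whole analysis to the target $X_{kl}$ — whose invariant $D''\in\{D',D'\pm E\}$ again satisfies $D''+nE\not\sim0$ for all $n$ — shows that the base locus of $\phi_{kl}^{-1}$ is one of the four $\Autz(X_{kl})$-invariant curves, so $\phi_{kl}^{-1}$ is $\Autz(X_{kl})$-equivariant and the two inclusions give $\phi_{kl}\Autz(X)\phi_{kl}^{-1}=\Autz(X_{kl})$.

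Finally, I would argue that these exhaust the type II diagrams: by the $2$-ray game any Sarkisov diagram of type I or II with $\pi$ as the LHS Mori fibre space is determined by the blowup of a one-dimensional $\Autz(X)$-orbit, of which there are precisely the four $l_{kl}^X$; as in Lemma \ref{A0_nontrivial} the resulting link keeps the base $S_E$ and is of type II, so no others occur. The main obstacle I anticipate is the third step: keeping the cocycle bookkeeping consistent so that the sign of the $b$-shift and the $\pm E$ shift land in the correct cell of the table, and verifying uniformly across $b>0$, $b=0$ and $b<0$ that the hypothesis $D'+nE\not\sim0$ is exactly what keeps every target in the same decomposable family with precisely four invariant curves.
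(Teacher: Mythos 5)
Your proposal is correct and follows essentially the same route as the paper's proof: surjectivity of $\pi_*$ via pullback-invariance of the pair $(\sigma_E, D')$ and Proposition \ref{Autalgebraicgroup}, identification of the four invariant curves through the restrictions $\PP(\O_C\oplus\O_C(D'))$ and $\PP(\O_C\oplus\O_C(D'-bE))$ over the two invariant sections of $S_E$, the cocycle computation for the elementary transformations giving the table, equivariance in both directions using that the hypothesis $D'+nE\not\sim 0$ is preserved by $D'\mapsto D'\pm E$, and exhaustion of type II diagrams via the $2$-ray game. Your explicit case split on $b>0$, $b=0$, $b<0$ for the surjectivity step is a small refinement the paper glosses over (Proposition \ref{Autalgebraicgroup} is stated for $b>0$), but otherwise the two arguments coincide.
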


\begin{proof}
	Let $f\in \Autz(S_D)$. Since $E'$ has degree zero, it is invariant by translations; hence, $f^*\tau_D^*(E') \sim \tau_D^*(E')$ by Blanchard's lemma (\ref{blanchard}). By Lemma \ref{geometryofruledsurface}, the section $\sigma_D$ is also $f$-invariant. This implies that $f^*X$ is $S_D$-isomorphic to $X$, so by Proposition \ref{Autalgebraicgroup}, the morphism $\pi_*\colon \Autz(X)\to \Autz(S_D)$ is surjective. In particular, the $\Autz(X)$-orbits have dimension at least one and the one-dimensional $\Autz(X)$-orbits lie in the geometrically ruled surfaces $\pi^{-1}(\sigma_D)$ and $\pi^{-1}(\sigma)$, where $\sigma$ is the other minimal section of $S_D$ disjoint from $\sigma_D$.
	
	By Lemma \ref{FFb-bundle}, we can choose the trivializations of $\tau_D\pi$ such that the transition maps equal
	\[
	\begin{array}{cccc}
		\theta_{ij}\colon &U_j \times \FF_b & \dashrightarrow & U_i\times \FF_b \\
		&(x,[y_0:y_1\ ; z_0:z_1]) & \longmapsto & (x,[y_0:\lambda_{ij}(x)y_1\ ; z_0:d_{ij}(x)z_1]),
	\end{array}
	\]
	where $\lambda_{ij},d_{ij}\in \O_C(U_{ij})^*$ are respectively the cocycles of the line bundles $\O_C(E')$ and $\O_C(D)$. With this choice of trivializations, we have that $\sigma_D = \{z_0=0\} \subset S_D$ and $\sigma = \{z_1=0\} \subset S_D$. 
	
	Taking $z_0=0$ in the transition maps above, we obtain that $\pi^{-1}(\sigma_D)$ is isomorphic to the decomposable $\PP^1$-bundle $\PP(\O_{\sigma_D} \oplus \O_{\sigma_D} (E'-bD))$. Since $E'-bD$ is non-trivial of degree zero by assumption, the curves $l^X_{00}=\{y_0=z_0=0\}$ and $l^X_{10}=\{y_1=z_0=0\}$ are $\Autz(X)$-invariant. Since $\pi$ is decomposable, the connected group $\mathbb{G}_m$ is contained in the kernel of the morphism $\pi_*\colon \Autz(X)\to \Autz(S_D)$ and acts transitively on the complement of those two curves in $\pi^{-1}(\sigma_D)$. Hence, $l^X_{00}$ and $l^X_{10}$ are the only $\Autz(X)$-invariant curves lying in $\pi^{-1}(\sigma_D)$. Similarly, taking $z_1=0$ in the transition maps above, we get that $l^X_{01}=\{y_0=z_1=0\}$ and $l^X_{11}=\{y_1=z_1=0\}$ are precisely the $\Autz(X)$-invariant curves contained in the surface $\pi^{-1}(\sigma)$. This implies that the curves $l^X_{kl}=\{y_k=z_l=0\}$, with $k,l\in \{0,1\}$, are the only $\Autz(X)$-invariant curves.
	
	 The blowup of $l^{X}_{0l}$ followed by the contraction of the strict transform of $\{z_l =0\}$ is locally given by 
	 \[
	 \begin{array}{ccc}
		U_i \times \FF_b & \dashrightarrow & U_i \times \FF_{b+1} \\
		(x,[y_0:y_1\ ; z_0:z_1]) & \longmapsto & (x,[y_0:z_ly_1\ ; z_0:z_1]).
	 \end{array}
	 \]
	 Similarly, the blowup of $l^X_{1l}$ followed by the contraction of the strict transform of $\{z_l =0\}$ is locally given by 
	 	 \[
	 \begin{array}{ccc}
	 	U_i \times \FF_b & \dashrightarrow & U_i \times \FF_{b-1} \\
	 	(x,[y_0:y_1\ ; z_0:z_1]) & \longmapsto & (x,[z_ly_0:y_1\ ; z_0:z_1]).
	 \end{array}
	 \]
	 In both cases, $\phi_{kl}\colon X\dashrightarrow X_{kl}$ is $\Autz(X)$-equivariant. By computing the transition maps $\phi_{kl}\theta_{ij}\phi_{kl}^{-1}$, we get the different $X_{kl}$ listed in the table, according to the values of the pair $(k,l)$. 
	 
	 We denote by $\pi_{kl}\colon X_{kl} \to S_D$ the corresponding $\PP^1$-bundle structure.
	 The base locus of $\phi_{00}^{-1}$ is the curve $l^{X_{00}}_{10} = \{y_1 = z_0 =0\} \subset X_{00}$. Taking $z_0=0$ in the transition maps of $\tau_D\pi_{00}$, we obtain that the geometrically ruled surface $\pi_{00}^{-1}(\pi_{00}(l^{X_{00}}_{10} ))$ is isomorphic to $\PP(\O_{\sigma_D}\oplus \O_{\sigma_D}(E'-(b+1)D))$. Since $E'-(b+1)D$ is non-trivial of degree zero, it follows by Lemma \ref{geometryofruledsurface} that $l^{X_{00}}_{10}$ is $\Autz(X_{00})$-invariant. 
	A similar argument follows for $\pi_{01}$, $\pi_{10}$ and $\pi_{11}$. For each pair $(k,l)$, we obtain that $\phi_{kl} \Autz(X) \phi_{kl}^{-1} =\Autz(X_{kl})$. Finally, every $\Autz(X)$-equivariant Sarkisov diagram of type I and II are uniquely determined by the blowup of an $\Autz(X)$-orbit $l_{kl}^X$ and the $2$-ray game.
\end{proof}

Using Lemma $\ref{invariantcurvesDDD}$, we determine the conjugacy class of $\Autz(S_D \times_C S_E)$ in the following proposition:

\begin{proposition}\label{prop:SDxSE}
	Let $X=S_D \times_C S_{E} $ and $\pi_D$ be the projection onto $S_D$. Assume that $E$ is not a multiple of $D$. Then the following hold:
	\begin{enumerate}
		\item\label{prop:SDxSE.1} $\Autz(X)$ is relatively maximal with respect to $\pi_D$ but the pair $(X,\pi_D)$ is not stiff.
		\item\label{prop:SDxSE.2} Let $X' \to T$ be a $\PP^1$-bundle over a geometrically ruled surface $T$. With respect to $\pi_D$, there exists an $\Autz(X)$-equivariant square birational map $X\dashrightarrow X'$ if and only if $T\cong S_D$ and there exist $b,n\in \mathbb{Z}$ such that \[
		X'\cong \PP(\O_{S_D}\oplus \O_{S_D}(b\sigma_D +{\tau_D}^{*}(nD+E)))).
		\]
	\end{enumerate}
\end{proposition}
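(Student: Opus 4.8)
The structure morphism $\pi_E$ exhibits $X = S_D \times_C S_E$ as the decomposable $\PP^1$-bundle $\PP(\O_{S_E}\oplus \O_{S_E}(\tau_E^*(D)))$, i.e.\ the case $b=0$, $D'=D$ of Lemma \ref{invariantcurvesDDD}. The plan is to run the $\Autz(X)$-equivariant Sarkisov program starting from $\pi_E$ and show that every link keeps us inside the family $\PP(\O_{S_E}\oplus \O_{S_E}(b\sigma_E + \tau_E^*(D+nE)))$. First I would record that the hypothesis ``$D+nE$ is non-trivial for all $n\in\mathbb{Z}$'' guarantees, in particular, that $D+nE$ has infinite order for every $n$ and that $D-bE$-type divisors appearing as restrictions never vanish; this is exactly the non-triviality assumption under which Lemma \ref{invariantcurvesDDD} applies to each intermediate bundle. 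So every $X' = \PP(\O_{S_E}\oplus \O_{S_E}(b\sigma_E + \tau_E^*(D+nE)))$ satisfies the blanket hypothesis $D'+mE = D+(n+m)E \neq 0$ of Lemma \ref{invariantcurvesDDD}, which makes the table of Sarkisov links applicable uniformly along the whole orbit of models.

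\medskip

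For \ref{prop:SDxSE.1} and the ``if'' direction of \ref{prop:SDxSE.2}, I would argue by induction using the four links $\phi_{kl}$ of Lemma \ref{invariantcurvesDDD}. Starting from $X=\PP(\O_{S_E}\oplus \O_{S_E}(0\cdot\sigma_E + \tau_E^*(D)))$, the links $\phi_{00},\phi_{01}$ reach $(b,n)\mapsto (b+1,n)$ and $(b+1,n+1)$, while $\phi_{10},\phi_{11}$ reach $(b-1,n)$ and $(b-1,n-1)$. Since from the pair $(b,n)$ one can increment or decrement $b$ by one, and simultaneously adjust $n$ by $0$ or $\pm 1$ with the sign of $b$-change, these moves generate all of $\mathbb{Z}^2$: e.g.\ applying $\phi_{00}$ then $\phi_{10}$ returns to $(b,n)$ via $(b+1,n)$, whereas $\phi_{01}$ then $\phi_{10}$ sends $(b,n)\to(b+1,n+1)\to(b,n+1)$, which changes $n$ while keeping $b$ fixed; combined with the $b$-shifts this reaches every $(b,n)$. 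Each $\phi_{kl}$ satisfies $\phi_{kl}\Autz(\cdot)\phi_{kl}^{-1} = \Autz(X_{kl})$ by Lemma \ref{invariantcurvesDDD}, so $\Autz(X)$ is conjugate to $\Autz(\PP(\O_{S_E}\oplus \O_{S_E}(b\sigma_E + \tau_E^*(D+nE))))$ for every $(b,n)\in\mathbb{Z}^2$. Because these conjugations are genuine square birational maps that are not all isomorphisms (the link $\phi_{kl}$ changes $b$, hence changes the fiber-type of $\tau_E\pi$, so the $\PP^1$-bundles are not square isomorphic), the pair $(X,\pi_E)$ is not stiff.

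\medskip

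The ``only if'' direction of \ref{prop:SDxSE.2}, which simultaneously yields relative maximality, is the crux. Here I would invoke the equivariant Sarkisov program (Theorem \ref{Floris}): any $\Autz(X)$-equivariant square birational map $X\dashrightarrow X'$ with $X'\to S_E$ a $\PP^1$-bundle decomposes into $\Autz(\cdot)$-equivariant Sarkisov links, and since we stay over the fixed base $S_E$ and never change the base (a square birational map over $S_E$ with $\psi=\mathrm{id}$, modulo $\Autz(S_E)$), only links of type I and II are relevant. By Lemma \ref{invariantcurvesDDD} the \emph{only} type-II links available at each model $\PP(\O_{S_E}\oplus \O_{S_E}(b\sigma_E + \tau_E^*(D+nE)))$ are the four $\phi_{kl}$, all of which land again in the family with parameters $(b\pm 1, n+\{0,\pm1\})$; and the lemma asserts there are no others. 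A separate point to nail down is the exclusion of type-III and type-IV links emanating from these models: type III/IV over $S_E$ would require $b\in\{0,1\}$ and produce either a contraction to a $\PP^2$-bundle or a fiber-product exchange, and I must check these do not introduce $X'$ outside the stated family nor fail to be $\Autz$-conjugations; for $b=0$ the type-IV link is exactly the projection exchange to $\pi_D$, which has a \emph{different base} $S_D$ and hence is not a square birational map over $S_E$, so it is excluded from consideration in \ref{prop:SDxSE.2}. \textbf{The main obstacle} will be verifying that the inductive ``staying in the family'' argument is airtight: one must confirm that at every model the blanket non-triviality hypothesis of Lemma \ref{invariantcurvesDDD} remains satisfied (it does, by the global assumption on $D+nE$), so that the four-link table — and crucially its \emph{completeness clause} ruling out any other type-II diagram — applies at each step, and that no type I link (blowing up a $0$-dimensional orbit or a non-listed curve) is possible, which follows because $\pi_{E*}$ is surjective and $\mathbb{G}_m\subset\ker$ acts with the only invariant curves being the four $l_{kl}$.
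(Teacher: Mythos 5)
Your proposal is correct and takes essentially the same route as the paper: the paper's own proof is exactly the iteration of the four type-II links of Lemma \ref{invariantcurvesDDD} to reach every pair $(b,n)\in\mathbb{Z}^2$, combined with the completeness clause of that lemma and the equivariant Sarkisov program (Theorem \ref{Floris}) for the converse direction and relative maximality, just written far more tersely. One inert slip worth correcting: non-triviality of $D+nE$ for every $n$ does \emph{not} imply that $D+nE$ has infinite order (take $D$ two-torsion and $E$ of infinite order); fortunately Lemma \ref{invariantcurvesDDD} requires only the non-triviality of $D'$ and of $D'+nE$ for all $n$, which you verify correctly, so nothing in your argument depends on that claim.
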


\begin{proof}
	Using Lemma \ref{invariantcurvesDDD} inductively, there exists an $\Autz(X)$-equivariant square birational map $\phi\colon X\dashrightarrow X'$ if and only if there exist $n,b\in \mathbb{Z}$ such that $X' \cong \PP(\O_{S_D}\oplus \O_{S_D}(b\sigma_D +{\tau_{D}}^*(nD+E)))$. In this case, $\phi \Autz(X) \phi^{-1} = \Autz(X')$.
	Therefore, $\Autz(X)$ is relatively maximal and $(X,\pi_D)$ is not stiff.
\end{proof}

\begin{proposition}\label{SDxSE}
	 Let $X=S_D \times_C S_{E} $. Then the following hold: 
	 \begin{enumerate}
		\item If there exists $(n,m)\in \mathbb{Z}^2$ coprime such that $n D + m E$ is trivial, then $\Autz(X)$ is not a maximal connected algebraic subgroup of $\Bir(X)$.
		\item Else, $\Autz(X)$ is conjugate to $\Autz(X')$, where $\pi\colon X'\to S$ is a $\PP^1$-bundle over a geometrically ruled surface $S$, if and only if there exist $b\in \mathbb{Z}$ and 
		$
		\begin{pmatrix}
			\alpha & \beta \\ \gamma & \delta
		\end{pmatrix} 
		\in 
		\mathrm{SL}_2(\mathbb{Z})
		$
		such that
		\[
		X'\cong 
		\PP(\O_{S} \oplus \O_{S}(b\sigma + \tau^*(\alpha D+\beta E))),
		\]
		where $\tau\colon S\to C$ is the decomposable $\PP^1$-bundle $\PP(\O_C \oplus \O_C(\gamma D + \delta E))$ and $\sigma$ is the section of $\tau$ corresponding to the line subbundle $\O_C(\gamma D + \delta E)\subset \O_C \oplus \O_C(\gamma D + \delta E)$.
	 \end{enumerate}
\end{proposition}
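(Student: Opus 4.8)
The plan is to run the $\Autz(X)$-equivariant Sarkisov program (Theorem \ref{Floris}) and to organize the resulting $\PP^1$-bundle models by a lattice datum. To the decomposable bundle $\PP(\O_{S_{\gamma,\delta}}\oplus\O_{S_{\gamma,\delta}}(b\sigma_{\gamma,\delta}+\tau_{\gamma,\delta}^*(\alpha D+\beta E)))$ I attach the pair $(M,b)$ with $M=\left(\begin{smallmatrix}\alpha&\beta\\\gamma&\delta\end{smallmatrix}\right)$, noting that $X=S_D\times_C S_E$ corresponds, via the projection $\pi_E$, to $(I_2,0)$ (base $S_{0,1}=S_E$, invariant $D$). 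I would first prove the forward implication of (2): every model with $M\in\mathrm{SL}_2(\mathbb{Z})$ is $\Autz(X)$-equivariantly birational to $X$. Since $\mathrm{SL}_2(\mathbb{Z})$ is generated by the shears $\left(\begin{smallmatrix}1&\pm1\\0&1\end{smallmatrix}\right)$ and the rotation $\left(\begin{smallmatrix}0&-1\\1&0\end{smallmatrix}\right)$, it suffices to realize each generator by an equivariant link. The shears are exactly the type II links of Lemma \ref{invariantcurvesDDD}: the cases $(0,1)$ and $(1,1)$ add $\pm(\gamma D+\delta E)$ to the invariant while changing $b$ by $\mp1$, and the cases $(0,0)$, $(1,0)$ readjust $b$. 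The rotation is the type IV link of Lemma \ref{SarkisovIIIandIV} \ref{SarkisovIIIandIV.3} exchanging the two projections of the fibre product at $b=0$, post-composed with the canonical isomorphism $\PP(\E)\cong\PP(\E\otimes\L)$ (so that $S_F\cong S_{-F}$), which restores the determinant to $+1$. Induction on the word length of $M$ then produces the required chain of equivariant links.

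For the converse in (2) I would show that no other $\PP^1$-bundle occurs. By Theorem \ref{Floris} every $\Autz(X)$-equivariant birational map to a $\PP^1$-bundle over a ruled surface factors through equivariant Sarkisov links, so it is enough that every link issuing from a model $(M,b)$ stays inside the family. Lemma \ref{invariantcurvesDDD} enumerates all type II links, and its hypothesis—that $\gamma D+\delta E$ and $\alpha D+\beta E$ be non-trivial of degree zero—holds automatically here, because the rows of a matrix in $\mathrm{SL}_2(\mathbb{Z})$ are primitive and, in case (2), no primitive combination of $D,E$ is trivial. Thus each type II link carries $(M,b)$ to another $\mathrm{SL}_2(\mathbb{Z})$-model, and the type IV links merely swap the two fibrations, again staying in the family after the canonical isomorphism. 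The only remaining possibility, a type III link (Lemma \ref{SarkisovIIIandIV} \ref{SarkisovIIIandIV.2} when $b=\pm1$), contracts onto a $\PP^2$-bundle over $C$, i.e.\ it leaves the category of $\PP^1$-bundles over ruled surfaces and contributes no new $\PP^1$-bundle model. This yields the completeness of the list.

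For (1) the mechanism is the degeneration of a link when the invariant becomes trivial. If $\alpha D+\beta E\sim 0$ with $(\alpha,\beta)$ primitive, I complete it to $M=\left(\begin{smallmatrix}a&b\\c&d\end{smallmatrix}\right)\in\mathrm{SL}_2(\mathbb{Z})$ and set $F=cD+dE$, so that $D=-bF$ and $E=aF$. I would then work with $\left(\begin{smallmatrix}a-c&b-d\\c&d\end{smallmatrix}\right)\in\mathrm{SL}_2(\mathbb{Z})$, whose second row gives the non-trivial base $S_F$ and whose first row gives the non-trivial invariant $(a-c)D+(b-d)E\sim-F$. By the forward construction the model $\PP(\O_{S_F}\oplus\O_{S_F}(b'\sigma_F+\tau_F^*(-F)))$ lies in the $\Autz(X)$-conjugacy class. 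Applying the type II link that adds $+F$ to the invariant produces $Y:=\PP(\O_{S_F}\oplus\O_{S_F}((b'{+}1)\sigma_F))$, a bundle with trivial invariant, where the hypothesis of Lemma \ref{invariantcurvesDDD} fails. Here $\Autz(Y)$ is strictly larger: as $H^0(S_F,\O(m\sigma_F))=\bigoplus_{i=0}^m H^0(C,\O(iF))\neq 0$, the kernel of $\pi_*\colon\Autz(Y)\to\Autz(S_F)$ contains a copy of $\mathbb{G}_a$ beyond the $\mathbb{G}_m$, so $\dim\Autz(Y)\geq 4>3=\dim\Autz(X)$ (the latter by Lemma \ref{autofiberproduct} and Lemma \ref{geometryofruledsurface} \ref{geometryofruledsurface.2}). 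The equivariant birational map $X\dashrightarrow Y$ therefore conjugates $\Autz(X)$ onto a proper connected subgroup of $\Autz(Y)$, which shows that $\Autz(X)$ is not a maximal connected algebraic subgroup of $\Bir(X)$.

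The main obstacle is the precise bookkeeping that makes the determinant-one condition exact rather than a determinant-$\pm1$ condition: I must verify that the type IV link is always corrected by the canonical isomorphism $S_F\cong S_{-F}$, together with the interchange of $\sigma_F$ with its disjoint companion governed by $\sigma\sim\sigma_D+\tau^*(D)$ (Lemma \ref{geometryofruledsurface} \ref{geometryofruledsurface.2}), so that type II and type IV links generate exactly $\mathrm{SL}_2(\mathbb{Z})$ and no orientation-reversing transformation survives. A secondary point is to guarantee, in the forward constructions, that the chosen chain of links never meets a degenerate (trivial-invariant or trivial-base) model, except at the single final dimension-jumping step used in (1); this is a navigability statement in $\mathrm{SL}_2(\mathbb{Z})$ avoiding the at most two forbidden primitive directions, elementary but requiring care. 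Once these are in place, the dimension count $\dim\Autz(X)=3$ and the jump to $\dim\Autz(Y)\geq 4$ are routine.
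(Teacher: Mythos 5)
Your part (2) follows essentially the same route as the paper: the type II links of Lemma \ref{invariantcurvesDDD} realize the shears, the type IV diagram of Lemma \ref{SarkisovIIIandIV} swaps the two projections, Theorem \ref{Floris} gives completeness, and in case (2) the hypothesis guarantees that every row of an $\mathrm{SL}_2(\mathbb{Z})$-matrix applied to $(D,E)$ is a primitive, hence non-trivial, combination, so Lemma \ref{invariantcurvesDDD} applies at every step. The one structural difference is your choice of generators: shear plus rotation forces you to repair a determinant $-1$ step by the identification $S_F\cong S_{-F}$, which you correctly single out as the main bookkeeping obstacle but do not carry out. The paper sidesteps this entirely by using the two shear families $\left(\begin{smallmatrix}1&n\\0&1\end{smallmatrix}\right)$ and $\left(\begin{smallmatrix}1&0\\n&1\end{smallmatrix}\right)$, acting on the invariant over either of the two projections; these already generate $\mathrm{SL}_2(\mathbb{Z})$, and the type IV swap is only used to pass from one projection to the other, so it never enters the matrix bookkeeping.

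The genuine gap is in your proof of (1), and it concerns torsion. Your navigability claim --- ``avoiding the at most two forbidden primitive directions'' --- is false when the subgroup of $\Cl^0(C)$ generated by $D$ and $E$ has torsion: take $F$ of order $5$, $D=F$, $E=2F$; then $2D-E\sim 0$ puts you in case (1), but infinitely many primitive combinations $\alpha'D+\beta'E$ are trivial. Along your chain towards the model with rows $(-F;F)$ you can meet models $\PP(\O_{S_{F'}}\oplus\O_{S_{F'}}(b\sigma+\tau^*(A)))$ on which $H^0(S_{F'},\O_{S_{F'}}(b\sigma+\tau^*(A)))\neq 0$ even though $A\not\sim 0$; there an extra $\mathbb{G}_a$ already sits in $\ker(\pi_*)$, the curves $l_{1l}$ are no longer invariant, and the links you planned need not exist. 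This is repairable (meeting such a model early only helps: the dimension jump happens there), but your plan does not cover it, whereas the paper runs a Euclidean-algorithm induction in which, at each step, either both shear moves are legitimate (Proposition \ref{prop:SDxSE}) or a relation of the special form $D'+n'E'\sim 0$ or $n'D'+E'\sim 0$ holds and Lemma \ref{lem:D+nEtrivial} concludes non-maximality outright --- an exhaustive dichotomy that requires no navigation. A second, smaller gap: your estimate $\dim\Autz(Y)\geq 4$ uses only $\dim\ker(\pi_*)\geq 2$ and tacitly assumes that the image of $\pi_*$ on $\Autz(Y)$ is two-dimensional; this is true but needs an argument (for instance, $\Autz(Y)$ contains the conjugate of $\Autz(X)$, whose image in $\Autz(S_F)$ remains two-dimensional under square birational maps, or one can combine Proposition \ref{Autalgebraicgroup} with $f^*Y\cong Y$ for all $f\in\Autz(S_F)$). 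The paper needs no dimension count at all: in Lemma \ref{lem:D+nEtrivial} the extra $\mathbb{G}_a$ visibly moves the base locus of the final link, giving the strict inclusion directly.
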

	
\begin{proof}
	First, we work with the $\PP^1$-bundle structure $\pi_D\colon X\to S_D$.
	Let $(b,n_1)\in \mathbb{Z}^2$ and let
	\[
	X'= \PP(\O_{S_D}\oplus \O_{S_D}(b\sigma_D +{\tau_D}^{*}(n_1D+E)))).
	\]
	If $E$ is a multiple of $D$, then $\Autz(X)$ is not relatively maximal by Lemma \ref{lem:D+nEtrivial}. Else, by Proposition $\ref{prop:SDxSE}$, the groups $\Autz(X)$ and $\Autz(X')$ are conjugate via a square birational map. Since this holds for arbitrary $b\in \mathbb{Z}$, we can in particular choose $b=0$, and we get that $\Autz(X)$ is conjugate to $\Autz(S_{D} \times_C S_{n_1D+E})$, for every $n_1 \in \mathbb{Z}$. Considering the case $b=0$ is the only way to construct new $\Autz(X)$-equivariant birational maps to $\PP^1$-bundles over geometrically ruled surfaces, as $\FF_0$-bundles are equipped with another $\PP^1$-bundle structure.
	
	We associate to the fiber product $S_D\times_C S_E$ the pair of divisors $(D,E)\in \Pic^0(C)^2$. Notice that at the first step above, the pair of divisors $(D,E)$ is replaced by $(D,n_1D+E)$, where $n_1\in \mathbb{Z}$ is any integer. The $\PP^1$-bundles obtained by a sequence of Sarkisov of links are completely determined by the pair of divisors $(D,E)$ and the integer $b\in \mathbb{Z}$.
	
	Next, we work with the other projection from $S_{D} \times_C S_{n_1D+E}$ onto $S_{n_1D+ E}$. If $D$ is a multiple of $n_1D+E$ for an integer $n_1 \in \mathbb{Z}$, then $\Autz(X)$ is not a maximal connected algebraic subgroup of $\Bir(X)$ by Lemma \ref{lem:D+nEtrivial}. Else, using Lemma \ref{lem:D+nEtrivial}, we can replace the pair $(D,n_1D+E)$ by $(D + n_2(D+n_1E),D+n_1E)$, where $n_2 \in \mathbb{Z}$ is any integer. Then we proceed by induction. At each step, we replace the pair of divisors $(D',E')$ by $(D'+n'E',E')$ or by $(D',n'D'+E')$. Equivalently, we can multiply the vector $(D',E')$ by the matrices:
	\[
	\begin{pmatrix}
		1 & n' \\ 0 & 1
	\end{pmatrix}, \text{ or}
	\begin{pmatrix}
		1 & 0 \\ n' & 1
	\end{pmatrix},
	\] 
	for any $n'\in \mathbb{Z}$. This concludes the proof, since those matrices generate the group $\mathrm{SL}_2(\mathbb{Z})$.
	\end{proof}
	
	We now prove the main proposition of this section:
	
	\begin{proof}[Proof of Proposition \ref{main_prop:S}]
		Assume that $\Autz(X)$ is relatively maximal. If $\tau \pi$ is an $\FF_b$-bundle, with $b>0$, then there exist a $\PP^1$-bundle $\pi'\colon X' \to S$ such that $\tau\pi'$ is an $\FF_0$-bundle and
		an $\Autz(X)$-equivariant square birational map $X\dashrightarrow X'$ by Proposition \ref{equivariantto0.SL}. This reduces to the case of $\FF_0$-bundles arising from a $\PP^1$-bundle over $S$.
		
		If $b=0$, the first two cases follow from Proposition \ref{FF0viaCxPP1} and Lemma \ref{lem:SLA1max}. The last two cases were studied through Propositions \ref{A0xS} and \ref{prop:SDxSE}: the pair $(X,\pi)$ is not stiff and the conjugacy class of $\Autz(X)$ is given as follows.
		
		Let $\pi'\colon X'\to T$ be a $\PP^1$-bundle over a geometrically ruled surface $T$. By Proposition \ref{A0xS}, there exists an $\Autz(S\times_C \AAA_0)$-equivariant square birational map $S\times_C \AAA_0\dashrightarrow X'$ if and only if $T\cong S$ and $X'\cong \AAA_{(S,b,nD)}$ as in case \ref{main_prop:S.2} \ref{main_prop:S.2.i}. By Proposition \ref{prop:SDxSE}, there exists an $\Autz(S\times_C \PP(\O_C\oplus \O_C(E)))$-equivariant square birational map $S\times_C \PP(\O_C\oplus \O_C(E))\dashrightarrow X'$ if and only if $T\cong S$ and $X'\cong \PP(\O_{S}\oplus \O_{S}(b\sigma \oplus {\tau}^{*}(nD+E)))$ for some $b,n\in \mathbb{Z}$.
	\end{proof}
	
\section{Proof of the main results}\label{section:proofs}

\begin{proof}[Proof of Theorems $\ref{thmA}$ and $\ref{thmB}$]
	Let $\tau\colon S\to C$ and $\pi\colon X\to S$ be $\PP^1$-bundles such that $\Autz(X)$ is relatively maximal. By Proposition \ref{removal jumping}, we can assume that $\tau\pi\colon X\to C$ is an $\FF_b$-bundle. Using Theorem \ref{dim2max} with Proposition \ref{basesurfacemaximal}, we also get that $S$ is one of the following geometrically ruled surfaces:
	$C\times \PP^1$, or $\AAA_0$, or $\AAA_1$, or $\PP(\O_C\oplus \O_C(D))$ for some non-trivial divisor $D$ of degree zero. The three latter cases occur only when $C$ is an elliptic curve. To conclude, apply Propositions \ref{main_prop:CtimesP^1}, \ref{main_prop:A_1}, \ref{main_prop:A0}, and \ref{main_prop:S}.
\end{proof}

\begin{proof}[Proof of Proposition \ref{propB}]
	If $g\geq 2$ and $X = C\times \PP^1 \times \PP^1$, then $\Autz(C)$ is trivial and $\Autz(X)\cong \PGL_2(\kk)^2$.
	
	Assume now that $g=1$ and we consider first the case where $X =S_1\times_C S_2$ is a fiber product of two geometrically ruled surfaces $\tau_1\colon S_1 \to C$ and $\tau_2\colon S_2\to C$. We denote by $\pi_1$ and $\pi_2$ the projections from $X$ onto the first and second factors. 
	By Proposition \ref{candidateFF0max}, $(\tau_1 \pi_1)_* = (\tau_2 \pi_2)_*$ and surjects onto $\Autz(C)$. By Lemma \ref{fiberproduct}, $\pi_1$ (resp. $\pi_2$) is decomposable if and only if $\tau_2$ (resp. $\tau_1$) is decomposable. Moreover, by Lemma \ref{autofiberproduct}, 
	\[
	\Autz(X) \simeq \Autz(S_1)\times_{\Autz(C)} \Autz(S_2),
	\]
	and $\ker({\pi_1}_*) = \ker({\tau_2}_*)$ and  $\ker({\pi_2}_*) = \ker({\tau_1}_*)$. In each case, we know $\ker({\tau_1}_*)$ and $\ker({\tau_2}_*)$ from Lemma \ref{geometryofruledsurface}, and this determines the dimension of $\Autz(X)$ and its orbits. This proves the statement for all cases of (I), except (vii) and (viii). The last two cases (vii) and (viii), where $X$ is not a fiber product of two geometrically ruled surfaces, follow from Propositions \ref{modulioverCtimesPP^1} and \ref{SarkisovoverA1}.
\end{proof}

\begin{proof}[Proof of Corollary \ref{coroC}]
	If $g\geq 2$, the pair $(C\times \PP^1\times \PP^1,\Autz(C\times \PP^1\times \PP^1))$ is superstiff; hence, $\Autz(C\times \PP^1\times \PP^1)$ is a maximal connected algebraic subgroup. 
	
	Assume now that $g=1$. Cases (a) and (b) follow from Propositions \ref{FF0viaCxPP1},  \ref{A_1A_1max}, and \ref{A_0A_1notmax}. The last case, statement (c), follows from Theorem \ref{thmB}. Indeed, there is no $\Autz(X)$-equivariant birational map from $X$ to an $\FF_1$-bundle over $C$ in cases \ref{thmA.vi}, \ref{thmA.ix} with $b\neq 1$, and \ref{thmA.x}. Thus, in all those cases, there is no $\Autz(X)$-equivariant birational map to a $\PP^2$-bundle.
\end{proof}

	\bibliographystyle{alpha} 
	\bibliography{bib} 	
	
	
\end{document}